\renewcommand{\[}{\begin{equation}\begin{aligned}}
\renewcommand{\]}{\end{aligned} \end{equation}}
\newtheorem{thm}{Theorem}
\newtheorem{prop}[thm]{Proposition}
\newtheorem{lemma}[thm]{Lemma}
\newtheorem{cor}[thm]{Corollary}
\theoremstyle{remark}
\newtheorem{remark}[thm]{Remark}
\theoremstyle{definition}
\newtheorem{definition}[thm]{Definition}
\newtheorem{lem}[thm]{Lemma}
\theoremstyle{definition}
\newtheorem{rmk}{Remark}
\newcommand{\cf}{\emph{cf.} }
\newcommand{\R}{\mathbb{R}}
\newcommand{\C}{\mathbb{C}}
\newcommand{\Z}{\mathbb{Z}}
\newcommand{\norm}[1]{\left\lVert#1\right\rVert}
\newcommand{\Lap}{\Delta}
\title{Singularity formations in Lagrangian mean curvature flow}
\author{Yang Li}
\address{Department of Pure Mathematics and Mathematical Statistics, Cambridge University, Cambridge CB3 0WA, UK }
\email{yl454@cam.ac.uk}
\author{G\'abor Sz\'ekelyhidi}
\address{Department of Mathematics, Northwestern University, Evanston, IL 60208, USA}
\email{gaborsz@northwestern.edu}
\begin{document}

\begin{abstract}
    We study singularities along the Lagrangian mean curvature flow with tangent flows given by multiplicity one special Lagrangian cones that are smooth away from the origin. Some results are: uniqueness of all such tangent flows in dimension two; uniqueness in any dimension when the link of the cone is connected; the existence of nontrivial special Lagrangian blowup limits. We also prove a singular version of Imagi-Joyce-dos Santos's uniqueness result of the Lawlor neck. As an application we prove that in any dimension, singularities that admit a tangent flow given by the union of two transverse planes is modeled on shrinking Lawlor necks at suitable scales. 
\end{abstract}

\maketitle

\section{Introduction}

The main theme of this paper is to study singularity formation mechanisms for the Lagrangian mean curvature flow (LMCF) in general dimensions. There are two principal sources of motivations:

\begin{enumerate}
    \item

    An influential conjectural programme of Thomas-Yau \cite{ThomasYau}, with major updates by Joyce \cite{JoyceEMS}, aims to relate the \emph{existence questions of special Lagrangians} inside a given Calabi-Yau manifold, to \emph{stability conditions} on the derived Fukaya category. The heuristic picture is that if we start with a Lagrangian brane which is well behaved from a Floer theoretic viewpoint, then one would hope that there is some mean curvature flow with surgery, which preserves the Lagrangian condition, and exists for long time, such that the infinite time limit decomposes the Lagrangian into a union of special Lagrangian integral currents, which morally speaking support the stable objects of the derived Fukaya category.

    Previous works of Neves \cite{Neves1}\cite{Neves2} have demonstrated the inevitability of singularity in the flow. To push the programme further, it is essential to understand better the singularity formation mechanisms.

    \item Lagrangian mean curvature flows are one of the most interesting classes of mean curvature flows \emph{in high codimensions}. One salient feature of LMCF, which is markedly in contrast with the (much better understood) mean curvature flow of hypersurfaces, is that under some natural conditions there is \emph{no type I singularity}, namely a singularity modelled on a smooth shrinker~\cite{Neves1}. In this sense the singularities of the LMCF cannot be very elementary.

\end{enumerate}

\subsection{Previous works}

The studies of LMCF typically fall into one of the following categories:

\begin{enumerate}
    \item  Under highly symmetric settings, the Lagrangian at any fixed time can be represented by some profile curve, and the LMCF becomes a variant of the curve shortening flow, with possibly extra singular forcing terms. 

\item  Under certain assumptions, the LMCF can be reduced to the evolution of a scalar potential function, which may be viewed as the parabolic version of the special Lagrangian graph equation. In favourable cases, one can rule out singularity in these cases.

\item There are a number of constructions for LMCF local models such as expanders and translating solitons, via some special ansatz.

\item  One can study the LMCF in some perturbative regime, including gluing style constructions and short time existence results.

\end{enumerate}

Neves \cite{Neves1}\cite{Neves2} went beyond the codimension one or perturbative cases, to establish a structural theory for the tangent flows of the LMCF, and applied it to show that LMCF singularity must form within any given Hamiltonian isotopy class of graded Lagrangians inside any Calabi-Yau manifold, which is compact or controlled near infinity. Neves's theory has been influential, for instance on later works about ancient flows \cite{LambertLotaySchulze}. The nature of the singularity is not explicit from Neves's arguments, which are based on contradictions.

The recent work of Lotay-Schulze-Sz\'ekelyhidi \cite{LSSz2} was able to extract more refined information on the singularity, without symmetry or perturbative assumptions, but relying crucially on the \emph{complex dimension two} assumption. The nature of this result is that under mild ambient assumptions, a priori knowledge of some tangent flow (assuming it is the union of two transverse special Lagrangian planes in $\mathbb{C}^2$), implies the uniqueness of the tangent flow, and determines the type II blow up model (it has to be the Lawlor neck inside $\mathbb{C}^2)$. The main challenge can be explained as follows. Suppose the LMCF $(L_t)$ develops a singularity at the origin at time zero. A \emph{tangent flow} is by definition some geometric measure theoretic subsequential limit for the parabolically rescaled flows $L^i_t= \sigma_i^{-1} L_{\sigma_i^2 t}$ for $\sigma_i\to 0$, which detects information of the original flow $L_t$ at length scale comparable to $\sqrt{-t}$. On the other hand, a \emph{type II blow up limit} is by definition a geometric measure theoretic subsequential limit for the parabolically rescaled flows $L^i_t= \sigma_i^{-1} (L_{\sigma_i^2 (t-t_i)}-x_i)$ where the parabolic centres $(x_i,t_i)\to (0,0)$, and the scaling parameter $\sigma_i\to 0$. \footnote{Our notion of type II blowup allows a priori singularities.} Since LMCF cannot have type I singularity, we need to zoom into a much smaller length scale $o( \sqrt{-t} )$ to detect a nontrivial type II blow up limit.
Usual parabolic tools, such as Huisken's monotonicity formula, are only able to access information at length scale at least $\sqrt{-t}$, and new ideas were required to pass information from the larger length scale to the smaller.

One main reason why complex dimension two was required, is that special Lagrangians in $\mathbb{C}^2$ can be viewed as algebraic curves in $\mathbb{C}^2$ after hyperk\"ahler rotation. This fact allows one to design a quantity whose vanishing characterises the union of two planes, 
and this quantity turns out to satisfy a useful differential inequality under the LMCF in complex dimension two. The principal goal of the present paper is to generalize some aspects of this 2D result to dimensions $n\geq 3$, and our methods are flexible enough to deal with certain other tangent flows which are not planar.

\subsection{Main results}

For simplicity, throughout the paper we assume the following conditions:

\begin{enumerate}
 \item Let $L^0_t$ be an \emph{exact immersed
LMCF} in $\mathbb{C}^n$ for
  $t\in [-1,0)$ with
 \emph{uniformly bounded Lagrangian angle and area ratios}, such that a singularity forms at time zero at the origin. 

 \item Some tangent flow\footnote{We do not assume the uniqueness of the tangent flow, but prove it in some special cases.} at $(x_0,t_0)=(0,0)$ is given by an \emph{integrable special Lagrangian cone}
  $W_0$ with \emph{multiplicity one}, that has a \emph{smooth link}. If $W_0 =
  \bigcup V_i$ is a union of 
  cones with connected links, then integrability means the
  integrability of each $V_i$.
\end{enumerate}

\begin{remark}
More generally  we expect that the methods here could be applied to the setting of a rational Lagrangian mean curvature  flow $L^0_t$ in a compact ambient Calabi-Yau
manifold, using ideas similar to \cite[Section 9]{LSSz2}. 
\end{remark}

\begin{thm}\label{thm:goodblowup}  
  There exists some type II blowup of the flow given by an exact special Lagrangian
  current $L_\infty$ which has tangent cone $W_0$ at infinity, but
  which is not equal to a translate of $W_0$.
\end{thm}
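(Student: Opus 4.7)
The scheme is to produce $L_\infty$ as a Type II blow-up obtained by Hamilton-style point-picking around the singular spacetime point, and then identify it using Neves's Lagrangian-angle convergence.

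First, since $L^0_t$ has uniformly bounded Lagrangian angle, Neves's results rule out Type I singularities, so the curvature must concentrate at scales finer than $\sqrt{-t}$. A Hamilton-style point-picking procedure produces $(y_i,s_i)\to (0,0)$ and $\sigma_i := |A|(y_i,s_i)^{-1} \to 0$ with $\sigma_i/\sqrt{-s_i}\to 0$ and with $|A|\le 2|A|(y_i,s_i)$ on the parabolic neighbourhood of $(y_i,s_i)$ of radius $R_i\sigma_i$, where $R_i\to \infty$. Define the rescaled flows $L^i_t := \sigma_i^{-1}(L^0_{s_i+\sigma_i^2 t}-y_i)$; these are exact LMCFs sharing the bounded angle and area ratio bounds of $L^0$, with $|A^i|\le 2$ on balls of radius $R_i$ and with $|A^i|(0,0)=1$. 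Using Brakke flow compactness together with local smooth convergence inside the bounded-curvature region, pass to a subsequential limit $L_\infty$, a smooth ancient exact LMCF on $\mathbb{C}^n\times(-\infty,0]$ with $|A^\infty|\le 2$ and $|A^\infty|(0,0)=1$.

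Next, I would show that $L_\infty$ is static, i.e., that each time-slice is a special Lagrangian. The key input is Neves's theorem that, for an exact LMCF with bounded Lagrangian angle, any sequence of parabolic rescalings at spacetime points converging to a singular point whose tangent flow is a multiplicity-one SL has Lagrangian angle converging in a strong enough sense to the constant angle of the tangent flow. In our setting $(y_i,s_i)\to (0,0)$ is the singular point with tangent flow $W_0$, so the angle functions on $L^i$ converge to the constant angle of $W_0$. Thus $\theta^\infty\equiv \mathrm{const}$ on $L_\infty$, and since $H = -J\nabla\theta$ for exact Lagrangians, we obtain $H_{L_\infty}=0$, so $L_\infty$ is a special Lagrangian and the flow is static. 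For the tangent cone at infinity, the rescaling $R^{-1}L_\infty$ corresponds to looking at $L^0$ near $(y_i,s_i)$ at spatial scale $R\sigma_i$; along a diagonal subsequence one can take $R\to\infty$ while $R\sigma_i/\sqrt{-s_i}\to 0$, which places one in the tangent-flow regime of $L^0$ at $(0,0)$, giving convergence of $R^{-1}L_\infty$ to $W_0$. Finally, $L_\infty$ cannot be a translate of $W_0$: such a translate has second fundamental form blowing up near the translated apex (or is a flat plane in the degenerate case), which is incompatible with the uniform bound $|A^\infty|\le 2$ together with the normalization $|A^\infty|(0,0)=1$.

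The main obstacle is the staticity step. Neves's angle-oscillation control is naturally formulated at the tangent-flow scale $\sqrt{-s_i}$, whereas we need to apply it at the strictly smaller Type II scale $\sigma_i$. Propagating the control across intermediate scales is delicate and is where the structural hypotheses on $W_0$ (smoothness of the link, integrability, and multiplicity one) enter, providing the quantitative estimates that let us pass information from the $\sqrt{-s_i}$ scale, where $L^0$ resembles $W_0$, down to the $\sigma_i$ scale where curvature blows up.
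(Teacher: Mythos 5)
Your point-picking scheme has a genuine gap at the step you treat as a diagonal-argument formality: showing that the curvature-normalized limit $L_\infty$ has tangent cone $W_0$ at infinity. Fixing $R$ and letting $i\to\infty$, the rescaling $R^{-1}L_\infty$ reflects $L^0$ near $(y_i,s_i)$ at scale $R\sigma_i$ with $R\sigma_i/\sqrt{-s_i}\to 0$, which is precisely \emph{not} the tangent-flow regime; Huisken's monotonicity formula (and Neves's results built on it) only controls the flow at scales comparable to or larger than $\sqrt{-t}$, so there is no a priori reason the flow resembles $W_0$ at the intermediate scales between $\sigma_i$ and $\sqrt{-s_i}$. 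Bridging exactly this gap is the content of the paper's Sections 3 and 4 (the elliptic dichotomy/non-concentration estimate of Proposition~\ref{prop:mainelliptic} and the decay estimate Proposition~\ref{prop:T1Ttheta}), and it is also why the paper does \emph{not} blow up at the curvature scale: the type II scale is chosen as the scale at which the Gaussian density drops below $\Theta(W_0)-\kappa$, which may be much larger than $|A|^{-1}$, and the resulting limit is only a (possibly singular) special Lagrangian current. Your curvature-scale limit, even granted smooth and static, could perfectly well be asymptotic to a single plane or to some other cone of smaller density than $W_0$, in which case the conclusion of the theorem fails for it. Note also that you locate the difficulty in the wrong place: staticity of the limit is comparatively easy and follows at any scale $r_i\le\sqrt{-t_i}$ from the pointwise angle control given by the monotonicity formula (the paper's Lemma after Proposition~\ref{prop:N1} together with Proposition~\ref{prop:N1} itself), whereas the geometric closeness to $W_0$ across intermediate scales — which you invoke both for the cone at infinity and implicitly for $(y_i,s_i)\to(0,0)$ capturing the relevant singular behaviour — is the real theorem and is nowhere established in your outline; acknowledging in the last paragraph that this propagation is "delicate" does not supply the argument.

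Structurally, the paper argues by contradiction: assuming no good blowup exists puts the flow in the setting of Conditions (A3(b)), so Proposition~\ref{prop:T1Ttheta} yields uniqueness of the tangent flow and graphicality over $W_0$ for as long as the Gaussian area stays near $\Theta(W_0)$; one then picks centres minimizing the Gaussian-density-drop radius $r(\tau,x)$, rescales at that radius, and obtains a special Lagrangian current with density exactly $\Theta(W_0)-\kappa$, asymptotic to a cone near $W_0$ but not a translate of a cone, and finally a diagonal argument using Simon's uniqueness-of-tangent-cones machinery upgrades the asymptotic cone to $W_0$ itself. If you want to salvage your approach, you would have to import essentially all of that machinery to control the intermediate scales, at which point the curvature normalization buys you nothing (and cannot be justified as giving cone $W_0$ anyway).
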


We emphasize that it is assumed that a singularity forms at $(0,0)$, otherwise we could also have a nontrivial tangent flow as a result of several immersed sheets intersecting, in which case there would not be a type II blowup as claimed. 
This result is particularly useful if we have a classification of the exact
special Lagrangian currents asymptotic to $W_0$. When $W_0$ is the
transverse union of two planes,
Joyce-Imagi-Santos \cite{JoyceImagi} have shown that a  \emph{smooth embedded} exact special Lagrangians asymptotic to $W_0$ must be a member of an explicit family known as \emph{Lawlor neck}. In our setting, an important caveat is that the \emph{smoothness} of the type II blowup is not known a priori. This is because our type II blow up detects the length scale at which the Gaussian area of the flowing Lagrangians starts to drop  perceptibly below the density of the tangent flow, which may a priori be much larger than the length scale determined from the magnitude of the second fundamental form.

Instead, we prove a more robust version of the Joyce-Imagi-Santos result allowing for a priori singular exact special Lagrangians admitting smooth approximations, and the smoothness becomes a nontrivial \emph{a posteriori} consequence once we know the objects are in fact Lawlor necks. Combining this ingredient with Theorem \ref{thm:goodblowup}, we shall deduce the following:

\begin{thm}\label{thm:Lawlorblowup}
  Suppose that the tangent flow $W_0$ is the union of two transverse
  planes in $\mathbb{C}^n$. Then the flow admits some type II blowup sequence converging to an exact Lawlor
  neck.
\end{thm}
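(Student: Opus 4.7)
The plan is to combine Theorem~\ref{thm:goodblowup} with the singular strengthening of the Joyce-Imagi-Santos classification announced in the paragraph preceding the statement. First, apply Theorem~\ref{thm:goodblowup} with the given tangent flow $W_0$, the transverse union of two planes in $\mathbb{C}^n$: this produces a type II blowup sequence $L^i_t = \sigma_i^{-1}(L^0_{t_i+\sigma_i^2 t} - x_i)$ with $(x_i,t_i,\sigma_i) \to (0,0,0)$, whose geometric measure theoretic limit $L_\infty$ is an exact special Lagrangian integral current with tangent cone $W_0$ at infinity, and crucially not a translate of $W_0$ itself.

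Next, feed $L_\infty$ into the singular Joyce-Imagi-Santos theorem. The hypothesis of that result is an exact special Lagrangian asymptotic to a pair of transverse planes admitting smooth Lagrangian approximations; in our situation these approximations are built in, as the rescaled time slices of the smooth immersed flow $L^0_t$ provide a sequence of smooth exact immersed Lagrangians converging to $L_\infty$. The bounded Lagrangian angle and area ratio assumptions on $L^0_t$ transfer to uniform bounds along the blowup sequence, supplying the control needed by the singular classification. That theorem then identifies $L_\infty$ as either a translate of $W_0$ or an (exact) Lawlor neck; the first possibility is excluded by Theorem~\ref{thm:goodblowup}, so $L_\infty$ is a Lawlor neck and the corresponding sequence $L^i_t$ is the sought type II blowup sequence. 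The a~posteriori smoothness and embeddedness are then automatic, since every member of the Lawlor neck family is smooth and embedded.

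The main obstacle is in effect packaged into the singular Joyce-Imagi-Santos theorem itself: the delicate work lies in classifying a priori singular exact special Lagrangians asymptotic to two transverse planes under the assumption that they arise as limits of smooth exact Lagrangians with controlled angle and area ratio. Given that ingredient, the deduction of Theorem~\ref{thm:Lawlorblowup} is the quick assembly above. A secondary technical point to verify is that the tangent cone of $L_\infty$ at infinity really is $W_0$ with multiplicity one and not a degeneration; this should follow from the multiplicity one hypothesis on the original tangent flow together with monotonicity of Gaussian density under the rescalings, so that the density at infinity of $L_\infty$ exactly matches that of $W_0$ and no mass is lost in the type II limit.
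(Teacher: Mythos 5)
Your proposal is correct and follows essentially the same route as the paper: the authors deduce Theorem~\ref{thm:Lawlorblowup} directly by combining Theorem~\ref{thm:goodblowup} with the singular Joyce--Imagi--dos Santos classification (Theorem~\ref{singularuniqueness}), using the rescaled flow slices as the smooth exact approximations and excluding the union-of-planes alternative because the blowup limit is not a translate of $W_0$. Your additional remarks on the tangent cone at infinity and multiplicity one match the content already built into Theorem~\ref{thm:goodblowup}, so no new argument is needed.
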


If instead $W_0$ has
connected link, and is stable in the sense of Joyce~\cite{JoyceII}, then there are no nontrivial exact special Lagrangians asymptotic
to $W_0$  (see
Proposition~\ref{prop:noexactAC}), so Theorem \ref{thm:goodblowup} implies the following (see also Lambert-Lotay-Schulze~\cite[Theorem 1.2]{LambertLotaySchulze} for a related result). 

\begin{cor}\label{cor:nostable}
  If the tangent flow $W_0$ has connected link, then it cannot be stable in the sense
  of Joyce. In particular the $T^2$-cone cannot arise as a tangent
  flow at a (first) singularity.
\end{cor}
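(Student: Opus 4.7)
This looks like a formal corollary, so my plan is simply to combine the two previously stated ingredients and argue by contradiction. Suppose $W_0$ has connected link and is stable in the sense of Joyce. On the one hand, Theorem~\ref{thm:goodblowup} produces a type II blowup limit $L_\infty$ which is an exact special Lagrangian current asymptotic to $W_0$ at infinity, and crucially $L_\infty$ is \emph{not} a translate of $W_0$. On the other hand, Proposition~\ref{prop:noexactAC} asserts that, under exactly these hypotheses of connected link and Joyce-stability, every exact special Lagrangian asymptotic to $W_0$ is a translate of $W_0$. The two statements are incompatible, so $W_0$ cannot have been stable after all.

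For the ``in particular'' clause I would verify that the Harvey--Lawson special Lagrangian $T^2$-cone fits the hypotheses of the first part. Its link is a $2$-torus, hence connected; Joyce-stability of this cone is a standard fact from \cite{JoyceII}. So the cone is ruled out as a tangent flow at a first singularity by the contradiction above. The ``first singularity'' qualifier is needed in order that the standing assumption of the excerpt (an exact immersed LMCF smooth on $[-1,0)$) be satisfied, which is what makes Theorem~\ref{thm:goodblowup} applicable.

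The only place I foresee a subtlety is in matching conventions between the two inputs: the object $L_\infty$ given by Theorem~\ref{thm:goodblowup} is an exact special Lagrangian \emph{current} with $W_0$ as a tangent cone at infinity, not a priori a smoothly asymptotically conical submanifold. One therefore needs to be sure that Proposition~\ref{prop:noexactAC} is formulated in the same generality — i.e.\ that it rules out all nontrivial such currents, not merely smooth ones — so that the contradiction really applies. Once that is granted, the corollary is immediate.
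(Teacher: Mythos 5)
Your proposal is correct and is essentially the paper's own argument: the corollary follows immediately by combining Theorem~\ref{thm:goodblowup} with Proposition~\ref{prop:noexactAC}, which is indeed stated for exact special Lagrangian \emph{currents}, so the subtlety you flag is already handled and no gap remains.
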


\begin{remark}
    In \cite[Section 3.6]{JoyceEMS}, Joyce suggests that in 3D
the Lagrangian mean curvature flow could develop a 
singularity at some time $t_0$ and be continued for later time, such that $L_t$ for $t > t_0$ would
have two singular points modeled on the $T^2$-cone. Note that this
does not contradict the Corollary, since we are not
ruling out the possibility that a tangent flow at $t=t_0$ could be the union of
two $T^2$-cones, for instance. This could be thought of as a generalized neck
pinch.    
\end{remark}

In general, for cones with connected links, by a variant of the proof of Theorem \ref{thm:goodblowup}, we obtain uniqueness of
the tangent flow.

\begin{thm}\label{thm:connected2}
  If $W_0$ has connected link, then $W_0$ is the unique tangent flow
  at the given singular point $(x_0,t_0)=(0,0)$. In particular at the singular time
  $L_{t_0}$ is a Lipschitz graph over $W_0$ in a neighborhood of
  $x_0$.
\end{thm}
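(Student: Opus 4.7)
I argue by contradiction, adapting the blowup construction behind Theorem~\ref{thm:goodblowup}. Suppose $W_0$ is not the unique tangent flow at $(0,0)$. Since every tangent flow shares the Gaussian density $\Theta(W_0)$ and is a multiplicity-one integrable special Lagrangian cone, non-uniqueness produces $\varepsilon>0$ and scales $\sigma_i<\rho_i\to 0$ along which the parabolic rescalings satisfy $\mathrm{dist}(L^{\rho_i}_{-1},W_0)<\varepsilon/4$ while $\mathrm{dist}(L^{\sigma_i}_{-1},W_0)>\varepsilon$. By continuity in the scale parameter, choose intermediate scales $\tau_i\in[\sigma_i,\rho_i]$ realizing the distance at $t=-1$ exactly $\varepsilon/2$, arranged so that $\rho_i/\tau_i\to\infty$.

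Next, pass to a Brakke subsequential limit $L_\infty^t$ of the rescaled flows $L^{\tau_i}$. By the same compactness and monotonicity machinery used in Theorem~\ref{thm:goodblowup}, combined with Neves' absence of type-I singularities for LMCF, $L_\infty^t$ is an eternal exact Lagrangian mean curvature flow of constant density $\Theta(W_0)$, hence static, i.e.\ an exact special Lagrangian integral current $L_\infty$. The condition $\rho_i/\tau_i\to\infty$ together with $L^{\rho_i}\to W_0$ forces the tangent cone of $L_\infty$ at spatial infinity to be $W_0$, while the choice of $\tau_i$ keeps $L_\infty$ at distance $\varepsilon/2$ from $W_0$ at unit scale.

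Now the connected-link hypothesis enters. Using integrability together with connected smoothness of the link, the moduli space of special Lagrangian cones of density $\Theta(W_0)$ near $W_0$ is smooth and (after quotienting by ambient isometries) finite-dimensional; in particular the tangent cone of $L_\infty$ at the origin, having density $\Theta(W_0)$ by monotonicity conservation, is forced to be $W_0$ itself up to an isometry compatible with the tangent cone at infinity, hence exactly $W_0$. A singular Simon-type uniqueness statement, obtained as a variant of the singular Imagi--Joyce--dos~Santos theorem proved in this paper, then shows that an exact special Lagrangian current with $W_0$ as tangent cone at both $0$ and $\infty$ must be a translate of $W_0$. This contradicts the $\varepsilon/2$-distance at unit scale, completing the contradiction.

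The main obstacle is the rigidity invoked in the third step: a connected-link version of the singular Imagi--Joyce--dos~Santos theorem that does not require the stability hypothesis appearing in Proposition~\ref{prop:noexactAC}. The key geometric point is that for disconnected-link cones one can resolve the singularity via Lawlor-type smoothings, whereas a connected smooth link admits no such local smoothing compatible with exactness and the asymptotic-conical constraint. Once uniqueness of the tangent flow is established, the Lipschitz-graph assertion for $L_{t_0}$ near $x_0$ follows from standard Allard-type regularity at the singular time: uniqueness of the tangent cone combined with smoothness of the link of $W_0$ upgrades $C^0$-closeness of $L_{t_0}$ to $W_0$ near $x_0$ to a Lipschitz graphical description.
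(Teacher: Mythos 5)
There is a genuine gap, and it sits exactly where the paper has to work hardest. Your intermediate-scale limit $L_\infty$ is obtained as a subsequential limit of parabolic rescalings of the flow centred at the spacetime point $(0,0)$ with scales $\tau_i\to 0$; by Huisken monotonicity and Neves' structure theory (Proposition~\ref{prop:N1}) such a limit is itself a tangent flow, i.e.\ a \emph{static union of special Lagrangian cones}, not a non-conical exact special Lagrangian asymptotic to $W_0$ at infinity. Your claim that $\rho_i/\tau_i\to\infty$ together with $L^{\rho_i}_{-1}$ being close to $W_0$ forces the tangent cone of $L_\infty$ at infinity to be $W_0$ does not follow: the closeness of $L^{\rho_i}$ to $W_0$ at time $-1$ is information about the original flow at time $-\rho_i^2$, which in the $\tau_i$-rescaled picture lives at rescaled time $-(\rho_i/\tau_i)^2\to-\infty$, not on the fixed time slice $t=-1$ at radius $\rho_i/\tau_i$. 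Transporting control from the larger spatial scale $\rho_i$ down to the scale $\tau_i$ at comparable times is precisely the quantitative content of the non-concentration estimate, the three-annulus lemmas, and Proposition~\ref{prop:T1Ttheta}; soft compactness and monotonicity alone do not deliver it, and your construction therefore only restates non-uniqueness (a second tangent cone at distance $\varepsilon/2$) rather than producing the rigid object you want to contradict.

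The rigidity you invoke in the third step is also not available. Proposition~\ref{prop:noexactAC} rules out nontrivial exact special Lagrangians asymptotic to $W_0$ only under the \emph{stability} hypothesis; for a general integrable connected-link cone such currents can exist, and indeed Theorem~\ref{thm:goodblowup} produces exactly such a nontrivial exact special Lagrangian as a type II blowup when a singularity modelled on $W_0$ forms, so a ``connected-link singular Imagi--Joyce--dos Santos theorem without stability'' cannot be the mechanism behind Theorem~\ref{thm:connected2}. The paper's actual route is different: assuming Conditions (A) with (A3(a)), one applies Proposition~\ref{prop:T1Ttheta} to the time-translated rescaled flows $M^i_\tau=M_{\tau+\tau_i}$, where $M_{\tau_i}\to W_0$ along a subsequence. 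Monotonicity gives $T_\Theta(\delta)=\infty$, so the proposition forces $M_\tau$ to remain an $\epsilon^2$-graph over $W_0$ on $A_{\epsilon^{-1},\epsilon}$ for all large $\tau$ and every $\epsilon$, which is uniqueness of the tangent flow; the Lipschitz graph statement for $L_{t_0}$ then comes from the uniform graphicality estimates across all scales (Proposition~\ref{prop:mainelliptic}), not from an Allard-type theorem applied after the fact. The connected-link hypothesis is used inside this machinery, via Lemma~\ref{lem:static1} and Proposition~\ref{prop:slowdecay}(b), to kill the troublesome static modes in the spectral decomposition — not through a classification of asymptotically conical special Lagrangians.
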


\begin{remark}
 It is an interesting question in what way the flow could be continued
if such a connected link singularity forms. Behrndt~\cite{Behrndt} showed that if a
Lagrangian $L$ has isolated conical singularities modelled on \emph{stable}
special Lagrangian cones, then the flow can be defined with initial
condition $L$, allowing for the singularities to move. However,
Corollary~\ref{cor:nostable} implies that the connected link singularity cannot be stable. It seems possible that in the setting of
Theorem~\ref{thm:connected2} the flow typically immediately smoothes
out the singularity, and generically such singularities do not form in
the first place.

\end{remark}

Finally we have the following improvement of the main result of
\cite{LSSz2} to allow any special Lagrangian union of planes in
$\mathbb{C}^2$.
\begin{thm}\label{thm:2dim}
  Suppose that the tangent flow $W_0$ is any (special Lagrangian) union of pairwise
  transverse planes in $\mathbb{C}^2$. Then $W_0$ is the unique
  tangent flow. Moreover at the singular time $t_0$ the Lagrangian $L_{t_0}$
  can be written as a Lipschitz graph over $W_0$ near the singularity.
\end{thm}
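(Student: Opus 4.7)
The plan is to combine the type II blowup existence from Theorem~\ref{thm:goodblowup} with the hyperk\"ahler trick special to $\mathbb{C}^2$, along the spirit of \cite{LSSz2} but without relying on a specific two-plane ansatz. Write $W_0=P_1\cup\cdots\cup P_k$ for the pairwise transverse special Lagrangian planes. By Theorem~\ref{thm:goodblowup} there exists some type II blowup $L_\infty$, an exact special Lagrangian integral current in $\mathbb{C}^2$ asymptotic to $W_0$ at infinity, which is not a translate of $W_0$. After an appropriate hyperk\"ahler rotation all special Lagrangian planes through the origin become complex lines, and $L_\infty$ is a holomorphic integral current asymptotic to $P_1\cup\cdots\cup P_k$. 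Since its area grows polynomially with quadratic rate, Bishop/Stoll removability applies, so $L_\infty$ is an algebraic curve of degree $k$ in $\mathbb{C}^2$.

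Second, I would classify exact algebraic curves of degree $k$ with these prescribed tangent lines at infinity. Each irreducible component is a degree $d_j$ algebraic curve asymptotic to the sub-configuration of $d_j$ of the $P_i$; the exactness condition (vanishing period integrals of the Liouville form over compactly supported cycles, and matching Lagrangian angles on each end) imposes strong constraints. In particular, since $L_\infty$ is not a translate of the singular union, at least one component $L'$ must be a genuine smoothing of a nontrivial subset of the $P_i$'s, i.e.\ a component with at least two planar ends but no singular point in the interior where two ends meet at the cone point. The crucial quantitative output is that the Gaussian density of $L_\infty$ at $0$ is strictly less than $\Theta(W_0)=k/2$, because any such smoothing component contributes strictly less density than the union of its asymptotic planes; this is the natural generalization to $k$ planes of the Lawlor-neck density drop used in \cite{LSSz2}.

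Third, I would feed this strict density drop into a uniqueness argument for the tangent flow. Suppose two tangent flows $W_0$ and $W_0'$ both occur, necessarily of the same Gaussian density $k/2$. By a connectedness argument in the space of parabolic blow-ups, at some intermediate scale the rescaled flow is close (in the measure sense) both to $W_0$ and to a configuration produced by pushing the type II blowup back up. Monotonicity of the Gaussian area combined with the density drop obtained above gives a strictly smaller density at intermediate scales, contradicting the assumption that $\Theta=k/2$ persists between the two tangent scales. This forces $W_0=W_0'$. Uniqueness of the tangent flow, combined with the fact that after hyperk\"ahler rotation $L_t$ is a holomorphic curve and hence supports the usual algebraic regularity theory, then gives that $L_{t_0}$ is a Lipschitz graph over $W_0$ in a neighborhood of the origin.

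The main obstacle will be step two and three together: making the density drop quantitative and uniform in terms of the transversality angles and the neck scale, and then designing a scale decomposition that propagates the drop all the way between the two tangent scales without relying on the \cite{LSSz2} explicit two-plane quantity. This likely requires an induction on $k$, peeling off Lawlor-type necks between the closest pair of planes (where Theorem~\ref{thm:Lawlorblowup} applies locally) and reducing to a configuration with fewer planes or one with connected link where Theorem~\ref{thm:connected2} can conclude.
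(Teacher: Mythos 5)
Your argument has a genuine gap at its core, in step three. Uniqueness of the tangent flow cannot be obtained by comparing Gaussian densities: by Huisken monotonicity every tangent flow at $(0,0)$ has the same Gaussian density, so two distinct unions of $k$ transverse planes $W_0\neq W_0'$ are indistinguishable by this invariant, and the strict density drop of the type II blowup $L_\infty$ lives at scales $o(\sqrt{-t})$, far below the tangent-flow scales. A drop of $\Theta$ at type II scales is perfectly consistent with the rescaled flow being measure-close to a full-density cone at the scales $\sqrt{-t}$ --- indeed this is exactly the situation the paper must handle, and it is why one cannot ``push the type II blowup back up'' by monotonicity alone: passing quantitative information from the type II scale to the tangent-flow scale is precisely the difficulty that the non-concentration estimate, the three-annulus iteration, and (in the $n=2$ case) the excess functional $\mathcal{A}_\beta$ with its discrete Lojasiewicz inequality (Proposition~\ref{prop:decay5}) are designed to overcome. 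The paper's actual proof of Theorem~\ref{thm:2dim} does not use Theorem~\ref{thm:goodblowup} or any classification of blowups at all: in dimension two Condition (A3(c)) holds, Proposition~\ref{prop:T1Ttheta} applies whether or not good blowups exist (the possible Gaussian-area drop is absorbed into the telescoping $\mathcal{A}_\beta(M_{\tau})$ terms), and uniqueness plus the Lipschitz graph statement then follow exactly as in the first part of the proof of Theorem~\ref{mainthmproof}. Your proposal replaces this machinery with ``a connectedness argument in the space of parabolic blow-ups,'' which is not an argument; the static modes coming from rotating individual planes independently, and the quantitative control of the Lagrangian potential needed to rule out slow drift of the cone, are exactly what that machinery exists to handle.

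Two smaller but real errors: after hyperk\"ahler rotation the time slices $L_t$ of the flow are \emph{not} holomorphic curves (only special Lagrangians with fixed angle become holomorphic), so the final sentence deriving the Lipschitz graph statement from ``algebraic regularity'' of $L_t$ does not make sense; in the paper the graphicality at the singular time comes from the uniform graphicality estimates encoded in $T_1(\epsilon)$ and the non-concentration bounds. Also, your steps one and two (hyperk\"ahler rotation, Bishop--Stoll, classification of exact algebraic curves asymptotic to $k$ lines) reproduce the philosophy of \cite{LSSz2} but are not needed here and, even if carried out, would only describe the type II blowups; they would not by themselves yield uniqueness of the tangent flow, which is the actual content of the theorem.
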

In particular this means that in two dimensions all tangent flows
given by multiplicity one special Lagrangians are unique.

\subsection{Outline of the proofs: LMCF ingredients}

We now give a somewhat oversimplified account of some ideas in Theorems \ref{thm:goodblowup}, \ref{thm:connected2} and \ref{thm:2dim}. The key difficulty is to pass quantitative information from a definite scale down to very small length and time scales around parabolic centres near the spacetime singularity.
As in \cite{LSSz2}, the broad strategy of the proof, modulo all technical subtleties, may be viewed as a \emph{parabolic analogue of the Allard-Almgren proof} for the uniqueness of the tangent cone. In this strategy, one considers the rescaled LMCF, and extracts a solution of the drift heat equation (essentially a Jacobi field) by a blow up procedure, with suitable weighted Sobolev norm bounds. The drift heat solution describes the first order deviation of the flow from the tangent flow $W_0$.  The spectral decomposition on the drift heat solution leads to a \emph{growth/decay dichotomy} in rescaled time. In the strict growth case, the flow gradually deviates from the tangent flow, which would cost a definite amount of entropy type quantity, so eventually cannot happen. The strict decay setting can be iterated in many time scales, so that we get some quantitative control down to the length scale associated to the type II blow up limit. The static component is more delicate, and makes essential use of the integrability assumption on $W_0$, morally similar to what happens in the Allard-Almgren argument.

A major technical difficulty is that because the tangent flow is not smooth, the time slice of the flow cannot be globally graphical over $W_0$, and the best hope is \emph{graphicality over a sufficiently long annulus}, where the dirft heat solution is a good approximation to the flow. As in \cite{LSSz2}
the main tool to overcome this issue is a \emph{non-concentration estimate},
which controls the \emph{quantitative rate} at which $L_t$ can diverge from $W_0$ as we approach the origin, and is measured in terms of some variant of the $L^2$ distance to $W_0$. In \cite{LSSz2} such a non-concentration estimate
was proved in the case when $W_0$ was the union of two plane. Writing
$W_0 = \{zw=0\}$ for suitable complex valued functions $z,w$ the
non-concentration estimate relies on estimating the quantity $|zw|$
along the flow. This method could not be generalized to higher
dimensions because it heavily depended on the connection between
special Lagrangians and complex submanifolds in complex dimension
two. Even in complex 2D, if $W_0$ were the union of more than two
planes, then the same ideas would give rise to a non-concentration
estimate, but it would not be strong enough to deduce uniqueness of
the tangent flow.

In this paper we use a different approach, exploiting the fact
that we have good pointwise control of the Lagrangian angle $\theta$
along the flow, because of the heat equation satisfied by $\theta$. This means that if $L_t$ is very close to $W_0$ in
some sense, then at a slightly later time $t'$ the oscillation of $\theta$
will be very small on $L_{t'}$. We can then treat the time slice $L_{t'}$ as being
almost a minimal surface, and morally we run an \emph{effective elliptic version of the Allard-Almgren proof for the uniqueness of tangent cones}, to show that $L_{t'}$ persists to be modelled on a small deformation of $W_0$ down to the length scale where the Gaussian area has dropped by a small definite amount. The upshot is a basic \emph{dichotomy} (see for instance
Proposition~\ref{prop:mainelliptic}): either
$L_{t'}$ remains close to $W_0$ on many scales, giving rise to a good
\emph{non-concentration estimate}, or some rescaling of $L_{t'}$ around some carefully chosen centre must be
close to an exact special Lagrangian $L_\infty$ in $\mathbb{C}^n$ that is asymptotic to
$W_0$ at infinity, but is not (the translation of) a cone. The amount of rescaling here is dictated by the length scale where the Gaussian area drops.  Moreover, by analysing the Laplacian on asymptotically conical special Lagrangians, we note that $L_\infty$  cannot approach $W_0$ with too fast decay rate at infinity, and
this leads to some quantitative estimate for how quickly $L_{t'}$ could have diverged
from $W_0$.

Theorem~\ref{thm:goodblowup} essentially exploits this dichotomy. The second alternative would lead to the desired type II blow up limit, while the first alternative provides a good
non-concentration estimate, which would lead to a 3-annulus type inequality for the $L^2$-distance in the rescaled time, which would then feed into the previous growth/decay dichotomy for the drift heat solution extracted from the rescaled LMCF. In the idealised case of strict decay, then upon many iterations, we would deduce that $L_t$ is close to $W_0$ on many spacetime scales, and then one can extract an ancient flow from a suitably rescaled sequence, which would again provide the desired type II blow up limit by appealing to \cite{LambertLotaySchulze}.

We now comment on the subtle issues arising from the static component in the spectral decomposition of the drift heat solution, which also appeared in \cite{LSSz2}. The Allard-Almgren strategy would be to kill the static component by deforming the model special Lagrangian $W_0$, to reduce to the strictly decaying case, which could then be iterated. This strategy works if the link of $W_0$ is \emph{connected}, and Theorem \ref{thm:connected2} would then follow through. On the other hand, if $W_0$ has several connected components, then each component can be rotated in $\mathbb{C}^n$ independently to special Lagrangians with \emph{different phase angles}, so their union is no longer a special Lagrangian, but nevertheless at the linearised level we still get some nontrivial static solution to the drift heat equation.  To deal with this issue, we use a quantitative variant of some earlier idea in Neves's structure theory \cite{Neves1}, which exploits a differential inequality combining the Lagrangian angle and the Lagrangian potential, and some local connectedness assumption on the Lagrangian. This leads to the technical sub-problem of estimating the Lagrangian potential. To do this efficiently, one needs to go beyond the Neves argument, and show that  the graphical region extends to a sufficiently small length scale at a certain \emph{quantitative rate}.
This step depends again strongly on the non-concentration estimate, which is another crucial ingredient of Theorem \ref{thm:goodblowup}.

Finally, the proof mechanism of Theorem  ~\ref{thm:2dim} depends on the following dichotomy, which is a byproduct of our non-concentration estimate proof (see Prop. \ref{prop:mainelliptic}). Either the non-concentration estimate holds down to sufficiently small length scale in order to run the quantitative version of the Neves argument as before, or some entropy type functional needs to drop by a perceptible amount. The second alternative leads to a
\emph{discrete version of the Lojasiewicz inequality}, which again controls the drift of $L_t$ over many spacetime scales.

\subsection{Outline of the proof: Lawlor neck}

The proof of Theorem \ref{thm:Lawlorblowup} relies on the singular version of Joyce-Imagi-Santos  characterisation of the Lawlor neck \cite{JoyceImagi}, see Theorem \ref{singularuniqueness} for the detailed statement, whose proof occupies section \ref{section:characterisationLawlor}, which can be read independent of the rest of the paper. We now give a slightly oversimplified outline.

Joyce-Imagi-Santos \cite{JoyceImagi} proved that in dimension $n\geq 3$, any exact and smoothly embedded special Lagrangian $L$, asymptotic to the union of two transverse special Lagrangian planes, must be a Lawlor neck. Their original argument relies on \emph{Floer theory}, and we provide a detailed sketch in 
section \ref{LawlorneckJoyceImagi}. In very rough terms, they first partially compactify $\mathbb{C}^n$ into a Liouville manifold obtained as the plumbing of two copies of $T^*S^n$, and the special Lagrangian can be compactified to $\bar{L}$ by adding some points at infinity. Using the classification of objects in the derived Fukaya category, and by studying the intersections with some distinguished Lagrangians, they show that $\bar{L}$ must be isomorphic to the compactification of the Lawlor neck as an object of the derived Fukaya category. The rest of their argument is modelled on the uniqueness theorem of Thomas-Yau \cite{ThomasYau}, whose main idea is to use judicious Hamiltonian perturbations to  eliminate the Floer degree $0,n$ intersection points between $\bar{L}$ and the Lawlor neck. The  caveat is that due to the subtlety of compactification, the degree $0,n$ intersection points within $\mathbb{C}^n$ can be eliminated, but at spatial infinity they cannot be eliminated. The main gist in \cite{JoyceImagi} is that if $\bar{L}$ does not coincide with the Lawlor neck, then one can get a contradiction by producing a holomorphic curve with zero area. The more technical part of \cite{JoyceImagi} is to control the intersection points, and relies crucially on the real analyticity of the Lagrangians.

In our case, the special Lagrangian $L$ arises as the geometric measure theoretic limit of a sequence of exact and  smoothly embedded Lagrangians $L_i$ whose Lagrangian angles are $C^0$-small, which reflects the fact that the type II limit is a rescaling limit of smooth Lagrangians in the LMCF. Since we do not know the a priori smoothness of the limiting special Lagrangian $L$, we cannot assume that the convergence is in the $C^1$-topology. The Floer theoretic argument can only be applied directly to the smooth approximations instead of $L$, and the difficulty is that the sequence does not have strong uniform control. In particular, we cannot eliminate the Lagrangian intersections inside $\mathbb{C}^n$, and all the arguments in \cite{JoyceImagi} relying on real analyticity break down.

We use a more subtle holomorphic curve argument, inspired by our previous work on quantitative Thomas-Yau uniqueness \cite{LiquantitativeTY}.

We recall the well known fact that the \emph{area of the holomorphic curve} with boundary on exact Lagrangians, can be computed in terms of the \emph{difference of the Lagrangian potentials} evaluated at the Lagrangian intersections. After some technical preliminary estimates on Lagrangian potentials, we choose a small Hamiltonian perturbation of the  Lawlor neck, by solving an auxiliary \emph{Poisson equation}, with forcing term prescribed by some carefully chosen function depending on the Lagrangian potentials. Consequently, at the degree $0,n$ Lagrangian intersections between this perturbed Lawlor neck and $L_i$, the difference of the Lagrangian potentials now have \emph{preferred signs}, modulo small estimable error term. Using Fukaya categorical ingredients similar to \cite{JoyceImagi}, we obtain holomorphic curves with boundary on these two Lagrangians. The main gist is that \emph{the area of this holomorphic curve will be small}, and the holomorphic curve can be required to pass through any prescribed point on the perturbed Lawlor neck. Using an area monotonicity formula for holomorphic curves with Lagrangian boundary, we deduce that the Lawlor neck lies inside a $C^0$-small neighbourhood of $L_i$. Passing the estimates to the limit, we eventually show that the Lawlor neck coincides with the special Lagrangian $L$.

\subsection{Organisation}
We begin with some background on special Lagrangians and LMCF in section \ref{section:Preliminaries}, such as analysis on the linearised equation, and some consequences of the monotonicity formula. Section \ref{section:timeslice} treats the fixed time slice as an elliptic problem, and proves the dichotomy that either there is a good blow up sequence, or there is some non-concentration estimate. Section \ref{section:Decayalongflow} builds upon this to prove decay estimates on the rescaled LMCF in time, and essentially shows that the rescaled LMCF remains graphical over some deformation of the tangent flow, as long as the Gaussian area does not drop significantly. Section \ref{section:characterisationLawlor} concerns the singular version of the Joyce-Imagi-Santos characterisation of the Lawlor neck, with a more Floer theoretic flavour, and can be read independent of the rest of the paper. Section \ref{section:main} combines the above to prove the main theorems.

\subsection*{Acknowledgements}
We thank Nick Edelen, Jason Lotay and Felix Schulze for helpful discussions. 
Y.L. is a Royal Society University Research Fellow at Cambridge University, and this work started while Y.L. was a Clay Research Fellow based at MIT. G. Sz. 
was supported in part by NSF grant DMS-2203218.

\section{Preliminaries}\label{section:Preliminaries}
\subsection{Special Lagrangian cones}
Let $W_0\subset \mathbb{C}^n$ be a special Lagrangian cone, smooth
away from the origin. We can write
\[ W_0 = \bigcup_{i=1}^l V_i, \]
where $V_i$ are special Lagrangian cones with connected
links. Throughout we will assume that the $V_i$ are all integrable
cones, in the sense that all quadratic growth harmonic functions on
$V_i$ correspond to a deformation of $V_i$ in a family of special
Lagrangian cones (with the same Lagrangian angle as $V_i$).

To be more precise we first have the following definition.
\begin{definition}
  Suppose that $V_s$ is a family of Lagrangian cones for $s\in
  (-1,1)$, which can be written as the (normal) graph of vector fields
  $v_s$ over the special Lagrangian cone $V$. Define the 1-forms
  $\eta_s = \iota_{v_s} \omega$ on $V$ using the symplectic form
  $\omega$. We say that the family $V_s$ is generated by the 1-form
  $\eta$ over $V$ if $s^{-1}\eta_s \to \eta$ on $V$ as $s\to 0$. 
\end{definition}

\begin{lemma}
  Suppose that we have a family $V_s$ of special Lagrangian cones, possibly with
  different Lagrangian angles than $V$, generated by the 1-form
  $\eta$ over $V$. Then $\eta = df$ for a homogeneous quadratic function $f$ on
  $V$ satisfying that $\Delta f$ is constant. 
\end{lemma}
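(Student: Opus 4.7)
The strategy is to extract a potential $f$ for $\eta$ from the cone structure of $V$, and then to identify $\Delta f$ with the rate of change of the Lagrangian angle along the family $V_s$. First, since $V$ and each $V_s$ are cones with vertex at the origin, the normal vector field $v_s$ realising $V_s$ as a graph over $V$ must be degree-one homogeneous: scaling the graph equation $p + v_s(p) \in V_s$ by $\lambda > 0$ and using uniqueness of the normal graph decomposition at $\lambda p$ forces $v_s(\lambda p) = \lambda v_s(p)$. Consequently $\eta_s = \iota_{v_s}\omega$ is degree-$2$ homogeneous as a 1-form on $V$, and this property is inherited by the limit $\eta = \lim_{s\to 0} s^{-1}\eta_s$.

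Next, the standard fact that Lagrangian deformations of a Lagrangian submanifold are generated by closed 1-forms shows that each $\eta_s$ is closed, hence so is $\eta$. Let $E$ denote the Euler vector field on $\mathbb{C}^n$, which is tangent to $V$. Cartan's formula together with $d\eta = 0$ gives $\mathcal{L}_E\eta = d(\iota_E\eta)$, while degree-$2$ homogeneity gives $\mathcal{L}_E\eta = 2\eta$. Setting $f = \tfrac12 \iota_E\eta$ therefore produces a function on $V\setminus\{0\}$ with $df = \eta$. Writing $w = \lim_{s\to 0} s^{-1}v_s$ for the infinitesimal deformation, the explicit expression $f(p) = \tfrac12 \omega(w(p), p)$ is manifestly homogeneous of degree $2$.

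For the final statement I would invoke the standard Jacobi-field identity relating the infinitesimal variation of the Lagrangian angle under a Lagrangian deformation to the Laplacian of the deformation potential. Because each $V_s$ is special Lagrangian, its Lagrangian angle $\theta(s)$ is constant on $V_s$ and hence depends only on $s$. Differentiating in $s$ at $s = 0$ and applying the Jacobi-field identity yields $\Delta f = \theta'(0)$, a real constant on $V$, which gives the three conclusions of the lemma.

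The argument is largely formal and no single step looks like a serious obstruction. The points needing a little care are the passage to the limit $s \to 0$, ensuring $s^{-1}\eta_s$ converges with enough regularity on $V\setminus\{0\}$ for both the Cartan calculation and the angle-variation formula to apply, and pinning down signs and normalisation conventions in the Lagrangian angle variation formula; both are essentially routine. Conceptually, the content of the lemma is the interplay between cone-scaling, closedness coming from the Lagrangian condition, and the classical identification of $\Delta f$ with the infinitesimal rate of change of the Lagrangian angle.
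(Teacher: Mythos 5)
The paper states this lemma without giving a proof (it is treated as standard background), so there is no argument of the authors' to compare against; judged on its own, your proof is essentially correct and is the natural one: homogeneity of $v_s$ from cone-scaling plus uniqueness of the normal-graph decomposition, closedness of $\eta$ from the linearised Lagrangian condition, exactness via the Euler field and Cartan's formula (which correctly handles the possibility $b_1(\mathrm{link})\neq 0$), and constancy of $\Delta f$ from the fact that each $V_s$ has constant angle. Two small points deserve more careful wording. First, the claim that ``each $\eta_s$ is closed'' is not literally true: $\eta_s=\iota_{v_s}\omega$ for the \emph{normal} graph field is not the $1$-form of a Weinstein-neighbourhood identification, so $d\eta_s\neq 0$ in general; what one has is the expansion $F_s^*\omega = d\eta_s + Q(v_s,\nabla v_s)$ with $Q$ quadratic (exactly as in the expansion used in Lemma~\ref{lem:graphlimit}), and since $F_s^*\omega=0$ this gives $d\eta_s=O(s^2)$ and hence $d\eta=\lim_{s\to 0} s^{-1}d\eta_s=0$, which is all you need. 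Second, you do not need to assume that $\theta(s)$ is differentiable at $s=0$: from $\theta(s)-\theta(0)=d^*\eta_s+Q_2(\eta_s,\nabla\eta_s)$, with the left side constant on $V$ for each $s$, dividing by $s$ and letting $s\to 0$ (using that elliptic regularity upgrades the convergence $s^{-1}\eta_s\to\eta$ to $C^\infty_{loc}$ away from the vertex, and homogeneity to control all scales) shows directly that $d^*\eta=-\Delta f$ is the constant $\lim_{s\to0}s^{-1}(\theta(s)-\theta(0))$. With these adjustments your argument is complete.
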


Given this, the integrability of $V$ means that for every quadratic
growth harmonic function $f$ on $V$ there is a family of special
Lagrangian cones $V_s$ generated by $df$. Note that we always have the
rotations $V_s = e^{is} V$, where $e^{is}$ acts by multiplication on each
factor of $\mathbb{C}^n$, and the $V_s$ are special Lagrangians, with
different angles from $V$. This family is generated by the 1-form $d |x|^2$
(up to scalar multiple), and $\Delta |x|^2 =
2n$ on $V$. It follows that if the special Lagrangian cone $V$ is integrable, then for any
quadratic function $f$ on $V$ with constant Laplacian, there is a
family of special Lagrangian cones $V_s$ generated by $df$.

\subsection{The drift heat equation on cones}
We will need certain results about the drift heat equation for
functions and 1-forms on special Lagrangian cones
$V$. Let us denote by
$W^{1,2}_0(V)$ the closure of the space of smooth functions of compact
support in $V\setminus\{0\}$, under the norm
\[ \begin{aligned}
    \Vert f \Vert_{W^{1,2}}^2 &= \Vert f\Vert_{L^2}^2 + \Vert \nabla
  f\Vert_{L^2}^2, \\
  &= \int_V (|f|^2 + |\nabla f|^2)\,
  e^{-\frac{|x|^2}{4}}\, d\mathcal{H}^n. \end{aligned} \]
Let us also denote by $L^2(V)$ the corresponding Gaussian $L^2$
space. Using the logarithmic Sobolev inequality~\cite{Ecker} on $V$, it follows
that $W^{1,2}_0(V)$ is compactly embedded in $L^2(V)$. As a
consequence the drift Laplacian defined by 
\[ \mathcal{L}_0 f = \Delta - \frac{1}{2} x. \nabla f \]
has discrete spectrum, and we can decompose $L^2(V)$ into an
orthogonal sum of eigenspaces of $\mathcal{L}_0$. The main application
of this for us is
the following, regarding solutions of the drift heat equation on
1-forms. 

\begin{lemma}\label{lem:decomp1}
  Suppose that $\eta(x,t)$ is a smooth family of exact 1-forms on
  $V\setminus\{0\}$ for $t\in (0,1)$, satisfying the drift heat
  equation
  \[ \frac{\partial \eta}{\partial t} = \Delta \eta + \frac{1}{2}
    (\eta - x\cdot \nabla \eta), \]
  where $\Delta$ is the Hodge Laplacian. 
  Assume that $\Vert \eta(\cdot, t)\Vert_{L^2(V\setminus B_1)}$
  is uniformly bounded for all $t$ and in addition that one of the following holds:
  \begin{itemize}
    \item[(i)] $\eta$ is uniformly bounded on $B_2\cap V$
    \item[(ii)] $n \geq 3$ and $|x|^{1.1} \eta$ is uniformly bounded
      on $B_2\cap V$
    \item[(iii)] $n=2$, so $V=\mathbb{R}^2$, and $|x|^{1.1} \eta$ as
      well as $d^*\eta$ are uniformly bounded on $B_2\cap V$. 
    \end{itemize}
    Then in cases (i) and (ii) we can write
  \[ \label{eq:etadecomp1} \eta = \sum_{j=1}^\infty a_j e^{(\lambda_j + 1) t } d\phi_j, \]
  where the $\phi_j$ form an orthonormal basis of eigenfunctions of 
  $\mathcal{L}_0$ acting on functions with $\mathcal{L}_0\phi_j = \lambda_j \phi_j$.
  In case (iii) we can write $\eta = a_0 e^{t/2} d\ln |x| + \eta_1$,
  where $\eta_1$ has an expansion as in \eqref{eq:etadecomp1}. 
\end{lemma}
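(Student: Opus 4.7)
The plan is to integrate $\eta$ to a scalar potential and invoke the spectral decomposition of the drift Laplacian $\mathcal{L}_0$. Since $\eta(\cdot,t)$ is exact on $V\setminus\{0\}$, for each $t$ I pick a potential $f(\cdot,t)$ with $df=\eta$. Using the flat-space identities $\Delta(df)=d(\Delta f)$ and $x\cdot\nabla(df)=d(x\cdot\nabla f)-df$, the 1-form drift heat equation is seen to be $d$ applied to the scalar equation
\[
\partial_t f = \mathcal{L}_0 f + f,
\]
up to an additive function of $t$ alone, which we normalize away by a suitable choice of potential. Hence any eigenfunction expansion of $f$ transfers directly to $\eta=df$.

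To apply spectral theory I need $f(\cdot,t)$ in the weighted Gaussian space $L^2(V)$. The uniform bound on $\Vert\eta\Vert_{L^2(V\setminus B_1)}$ together with a Poincar\'e-type inequality on $V\setminus B_1$ controls $f$ away from the origin, once the additive constant is fixed (say by mean-zero on a reference sphere). Near the origin, case (i) gives $|\eta|\le C$ on $B_2\cap V$, so $f$ is Lipschitz, hence bounded; case (ii) gives $|x|^{1.1}|\eta|\le C$, which integrates radially to $|f|\le C|x|^{-0.1}+C'$, and this is in $L^2(B_2\cap V)$ precisely because $n\ge 3$. In both cases $f(\cdot,t)\in L^2(V)$. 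By the compact embedding $W^{1,2}_0\hookrightarrow L^2$ already cited in the paper, $\mathcal{L}_0$ has a discrete orthonormal eigenbasis $\{\phi_j\}$ with eigenvalues $\lambda_j$, and expanding $f=\sum_j a_j(t)\phi_j$ the scalar equation forces $a_j'(t)=(\lambda_j+1)a_j(t)$, i.e.\ $a_j(t)=a_j(0)e^{(\lambda_j+1)t}$. Applying $d$ then yields~\eqref{eq:etadecomp1}.

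Case (iii), with $V=\mathbb{R}^2$, is the delicate one because the potential $f$ can genuinely develop a $\ln|x|$ singularity at the origin lying outside the $L^2(V)$ spectral picture ($d\ln|x|$ has infinite Gaussian $W^{1,2}$-norm). The extra hypothesis that $d^*\eta$ is bounded means $\Delta f$ is bounded; combined with $|x|^{1.1}|\eta|\le C$, standard elliptic estimates force the singularity of $f$ at $0$ to be no worse than logarithmic. I then write $f=c(t)\ln|x|+g(x,t)$ with $g$ more regular, plug into the scalar equation above, and use $\mathcal{L}_0\ln|x|=-\tfrac12$ to match the $\ln|x|$ term: this produces a linear ODE for $c(t)$ giving the exponential rate claimed in the lemma, while the regular part $g(\cdot,t)\in L^2(V)$ satisfies the same scalar equation with a bounded (indeed $t$-only) source and admits the eigenfunction expansion as before. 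Writing $\eta_1:=dg$ gives the desired decomposition $a_0 e^{t/2}d\ln|x|+\eta_1$.

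The main obstacle I expect is this 2D case: isolating the singular mode $\ln|x|$ and justifying its clean separation from the weighted $L^2$ spectral decomposition via the hypothesis on $d^*\eta$. This is precisely where the dimension matters, since in $n\ge 3$ the mild radial growth $|x|^{-0.1}$ is already $L^2$-integrable near the origin and no separation of modes is needed. The remainder of the argument is a routine application of the spectral theory of $\mathcal{L}_0$ developed just before the lemma.
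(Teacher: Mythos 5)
For cases (i) and (ii) your argument is essentially the paper's proof: integrate $\eta$ to a potential, absorb the time-dependent constant so that $\partial_t f=\Delta f+\tfrac12(2f-x\cdot\nabla f)$, use the Gaussian Poincar\'e inequality to place $f$ and $df$ in $L^2(V)$, expand in eigenfunctions of $\mathcal{L}_0$, and read off $a_j'=(\lambda_j+1)a_j$. For case (iii) the paper does not argue directly but cites \cite[Proposition 5.2]{LSSz2}; your direct sketch (bounded $d^*\eta$ gives bounded $\Delta f$, elliptic theory on the punctured plane isolates a $c(t)\ln|x|$ mode, $\mathcal{L}_0\ln|x|=-\tfrac12$ with the resulting constant absorbed into the normalization, remainder expanded spectrally) is a reasonable self-contained substitute for that citation.

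Two points need repair. First, your explanation of where $n\geq 3$ enters in case (ii) is not the right mechanism: $|x|^{-0.1}$ is square-integrable near the origin in \emph{every} dimension, so $L^2$-membership of $f$ is never the issue. The hypotheses (i)/(ii) and the restriction $n\geq 3$ are needed exactly at the step you dispatch in one line, namely that the coefficients of the $L^2$-expansion satisfy the ODE: to move $\mathcal{L}_0$ onto $\phi_j$ one integrates by parts on $V\setminus B_\epsilon$, and under (ii) the boundary terms are of size roughly $\epsilon^{n-1}\cdot\epsilon^{-1.1}$, which tends to zero only when $n\geq 3$ (this is the same boundary-term computation carried out explicitly in the proof of Lemma~\ref{lem:static1}); under (i) they are of size $\epsilon^{n-1}$ and no dimensional restriction is needed. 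When $n=2$ this is precisely what fails: $d\ln|x|$ satisfies the bounds in (ii), its potential lies in Gaussian $L^2$, and yet it is not captured by the expansion — which is why case (iii) must be separated. Second, in case (iii) you should actually perform the matching rather than assert ``the rate claimed in the lemma'': since $\Delta\, d\ln|x|=0$ and $x\cdot\nabla\, d\ln|x|=-d\ln|x|$, the coefficient satisfies $c'=c$, so the logarithmic mode grows like $e^{t}$, in agreement with \eqref{eq:etadecomp2}; the $e^{t/2}$ appearing in the statement of the lemma is inconsistent with that later display, so quoting the rate blindly would have led you astray.
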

\begin{proof}
  The case (iii) is contained in \cite[Proposition 5.2]{LSSz2}, so we
  will focus on (i) and (ii). 
  We can write $\eta = df$, and modifying the $f$ by a suitable
  time dependent constant we can arrange that
  \[ \frac{\partial f}{\partial t} = \Delta f + \frac{1}{2}( 2f -
    x\cdot \nabla f). \]
  Using that the spectrum of $\mathcal{L}_0$ is discrete, and the kernel consists of the constants, we have the Poincar\'e inequality
  \[ \int_V (f- \underline{f})^2 \, e^{-|x|^2/4}\, d\mathcal{H}^n \leq C\int_V |df|^2\,
    e^{-|x|^2/4}\, d\mathcal{H}^n, \]
  where $\underline{f}$ is the average of $f$ with respect to
  $e^{-|x|^2/4}d\mathcal{H}^n$. Using this we see that $f, df
  \in L^2(V)$, while at the same time in case (i) $f, df$
  are uniformly bounded near 0, and in case (ii) $|x|^{0.1}f$ and
  $|x|^{1.1}df$ are bounded near 0. We can write
  \[ f = \sum_{j=1}^\infty d_j(t) \phi_j, \]
  and using the bounds on $f$ show that $d_j'(t) = (\lambda_j + 1) d_j(t)$. The result
  follows.  
\end{proof}

The linearization of the
Lagrangian mean curvature 
flow near a special Lagrangian cone $V$ is given by the drift heat
equation
\[\label{eq:f1} \frac{\partial\eta}{\partial t} = \Delta\eta +
  \frac{1}{2}(\eta - x\cdot \nabla\eta).
  \]
for exact 1-forms $\eta$. Of particular importance are the
static solutions. 
\begin{lemma}\label{lem:static1}
  Suppose that $\eta(x,t)$ is a smooth, exact, time independent solution of
  \eqref{eq:f1} on $(V\setminus\{0\}) \times (0,1)$. As in
  Lemma~\ref{lem:decomp1} assume that $\eta$ satisfies the bound (i)
  or (ii) near 0, and is in $L^2$ on $V$. Then we can write $\eta =
  d(f + c|x|^2)$ for a constant $c$ and a 
  homogeneous harmonic function of degree 2 on $V$.
\end{lemma}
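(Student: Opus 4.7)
The plan is to reduce the classification of static exact solutions to a spectral problem for the drift Laplacian $\mathcal{L}_0$ on $V$, and then compute its $(-1)$-eigenspace explicitly. Since $\eta$ is time-independent and meets the hypotheses of Lemma~\ref{lem:decomp1}, that lemma yields an expansion
\[ \eta = \sum_{j=1}^\infty a_j\, e^{(\lambda_j + 1)t}\, d\phi_j, \]
where the $\phi_j$ are $L^2$-eigenfunctions with $\mathcal{L}_0 \phi_j = \lambda_j \phi_j$. Time-independence of $\eta$ forces $a_j(\lambda_j + 1) = 0$ whenever $d\phi_j \neq 0$, so only eigenvalue $-1$ modes survive, and it remains to characterize the non-constant $\phi \in L^2(V)$ solving $\mathcal{L}_0 \phi = -\phi$.

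To characterize this $(-1)$-eigenspace I would separate variables, writing $\phi = R(r)\Phi(\theta)$ with $\Phi$ an eigenfunction of $\Delta_\Sigma$ on the link of $V$, $\Delta_\Sigma \Phi = -\mu \Phi$. A direct computation reduces $\mathcal{L}_0 \phi = -\phi$ to the radial ODE
\[ R'' + \Bigl(\frac{n-1}{r} - \frac{r}{2}\Bigr) R' - \frac{\mu}{r^2} R + R = 0, \]
which via $s = r^2/4$ becomes a confluent hypergeometric equation. Its indicial roots at the origin are $k$ and $-(k+n-2)$ with $k(k+n-2) = \mu$; the regularity bound (i) or (ii) on $\eta$ selects the regular branch $R \sim r^k$ near $0$, and the Gaussian $L^2$ condition at infinity restricts to the polynomial (Laguerre) solutions, quantizing the admissible eigenvalues to $-k/2 - m$ for $m \in \mathbb{Z}_{\geq 0}$.

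Setting $-k/2 - m = -1$ leaves only $(k,m) = (2,0)$ and $(k,m) = (0,1)$. The first case gives $\mu = 2n$ and $\phi = r^2 \Phi(\theta)$, which is a homogeneous harmonic function of degree $2$ on $V$; the second case gives $\mu = 0$, $\Phi$ constant, and $R$ a scalar multiple of $r^2 - 2n$, whose differential is proportional to $d|x|^2$. Combining, $\eta = d(f + c|x|^2)$ for a homogeneous degree-$2$ harmonic function $f$ on $V$ and a constant $c$, as claimed. The main technical point is the radial ODE analysis: one must verify that the assumed bounds on $\eta$ and the Gaussian $L^2$ condition really do cut out only the polynomial Laguerre solutions at every angular mode, and in particular that the mildly singular indicial branch $r^{-(k+n-2)}$ at the origin is ruled out by the sharp bounds prescribed in cases (i) and (ii).
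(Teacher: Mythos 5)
Your proof is correct, but it reaches the conclusion by a genuinely different mechanism than the paper. Both arguments begin identically: Lemma~\ref{lem:decomp1} plus time-independence reduces everything to $\eta = df$ with $\mathcal{L}_0 f = -f$, i.e.\ only the $\lambda_j=-1$ modes survive. At that point the paper does \emph{not} compute the $(-1)$-eigenspace spectrally: it sets $h=\Delta f$, observes that $h$ solves the drift-harmonic equation \eqref{eq:heq}, and proves $h$ is constant by the integration by parts \eqref{eq:intparts}, whose boundary terms are controlled near the origin by the bounds (i)/(ii) together with Schauder estimates on annuli, and at infinity by the Colding--Minicozzi growth estimate; then $f-c|x|^2$ is harmonic and the eigenvalue equation forces homogeneity of degree $2$. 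You instead diagonalize $\mathcal{L}_0$ on the cone by separation of variables, obtaining the classical quantization $\lambda=-k/2-m$ with eigenfunctions $r^k$ times a generalized Laguerre polynomial in $r^2/4$ times a link eigenfunction, and read off the $(-1)$-eigenspace as the span of $\{r^2\Phi:\Delta_\Sigma\Phi=-2n\Phi\}$ and $|x|^2-2n$. What your route buys is an explicit description of \emph{all} modes, not just the static ones; what it costs is the need to justify the mode-by-mode reduction (that the hypotheses pass to each radial coefficient), the Kummer/Laguerre asymptotics giving the quantization (the solution regular at the origin is Gaussian-$L^2$ at infinity iff the Kummer parameter is a nonpositive integer, i.e.\ iff it is polynomial), and the exclusion of the singular indicial branch at $0$ --- which, as you correctly flag, is exactly where (i)/(ii) enter and where the restriction $n\geq 3$ in (ii) is used (for $n=2$ the $\log r$ mode is precisely why the paper has the separate case (iii), excluded from this lemma); alternatively you can avoid invoking (i)/(ii) at this stage, since the eigenfunctions $\phi_j$ from the spectral decomposition already lie in $W^{1,2}_0(V)$ and the singular branches have infinite Dirichlet energy near the origin. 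Two points worth making explicit when writing this up: the conclusion that $\Phi$ is constant in the $\mu=0$ mode uses that the link of $V$ is connected (the paper's standing convention for $V$; the statement would fail otherwise), and the solutions of $-k/2-m=-1$ are only $(k,m)=(2,0)$ and $(0,1)$ because $m$ is a nonnegative integer while $k\geq 0$ need not be an integer, so no other homogeneities can contribute.
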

\begin{proof}
  From Lemma~\ref{lem:decomp1} we find that $\eta = df$, where
  $\mathcal{L}_0 f = - f$, that is
  \[ \label{eq:Df1} \Delta f + \frac{1}{2}( 2f - x\cdot \nabla f) = 0. \]
  By direct calculation we find that $h = \Delta f$ satisfies
  \[ \label{eq:heq} \Delta h- \frac{1}{2} x\cdot \nabla h = 0. \]
  For the calculation note that on a cone the contraction of the Ricci
  tensor with a radial vector field vanishes. 

  We claim that the following integration by parts is justified: 
  \[\label{eq:intparts} \int_V |\nabla h|^2 e^{-|x|^2/4}\,
    d\mathcal{H}^n = -\int_V h(\Delta h - \frac{1}{2} x\cdot \nabla
    h)\, e^{-|x|^2/4}\,d\mathcal{H}^n = 0. \]
  To see this we need to show that the boundary term
  \[ \label{eq:bdterm} \int_{\partial B_r(0)\cap V} h\partial_n h\,
    e^{-|x|^2/4}\, d\mathcal{H}^{n-1} \]
  converges to zero as $r\to 0$ or $r\to \infty$.

  Let us first consider $r\to 0$ in \eqref{eq:bdterm}. First we assume
  $n\geq 3$. Using that
  $\eta = df$ and that $|x|^{1.1}|\eta|$ is bounded near 0, we have
  that $|x|^{0.1}|f|$ and $|x|^{1.1}|\nabla f|$ are bounded near
  0. Equation~\eqref{eq:Df1} implies that $|x|^{0.1}\Delta f = |x|^{0.1}h$ is also
  bounded near 0. Applying Schauder estimates on annuli $A_{2r, 2}\cap
  V$ to \eqref{eq:heq} we find that $|x|^{1.1}|\nabla h|$ is bounded
  near 0. In particular as $r\to 0$ the boundary integral
  \eqref{eq:bdterm} is of order $r^{n-1}r^{-0.1}r^{-1.1} = r^{n-2.2}$,
  which tends to zero as $r\to 0$, since $n \geq 3$. If $n=2$, then
  $f, \nabla f$ are bounded near 0. Since   $V$ is a plane in this
  case, we find that $f$ is a distributional solution of
  \eqref{eq:Df1} across the origin, and so $f$ is actually smooth
  across 0. It follows that the boundary term \eqref{eq:bdterm}
  converges to zero as $r\to 0$.

  We now consider $r \to \infty$. For this we use the growth estimate
  Colding-Minicozzi~\cite[Theorem 0.7]{CMgrowth}. Although $V$ is
  possibly singular at the origin, as stated in \cite{CMgrowth} below
  Theorem 0.7, the result holds even if the conditions are only
  satisfied outside of a compact set. We can apply the theorem to both
  $u=f$ and $u=\eta$. The conclusion is that the integrals
  \[ \int_{\partial B_r(0)\cap V} |f|^2\, d\mathcal{H}^{n-1}, \quad
    \int_{\partial B_r(0)\cap V} |\nabla f|^2\, d\mathcal{H}^{n-1} \]
  have polynomial growth in $r$ as $r\to\infty$. Using \eqref{eq:Df1},
  the analogous integral of $|\Delta f|^2$ also has polynomial growth,
  and so by elliplic $L^p$ theory the integral of $|\nabla^2 f|^2$
  also has polynomial growth. Differentiating \eqref{eq:Df1} we
  conclude that
  \[ \int_{\partial B_r(0)\cap V} |\nabla h|^2\, d\mathcal{H}^{n-1}\]
  also has polynomial growth, and then using the Cauchy-Schwarz
  inequality we find that \eqref{eq:bdterm} converges to zero as $r\to
  \infty$. This justifies the integration by parts formula
  \eqref{eq:intparts}. 
  
    From \eqref{eq:intparts} it follows that
    $h = \Delta f$ is constant. For a suitable $c$ it follows that
  $\Delta( f - c |x|^2) = 0$, and from \eqref{eq:Df1} we then find
  that $f$ is homogeneous of degree 2. This implies the result. 
\end{proof}

We will need the following $L^2$ 3-annulus lemma, similar to
\cite[Lemma 6.1]{LSSz2} (see also Colding-Minicozzi~\cite{CM} as
well as
Simon~\cite[Lemma 3.3]{SimonIsolated}). When $n=2$ then, as in
\cite{LSSz2}, we need to account for the fact that the $d\ln |x|$
terms are not in $L^2$ near the origin. Because of this, for a
solution
\[\label{eq:etadecomp2} \eta &= a_0 e^t d\ln |x| + \eta_1 \\
  &= a_0 e^t d\ln |x| +\sum_{j=1}^\infty a_j e^{(\lambda_j +
    1) t } d\phi_j, \]
of \eqref{eq:f1} (where $a_0=0$ unless we are in the case $n=2$ so
$W_0$ is a union of planes), we define
\[ \label{eq:normdefn10} \Vert \eta(t)\Vert^2 = |a_0 e^t|^2 + \Vert
  \eta_1(t)\Vert_{L^2(W_0)}^2. \]

\begin{lemma}\label{lem:L23ann2}
 For any $d \in \mathbb{R}$ there are $0 < \lambda_1 < \lambda_2$
 depending on $W_0, d$ with the
  following property. 
  Let $\eta(t)$ be a smooth exact solution of \eqref{eq:f1} on
  $W_0\setminus\{0\}$ for $t\in [0,2]$, satisfying the bound (i), (ii)
  or
  (iii) in Lemma~\ref{lem:decomp1} in $B_2$, 
  and in $L^2$ on $W_0\setminus B_1$. We then have
  \begin{enumerate}
    \item If $\Vert \eta(1)\Vert \geq e^{d+\lambda_1} \Vert \eta(0)\Vert$,
      then $\Vert \eta(2)\Vert \geq e^{d+\lambda_2} \Vert
      \eta(1)\Vert$,
    \item If $\Vert \eta(1)\Vert \geq e^{-d+\lambda_1} \Vert \eta(2)\Vert$,
      then $\Vert \eta(0)\Vert \geq e^{-d+\lambda_2} \Vert
      \eta(1)\Vert$,
    \item If $\eta$ contains no term in the decomposition
      \eqref{eq:etadecomp2} with degree $\lambda_j+1 = d$,
      then the conclusion of either (1) or (2) above must hold.
   \end{enumerate}
 \end{lemma}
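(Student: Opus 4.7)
My plan is to convert the assertion into an elementary statement about sums of exponentials via the spectral expansion of Lemma~\ref{lem:decomp1}. Writing $\eta = \sum_j a_j e^{(\lambda_j+1)t} d\phi_j$ (plus a single term $a_0 e^t d\ln|x|$ in case (iii)) and using that the differentials $d\phi_j$ are mutually orthogonal in the Gaussian $L^2$ inner product on $W_0$ — an integration by parts against $\mathcal{L}_0\phi_j = \lambda_j\phi_j$ — together with the definition \eqref{eq:normdefn10} of $\|\cdot\|$, I get
\[ \|\eta(t)\|^2 = \sum_j c_j e^{2\mu_j t}, \qquad c_j \geq 0, \]
with $\mu_j = \lambda_j + 1$ (and, in case (iii), an extra term $\mu_0 = 1$, $c_0 = a_0^2$ for the logarithmic mode). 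Since $\mathcal{L}_0$ has discrete spectrum, the set $\{\mu_j : c_j > 0\}$ is discrete, so for each $d$ I may define the spectral gap $\delta = \min\{|\mu_j - d| : c_j > 0,\ \mu_j \neq d\} > 0$.

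I would then split the sum as $\|\eta(t)\|^2 = F_+(t) + F_0(t) + F_-(t)$ according to whether $\mu_j > d$, $\mu_j = d$, or $\mu_j < d$. Each piece is a pure exponential sum, so
\[ F_+(t+1) \geq e^{2(d+\delta)} F_+(t), \quad F_0(t+1) = e^{2d} F_0(t), \quad F_-(t+1) \leq e^{2(d-\delta)} F_-(t). \]
Part (3) is then immediate: if $F_0 \equiv 0$, then at time $1$ one of $F_\pm$ contributes at least half the total. If $F_+(1) \geq \tfrac12 \|\eta(1)\|^2$, then $\|\eta(2)\|^2 \geq F_+(2) \geq \tfrac12 e^{2(d+\delta)}\|\eta(1)\|^2$, giving the conclusion of (1). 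The opposite case gives the conclusion of (2) by the symmetric estimate $\|\eta(0)\|^2 \geq F_-(0) \geq \tfrac12 e^{-2(d-\delta)} \|\eta(1)\|^2$. The factor $\tfrac12$ can be absorbed by a small decrease of $\lambda_2$ below $\delta$.

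For parts (1) and (2) I would combine the same splitting with log-convexity of $t\mapsto\|\eta(t)\|^2$ (Cauchy--Schwarz applied to the exponential representation). Under the hypothesis of (1), the normalised quantity $F(t) = e^{-2dt}\|\eta(t)\|^2$ satisfies $F(1) \geq e^{2\lambda_1} F(0)$; since the $F_0$ contribution to $F$ is constant and the $F_-$ contribution is non-increasing, all the growth of $F$ must be carried by $F_+$, giving a quantitative lower bound $F_+(1) \geq \alpha(\lambda_1, \delta)\|\eta(1)\|^2$ with $\alpha > 0$. Applying $F_+(2) \geq e^{2(d+\delta)} F_+(1)$ then yields $\|\eta(2)\|^2 \geq \alpha e^{2(d+\delta)} \|\eta(1)\|^2$, which is the conclusion of (1) for any $\lambda_2$ with $e^{2\lambda_2} \leq \alpha e^{2\delta}$. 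Part (2) is the time-reversed statement with $F_-$ replacing $F_+$.

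The principal technical subtlety lies in calibrating the pair $0 < \lambda_1 < \lambda_2$: the naive domination estimate above admits $\lambda_2 > \lambda_1$ only when the spectral gap $\delta$ is not too small relative to $\lambda_1$. To obtain a choice valid for arbitrary $\delta$, I would follow \cite[Lemma 6.1]{LSSz2} and sharpen the bound using strict log-convexity of $\log\|\eta(t)\|^2$ — quantitatively, the positive variance of the rates $\mu_j$ against the time-dependent weights $c_j e^{2\mu_j t}$, which is bounded below whenever more than one mode is active — to extract a definite acceleration of the logarithmic growth rate from one unit interval to the next.
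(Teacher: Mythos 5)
Your reduction of the lemma to a statement about exponential sums is sound and is in fact the standard opening move: the paper itself offers no proof, deferring to \cite[Lemma 6.1]{LSSz2}, Colding--Minicozzi and \cite[Lemma 3.3]{SimonIsolated}, and those arguments begin exactly as you do, using the Gaussian orthogonality of the $d\phi_j$ to write $\Vert\eta(t)\Vert^2=\sum_j c_j e^{2\mu_j t}$ with $\mu_j=\lambda_j+1$ (plus the logarithmic mode in case (iii), which the norm \eqref{eq:normdefn10} handles as one more exponential). One small but necessary correction: your spectral gap $\delta$ is defined over the modes with $c_j>0$, hence depends on $\eta$; since $\lambda_1,\lambda_2$ may depend only on $W_0$ and $d$, you must take $\delta$ to be the gap of the full spectrum around $d$ (positive by local finiteness of the spectrum).

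The genuine gap is in the quantitative core, namely producing a pair with $\lambda_2>\lambda_1>0$ uniformly in $\eta$, and your estimates do not deliver it when $\delta$ is small. In part (1), your bound amounts to $F_+(1)\geq(1-e^{-2\lambda_1})e^{-2d}\Vert\eta(1)\Vert^2$, so you need $e^{2\lambda_1}<e^{2\lambda_2}\leq(1-e^{-2\lambda_1})e^{2\delta}$; since $\min_{s>1}\,s^2/(s-1)=4$, this is only possible if $e^{2\delta}>4$, i.e. $\delta>\ln 2$, and the eigenvalue gaps of $\mathcal{L}_0$ around an arbitrary $d$ need not be that large. The same defect appears in your part (3): absorbing the factor $\tfrac12$ requires $\lambda_2\leq\delta-\tfrac12\ln 2$. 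These losses are not removable by routine tightening: already for two modes at $d\pm\delta$ the largest $\lambda_2$ for which the dichotomy in (3) holds satisfies $e^{2\lambda_2}\leq\cosh(2\delta)$, i.e. $\lambda_2=O(\delta^2)$, so any argument that throws away a fixed multiplicative constant is structurally unable to reach the statement. Finally, the repair you sketch is not correct as stated: the variance of the rates $\mu_j$ with weights $c_j e^{2\mu_j t}$ is \emph{not} bounded below ``whenever more than one mode is active'' (it tends to zero with the mass of the second mode), so no definite acceleration of $\tfrac{d}{dt}\log\Vert\eta(t)\Vert^2$ follows from that remark alone. What is actually needed -- and is the content of the cited three-annulus arguments -- is to use the hypothesis of (1) (resp.\ (2)) to bound from below the proportion of Gaussian mass carried at $t=1$ by the modes above (resp.\ below) $d$, and to control the opposite block via its behaviour at $t=0$ (resp.\ $t=2$), e.g.\ through Cauchy--Schwarz within each spectral block; without this step, or a compactness argument in its place, the lemma is not proved.
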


 Finally we have the following estimate for derivatives of a solution
 of the drift heat equation, similar to Proposition 5.3 in
 \cite{LSSz2}. 
 \begin{lemma}\label{lem:pointwise2}
Let $\eta(t)$ be a smooth exact solution of \eqref{eq:f1} on 
  $W_0\setminus\{0\}$ for $t\in [0,1]$, satisfying the bound (i) or
  (ii)  in Lemma~\ref{lem:decomp1} in $B_2$, 
  and in $L^2$ on $W_0\setminus B_1$. 
   For some $p_k > 1$ and $C_k > 0$ depending only on the cone
   $W_0$, for $k\geq 0$ we have
   \[ |\nabla^k \eta(1)|^2(x) \leq C_k \max\{|x|^{-k},1\} e^{\frac{|x|^2}{4p_k}}
     \int_{W_0} |\eta(0)|^2\, e^{-|x|^2/4}. \]
  If $\eta$ satisfies the bound (iii) in Lemma~\ref{lem:decomp1}, then
  $\eta$ can be decomposed as in \eqref{eq:etadecomp2} on each plane
  component of $W_0$. For the corresponding smooth pieces $\eta_1$ we
  have
  \[ |\nabla^k \eta_1(1)|^2(x) \leq C_k e^{\frac{|x|^2}{4p_k}}
    \int_{W_0} |\eta_1(0)|^2\, e^{-|x|^2/4}.\]
\end{lemma}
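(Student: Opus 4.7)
The plan is to rescale the drift heat equation to the ordinary heat equation on the cone $W_0$ and then apply interior parabolic regularity together with Gaussian heat kernel bounds. Since $\eta$ is exact, I write $\eta = df$ and, as in the proof of Lemma \ref{lem:static1}, adjust $f$ by a time-dependent constant so that $\partial_t f = \mathcal{L}_0 f + f$. The substitution $f(x,t) = e^{t} u(e^{-t/2} x, -e^{-t})$ transforms this into the ordinary heat equation $\partial_\tau u = \Delta u$ on $W_0$ for $\tau \in [-1, -e^{-1}]$, with $u(\cdot,-1) = f(\cdot,0)$; the exact 1-form $du$ then satisfies the corresponding heat equation on 1-forms. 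It therefore suffices to bound $\nabla^k du$ at $\tau = -e^{-1}$ and $y = e^{-1/2} x$ in terms of the Gaussian-weighted $L^2$ norm of $du(\cdot,-1)$, which by the Poincar\'e inequality used in the proof of Lemma \ref{lem:decomp1} is comparable to $\Vert \eta(0) \Vert_{L^2(W_0)}^2$.

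Since the link of $W_0$ is smooth, the heat kernel of $\Delta$ on $W_0 \setminus \{0\}$ satisfies a Gaussian upper bound $H(y, z, s) \le C s^{-n/2} \exp(-|y-z|^2/(Cs))$ by classical results on manifolds whose geometry is bounded away from the vertex (cf.\ Grigor'yan). Combined with interior parabolic Schauder estimates on cylinders of radius $r \sim \min\{|y|/2, 1\}$, this yields a pointwise bound for $|\nabla^k du(y, -e^{-1})|$ in terms of an integral of $|du(\cdot,-1)|$ weighted by the heat kernel. To convert this into the drift-heat Gaussian weight $e^{-|z|^2/4}$, I would apply the elementary inequality $|y-z|^2 \ge (1-\epsilon)|z|^2 - \epsilon^{-1}|y|^2$: choosing $\epsilon$ small and $C$ enlarged slightly so that $(1-\epsilon)/C > 1/4$, the $|z|^2$ factor dominates $e^{-|z|^2/4}$ while the $|y|^2$ factor contributes the Gaussian term $e^{|y|^2/(4p_k')}$ for some $p_k' > 1$. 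Applying Cauchy--Schwarz against the Gaussian weight then reduces the remaining integral to $\Vert\eta(0)\Vert_{L^2(W_0)}^2$.

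Pulling everything back via $|y|^2 = e^{-1}|x|^2$ and $r \sim \min\{|x|/2, c'\}$ produces the stated factor $e^{|x|^2/(4p_k)}$ together with a polynomial-in-$|x|^{-1}$ prefactor arising from the parabolic regularity radius, giving the $\max\{|x|^{-k}, 1\}$ term after a harmless redefinition of $p_k$ and $C_k$. The main technical obstacle is controlling $u$ near the vertex of $W_0$: the three alternative hypotheses (i)--(iii) of Lemma \ref{lem:decomp1} are precisely what is needed to ensure that $du(\cdot,-1)$ has enough local integrability near $0$ for the heat kernel representation to make sense. In case (iii), the explicit $a_0 e^t d\ln|x|$ piece of the decomposition \eqref{eq:etadecomp2} is separated off first and estimated directly from its formula, while the smooth remainder $\eta_1$ is handled by the argument above applied to each plane component of $W_0$ separately.
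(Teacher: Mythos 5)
Your overall strategy is viable and in fact close in spirit to what the paper has in mind (the paper gives no written proof, deferring to \cite[Proposition 5.3]{LSSz2}), and you correctly identify the one genuinely quantitative point: after the substitution $f(x,t)=e^{t}u(e^{-t/2}x,-e^{-t})$ the evaluation point satisfies $|y|^2=e^{-1}|x|^2$, and it is exactly this factor $e^{-1}$ that turns a Gaussian-kernel smoothing estimate at elapsed time $1-e^{-1}$ into a weight $e^{|x|^2/(4p_k)}$ with $p_k>1$. The treatment of case (iii) by splitting off the explicit $a_0e^{t}d\ln|x|$ term and arguing on each (flat) plane component is also fine.

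However, as written there are gaps. First, the heat-kernel representation is asserted rather than proved: $\eta$ is only \emph{some} solution of \eqref{eq:f1} on $W_0\setminus\{0\}$ with the qualitative bounds (i)/(ii), and "enough local integrability of the initial data" does not identify it with the semigroup evolution of $\eta(0)$; you need a uniqueness/identification argument handling both the vertex and the Gaussian growth at infinity. The clean fix, available in the paper, is to invoke the expansion $\eta=\sum_j a_je^{(\lambda_j+1)t}d\phi_j$ of Lemma~\ref{lem:decomp1}, which is precisely this identification. Second, you appeal to Gaussian upper bounds for the heat kernel \emph{on 1-forms}; on a cone the curvature blows up like $r^{-2}$ at the vertex, so semigroup domination is unavailable and such bounds are not classical. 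You should instead work with the scalar potential $u$, normalized by a time-dependent constant and the Gaussian Poincar\'e inequality so that $\Vert u(\cdot,-1)\Vert_{L^2}$ is controlled by $\Vert\eta(0)\Vert_{L^2}$ (this is where Poincar\'e enters; $du(\cdot,-1)$ simply equals $\eta(0)$), and recover $\nabla^k du$ from interior parabolic estimates at scale $\min\{|y|/2,1\}$. Third, the weight bookkeeping is off: the condition "$(1-\epsilon)/C>1/4$" cannot hold, since any Gaussian constant in a heat kernel bound is at least $4$; what actually saves the argument is the squaring in Cauchy--Schwarz together with the factor $e^{-1}$ above, so that any bound $\exp(-d^2/(Ks))$ with $K<8$ suffices (Davies/Li--Yau give $K=4+\epsilon$). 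Finally, interior estimates at scale $\sim|x|$ give $|\nabla^k\eta(1)|^2\lesssim\max\{|x|^{-2k},1\}$, and no "harmless redefinition of constants" upgrades this to the stated $\max\{|x|^{-k},1\}$; note, though, that the later applications (e.g.\ in Lemma~\ref{lem:3ann3} and Proposition~\ref{prop:decay4}) use exactly $\max\{|x|^{-2k},1\}$, so the power your method produces is the one that is actually needed.
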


It is worth observing also that the logarithmic term in the
decomposition \eqref{eq:etadecomp2} satisfies $d^*d\ln |x| = 0$, at
least away from the origin, and so under the setting (iii) in
Lemma~\ref{lem:decomp1} we still have the estimate
\[ |d^*\eta(1)|^2(x) \leq C e^{\frac{|x|^2}{4p_k}}
  \int_{W_0} |\eta(0)|^2\, e^{-|x|^2/4}, \]
even though $\nabla\eta(1)$ may blow up at rate $|x|^{-2}$ at the
origin.

 \subsection{Special Lagrangian currents asymptotic to cones}
In this section we prove some results about the decay rate of exact special
Lagrangian currents to their asymptotic cones.

\begin{definition}
  Following \cite[Definition 2.10]{CollinsLi}, we define an exact special
  Lagrangian current $L\subset\mathbb{C}^n$ to be a multiplicity one, closed
  integral current, that is special Lagrangian, and
  such that $\frac{1}{2}\lambda|_{L_{reg}} = df$
  for some $f: L_{reg} \to \mathbb{R}$.

  Let $W$ be a special Lagrangian cone, with smooth link, as
  before. We say that a special Lagrangian current $L$ is \emph{asymptotic
  to $W$}, if the tangent cone at infinity of $L$ is given by
$W$. Furthermore we say that $L$ is asymptotic to $W$ \emph{at rate
  $\delta< 2$}, if outside of a ball $B_{R_0}(0)$, $L$ can be written as the
  graph of a normal vector field $v$ over $W$ satisfying $|\partial^k
  v| = O(r^{\delta-k-1})$ for all $k\geq 0$. Note that by using the Schauder
  theory and rescalings, the estimate for $k=0$ implies the ones for $k > 0$, as long
  as $\delta < 2$ so that $|v|/r \to 0$. 
\end{definition}

If $W$ is an integrable special Lagrangian cone, as in the
setting of this paper, then it follows from Allard-Almgren~\cite{AA}
that whenever $L$ is a special Lagrangian with a tangent cone at
infinity given by $W$ (with multiplicity one), then $L$ is in fact
asymptotic to $W$ at some rate $\delta < 2$. 

We have the following result stating that exact special Lagrangian
currents cannot converge to a cone $W$ too quickly at infinity if they
differ from $W$.

\begin{prop}\label{prop:slowdecay}
  Let $W\subset\mathbb{C}^n$ be a special Lagrangian cone with smooth
  link, and $L$ an exact special Lagrangian
  current asymptotic to $W$ at rate $\delta$.
  \begin{itemize}
  \item[(a)] If $\delta < 2-n$, then $L=W$. 
  \item[(b)] If the link of $W$ is connected, and $\delta < 0$, then
    $L=W$. 
  \end{itemize}
\end{prop}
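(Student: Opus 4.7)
On the graphical region $L\cap(\mathbb{C}^n\setminus B_{R_0})$ the current $L$ is the graph of $df$ over $W$ for some potential $f:W\setminus B_{R_0}\to\mathbb{R}$ with $|\nabla^k f|=O(r^{\delta-k})$ (using the rate convention on $v$, and that $|\nabla f|\sim|v|$). Constancy of the Lagrangian angle along $L$ (equal to the angle of $W$) gives a nonlinear elliptic equation
\[
\Delta_W f = Q(f,\nabla f,\nabla^2 f),
\]
with $Q$ quadratic in its arguments, hence of size $O(r^{2(\delta-1)})$ — strictly faster decay than $\Delta_W f$ itself at infinity. Harmonic functions on $W$ decompose via link eigenfunctions: for each eigenfunction $\phi_j$ of the link Laplacian with eigenvalue $\nu_j\ge 0$ one obtains homogeneous harmonics $r^{\mu_j^\pm}\phi_j(\omega)$ with $\mu_j^\pm=\tfrac{1}{2}\bigl((2-n)\pm\sqrt{(n-2)^2+4\nu_j}\bigr)$. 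The slowest decaying exponent is $\mu_0^-=2-n$ (from constants on each link component); all others satisfy $\mu_j^-<2-n$.

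The first step is to produce an asymptotic spectral expansion
\[
f(r\omega)=\sum_j c_j\, r^{\mu_j^-}\phi_j(\omega)+(\text{nonlinear correction of strictly faster decay}),
\]
via a three-annulus/frequency inequality in $\log r$ for the equation $\Delta_W f=Q$ (the elliptic analogue at spatial infinity of Lemma~\ref{lem:L23ann2}), together with the Colding--Minicozzi polynomial growth estimates already used in Lemma~\ref{lem:static1}. The leading nonzero exponent $\mu_{j_0}^-$ satisfies $\mu_{j_0}^-\le\delta$. For part (a), the hypothesis $\delta<2-n$ forces $\mu_{j_0}^-<2-n$ and hence $\nu_{j_0}>0$, so the leading link eigenfunction is non-constant. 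Each coefficient $c_j$ can be identified (up to a constant) with the flux through a large sphere of the conserved current obtained by pairing $f$ against the dual growing harmonic $r^{\mu_j^+}\phi_j$; the exactness and closed-current properties of $L$, combined with the quadratic smallness of $Q$, force these fluxes to vanish. Subtracting the leading mode at each stage improves the decay of $f$ (which in turn improves the bound on $Q$), so upon iteration $f$ has super-polynomial decay at infinity. Real-analyticity of the special Lagrangian equation then yields $f\equiv 0$ outside $B_{R_0}$, and unique continuation for special Lagrangian currents across the possibly singular interior gives $L=W$ globally.

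For part (b), connectedness of the link makes $\nu_0=0$ a simple eigenvalue with a one-dimensional constant eigenspace, so the only $r^{2-n}$ mode in the expansion is the purely radial $c_0\, r^{2-n}$. Under the weaker assumption $\delta<0$ this mode is a priori permitted, but the same flux computation identifies $c_0$ with a boundary invariant of the exact closed current $L$ asymptotic to $W$ itself (rather than a nontrivial deformation), which must vanish; the remaining modes then carry exponents strictly less than $2-n$, and we are reduced to the setting of part (a). The main obstacle in both parts is the iterative subtraction step: the identification of each $c_j$ as a flux-type invariant that vanishes requires careful handling of the nonlinear error $Q$ along the dual harmonic pairing, and the propagation of $f\equiv 0$ from the graphical region to all of $L$ must accommodate the potentially singular interior via the structure theory of integral currents.
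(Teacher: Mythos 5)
Your spectral-expansion strategy has a genuine gap at its core: the step where you claim that each decaying coefficient $c_j$ ``can be identified with a flux \dots which the exactness and closed-current properties of $L$ force to vanish.'' No mechanism is given, and there is none of this kind. Exactness of $L$ constrains the primitive of the Liouville form on $L$; it imposes no constraint on the spectral coefficients of the graphing potential over $W$, since a graph of $df$ over the cone is automatically (locally) exact. Indeed, the decaying coefficients of an asymptotically conical special Lagrangian are genuinely nonzero in examples --- the Lawlor neck has a nonvanishing coefficient exactly at the threshold rate $2-n$ --- so any argument killing the modes must use the hypotheses $\delta<2-n$ (resp.\ connected link and $\delta<0$) quantitatively, which your pairing argument does not. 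The same applies to the $c_0$ mode in part (b): for disconnected links this coefficient is essentially the invariant $A(L)$ of the Lawlor neck and is nonzero, and you give no actual reason why connectedness forces it to vanish. The endgame is also gapped: superpolynomial decay plus real-analyticity does not by itself give vanishing at infinity (interior analytic unique continuation says nothing about decay at an end; one would need a Carleman or frequency-type argument), and ``unique continuation for special Lagrangian currents across the possibly singular interior'' is not an available tool --- even granting $f\equiv 0$ outside $B_{R_0}$, concluding $L=W$ inside requires a separate argument.

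The paper's proof is much shorter and uses exactness in a completely different way: it works with the Lagrangian potential $f$ on $L$ itself, defined by $df=\tfrac{1}{2}\lambda|_{L_{reg}}$, so that $|\nabla f|=|x^\perp|=O(r^{\delta-1})$, and it uses that $f$ is harmonic on $L_{reg}$ while $f^2$ is weakly subharmonic across the singular set (this is where exactness and the current structure genuinely enter). For (a) one integrates $\Delta f^2=2|\nabla f|^2$ over $L\cap B_r$; the boundary term is $O(r^{n-2+\delta})\to 0$ when $\delta<2-n$, so $f$ is constant. For (b) the connectedness of the link gives $\mathrm{osc}_{\partial B_r\cap L} f=O(r^\delta)$, and the maximum principle for the subharmonic $f^2$ gives $f$ constant. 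In either case $x^\perp\equiv 0$, so $L$ is a cone asymptotic to $W$, hence $L=W$. This global potential-plus-subharmonicity argument avoids any asymptotic expansion, any mode-by-mode subtraction, and any unique continuation across the singular interior; those are precisely the places where your proposal breaks down.
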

\begin{proof}
  Outside of a large ball $B_R(0)$ we write $L$ as the graph of $v$
  with $|\partial^k v| = O(r^{\delta-k-1})$. This implies that on $L$
  we have  $|\nabla f| = |x^\perp| = O(r^{\delta -1})$. Since in either case
  above we have $\delta < 0$, we can assume that $|f| = O(1)$.
  Note also that $f^2$ is weakly subharmonic on $L$ (see
  \cite[Lemma 2.13]{CollinsLi}), while $f$ is harmonic on the
  regular part of $L$. 
  \begin{itemize}
  \item[(a)]
    We use that $f^2$ is subharmonic, and on the regular part
    $L_{reg}$ we have $\Delta f^2 = 2|\nabla f|^2$. This
    implies that for large balls $B_r(0)$
    \[ \label{eq:fint10} 2\int_{L_{reg}\cap B_r(0)} |\nabla f|^2 \leq \int_{L \cap
        B_r(0)} \Delta f^2 = \int_{\partial B_r(0) \cap L} 2f
      \nabla_n f, \]
    where $n$ is the unit outward pointing normal to $\partial
    B_r(0)$. Using the bound for $f, \nabla f$ we have
    \[ \int_{L_{reg}\cap B_r(0)} |\nabla f|^2 \leq C r^{\delta-1}r^{n-1}, \]
    for large $r$. If $\delta < 2-n$, this implies that $f$ is
    constant. 
  \item[(b)]
    The estimate $|\nabla f| = O(r^{\delta-1})$, and the fact that the
    link of $W$ is connected,  implies that the
    oscillation $\mathrm{osc}_{\partial B_r(0)\cap L} f= O(r^{\delta})$ for large
    $r$. Modifying $f$ by a constant, we can assume that $|f| =
    O(r^{\delta})$ on $\partial B_r(0)$. Using that $f^2$ is
    subharmonic, we can apply the maximum principle, and let $r \to
    \infty$ to deduce that $f$ is constant. 
  \end{itemize}
    In either case $L$ must be a cone if $f$ is constant, and the fact
    that $L$ is asymptotic to $W$ implies that then $L=W$. 
  \end{proof}

  Note that the condition in (a) is sharp, since the Lawlor neck is asymptotic
  to the union of two planes at rate $2-n$. We expect that the
  condition in (b) is not sharp, but it is sufficient for our
  purposes. 

  Let us briefly recall the notion of a stable special Lagrangian cone
  from Joyce~\cite[Definition 3.6]{JoyceII}. We will only be concerned
  with the case when the link of the cone is connected. 
  \begin{definition}\label{defn:stableSL}
    Let $W$ be a special Lagrangian cone with connected link. Then $W$
    is stable if the space of harmonic functions on $W$ with at most
    quadratic growth are given by the span of the constant functions,
    the coordinate functions, and the degree 2 functions induced by
    the action of $SU(n)$.  
  \end{definition}
  There are only a few known examples of stable special Lagrangian
  cones, such as the $T^2$-cone~\cite{JoyceII} and a few
  others~\cite{Ohnita}. The following result shows that there are no
  non-trivial exact special Lagrangians asymptotic to a stable special
  Lagrangian cone. For the $T^2$-cone $C$ this also follows from the
  classification of all special Lagrangians asymptotic to $C$ by
  Imagi~\cite[Theorem 1.1]{Imagi}. 

  \begin{prop}\label{prop:noexactAC}
    Let $W$ be a stable special Lagrangian cone (with smooth connected
    link). Then the only exact special Lagrangian currents asymptotic
    to $W$ are the translates of $W$.
  \end{prop}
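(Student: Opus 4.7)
The plan is to combine the slow-decay result Proposition~\ref{prop:slowdecay}(b) with an asymptotic expansion at the conical end, using the stability hypothesis to rule out all non-trivial leading modes except those corresponding to translations.

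First observe that stability implies integrability, since the $SU(n)$-induced degree-$2$ harmonics on $W$ are realized geometrically by $SU(n)$ rotations, giving genuine families of special Lagrangian cones through $W$. Thus by Allard--Almgren, $L$ is asymptotic to $W$ at some rate $\delta<2$, and outside a large ball $L$ can be written as the graph of $J\nabla f$ over $W$ for a Lagrangian potential $f$ satisfying $|f|=O(r^{\delta})$. The special Lagrangian condition becomes $\Delta_W f = Q(\nabla f,\nabla^2 f)$ for a nonlinearity $Q$ quadratic in the derivatives, and standard conical elliptic theory (a bootstrap on dyadic annuli using Schauder estimates) yields a leading asymptotic expansion
\[
f = f_{\alpha} + \text{(lower order)},
\]
where $f_{\alpha}$ is a homogeneous harmonic function on $W$ of some degree $\alpha<2$ equal to an indicial root of $\Delta_W$.

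Stability (Definition~\ref{defn:stableSL}) says the indicial roots of $\Delta_W$ in $[0,2]$ are exactly $\{0,1,2\}$, with eigenspaces spanned respectively by the constants, the restrictions of linear coordinates on $\mathbb{C}^n$, and the $SU(n)$-induced quadratics. Since $\alpha<2$, the leading rate is either negative or lies in $\{0,1\}$. A leading constant is absorbed into the additive ambiguity of $f$ without changing $L$. A leading linear term $f_1 = \langle x_0,\cdot\rangle|_W$ corresponds geometrically to $L$ being asymptotic to the translate $W+x_0$; replacing $L$ by $L-x_0$ yields an exact special Lagrangian current asymptotic to $W$ with the leading linear mode killed. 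Since stability leaves no indicial roots in $(0,1)$, after these reductions the new sharp decay rate of the potential is strictly less than zero. The link of $W$ is connected by the stability hypothesis, so Proposition~\ref{prop:slowdecay}(b) then forces the translated current to equal $W$, and the original $L$ is a translate of $W$.

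The main technical obstacle is the asymptotic expansion step: one must extract an honest homogeneous harmonic leading term despite the quadratic nonlinearity in the special Lagrangian equation, and iterate so that the sharp rate lands on an indicial root. This is standard but requires care --- either a careful bootstrap on dyadic annuli with barrier comparison, or a direct appeal to conical elliptic theory in the style of Lockhart--McOwen. A smaller subtlety is the identification of degree-$1$ harmonic functions on the minimal cone $W$ with restrictions of linear coordinates on $\mathbb{C}^n$, which holds because coordinate functions are harmonic on any minimal submanifold of Euclidean space, and by stability these exhaust the degree-$1$ harmonics on $W$.
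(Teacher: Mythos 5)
Your proposal is correct and follows essentially the same route as the paper: use stability to see that the only slowly-decaying modes of the potential are constants and linear functions, kill the linear mode by a translation and the constant by adjusting the potential, conclude that the translated current is asymptotic to $W$ at a strictly negative rate, and finish with Proposition~\ref{prop:slowdecay}(b). The only real difference is in how the intermediate decay improvement is obtained: the paper does not redo the asymptotic analysis you sketch (Allard--Almgren plus a Lockhart--McOwen style expansion and iteration past the indicial roots $0$ and $1$), but instead quotes Imagi [Theorem 4.7] to get a translate asymptotic to $W$ at rate $0$, and Joyce [Theorem 7.11(a)] to improve the rate to any $\delta>2-n$ after removing the constant, using that no cone has homogeneous harmonics of degree in $(2-n,0)$; this buys the conclusion with no new hard analysis, at the cost of invoking those references, whereas your version is self-contained but carries the burden of the expansion step you correctly flag as the technical core. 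One small point: the paper disposes of $n=2$ separately (there $W$ must be a plane and the statement follows from the monotonicity formula), and you should do the same or argue separately there, since in two dimensions the indicial root $0$ is a double root and a $\log r$ term in the potential would spoil your assertion that the sharp rate after the reductions is strictly negative.
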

  \begin{proof}
    We can assume that $n > 2$ since the only option for $W$ in
    $\mathbb{C}^2$ is a plane, in which case the result follows from
    the monotonicity formula for minimal surfaces. 
    Suppose that $L$ is an exact special Lagrangian current asymptotic to $W$
    at infinity. By \cite[Theorem 4.7]{Imagi} we have that a
    translate $L-x$ is
    asymptotic to $W$ at rate 0. This means that near infinity $L-x$
    is the graph of a one form $\eta=du$, where $u = u_0 + \tilde{u}$,
    $u_0$ is a bounded harmonic function, and $\tilde{u}$ decays at infinity. Modifying
    $u_0$ by a constant, and using that there are no homogeneous
    harmonic functions on $W$ with degrees in $(2-n,0)$, we can ensure
    (see \cite[Theorem 7.11(a)]{JoyceI}) that $L-x$ is an exact
    special Lagrangian, asymptotic
    to $W$ at rate $\delta$ for any $\delta > 2-n$. It follows from
    Proposition~\ref{prop:slowdecay}(b) that $L-x = W$. 
  \end{proof}

\subsection{Preliminaries for the Lagrangian mean curvature flow}
Throughout the paper we will consider a solution $L_t$ of the
Lagrangian mean curvature 
flow in $\mathbb{C}^n$ for $t\in [-1,0)$. We
assume that we have uniformly bounded Lagrangian angle
and  area ratios:
\[ |\theta| \leq C_0, \qquad r^{-n}|L_t \cap B(p, r)| \leq
    C_0\text{ for all }p\in \mathbb{C}^n, r > 0, \]
  where $|A|$ denotes the $n$-dimensional Hausdorff measure.
These conditions are preserved along the flow, so it is enough to
assume them for the initial condition $L_{-1}$ (see
Neves~\cite{Neves1}). In addition the conditions will hold with the same
constant for any rescalings of the $L_t$, in particular along any
parabolic rescaling of the flow. 

We define the Gaussian area of $L$ (centered at the origin, at scale
1) to be
\[ \Theta(L) = \frac{1}{(4\pi)^{n/2}} \int_L e^{-|x|^2/4}\,
  d\mathcal{H}^n. \]
More generally we define the Gaussian area centered at $x$ at scale
$r$ by
\[ \Theta(L, x, r) = \Theta(r^{-1}(L-x)). \]
In terms of the backwards heat kernel
\[ \rho_{x_0, t_0} (x,t) = \big( 4\pi (t_0 - t)\big)^{n/2} exp\left(
    -\frac{|x-x_0|^2}{4(t_0-t)}\right), \]
for any $t$ we have 
\[ \Theta(L,x_0,r_0) = \int_L \rho_{x_0, t+r_0^2}(x,t)\,
  d\mathcal{H}^n(x). \]
Huisken's monotonicity formula states that for any smooth family of
functions $f_t$ on $L_t$ with polynomial growth at infinity, we have
\[ \frac{d}{dt} \int_{L_t} f_t \rho_{x_0, t_0}\, d\mathcal{H}^n(x) =
  \int_{L_t} \left( \frac{d}{dt} f_t - \Delta f_t - \left| H +
      \frac{(x-x_0)^\perp}{2(t_0-t)}\right|^2 f_t \right)\, \rho_{x_0,
    t_0}\, d\mathcal{H}^n(x). \]
Since $\theta$ satisfies the heat equation along the Lagrangian mean
curvature flow, and $|\nabla \theta| = |H|$, for any constant
$\theta_0$ we have
\[ \label{eq:thetamon} \frac{d}{dt}\int_{L_t} |\theta - \theta_0|^2 \rho_{x_0, t_0}\,
  d\mathcal{H}^n(x) \leq -\int_{L_t} 2|H|^2 \rho_{x_0, t_0}\,
  d\mathcal{H}^n(x). \]
Using this, we have the following, whose proof is essentially
identical to Neves~\cite[Theorem A]{Neves1} (see also \cite[Proposition 4.1]{LambertLotaySchulze}).
\begin{prop}\label{prop:N1}
  Suppose that we have a sequence of flows $L^i_t$ for $t\in [-1,0)$ satisfying 
  uniform angle and area ratio bounds. In addition suppose that for
  some $\theta_0$ we have
  \[ \label{eq:theta1} \lim_{i\to \infty} \int_{L^i_{-1}} |\theta-\theta_0|^2 e^{-|x|^2 /
      4}\, d\mathcal{H}^n(x) = 0. \]
  Then there is an integral special
  Lagrangian current $L_\infty$ such that after passing to a
  subsequence, for every compactly supported smooth function
  $\phi$ and all $t\in [-1, 0)$ we have
  \[ \label{eq:limL1} \lim_{i\to \infty} \int_{L^i_t} \phi\, d\mathcal{H}^n =
    \mu_\infty(\phi), \]
  where $\mu_\infty$ is the Radon measure of $L_\infty$. Moreover this
  convergence is uniform in $t$ over $[-1,0)$. 
\end{prop}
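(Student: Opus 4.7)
The plan is to follow Neves's proof of Theorem A in \cite{Neves1}; the engine is Huisken's monotonicity inequality \eqref{eq:thetamon} combined with the hypothesis \eqref{eq:theta1}. First I would integrate \eqref{eq:thetamon} in time from $-1$ to $t$ using the spacetime center $(0,0)$. Combined with \eqref{eq:theta1}, this yields
\[
\int_{L^i_t} |\theta-\theta_0|^2\,\rho_{0,0}\, d\mathcal{H}^n \longrightarrow 0 \quad \text{uniformly in } t\in[-1,0),
\]
together with $\int_{-1}^{0} \int_{L^i_s} |H|^2\,\rho_{0,0}\, d\mathcal{H}^n\, ds \to 0$. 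Running the same argument with centers $(x_0,t_0)$ for $t_0>0$, whose backward heat kernel at time $-1$ is bounded below on every compact set by a constant multiple of $e^{-|x|^2/4}$, promotes these conclusions to analogous weighted smallness for $\rho_{x_0,t_0}$. A countable choice of such centers then gives the corresponding smallness on every bounded spacetime region in $\mathbb{C}^n \times [-1,0)$.

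Second, the uniform area-ratio bound together with Allard-type integral varifold compactness and a diagonal extraction over a countable dense set of times yields a subsequence along which $L^i_t$ converges weakly as Radon measures to some $\mu_t$ for every such $t$. The Lagrangian condition is preserved under such varifold limits, and the first step forces $\mu_t$ to have constant Lagrangian angle $\theta_0$ almost everywhere, while the vanishing of the localized $|H|^2$ integrals forces $H=0$ distributionally. Thus $\mu_t$ is a stationary integral special Lagrangian current for almost every $t$.

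Third, to upgrade to convergence at every $t$ and to establish independence of $t$, I would invoke the first variation formula for MCF: for any $\phi \in C_c^\infty(\mathbb{C}^n)$,
\[
\left|\frac{d}{dt}\int_{L^i_t} \phi\, d\mathcal{H}^n\right| \leq \|\nabla\phi\|_\infty \int_{L^i_t\cap\mathrm{supp}\,\phi} |H|\, d\mathcal{H}^n + \|\phi\|_\infty \int_{L^i_t\cap\mathrm{supp}\,\phi} |H|^2\, d\mathcal{H}^n.
\]
By Cauchy--Schwarz, the area-ratio bound, and the localized $|H|^2$ smallness from the first step, the total variation of $t \mapsto \int_{L^i_t}\phi\, d\mathcal{H}^n$ on $[-1,0)$ tends to zero with $i$. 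Hence $\mu_t$ is independent of $t$, and the resulting Radon measure $\mu_\infty$ is that of a special Lagrangian integral current $L_\infty$, with \eqref{eq:limL1} holding uniformly in $t$.

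The main obstacle is converting the Gaussian-weighted $L^2$ control of $H$, which measures $H$ strongly only near the origin for $t$ close to zero, into a spacetime $L^2$ bound of $H$ on every compact subset of $\mathbb{C}^n \times [-1,0)$. This is precisely Neves's device of running monotonicity at a countable family of spacetime centers, and it is what makes the single time $t=-1$ hypothesis \eqref{eq:theta1} sufficient to control the entire flow.
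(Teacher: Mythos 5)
Your proposal is correct and is essentially the paper's own argument: Huisken monotonicity with centers at times $t_0>0$ (the paper simply recenters the kernel at $(0,1)$) converts \eqref{eq:theta1} into smallness of $\int_{-1}^0\int_{L^i_t\cap K}|H|^2\,d\mathcal{H}^n\,dt$ for every compact $K$, Neves-style compactness (his Proposition 5.1) then yields the single special Lagrangian limit at a good time slice, and the evolution identity for $\int_{L^i_t}\phi\,d\mathcal{H}^n$ together with the localized $|H|^2$ smallness gives independence of $t$ and uniformity of the convergence, exactly as in the paper. One small correction: to transfer \eqref{eq:theta1} to the recentered kernels you need that $\rho_{x_0,t_0}(\cdot,-1)$ is bounded \emph{above} by a multiple of $e^{-|x|^2/4}$ on compact sets, combined with a tail estimate from the uniform Lagrangian-angle and area-ratio bounds (for $t_0>0$ this kernel decays more slowly than $e^{-|x|^2/4}$, so a global comparison fails), rather than the lower bound you invoke; this is precisely where the paper uses those uniform bounds.
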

\begin{proof}
  We sketch the main steps for the readers' convenience. From
  \eqref{eq:theta1}, and the uniform bound on area ratios and the angle,
  we have
  \[ \lim_{i\to\infty} \int_{L^i_{-1}} |\theta-\theta_0|^2 \rho_{0, 1}\,
    d\mathcal{H}^n = 0, \]
  i.e. we can change the center of the backward heat kernel to the
  later time $t=1$. 
  By the
  monotonicity formula applied to $|\theta - \theta_0|^2$ we then have
  \[ \label{eq:Hint2} \lim_{i\to \infty} \int_{-1}^0 \int_{L^i_t} |H|^2\, \rho_{0,1}\,
    d\mathcal{H}^n\, dt = 0. \]
  This means on the one hand that for any compact set $K$ we have
  \[  \label{eq:Hint21} \lim_{i\to \infty} \int_{-1}^0 \int_{L^i_t\cap K} |H|^2\,
    d\mathcal{H}^n\, dt = 0, \]
  and on the other hand we can find an $a_0\in [-1/2,-1/4]$ such that for all $R > 0$ we
  have 
  \[ \lim_{i\to \infty} \int_{L^i_{a_0} \cap B(0,R)} |H|^2\,
    d\mathcal{H}^n = 0. \]
  Using Neves~\cite[Proposition 5.1]{Neves1} we obtain an integral
  special Lagrangian $L_\infty$ such that \eqref{eq:limL1} holds for
  $t=a_0$. Unlike in \cite{Neves1} we only need one special
  Lagrangian $L_\infty$ because of our assumption \eqref{eq:theta1}.

  To see that \eqref{eq:limL1} holds for all $t\in [-1,0)$, we can
  argue exactly as Neves, using that
  \[ \frac{d}{dt} \int_{L^i_t} \phi\, d\mathcal{H}^n = \int_{L^i_t}
    \langle H, D\phi\rangle\, d\mathcal{H}^n - \int_{L^i_t} |H|^2
    \phi\, d\mathcal{H}^n. \]
  The bound \eqref{eq:Hint21} can then be used to bound the difference
  between the integrals of $\phi$ on $L^i_t$ and $L^i_{a_0}$. 
\end{proof}

We suppose that the flow $L_t$ has a tangent flow given by the
(special Lagrangian) static
flow $W_0$ at $(0,0)$. This means that for a suitable sequence of
scales $\sigma_i \to 0$, the sequence of rescaled flows defined by
\[ L^i_t = \sigma_i^{-1}L_{\sigma_i^2t} \]
converge to $W_0$ in the sense of the convergence \eqref{eq:limL1}
in Proposition~\ref{prop:N1}. The following is a
consequence of the monotonicity formula \eqref{eq:thetamon}. 

\begin{lemma} Suppose that $L_t$ has a tangent flow $W_0$ at $(0,0)$.
  Let $r_i \to 0$ and
$(x_i, t_i) \to (0,0)$ satisfying $r_i \leq \sqrt{-t_i}$. Then
the rescaled flows $\tilde{L}^i_t = r_i^{-1} (L_{t_i + r_i^2 + r_i^2t} - x_i)$ satisfy
\[ \lim_{i\to \infty} \int_{\tilde{L}^i_{-1}} |\theta - \theta_{W_0}|^2
  \rho_{0,0}\, d\mathcal{H}^n = 0. \]
\end{lemma}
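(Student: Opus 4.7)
The plan is to apply Huisken's weighted monotonicity \eqref{eq:thetamon} twice. Define
\[
G_{x_0, t_0}(t) := \int_{L_t} |\theta - \theta_{W_0}|^2 \rho_{x_0, t_0}(\cdot, t)\, d\mathcal{H}^n,
\]
which is non-increasing in $t<t_0$ by \eqref{eq:thetamon}. A direct change of variables identifies the quantity appearing in the statement with $G_{x_i,\, t_i+r_i^2}(t_i)$, so the goal becomes $G_{x_i,\, t_i+r_i^2}(t_i)\to 0$.

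The first step is to show that $G_{0,0}(t)\to 0$ as $t\to 0^-$. Parabolic scale invariance of the Gaussian integral at $(0,0)$ gives
\[
G_{0,0}(-\sigma_i^2) = \int_{L^i_{-1}} |\theta-\theta_{W_0}|^2\,\rho_{0,0}(\cdot, -1)\, d\mathcal{H}^n,
\]
and the tangent flow assumption provides a sequence $\sigma_i\to 0$ along which $L^i_{-1}\to W_0$ in the Radon measure sense. Since the tangent flow has multiplicity one and is smooth away from $0$, Brakke/White regularity at each regular point of $W_0$ upgrades this to smooth spacetime convergence in a neighborhood of each such point, so $\theta_{L^i_{-1}}\to \theta_{W_0}$ locally smoothly. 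Splitting the integral into a small ball around the singular apex $0$ (controlled by the area-ratio bound together with $|\theta|\leq C_0$) and its complement (handled by dominated convergence with Gaussian tail decay), one obtains $G_{0,0}(-\sigma_i^2)\to 0$; monotonicity then extends this to $G_{0,0}(t)\to 0$ for all $t\to 0^-$. This step is the main obstacle, since it requires upgrading Radon-measure convergence of the tangent flow to $L^2$-Gaussian convergence of the Lagrangian angle, and uses in an essential way the multiplicity one, smooth-link, and uniform bound assumptions.

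The second step uses shifted monotonicity. Fix $T_0\in(-1,0)$ and set $s_i:=t_i+r_i^2$; since $r_i\to 0$, for $i$ large one has $T_0+s_i<t_i<s_i$, and applying the monotonicity of $G_{x_i, s_i}(\cdot)$ between these two times yields
\[
G_{x_i, s_i}(t_i)\;\leq\; G_{x_i, s_i}(T_0+s_i).
\]
As $(x_i, s_i)\to(0,0)$ and $T_0+s_i\to T_0$, the flow $L_t$ is smooth in a spacetime neighborhood of time $T_0$, and $\rho_{x_i, s_i}(\cdot, T_0+s_i)\to \rho_{0,0}(\cdot, T_0)$ uniformly on compacts with uniform Gaussian tail; together with the uniform angle bound this gives $G_{x_i, s_i}(T_0+s_i)\to G_{0,0}(T_0)$. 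Combining, $\limsup_i G_{x_i,\, t_i+r_i^2}(t_i)\leq G_{0,0}(T_0)$ for every $T_0\in(-1,0)$, and letting $T_0\to 0^-$ while invoking the first step finishes the proof.
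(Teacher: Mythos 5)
Your proof is correct, and it reaches the conclusion by a genuinely different route than the paper's, though both live in the same circle of ideas. The common input is the first step: the tangent flow assumption gives, after parabolic rescaling, smallness of $\int|\theta-\theta_{W_0}|^2\rho_{0,0}$ at time $-1$ at small scales (the paper simply asserts this as \eqref{eq:thetaint20}; your sketch via White's regularity and the splitting near the apex is, if anything, more detailed), and both arguments then run Huisken's monotonicity \eqref{eq:thetamon} with moving centers. The difference is the intermediate mechanism. The paper converts the $L^2$ smallness into a \emph{pointwise} bound $|\theta-\theta_{W_0}|\le\Psi(\epsilon)$ on $B_1\cap L^\sigma_t$ for $t\in[-1/2,0)$ by applying \eqref{eq:thetamon} centered at every point $(x,t)$ with $|x|\le1$, and then estimates the rescaled Gaussian integral directly, splitting into the region near the concentrating center (pointwise bound) and the far region (area ratios, bounded angle, Gaussian tails). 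You instead stay at the $L^2$ level throughout: you propagate the smallness forward in time to get $G_{0,0}(t)\to0$, and then use shifted-center monotonicity, $G_{x_i,s_i}(t_i)\le G_{x_i,s_i}(T_0+s_i)$, together with continuity of the fixed-scale Gaussian integral in the center and the time slice (smoothness of the flow at $T_0\in(-1,0)$ plus uniform tails from the area-ratio and angle bounds) to compare with $G_{0,0}(T_0)$, and finally let $T_0\to0^-$. Your route avoids pointwise estimates and is arguably cleaner for this particular statement; the paper's route has the side benefit of producing the bound \eqref{eq:thetap10}, a quantitative pointwise statement uniform over all nearby centers and scales, which is the form in which the smallness is consumed in its estimate. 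One small observation: your argument never actually uses the hypothesis $r_i\le\sqrt{-t_i}$ (you only need $r_i\to0$ to ensure $T_0+s_i<t_i$ for large $i$); this is not a flaw, as that hypothesis is needed for the way the lemma is applied afterwards rather than for its conclusion.
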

\begin{proof}
The assumption that $L_t$ has a special Lagrangian tangent flow $W_0$ at
$(0,0)$ implies that given any $\epsilon > 0$ we can choose a small
$\sigma > 0$ such that the
rescaling $L^\sigma_t = \sigma^{-1} L_{\sigma^{2}t}$ satisfies
\[ \label{eq:thetaint20} \int_{L^\sigma_{-1}} |\theta - \theta_{W_0}|^2\,
  \rho_{0,0}\, d\mathcal{H}^n < \epsilon. \]
Using the monotonicity formula applied to $|\theta -\theta_{W_0}|^2$
centered at points $(x,t)$ with
$|x|\leq 1$ and $t\in [-1/2,0)$ this implies that we have the
pointwise bound
\[ \label{eq:thetap10} |\theta - \theta_{W_0}| \leq \Psi(\epsilon) \text{ on } (B_1(0)\cap
  [-1/2, 0) )\cap L^\sigma_t, \]
where $\Psi(\epsilon)$ denotes a function converging to 0 as
$\epsilon\to 0$. 

Note that 
\[ \tilde{L}^i_t = \tilde{r}_i^{-1}(L^\sigma_{\tilde{t}_i + \tilde{r}_i^2+
    \tilde{r}_i^2t} - \tilde{x}_i), \]
where
\[ \tilde{t}_i = \sigma^{-2} t_i, \quad \tilde{x}_i = \sigma^{-1}x_i,
  \quad \tilde{r}_i = \sigma^{-1} r_i. \]
In particular $(\tilde{x}_i, \tilde{t}_i) \to (0,0)$ and $\tilde{r}_i
\leq \sqrt{-\tilde{t}_i}$, and we have
\[ \int_{\tilde{L}^i_{-1}} |\theta - \theta_{W_0}|^2\, \rho_{0,0}\,
    d\mathcal{H}^n = \int_{L^\sigma_{\tilde{t}_i}}
   |\theta - \theta_{W_0}|^2 \rho_{\tilde{x}_i, \tilde{t}_i}\, d\mathcal{H}^n. \] 
Using the pointwise bound \eqref{eq:thetap10},  the fact that
$\tilde{t}_i \to 0$ and $\tilde{r}_i\to 0$, and the
global uniform bounds for  $\theta$ and the area ratios, we find that
for sufficiently large $i$ 
\[ \int_{\tilde{L}^i_{-1}} |\theta - \theta_{W_0}|^2\, \rho_{0,0}\,
  d\mathcal{H}^n < \Psi_1(\epsilon), \]
for another function $\Psi_1$ converging to zero as $\epsilon \to
0$. This completes the proof. 
\end{proof}

This result, together with Proposition~\ref{prop:N1}, implies that a
rescaled sequence $\tilde{L}^i_t$ as in the Lemma converges, up to
choosing a subsequence, to a
static flow given by an integral special Lagrangian current
$L_\infty$.

\begin{definition} \label{defn:goodblowup}
  Let us call a sequence of scales $r_i \to 0$ and points $(x_i,
  t_i)\to (0,0)$ satisfying $r_i < \sqrt{-t_i}$ a good blowup sequence
  for the flow $L_t$, if along a subsequence the corresponding rescaled Lagrangians
  $\tilde{L}^i_{-1} = r_i^{-1}(L_{t_i}-x_i)$ converge to a special Lagrangian current $L_\infty$
  that is asymptotic to $W_0$ at infinity, but such that
  $L_\infty$ is not a translation of $W_0$.
\end{definition}
Our goal in Theorem~\ref{thm:goodblowup} is to show that if a tangent
flow is given by $W_0$, then there is a good blowup sequence.

\subsection{Graphicality and linearization}\label{sec:gl}
Let us define the following notion of graphicality.
\begin{definition}\label{defn:cgraph}
  Let us fix a large integer $k_0 > 0$. 
  We say that $L$ is a $c$-graph over $W$ on the region $U\subset
  \mathbb{C}^n$, if $L\cap U$ can be written as the graph of a (not
  necessarily normal) vector field $v$ over $W\cap U$, with
  $|v|_{C^{k_0}} < c$.
\end{definition}

Note that if we have a sequence $L_i$ of $c_i$-graphs over $W$ on an open set $U\subset W$,
and $c_i\to 0$, then we can also view the $L_i$ as normal $c_i'$-graphs
over slightly perturbed open sets $U_i$, with $c_i' / c_i$ bounded. If
$W$ is Lagrangian, and $L$ is the normal graph of a vector field $v$
over $W$, then we also say that $L$ is the graph of the 1-form $\eta =
\iota_v\omega$. Equivalently this is the 1-form on $W$ dual to the
vector field $Jv$ along $W$. If both $W$ and $L$ are exact Lagrangian, then
to leading order $\eta$ is exact, and to leading order the difference
in Lagrangian angles $|\theta_L - \theta_W|$ is given by
$d^*\eta$. This is made more precise in the following.

\begin{lemma}\label{lem:graphlimit}
    Suppose that we choose $k_0$ in Definition~\ref{defn:cgraph} 
  large enough depending on $n$, and $c_0$ sufficiently small,
  depending on bounds for the geometry of $W\cap U$.
  Let $L_i$ be a sequence of exact Lagrangians that
  are $c_0$-graphs over the exact special Lagrangian $W$ on $U$ and satisfy
  \[  \int_{L_i\cap U} d_{W}^2\,
        d\mathcal{H}^n \leq d_i^2 \to 0, \]
      for the distance function $d_{W}$ from $W$. Suppose in addition
      that we have the bound
  \[  \sup_{L_i\cap U} d_i^{-1} |\theta_{L_i}| \to 0. \]
  Writing $L_i$ as the graphs of the 1-forms $\eta_i$ over open sets
  $U_i$ as above, we then have that along a subsequence the 1-forms
  $d_i^{-1}\eta_i$ converge locally uniformly to an exact harmonic 1-form
  $\eta_\infty$ on $W\cap U$. 
\end{lemma}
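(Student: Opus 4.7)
The plan is to exploit exactness of $W$ and $L_i$ to write $\eta_i = df_i$, and then analyse the scalar PDE for $f_i$ coming from the Lagrangian angle. Since $W$ and $L_i$ are exact and $L_i$ is a $c_0$-graph, choosing primitives of the Liouville form gives smooth $f_i : U_i \to \mathbb{R}$ with $|f_i|_{C^{k_0}} \le C c_0$. Normalising each $f_i$ by an additive constant on every connected component of $U_i$ and applying Poincar\'e, the hypothesis $\int_{L_i \cap U} d_W^2 \le d_i^2$ yields $\|f_i\|_{H^1(U_i)} \le C d_i$.

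The next step is the Taylor expansion of the Lagrangian angle near $W$: writing a Lagrangian $L$ as the graph of $df$ over the special Lagrangian $W$, one has
\[
\theta_L = -\Delta_W f + F(f, \nabla f, \nabla^2 f),
\]
where $F$ is smooth and vanishes to second order at the origin, so under the $c_0$-graph hypothesis $|F(f)| \le C c_0 (|\nabla^2 f| + |\nabla f| + |f|)$ pointwise. I would then run an $L^2$ elliptic bootstrap: interior Calder\'on--Zygmund applied to $\Delta_W f_i = -\theta_{L_i} + F(f_i)$ on $V \subset\subset U$ gives
\[
\|f_i\|_{H^2(V)} \le C\bigl(\|f_i\|_{L^2} + \|\theta_{L_i}\|_{L^2} + \|F(f_i)\|_{L^2}\bigr) \le C d_i + o(d_i) + C c_0 \|f_i\|_{H^2}.
\]
For $c_0$ small enough (in terms of the geometry of $W \cap U$), the last term absorbs into the left side, producing $\|f_i\|_{H^2(V)} \le C d_i$. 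Iterating on higher derivatives --- which we can afford because $k_0$ is large --- upgrades this to $\|f_i\|_{C^m(V)} \le C d_i$ for some $m$ large enough, depending on $n$.

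Finally, setting $\tilde f_i = d_i^{-1} f_i$ gives uniform bounds $\|\tilde f_i\|_{C^m(V)} \le C$, and the rescaled equation reads
\[
\Delta_W \tilde f_i = -d_i^{-1} \theta_{L_i} + d_i^{-1} F(d_i \tilde f_i).
\]
The first term on the right tends to zero uniformly by hypothesis, while the second is $O(d_i)$ because $F$ vanishes quadratically and $\tilde f_i$ is uniformly $C^m$-bounded. Arzel\`a--Ascoli then extracts a subsequence converging in $C^{m-1}_{loc}$ to some $\tilde f_\infty$ with $\Delta_W \tilde f_\infty = 0$, and $d\tilde f_\infty =: \eta_\infty$ is the exact harmonic $1$-form claimed. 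The main obstacle is the absorption step in the elliptic bootstrap, which is precisely what forces the smallness of $c_0$ relative to the geometry of $W \cap U$ in the statement; a minor point to verify in passing is that any compact subset of $W \cap U$ is contained in $U_i$ for all large $i$, so that the limit is defined on all of $W \cap U$.
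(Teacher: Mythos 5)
Your overall scheme (linearize, use ellipticity plus an absorption for small $c_0$, rescale by $d_i$, extract a limit by compactness) is the same mechanism as the paper's, which works directly with the first--order system on the $1$-forms: $\omega|_{L_i}=\omega|_W+d\eta_i+Q_1$ and $\theta_{L_i}=d^*\eta_i+Q_2$ with $Q_1,Q_2$ at least quadratic, gets $\Vert\eta_i\Vert_{W^{1,2}(U')}\le C d_i$ from a perturbation of $d+d^*$, passes to a weak limit which is harmonic, and then proves exactness of $\eta_\infty$ separately by integrating the Liouville form over loops ($\int_{\gamma_i}\lambda=2\int_\gamma\eta_i+O(d_i^2)$) and using exactness of $L_i$.

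The genuine gap is your very first step: the identification $\eta_i=df_i$ with $f_i$ a primitive of the Liouville form. The $\eta_i$ in the statement is the \emph{normal-graph} $1$-form $\iota_{v_i}\omega$ of Section~\ref{sec:gl}, and this form is in general neither closed nor exact: the Lagrangian condition only gives $d\eta_i=-Q_1(x,\eta_i,\nabla\eta_i)$, i.e.\ closedness up to a quadratic error, and exactness of $L_i$ relates $\eta_i$ to the Liouville primitive only to leading order (note even the factor $2$ in the displayed loop identity above). So your scalar equation is really an equation for a different family of forms (say the Weinstein-chart graph forms $df_i$, after choosing an exact identification of a neighborhood of $W\cap U$ with a neighborhood of the zero section of $T^*W$ --- a choice you never make), and your argument proves convergence of $d_i^{-1}df_i$, not of $d_i^{-1}\eta_i$. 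To conclude the lemma you must still show $d_i^{-1}(\eta_i-df_i)\to 0$ locally uniformly; this is plausible because the discrepancy is quadratic in the graph data, but it needs the pointwise bound $|v_i|+|\nabla v_i|\le Cd_i$ on compact subsets, which is only available \emph{after} your bootstrap and is nowhere stated. Relatedly, by assuming exactness of $\eta_i$ outright you silently bypass the one step of the paper's proof that genuinely uses exactness of the Lagrangians $L_i$ (the loop-integral argument); in your setup the corresponding issue resurfaces as the need for an \emph{exact} Weinstein identification and for the comparison $\eta_i\approx df_i$. With those two points supplied, the rest of your bootstrap and Arzel\`a--Ascoli argument is fine, and in fact yields stronger (smooth local) convergence than the paper's $L^2$ argument; the minor imprecisions (Poincar\'e applied componentwise on $U_i$, and $\Vert F(f_i)\Vert_{L^2}$ involving the $H^2$ norm on the larger set, requiring the usual cutoff/iteration) are repairable as you indicate.
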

\begin{proof}
  We can use the description of $L_i$ as the graph of $\eta_i$ over
  $W$ to pull back geometric quantities on $L_i$ to $W$. In this way
  we have
  \[ \begin{aligned}
      \omega|_{L_i} &= \omega|_W + d\eta_i + Q_1(x,\eta_i, \nabla
      \eta_i), \\
        \theta_{L_i} &=  d^*\eta_i + Q_2(x,\eta_i, \nabla\eta_i),
      \end{aligned}\]
   where $Q_1$ and $Q_2$ are power series depending at least
   quadratically on the entries $\eta_i, \nabla\eta_i$. If
   \[ |\eta_i| + |\nabla \eta_i| \leq \epsilon, \]
   for sufficiently small $\epsilon$, then we can apply elliptic
   estimates to a small perturbation of the operator $d+d^*$ to deduce
   \[ \Vert \eta_i\Vert_{W^{1,2}(U')} \leq C_{U'} d_i ,\]
   for any relatively compact $U'\subset U$. It follows that up to
   choosing a subsequence the $d_i^{-1}\eta_i$ converge locally in
   $L^2$ to a limit $\eta_\infty$ on $W\cap U$. The assumption that
   $d_i^{-1}|\theta_{L_i}| \to 0$ uniformly implies that $\eta_\infty$
   satisfies the system $d\eta_\infty = 0$ and $d^*\eta_\infty = 0$
   weakly, and so $\eta_\infty$ is a harmonic one-form.

   To see that $\eta_\infty$ is exact, note that any loop $\gamma$
   on $W\cap U$ can be viewed as a loop $\gamma_i$ on $L_i$ by the
   graphicality condition. If we denote by $\lambda$ the Liouville one-form,
   then we have
   \[ \int_{\gamma_i} \lambda = 2\int_\gamma \eta_i + O(d_i^2). \]
   It follows that
   \[ \lim_{i\to\infty} \int_\gamma d_i^{-1} \eta_i = 0, \]
   and so the limiting form $\eta_\infty$ is exact. 
  \end{proof}

\section{Estimates for a given time slice in a ball}\label{section:timeslice}
In this section we study a fixed time slice $L$ of the (rescaled) mean
curvature flow, inside the ball $B(0,2)$. Our goal is to show that in
several different settings we
can obtain bounds for how much a time slice $L$ can deviate from
$W_0$ or from a perturbation of $W_0$.
This exploits the fact that when the Lagrangian angle $\theta$ is small
pointwise, then the behavior of $L$ is close to that of a minimal
submanifold. The basic idea is to use the three annulus lemma to show
that either the time slice $L$ remains close to a cone $W$ at many
scales, or at some scale $L$ is close to a special Lagrangian
asymptotic to $W_0$. For instance if no good blowup sequence exists, in the sense of
Definition~\ref{defn:goodblowup}, then we find that $L$ must remain
close to $W$. At the same time if the link of $W_0$ is connected, or
in the two dimensional case, we can obtain a
slightly worse estimate, even without assuming that good blowup
sequences do not exist.

In our applications we will fix a Lagrangian mean curvature
flow in $\mathbb{C}^n$ that we denote by $L^0_t$ for $t\in [-1,0)$, and which has a
singularity at $(0,0)$. This flow is assumed to satisfy uniform bounds
for the area ratios and the Lagrangian angle. Throughout this section
we consider Lagrangian mean curvature flows $L_t$ for $t\in [-1,0)$
which are obtained as suitable rescalings of $L^0_t$. More precisely
we assume the following:

\begin{itemize}
\item[(\dag)] For some $(X,T)$ and $r > 0$ such that $T\in [-1,0)$ and
  $|X|, r < \sqrt{-T}$, the flow $L_t$ is defined by
  \[ r^{-1}\Big( L^0_{T+ r^2(t+1)} - X\Big). \]
  for $t\in [-1,0)$. 
\end{itemize}

We will need to allow perturbations of the cone $W_0$ to certain unions of special Lagrangian cones that do not all have the same Lagrangian angle. For this we have the following definition.
\begin{definition}\label{defn:components1}
   Suppose that $L$ is an immersed Lagrangian in $B(0,2)$ that is a $c_0$-graph over $W_0$ on the annulus $A_{2,1/2}$ for sufficiently small $c_0 > 0$ depending on $W_0$. We can then decompose $L\cap B(0,2)$ as a union of connected components 
   \[ L\cap B(0,2) = \bigcup_{j=1}^m L_j, \]
   where each $L_j$ is an immersed Lagrangian whose domain of parametrization is connected. 

   Corresponding to the decomposition of $L$ we also have a decomposition of the cone $W_0$:
   \[ W_0 = \bigcup_{j=1}^m W_{0,j},\]
   such that on $A_{2,1/2}$ the component $L_j$ is $\delta$-graphical over $W_{0,j}$. 

    We will denote by $\mathcal{W}_L$ a small neighborhood of $W$ in the moduli
space of minimal Lagrangian cones obtained by perturbing each $W_{0,j}$ to other special Lagrangian cones. Thus, each element $W\in
\mathcal{W}$ is of the form
\[ W = \bigcup_{j=1}^m W_j, \]
where the cone $W_j$ is a small special Lagrangian perturbation of $W_{0,j}$. 

    Finally, for $W\in \mathcal{W}_L$ we define the angle function $\theta_W: L\cap B(0,2)\to \mathbb{R}$ to be the (locally constant) function equal to $\theta_{W_j}$ on the component $L_j$.
\end{definition}

The main result is the following.
\begin{prop}\label{prop:mainelliptic}
  Suppose that $L_t$ is a Lagrangian mean curvature flow as in (\dag)
  for $t\in [-1,0)$. 
  There are constants $\delta, c_1,  \rho_0, C > 0$ depending on the bounds
  on the angle and area ratios, as well as on the cone $W_0$,
  satisfying the following. Suppose that $L_{-1}$ is sufficiently
  close to $W_0$ in the sense that $L_{-1}$ is a $\delta^2$-graph over
  $W_0$ on the annulus $A_{\delta^{-1}, \delta}$.
  In addition suppose
  that for some $d < \delta$ the time slice $L := L_{-1/2}$ satisfies
  \begin{itemize}
  \item[(i)] $|\theta - \theta_{W}| < d$ on $B_2\cap L$ for some
    $W\in \mathcal{W}_L$, 
    \item[(ii)] $L$ is a $d$-graph over $W$ on $A_{2, \rho_0/2}$.
    \end{itemize}
    Then
    \begin{itemize}
    \item[(a)]
        If the link of $W_0$ is connected, or $n=2$, then for all $r\in
    (\delta^{-1}d^{(2+\lambda_2)^{-1}}, 1)$,  $r^{-1}L$ is a
    $Cdr^{-2-\lambda_2}$-graph 
    over $W$ on $A_{1,1/2}$. Here $\lambda_2 > 0$ appears in
    Lemma~\ref{lem:L23ann1}, and can be chosen to be 
    arbitrarily close to 0. 
  \item[(b)]
    If the flow $L^0_t$ admits no good blowup sequence in the
  sense of Definition~\ref{defn:goodblowup}, then for all $r \in
  (\delta^{-1}d, 1)$, $r^{-1}L$ is a $Cdr^{-1}$-graph over
  $W$ on the annulus $A_{1, 1/2}$.
\item[(c)]
  At least one of the following two alternatives holds:
  \begin{itemize}
  \item Either the same conclusion as in (b) holds (without any
    assumption on good blowup sequences),
    \item or there is an $r_0 > 0$ (depending on $L$) such that for all $r\in (\delta^{-1}r_0,
      1)$ the rescaling $r^{-1}L$ is a $\max\{c_0,
      Cdr^{-(n+\lambda_2)}\}$-graph over $W$ on $A_{1, 1/2}$ and in
      addition
      \[ \label{eq:xperplower} \int_{r_0^{-1}L\cap (B_2\setminus B_1)} |x^\perp|^2\,
        d\mathcal{H}^n > c_1. \]
      Here $\lambda_2, c_0 > 0$ can be arranged to be arbitrarily small, by choosing
      the other constants suitably. 
    \end{itemize}
    \end{itemize}
  \end{prop}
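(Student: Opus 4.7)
The approach is to treat $L = L_{-1/2}$ as an approximately minimal Lagrangian, exploiting the smallness of $\theta-\theta_W$ from hypothesis (i), and run an elliptic analog of the Allard--Almgren uniqueness iteration in the style of the three-annulus lemma. Wherever $L$ is graphical over $W$, write it as the graph of a $1$-form $\eta$: the Lagrangian condition gives $d\eta = 0$ to leading order, while Lemma \ref{lem:graphlimit} combined with (i) gives $d^*\eta = O(d)$, so $\eta$ is essentially an exact harmonic $1$-form on $W$. The elliptic analog of the spectral decomposition in Lemma \ref{lem:decomp1} yields an expansion of $\eta$ in homogeneous harmonic $1$-forms on $W$ (plus a logarithmic mode when $n=2$), and an elliptic three-annulus estimate in the spirit of Lemma \ref{lem:L23ann2} gives, at each dyadic scale $r$, a dichotomy for a suitably normalized $L^2$ norm $\phi(r) = r^{-n/2}\|\eta\|_{L^2(A_{2r,r/2})}$: either $\phi$ decays at least at a chosen threshold rate, or it starts growing thereafter at the slowest admissible growth rate.

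To set up the dichotomy I would first use the integrability of the $V_i$ to absorb the scale-invariant (degree-$2$ harmonic) modes by adjusting $W$ within $\mathcal{W}_L$; the resulting new $W$ still satisfies (i) and (ii), since the adjustment is along a bona fide family of special Lagrangian cones with the same Lagrangian angles as the components of the original $W$. After this adjustment the spectrum of $\eta$ splits strictly above and strictly below the scale-invariant rate, and the threshold rate in the three-annulus lemma is chosen depending on which case of the proposition is targeted.

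For case (a), set the threshold just above scale-invariance, at rate $r^{\lambda_2}$ with $\lambda_2>0$ arbitrarily small. Proposition \ref{prop:slowdecay}(b) forbids any exact special Lagrangian current asymptotic to $W$ with decay strictly faster than $r^0$ when the link is connected, and the same conclusion holds when $n=2$, where $W_0$ is a union of planes and Proposition \ref{prop:slowdecay}(a) gives $\delta\geq 2-n=0$. Consequently the growth alternative cannot terminate into a genuine blowup faster than $r^{-2-\lambda_2}$, so the iteration propagates the seed bound $\phi(1)=O(d)$ down to $\phi(r)\leq Cdr^{-2-\lambda_2}$ purely through the slowest admissible growth mode. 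Standard interior Schauder estimates, combined with the smallness of the angle oscillation, then promote this $L^2$ bound to the stated $C^{k_0}$ graphicality estimate, valid for $r\geq \delta^{-1}d^{1/(2+\lambda_2)}$.

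For cases (b) and (c), set the threshold one notch higher, so that decay corresponds to $\phi$ bounded as $r\to 0$, allowing translation (degree-$1$) modes but no strictly faster growth. Iterate downward from $r=1$: either decay persists at every scale, directly giving (b); or the iteration terminates at some scale $r_0$ where $\phi(r_0)$ reaches a definite size $c_0'$. In the terminating case, rescale $L$ and the ambient LMCF by $r_0^{-1}$: the rescaled time slice deviates from $W_0$ by a definite amount, which forces the Gaussian density to drop and yields \eqref{eq:xperplower}, yet its Lagrangian angle is still small, so Proposition \ref{prop:N1} applied to the rescaled parabolic sequence produces a limiting exact special Lagrangian current $L_\infty$ asymptotic to $W_0$ but not a translate of $W_0$ --- a good blowup sequence in the sense of Definition \ref{defn:goodblowup}. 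Under the no-good-blowup assumption this terminating alternative is forbidden, which recovers (b); without that assumption we obtain the two alternatives of (c). The main obstacle I foresee is rigorously matching the linear spectral iteration with the nonlinear rigidity of Proposition \ref{prop:slowdecay}: one needs to check that a terminating iteration really produces an honest special Lagrangian current with the correct asymptotics to $W_0$ (rather than a lower-density or non-Lagrangian limit), which requires careful coordination of the monotonicity formula, the angle--area coupling of LMCF, and Proposition \ref{prop:N1} at the right choice of parabolic scale.
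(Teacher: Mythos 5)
Your overall strategy is the same as the paper's (treat $L$ as almost-minimal, write it as the graph of an approximately harmonic $1$-form, run a three-annulus iteration, and cap the admissible growth using Proposition~\ref{prop:slowdecay} and the no-good-blowup hypothesis), but as written the argument has genuine gaps. The central one: you never explain why the rescalings $\rho_0^{-k}L$ remain graphical over (a cone near) $W_0$ on the next annulus, which is what allows the iteration to continue at all. Smallness of the annular $L^2$ distance at the larger scales does not imply this -- graphicality can fail, or extra sheets of $L$ can enter the annulus from below, while your quantity $\phi$ is still small. In the paper this step is supplied by Lemma~\ref{lem:conescale}, which uses the flow on $[-1,-1/2]$, Proposition~\ref{prop:N1} and White's regularity theorem, and it is precisely why the terminal scale is \emph{defined} by the Gaussian density radius $r_0(x)$ in \eqref{eq:r01}, not by ``$\phi$ reaching a definite size.'' As a consequence your derivation of the second alternative in (c) is unjustified and in fact runs backwards: a definite annular deviation from $W$ does not force a Gaussian density drop, and it does not rule out the limit being some nearby cone or a translate of $W_0$, so \eqref{eq:xperplower} does not follow from it. In the paper, \eqref{eq:xperplower} comes from the density drop between two nearby scales at $r_0(0)$ (via the monotonicity formula and the definition of $r_0$), which is what forces the smooth limit on the annulus to be non-conical.

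The second set of gaps concerns the translation modes and the cone adjustments. In case (b) the paper recenters the slice at the point $x_0$ minimizing $r_0(x)$ -- this is the extra hypothesis in Lemma~\ref{lem:growthbound2}(c) -- and this recentering is what guarantees that the blowup produced in the growth alternative is not merely a translate of $W_0$ (which would not be a good blowup and would not give a contradiction); the exponent $r^{-1}$ in conclusion (b) is exactly the cost of undoing this translation ($|x_0|\le 4d$), not a growth rate of modes. Your plan to ``allow translation (degree-$1$) modes'' while asserting that ``decay corresponds to $\phi$ bounded'' is internally inconsistent (translation modes make $\phi\sim r^{-1}$), and at best such a threshold yields a weaker exponent than the stated $Cdr^{-1}$. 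Similarly, a single initial adjustment of $W$ within $\mathcal{W}_L$ cannot absorb the scale-invariant modes: since $L$ is only approximately harmonic (error of size $d$ plus quadratic terms), the static component regenerates at every scale, so the cone must be re-chosen scale by scale as in Lemma~\ref{lem:3ann2}(a), with the accumulated drift $d(W_i,W)\le Ci(d_0+\bar\theta)$ tracked and the iteration stopped after roughly $d^{-1}$ steps; this bookkeeping is what produces the restriction $r>\delta^{-1}d$ and the logarithmic loss. You also do not address the scales at which the angle error dominates the distance, where the harmonic approximation and hence the three-annulus comparison fail and the paper's dichotomy (angle dominates, or controlled growth) is needed. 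Finally, note that in case (a) no cone adjustment is needed at all, since growth at rate $2+\lambda_2$ is simply tolerated there.
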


The proof will take up the rest of this section. We first give a rough
outline of the argument for part (a). Part (b) and (c) follow a similar
approach, using also Proposition~\ref{prop:slowdecay}. 
Initially let us assume that actually $\theta = \theta_W$ on
$L$, so that $L$  is a minimal submanifold. Then, similarly to
Edelen~\cite[Theorem 13.1]{Edelen}, we can follow
the proof of the uniqueness of tangent cones to show that
if $L$ is close to $W$ at the unit scale (i.e. on the annulus $A_{1,
  \rho_0}$), then $L$ remains close to $W$ on smaller annuli $A_{r,
  r\rho_0}$, as long as it remains ``sufficiently conical''. Whether
$L$ remains sufficiently conical at scale $r$ can in turn be
controlled by requiring that the
Gaussian area $\Theta(L, 0, r)$ remains close to $\Theta(W)$. Applying
similar considerations to translates of $L$ as well leads to the
statement that either a translate $L-x_0$ of $L$ is close to $W$ at all scales, or
we can find a rescaling which leads to a good blowup sequence. This is
similar to the argument in \cite[Theorem 2.8]{Edelen}.

In our setting instead of $\theta=\theta_W$ we only know
that $|\theta - \theta_W| < d$. The arguments used in the proof of the
uniqueness of tangent cones -- in our case the 3-annulus lemma -- will
then only work at scales at which the distance from $W$ to $L$ is much
greater than $d$. As we will see more precisely below, the effect of
this will be that either we can find a rescaling of $L$ which leads to
a good blowup sequence, or $L$ remains close to some cone $W_r$ on the
annulus $A_{r, r\rho_0}$ for $r < 1$, however the cones $W_r$ can slowly drift
away from $W$: we have an estimate of the form $d(W, W_r) < Cd|\log
r|$ for the distance between the cones $W,W_r$ inside
$\mathcal{W}_L$. 
We are only interested in $r > \delta^{-1}d$ in our application, so
$W_r$ still remains very close to $W$ and is in particular a
$Cd$-graph over $W$ on the annulus $A_{r, r\rho_0}$. 

Before we give the proof of Proposition~\ref{prop:mainelliptic}, we
need some preliminary results. 
Let us begin by stating the linear 3-annulus lemma that we will
use, which is the elliptic version of Lemma~\ref{lem:L23ann2}. For a
1-form $\eta$ on the cone $W_0$, we define the $L^2$ norm 
\[ \Vert \eta\Vert^2_{U} = \int_U |\eta|^2\,
  d\mathcal{H}^n, \]
where $U\subset W_0$ is any subset. 

\begin{lemma}[See Lemma 3.3 in Simon~\cite{SimonIsolated}] \label{lem:L23ann1}
  Let $d\in \mathbb{R}$. For all sufficiently small $0 < \lambda_1 < \lambda_2$
  depending on $W_0, d$, we can choose 
  sufficiently small $\rho_0 > 0$ (depending on
  $W_0, d, \lambda_i$) satisfying the following. Suppose that $\eta$ is a
  harmonic 1-form on the annulus $A_{1, \rho_0^3}$ with finite
  $L^2$-norm. Then
  \begin{itemize}
    \item[(i)] If $\Vert \eta\Vert_{A_{\rho_0, \rho_0^2}} \geq
      \rho_0^{d-\lambda_1+\frac{n}{2}} \Vert \eta\Vert_{A_{1, \rho_0}}$ then $\Vert
      \eta\Vert_{A_{\rho_0^2, \rho_0^3}} \geq \rho_0^{d-\lambda_2 + \frac{n}{2}} \Vert
      \eta\Vert_{A_{\rho_0, \rho_0^2}}$. 
    \item[(ii)] If $\Vert \eta\Vert_{A_{\rho_0, \rho_0^2}} \geq
      \rho_0^{-d-\lambda_1- \frac{n}{2}} \Vert \eta\Vert_{A_{\rho_0^2, \rho_0^3}}$ then $\Vert
      \eta\Vert_{A_{1, \rho_0}} \geq \rho_0^{-d-\lambda_2 - \frac{n}{2}} \Vert
      \eta\Vert_{A_{\rho_0, \rho_0^2}}$. 
    \end{itemize}
    In addition if under the decomposition of $\eta$ into homogeneous
    components there is no homogeneous degree $d$ term, then one of
    the conclusions in (i) or (ii) must hold. 
  \end{lemma}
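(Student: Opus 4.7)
The plan is to adapt Simon's approach~\cite[Lemma 3.3]{SimonIsolated} to harmonic 1-forms on the cone $W_0$. I begin with a modal decomposition: in coordinates $x = r\omega$ with $\omega$ on the smooth link $\Sigma = W_0 \cap S^{2n-1}$, separation of variables together with the Hodge decomposition of 1-forms on $\Sigma$ writes any harmonic 1-form $\eta$ with finite $L^2$-norm on an annulus as $\eta = \sum_i r^{\alpha_i}\psi_i(\omega)$, with finitely many $r^{\alpha_i}\log r$ corrections in degenerate cases. The exponents $\{\alpha_i\}$ form a discrete set determined by the spectrum of Hodge-theoretic operators on $\Sigma$, and the $\psi_i$ are $L^2$-orthogonal on $\Sigma$. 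Setting $\beta_i = \alpha_i + n/2$, $\gamma = d + n/2$, and $a_i = \Vert r^{\alpha_i}\psi_i\Vert_{L^2(A_{1,\rho_0})}^2$, direct integration gives
\[ f(k) := \Vert \eta\Vert_{L^2(A_{\rho_0^{k-1},\rho_0^k})}^2 = \sum_i a_i \rho_0^{2(k-1)\beta_i}, \quad k=1,2,3. \]

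Next I exploit the spectral gap. Using the discreteness of $\{\beta_i\}$ I choose $0 < \lambda_1 < \lambda_2$ so small that the narrow windows $(\gamma-\lambda_2,\gamma-\lambda_1]$ and $[\gamma+\lambda_1,\gamma+\lambda_2)$ avoid the spectrum, and I pick some $\epsilon > 0$ with $\{\beta_i\} \cap (\gamma-\epsilon,\gamma+\epsilon)\subseteq\{\gamma\}$ (and equal to $\emptyset$ under the hypothesis of (iii)), arranging $\epsilon > 3\lambda_2$. Take $\rho_0$ small depending on all parameters. Split $\eta = \eta_L + \eta_H$ into low ($\beta_i < \gamma-\epsilon$) and high ($\beta_i > \gamma+\epsilon$) modes, and write $P_k = \Vert\eta_L\Vert_{A_k}^2$, $Q_k = \Vert\eta_H\Vert_{A_k}^2$, $\mu_k = P_{k+1}/P_k$, $\nu_k = Q_{k+1}/Q_k$, $\theta_k = P_k/(P_k+Q_k)$. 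The modal formula yields the gap estimates $\mu_k \geq \rho_0^{2\gamma-2\epsilon}$ and $\nu_k \leq \rho_0^{2\gamma+2\epsilon}$; Cauchy--Schwarz applied separately to each part shows $\mu_k,\nu_k$ are nondecreasing in $k$; and by construction $f(k+1)/f(k) = \mu_k\theta_k + \nu_k(1-\theta_k)$.

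For part (i), the hypothesis $f(2)/f(1) \geq \rho_0^{2\gamma-2\lambda_1}$ and $\nu_1(1-\theta_1) \leq \rho_0^{2\gamma+2\epsilon} \ll \rho_0^{2\gamma-2\lambda_1}$ force $\mu_1\theta_1 \geq \tfrac{1}{2}\rho_0^{2\gamma-2\lambda_1}$. The identity $\theta_2 = \mu_1\theta_1/(\mu_1\theta_1 + \nu_1(1-\theta_1))$ then yields $\theta_2 \geq 1 - C\rho_0^{2\epsilon+2\lambda_1}$, so $\theta_2$ is very close to $1$, and consequently $f(3)/f(2) \geq \mu_2\theta_2 \geq \rho_0^{2\gamma-2\epsilon}(1 - C\rho_0^{2\epsilon+2\lambda_1}) > \rho_0^{2\gamma-2\lambda_2}$ by the margin $\epsilon > \lambda_2$. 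Part (ii) is symmetric (swap the roles of the inner and outer annuli). For (iii), suppose both conclusions fail, so $f(2)/f(1), f(3)/f(2) \in (\rho_0^{2\gamma+2\lambda_2}, \rho_0^{2\gamma-2\lambda_2})$. From the lower bound on $f(2)/f(1)$ I get $\mu_1\theta_1 \geq \tfrac{1}{2}\rho_0^{2\gamma+2\lambda_2}$, and the upper bound on $f(2)/f(1)$ gives $\theta_2 \geq \mu_1\theta_1/\rho_0^{2\gamma-2\lambda_2}$. Therefore $\mu_2\theta_2 \geq \rho_0^{2\gamma-2\epsilon} \cdot \rho_0^{2\gamma+2\lambda_2}/(2\rho_0^{2\gamma-2\lambda_2}) = \tfrac{1}{2}\rho_0^{2\gamma-2\epsilon+4\lambda_2}$. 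Since $\epsilon > 3\lambda_2$ and $\rho_0$ is small, this strictly exceeds $\rho_0^{2\gamma-2\lambda_2}$, contradicting $f(3)/f(2) \geq \mu_2\theta_2$.

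The main obstacle is the last step. The weighted-average formula allows $f(k+1)/f(k)$ to take any value in $[\nu_k,\mu_k]$, so the spectral gap alone does not forbid both ratios from landing in the window $(\rho_0^{2\gamma+2\lambda_2}, \rho_0^{2\gamma-2\lambda_2})$ at a single scale. The essential content is the quantitative propagation of $\theta_k$ toward $1$ encoded in $\theta_2 = \mu_1\theta_1/(\mu_1\theta_1+\nu_1(1-\theta_1))$, which together with $\mu_k \geq \rho_0^{2\gamma-2\epsilon}$ and the chosen margin $\epsilon > 3\lambda_2$ closes the contradiction; verifying the correct margin between $\epsilon$, the $\lambda_i$, and $\rho_0$ is the main book-keeping.
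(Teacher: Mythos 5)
Note first that the paper does not actually prove this lemma; it is quoted from Simon's Lemma 3.3, so your separation-of-variables argument is being compared against that classical route, which it essentially reproduces in discrete form. The skeleton (modal expansion, spectral gap, splitting into slow and fast modes, ratio bookkeeping) is viable, but as written there are three concrete gaps.

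First, the whole mechanism rests on your assertion that the homogeneous modes are $L^2$-orthogonal on cross-sections, so that $f(k)=\sum_i a_i\rho_0^{2(k-1)\beta_i}$ with $a_i\geq 0$; this positivity is what gives both the gap estimates $\mu_k\geq\rho_0^{2\gamma-2\epsilon}$, $\nu_k\leq\rho_0^{2\gamma+2\epsilon}$ and the identity $f=P+Q$. That orthogonality is genuinely nontrivial: for the scalar analogue it is \emph{false} (the pair $r^{a^+}\phi$, $r^{a^-}\phi$ with the same link eigenfunction $\phi$ is not orthogonal on spheres, and the cross term can be negative). For closed and coclosed $1$-forms it does hold, but only because of the indicial identity $a^+a^-=-\mu$, which makes $d(r^{a^+}\phi)$ and $d(r^{a^-}\phi)$ cross-sectionally orthogonal, together with a separate check for the modes coming from harmonic $1$-forms on the link; you also acknowledge possible $\log$ terms and then silently drop them from the formula. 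Without these verifications (or without Simon's alternative treatment of two-exponential coefficient functions), the Cauchy--Schwarz monotonicity and the gap bounds are unsupported.

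Second, parts (i) and (ii) must hold for forms that \emph{do} contain a degree-$d$ component, but your split $\eta=\eta_L+\eta_H$ (low $\beta_i<\gamma-\epsilon$, high $\beta_i>\gamma+\epsilon$) simply omits the $\beta=\gamma$ mode, so $f(k)=P_k+Q_k$ and the weighted-average formula fail in exactly the cases (i)--(ii) address. This is repairable: either use a three-way split and observe that the hypothesis $f(2)/f(1)\geq\rho_0^{2\gamma-2\lambda_1}$ forces $P_2\geq \tfrac12\rho_0^{-2\lambda_1}M_2$, or assign the $\gamma$-mode to the high group in (i) (and the low group in (ii)), where $\nu_1\leq\rho_0^{2\gamma}$ still suffices. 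Third, in (iii) you mis-negate the statement: failure of the two conclusions only gives $f(2)/f(1)>\rho_0^{2\gamma+2\lambda_2}$ and $f(3)/f(2)<\rho_0^{2\gamma-2\lambda_2}$, not the two-sided window you claim, and in particular the upper bound on $f(2)/f(1)$ that you use to bound $\theta_2$ is not available. The contradiction survives a small repair: $1-\theta_2=\nu_1(1-\theta_1)\,f(1)/f(2)\leq \rho_0^{2(\epsilon-\lambda_2)}$ follows from the lower bound alone, whence $f(3)/f(2)\geq\mu_2\theta_2\geq\rho_0^{2\gamma-2\epsilon}\bigl(1-\rho_0^{2(\epsilon-\lambda_2)}\bigr)>\rho_0^{2\gamma-2\lambda_2}$, and the condition $\epsilon>3\lambda_2$ is then not needed, $\epsilon>\lambda_2$ suffices.
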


Here we define the degree of a homogeneous one-form $\eta$ in terms of
the degree of its norm. That is, a homogeneous one-form $\eta$ of
degree $d$ satisfies $|\eta|(rz) = r^d |\eta|(z)$ where $r >
0$ and $z\in W_0$. Thus the graphs of homogeneous $\eta$ with degree
1 over $W_0$ are cones. 

Using Lemma~\ref{lem:graphlimit} we can obtain an analogous result for
Lagrangians $L$ close to $W_0$, as in \cite[Lemma 3.3]{SimonIsolated},
as long as the Lagrangian angle of $L$ is sufficiently close to that
of $W_0$. Let us first define the following distance function for a
Lagrangian $L$ from a cone $W\in \mathcal{W}_L$ on an open set $U$:
\[ D_W(L, U) = \left(\int_{L\cap U} d_W^2\, d\mathcal{H}^n\right)^{1/2}, \]
where $d_W(x)$ is the distance of $x$ from $W$. 

\begin{lemma}\label{lem:3ann2}
  We choose $0 < \lambda_1 < \lambda_2$ small, so that
  Lemma~\ref{lem:L23ann1} applies with $d=1,-1$ and $1-n$. We choose $\rho_0 > 0$ small enough for Lemma~\ref{lem:L23ann1}
  to apply with these values of $d$ and $\lambda_i$. Let $\lambda\in
  (\lambda_1, \lambda_2)$. 
 If $c_0, \kappa_0 > 0$ are chosen sufficiently small,
  depending on $W_0, \rho_0, \lambda$, then there is a constant $C > 0$
  depending on $W_0$ such that we have the following. 

  Let $L$ be a Lagrangian $c_0$-graph over $W_0$ on the annulus $A_{1,
    \rho_0^{3}}$.
  Suppose that $d(W, W_0) < \kappa_0$ in $\mathcal{W}_L$, and $D_{W}(L,
  A_{1, \rho_0}) < \kappa_0$. In addition suppose that
  \[ |\theta_L - \theta_W| \leq \kappa_0 D_W(L, A_{1, \rho_0})
    \text{ on the ball $B_1(0)$.} \]
  We then have the following.
  \begin{itemize}
    \item[(a)] There exists a $W' \in \mathcal{W}_L$ with $d(W, W') \leq
      CD_W(L , A_{1, \rho_0})$, and $\theta_{W'} = \theta_W$, such
      that at least one of the following possibilities holds:
      \begin{itemize}
      \item Growth setting:
        \[D_{W'}(\rho_0^{-2}L, A_{1, \rho_0}) \geq \rho_0^{-\lambda}
          D_{W'}(\rho_0^{-1}L, A_{1, \rho_0}), \]
        \item Decay setting (note on the right the cone is $W$ not $W'$):
           \[D_{W'}(\rho_0^{-1}L, A_{1, \rho_0}) \leq \rho_0^{\lambda}
          D_{W}(L, A_{1, \rho_0}). \]
          \end{itemize}
    \item[(b)] With $p=0,2$ or $n$, if $D_W(\rho_0^{-1}L, A_{1, \rho_0}) \geq \rho_0^{-(p+\lambda)}
      D_W(L, A_{1, \rho_0})$, then
      \[D_W(\rho_0^{-2}L, A_{1, \rho_0}) \geq \rho_0^{-(p+\lambda)}
        D_W(\rho_0^{-1}L, A_{1, \rho_0}).\]
   \end{itemize}
\end{lemma}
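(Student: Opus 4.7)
The plan is to establish Lemma~\ref{lem:3ann2} via a blow-up/contradiction argument in the spirit of Simon~\cite{SimonIsolated}, reducing the nonlinear claim to the linear 3-annulus estimate Lemma~\ref{lem:L23ann1}.

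\emph{Blow-up setup.} Suppose some assertion in (a) or (b) fails for a fixed $\lambda \in (\lambda_1, \lambda_2)$. Choose sequences $c_{0,i}, \kappa_{0,i} \to 0$ and counterexamples $L_i$ graphical over $W_i \in \mathcal{W}_{L_i}$, and set $D_i := D_{W_i}(L_i, A_{1, \rho_0})$. The $c_{0,i}$-graph assumption together with $c_{0,i}\to 0$ forces $D_i \to 0$, and after extraction we may assume $W_i \to W_0$ in the moduli. Write $L_i$ as the graph of an exact 1-form $\eta_i$ over $W_i$. The angle bound $|\theta_{L_i} - \theta_{W_i}| \leq \kappa_{0,i} D_i$ together with $\kappa_{0,i}\to 0$ places us in the setting of Lemma~\ref{lem:graphlimit} with normalizing scale $D_i$, so along a further subsequence the rescaled 1-forms $D_i^{-1}\eta_i$ converge locally smoothly on $W_0 \cap A_{1, \rho_0^3}$ to an exact harmonic 1-form $\eta_\infty = df_\infty$ of unit $L^2$-norm on $A_{1, \rho_0}$.

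\emph{Absorbing the degree-$1$ part for (a).} Decompose $\eta_\infty = \sum_d df^{(d+1)}$ into homogeneous harmonic pieces. The degree-$1$ one-form component $df^{(2)}$ comes from a degree-$2$ harmonic function $f^{(2)}$ on each component $V_j$ of $W_0$ with $\Delta f^{(2)} = 0$. By the integrability of each $V_j$, this function generates a family of special Lagrangian cones in $\mathcal{W}_{L_i}$, and the vanishing of $\Delta f^{(2)}$ makes the family \emph{angle-preserving} (the angle-changing direction corresponds to $d|x|^2$, for which $\Delta|x|^2 = 2n \neq 0$). Following this family for a parameter of size comparable to $D_i$ produces $W_i' \in \mathcal{W}_{L_i}$ with $\theta_{W_i'} = \theta_{W_i}$ and $d(W_i, W_i') \leq C D_i$. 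Rewriting $L_i$ as a graph over $W_i'$ and renormalizing, the blow-up limit becomes $\tilde\eta_\infty := \eta_\infty - df^{(2)}$, an exact harmonic 1-form on $W_0$ with no degree-$1$ homogeneous component.

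\emph{Linear dichotomy and conversion.} Apply Lemma~\ref{lem:L23ann1} to $\tilde\eta_\infty$ with parameter $d = 1$. Absence of the degree-$1$ component forces one of alternatives (i) or (ii) to hold strictly. Under the rescaling identity $D_{W'}(r^{-1}L, A_{1, \rho_0}) \sim r^{-1-n/2}\Vert\eta\Vert_{A_{r, r\rho_0}}$, the linear growth inequality at parameter $d = 1$ translates precisely to the nonlinear growth inequality in (a); similarly for decay (the fact that the stated decay alternative uses $D_W(L)$ rather than $D_{W'}(L)$ on the right is absorbed by the bound $d(W, W') \leq CD_W(L, A_{1, \rho_0})$). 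The resulting dichotomy contradicts the assumed counterexample, proving (a). For part (b), the same machinery transfers the hypothesis $D_W(\rho_0^{-1}L) \geq \rho_0^{-(p+\lambda)}D_W(L)$ to a linear growth condition at parameter $d = 1-p$ on the blow-up limit, giving $d = 1, -1, 1-n$ for $p = 0, 2, n$ respectively (precisely the values of $d$ for which Lemma~\ref{lem:L23ann1} is invoked in the choice of $\lambda_1, \lambda_2$). Lemma~\ref{lem:L23ann1}(i) then propagates the linear growth to the next annulus, and converting back gives the desired nonlinear inequality; no cone modification is needed here since $p \in \{0, 2, n\}$ does not coincide with the absorbable degree.

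\emph{Main obstacle.} The chief technical subtlety lies in the blow-up step: one must verify that the angle bound $|\theta_L - \theta_W| \leq \kappa_0 D_W(L)$, which is only sharp with respect to $D_W$, is strong enough after renormalization by $D_i^{-1}$ to yield a genuinely harmonic (not merely closed and divergence-free modulo $O(1)$ error) limit on $W_0$. Equivalently, the quadratic nonlinear remainders $Q_1, Q_2$ in the graphicality identities from Lemma~\ref{lem:graphlimit} must be controlled uniformly on each dyadic annulus $A_{\rho_0^k, \rho_0^{k+1}}$ in the range of interest, and absorbed into the error at the end. The integrability of $W_0$ enters a second time in the absorbing step, providing a finite-dimensional family of angle-preserving cone deformations that exactly matches the kernel (the degree-$1$ harmonic 1-forms) we must kill before the linear dichotomy of Lemma~\ref{lem:L23ann1} applies.
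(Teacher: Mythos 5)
Your overall architecture matches the paper's: a compactness/contradiction argument, extraction of an exact harmonic one-form via Lemma~\ref{lem:graphlimit}, absorption of the homogeneous degree-one part through the integrability assumption to produce the angle-preserving cone $W'$, and then the linear three-annulus Lemma~\ref{lem:L23ann1} with $d=1$ for (a) and $d=1-p$ for (b). However, there is a genuine gap in your blow-up step: you normalize by the outer-scale quantity $D_i = D_{W_i}(L_i, A_{1,\rho_0})$. Nothing in the hypotheses or in the negation of the conclusions bounds the distances on the two inner annuli by $C D_i$; on $A_{\rho_0,\rho_0^3}$ the only available control is the fixed $c_0$-graphicality. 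Concretely, the failure of the growth alternative bounds $D_{W_i}(\rho_0^{-2}L_i, A_{1,\rho_0})$ by $\rho_0^{-\lambda} D_{W_i}(\rho_0^{-1}L_i, A_{1,\rho_0})$, and the failure of the decay alternative bounds $D_{W_i}(L_i, A_{1,\rho_0})$ by $\rho_0^{-\lambda} D_{W_i}(\rho_0^{-1}L_i, A_{1,\rho_0})$: everything is controlled by the \emph{middle} scale, while the ratio of the middle scale to $D_i$ may tend to infinity. In part (b) this is not even a borderline scenario, since the hypothesis there asserts that the middle scale is at least $\rho_0^{-(p+\lambda)}$ times $D_i$, with no upper bound. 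Hence the rescaled forms $D_i^{-1}\eta_i$ need not be locally bounded in $L^2$ on $A_{1,\rho_0^3}$, your claim of local smooth convergence to a harmonic one-form there is unjustified, and without control on the inner annuli the limit cannot be fed into Lemma~\ref{lem:L23ann1}. The correct normalization, as in the paper, is by the middle-scale quantity $d_i = D_{W_i}(\rho_0^{-1}L_i, A_{1,\rho_0})$, which dominates all three annuli precisely because of the assumed failure of the dichotomy (respectively, the hypotheses in (b)).

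A related imprecision: your explanation of why the decay alternative carries $D_W(L, A_{1,\rho_0})$ rather than $D_{W'}(L, A_{1,\rho_0})$ on the right (``absorbed by $d(W,W')\le C D_W$'') is not the actual mechanism. With the correct normalization by $d_i$, after passing to $W'$ the renormalized middle-scale distance $d_i^{-1}D_{W'}(\rho_0^{-1}L_i, A_{1,\rho_0})$ may tend to zero, and in that case one lands in the decay alternative only because the subtracted degree-one component, being homogeneous, has comparatively larger norm on the outer annulus and therefore keeps $D_W(L, A_{1,\rho_0})$ of order $d_i$. This degenerate case has to be singled out before applying the linear dichotomy, and it is invisible under your normalization by $D_i$.
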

\begin{proof}
  We first prove (a),  arguing by contradiction.  Suppose that $\kappa_{0,i} \to 0$,
  and we have Lagrangians $L^i$ and $W^i\in \mathcal{W}_{L^i}$ satisfying the hypotheses
  \begin{enumerate}
    \item $L^i$ is a $c_0$-graph over $W_0$ on the annulus
      $A_{1,\rho_0^{3}}$,
    \item $d(W^i, W_0) < \kappa_{0,i}$, $D_{W^i}(L^i, A_{1,\rho_0}) <
      \kappa_{0,i}$,
    \item $|\theta_{L^i} - \theta_{W^i}| \leq \kappa_{0,i} D_{W_i}(L^i,
      A_{1,\rho_0})$ on $A_{1,\rho_0^{3}}$.
    \end{enumerate}
   We will show that for suitable $c_0,  C$ the conclusions will
   hold once $i$ is sufficiently large.
   
   If $c_0$ is chosen sufficiently small, then we can write the $L^i$
   as the graphs of one-forms $\eta_i$ over $W^i$ on the annulus
   $A_{1,\rho_0^{3}}$. Let us write
   \[ d_i = D_{W^i}(\rho_0^{-1} L^i, A_{1,\rho_0}). \]

   We can assume that for all $i$ the conclusions in part (a) fail
   with the choice $W' = W^i$
   and in particular this means that
   \[ \begin{aligned}
       D_{W^i}(\rho_0^{-2}L^i, A_{1, \rho_0}) &< \rho_0^{-\lambda}
       d_i, \\
       D_{W^i}(L^i, A_{1,\rho_0}) &< \rho_0^{-\lambda} d_i.
     \end{aligned} \]

   This implies that we have an $L^2$ bound $\Vert
   \eta_i\Vert_{A_{1,\rho_0^{3}}} \leq C_1 d_i$. Using
   Lemma~\ref{lem:graphlimit} we find that along a subsequence we have
   $d_i^{-1}\eta_i \to \eta_\infty$ locally uniformly on
   $A_{1,\rho_0^3}$, and $\eta_\infty$ is an exact harmonic one-form on
   $W_0$. We can write $\eta_\infty = \eta_\infty^0 + \tilde\eta_\infty$, where
   $\eta_\infty^0$ is a homogeneous degree one harmonic one-form and $\tilde\eta_\infty$ is
   orthogonal to the space of such homogeneous forms. By the
   integrability assumption for $W_0$ we can find
   $\tilde{W}^i\in\mathcal{W}_{L^i}$ that are given by the graphs of $d_i
   \eta_\infty^0$ over $W^i$ to leading order. Writing $L^i$ as the
   graphs of $\tilde{\eta}_i$ over $\tilde{W}^i$ we find that
   \[ d_i^{-1}\tilde{\eta}_i \to \tilde{\eta}_\infty, \]
   locally uniformly. 
   Since $\tilde{\eta}_\infty$ has no degree one component,
   Lemma~\ref{lem:L23ann1}, applied with $d=1$, implies that we must have either
   \[\label{eq:etainf1} \Vert \tilde{\eta}_\infty\Vert_{A_{\rho_0^2, \rho_0^3}} \geq
     \rho_0^{1-\lambda_2+\frac{n}{2}} \Vert
     \tilde{\eta}_\infty\Vert_{A_{\rho_0, \rho_0^2}}, \]
   or
   \[ \label{eq:etainf2} \Vert \tilde{\eta}_\infty\Vert_{A_{1, \rho_0}} \geq
     \rho_0^{-1-\lambda_2-\frac{n}{2}} \Vert
     \tilde{\eta}_\infty\Vert_{A_{\rho_0, \rho_0^2}}. \]

   The local uniform convergence $d_i^{-1}\tilde\eta_i \to
   \tilde\eta_\infty$, and scaling, implies that we have
   \[ \lim_{i\to\infty} d_i^{-1}D_{\tilde{W}_i}(\rho_0^{-1} L^i, A_{1,
       \rho_0}) = \rho_0^{-1-\frac{n}{2}}
     \Vert \tilde\eta_\infty\Vert_{A_{\rho_0, \rho_0^2}}, \]
   and
   \[ \begin{aligned}
       \lim_{i\to\infty} d_i^{-1} D_{\tilde{W}_i}(L^i, A_{1,\rho_0}) &\geq 
       \Vert \tilde\eta_\infty\Vert_{A_{1, \rho_0}}, \\
       \lim_{i\to\infty} d_i^{-1} D_{\tilde{W}_i}(\rho_0^{-2} L^i, A_{1,\rho_0}) &\geq \rho_0^{-2-n}
       \Vert \tilde\eta_\infty\Vert_{A_{\rho_0^2, \rho_0^3}}.
     \end{aligned} \]
   It follows, using \eqref{eq:etainf1} or \eqref{eq:etainf2}, that we
   have either
   \[ \label{eq:etainf3} \lim_{i\to\infty} d_i^{-1} D_{\tilde{W}_i}(L^i, A_{1,\rho_0}) &\geq \rho_0^{-\lambda_2}
     \lim_{i\to\infty} d_i^{-1} D_{\tilde{W}_i}(\rho_0^{-1} L^i,
     A_{1,\rho_0}), \]
   or
   \[ \label{eq:etainf4}
     \lim_{i\to\infty} d_i^{-1} D_{\tilde{W}_i}(\rho_0^{-2} L^i, A_{1,\rho_0}) &\geq \rho_0^{-\lambda_2}
     \lim_{i\to\infty} d_i^{-1}D_{\tilde{W}_i}(\rho_0^{-1} L^i,
     A_{1,\rho_0}). \]
   If $d_i^{-1} D_{\tilde{W}_i}(\rho_0^{-1}L^i, A_{1,\rho_0}) \to 0$,
   then for large $i$ we are in the ``Decay setting'' of (a). Otherwise the
   inequalities \eqref{eq:etainf3}, \eqref{eq:etainf4} imply that if
   $\lambda < \lambda_2$, then for sufficiently large $i$, if $\rho_0$
   is chosen sufficiently small (depending on $C, \lambda$), we are in
   either the growth or decay setting of (a).

   The proof of (b) is very similar, arguing by contradiction. As
   in part (a) we assume that we have a sequence $L^i$ for which the
   conclusion fails. Writing the $L^i$ as graphs of $\eta_i$ over
   $W_i$, and normalizing by $d_i = D_{W_i}(\rho_0^{-1}L^i,
   A_{1,\rho_0})$, we can extract a limiting harmonic one-form
   $d_i^{-1}\eta_i \to \eta_\infty$ on $W_0$, which satisfies
   \[ \rho_0^{-1-\frac{n}{2}} \Vert \eta_\infty\Vert_{A_{\rho_0, \rho_0^2}} = 1, \]
   and at the same time 
   \[ \begin{aligned}
       \Vert \eta_\infty \Vert_{A_{1,\rho_0}} &\leq \rho_0^{p-\lambda-1-\frac{n}{2}} \Vert \eta_\infty\Vert_{A_{\rho_0, \rho_0^2}}, \\
       \Vert \eta_\infty \Vert_{A_{\rho_0^2, \rho_0^3}} &\leq
       \rho_0^{1+\frac{n}{2}-p-\lambda} \Vert \eta_\infty\Vert_{A_{\rho_0, \rho_0^2}}.
     \end{aligned} \]
   These contradict Lemma~\ref{lem:L23ann1} part (i), with $d=1-p$. 
\end{proof}

In order to apply this result to rescalings $\rho_0^{-k}L$, we need to
ensure that these rescalings 
remain close to $W_0$ on the annulus $A_{1,\rho_0^3}$. In the proof of
Proposition~\ref{prop:mainelliptic} we will be
applying this result to Lagrangians given by
$L = L_{-1/2}$ for a flow $L_t$ obtained from $L^0_t$ as in (\dag). The following
results shows that if $L_{-1}$
is sufficiently close to $W_0$, then $\rho_0^{-k}L_{-1/2}$ will be close to a
cone as long as its Gaussian area is sufficiently close to that of
$W_0$. This will be used below to ensure that we can keep applying
Lemma~\ref{lem:3ann2} at smaller and smaller scales. Note that to
apply the Lemma, we also need the angle $|\theta_L - \theta_W|$ to be
very small relative to the distance from $L$ to $W$. At the same time,
our goal is to control the distance of $L$ to a cone, so 
at scales where this distance is relatively small compared to the
angle bound, our task is already done. See the proof of
Proposition~\ref{prop:mainelliptic} below for the detailed argument. 

\begin{lemma}\label{lem:conescale}
  Let $c_0 > 0$ be small. Given $\epsilon > 0$,
  there are $\delta, \kappa_1 > 0$ depending on
  $c_0, \epsilon, W_0$, such that we have the following: suppose that
  $L_{-1}$ is a $\delta^2$-graph over $W_0$ on $A_{\delta^{-1}, \delta}$, $|x|
  < \delta$, and let $L=L_{-1/2}-x$. Suppose that for some $r < 1$ the rescaled surface $\tilde{L} =
  r^{-1}L$ satisfies $\Theta(\tilde{L}) >
  \Theta(W_0) - \kappa_1$. In addition suppose that $\tilde{L}$ is a
  $c_0$-graph over $W_0$ on the annulus $A_{2,1}$. Then $\tilde{L}$ is
  an $\epsilon^2$-graph over $W\in\mathcal{W}_L$ over the annulus
  $A_{\epsilon^{-1}, \epsilon}$. 
\end{lemma}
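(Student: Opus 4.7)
The plan is to argue by contradiction and use compactness. Assume the conclusion fails: there exist sequences $\delta_i,\kappa_{1,i}\to 0$, Lagrangian mean curvature flows $L^i_t$ satisfying (\dag), centers $|x_i|<\delta_i$, scales $r_i\in (0,1)$, and rescaled slices $\tilde L^i:=r_i^{-1}(L^i_{-1/2}-x_i)$ that are $c_0$-graphs over $W_0$ on $A_{2,1}$ with $\Theta(\tilde L^i)>\Theta(W_0)-\kappa_{1,i}$, yet are not $\epsilon^2$-graphs over any element of $\mathcal W_{L^i}$ on $A_{\epsilon^{-1},\epsilon}$. I will extract a limit that is forced to be a small cone perturbation of $W_0$ and upgrade the convergence enough to reach a contradiction.

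First I would verify the initial angle condition of Proposition~\ref{prop:N1} for $L^i_t$. The weighted integral $\int_{L^i_{-1}}|\theta-\theta_{W_0}|^2 e^{-|x|^2/4}$ is handled by splitting into $B_{\delta_i}$ (bounded area ratios give an $O(\delta_i^n)$ contribution), the annulus $A_{\delta_i^{-1},\delta_i}$ (the $\delta_i^2$-graph hypothesis forces $|\theta-\theta_{W_0}|=O(\delta_i^2)$, so the contribution is $O(\delta_i^4)$), and the exterior (where the Gaussian is super-exponentially small). Hence $L^i_t\to W_0$ as Radon measures uniformly on $[-1,0)$. Applying the monotonicity inequality \eqref{eq:thetamon} with heat kernel centered at $(x_i,-1/2+r_i^2)$ on $[-1,-1/2]$, and using the same splitting at $t=-1$ (now for a Gaussian of scale $\sqrt{1/2+r_i^2}\in[1/\sqrt 2,\sqrt{3/2}]$ centered very close to the origin), I obtain, after the change of variables $y=r_i^{-1}(z-x_i)$,
\[
\int_{\tilde L^i}|\theta-\theta_{W_0}|^2 e^{-|y|^2/4}\,d\mathcal H^n \longrightarrow 0.
\]

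Next I would set up the parabolic rescaling $\hat L^i_s:=r_i^{-1}(L^i_{-1/2+r_i^2 s}-x_i)$, a Lagrangian mean curvature flow whose domain of existence always contains a fixed interval ending at $s=0$ (e.g.\ $[-1/2,0]$, since $r_i<1$). A standard time rescaling reduces the setup to that of Proposition~\ref{prop:N1}, so along a subsequence $\hat L^i_s$ converges as Radon measures to a static integral special Lagrangian current $L_\infty$ with $\theta_{L_\infty}=\theta_{W_0}$. To force $L_\infty$ to be a cone I would apply Huisken's monotonicity centered at $(0,1)$ in $\hat L^i$-coordinates: the Gaussian area $\Theta(\hat L^i_{-\alpha},0,\sqrt{1+\alpha})$ at $s=-\alpha$ (say $\alpha=1/2$) converges to $\Theta(W_0)$ by monotonicity applied to the original flow, and $\Theta(\tilde L^i)\to\Theta(W_0)$ by hypothesis, so the total shrinker defect $\int_{-\alpha}^0\int|H+y^\perp/(2(1-s))|^2 \rho_{0,1}\,d\mathcal H^n\,ds$ is $o(1)$. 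Passing to the limit at the varifold level, using $H_{L_\infty}=0$, forces $y^\perp\equiv 0$ on $L_\infty$, so $L_\infty$ is a cone with vertex at the origin. Since $\tilde L^i$ is $c_0$-graphical over $W_0$ on $A_{2,1}$, interior parabolic regularity for LMCF near a smooth time-independent limit promotes the convergence to $C^{k_0}$ there, so $L_\infty\cap A_{2,1}$ is a $c_0$-graph over $W_0$; being a cone with vertex $0$, $L_\infty$ is then a small special-Lagrangian-cone perturbation of $W_0$ and lies in $\mathcal W_{L^i}$ for all sufficiently large $i$.

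The final step is to upgrade the Radon convergence to $C^{k_0}$-convergence on $A_{\epsilon^{-1},\epsilon}$. Since $L_\infty$ is smooth of multiplicity one there, White's $\epsilon$-regularity theorem for Brakke flows, applied to $\hat L^i_s$ near $s=0$ at each point $p\in L_\infty\cap A_{\epsilon^{-1},\epsilon}$ at a sufficiently small scale, yields smooth convergence on a neighborhood of $p$; a finite covering then shows that $\tilde L^i$ is eventually an $\epsilon^2$-graph over $L_\infty\in\mathcal W_{L^i}$ on $A_{\epsilon^{-1},\epsilon}$, contradicting the standing assumption. The main technical obstacle I anticipate is the coning step: one must choose Huisken centers and rescalings so that the monotonicity windows remain valid uniformly for all $r_i\in(0,1)$ (in particular accommodating both $r_i\to r_\infty>0$ and $r_i\to 0$), and one must justify passing the shrinker defect to the limit at the varifold level given only the uniform angle and area-ratio bounds supplied by (\dag).
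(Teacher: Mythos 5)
Your proposal is correct and follows essentially the same route as the paper, which compresses the middle of the argument into a citation: after transporting the angle smallness and Gaussian density to the shifted center $(x_i,-1/2+r_i^2)$ via Huisken monotonicity, the paper simply invokes the argument of Neves' Theorem A together with Proposition~\ref{prop:N1} to conclude that $r_i^{-1}(L^i_{-1/2}-x_i)$ subconverges to a special Lagrangian cone, identifies it with some $W\in\mathcal{W}_L$ using the $c_0$-graphicality on $A_{2,1}$, and upgrades to $\epsilon^2$-graphicality exactly as you do. The one step you flag as delicate (lower semicontinuity of the shrinker defect) can be sidestepped: your sandwich already shows $\Theta(\hat L^i_s,0,\sqrt{1-s})\to\Theta(W_0)$ for all $s\in[-\alpha,0]$, so the static limit, being a stationary special Lagrangian current, has constant Gaussian density ratio on an interval of scales, and the rigidity of the static monotonicity formula forces $x^\perp\equiv 0$, i.e.\ the limit is a cone.
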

\begin{proof}
  We can argue by contradiction. Suppose that for a given small $c_0,
  \epsilon > 0$ no $\delta > 0$ works, and so we have a sequence
  $L^i_t$ satisfying that $L^i_{-1} \to W_0$, while
  \[ \int_{L^i_{-1/2}} \rho_{x_i, -1/2+r_i^2}\, d\mathcal{H}^n> \Theta(W_0) - 1/i,  \]
  for some $x_i \to 0$ and $r_i\in (0,2)$. 
  The condition $L^i_{-1}\to W_0$ implies that
  \[ \int_{L^i_{-1}} \rho_{x_i, -1/2+r_i^2} d\mathcal{H}^n&\to \Theta(W_0) \\
      \int_{L^i_{-1}} |\theta - \theta_{W_0}|^2\, \rho_{x_i, -1/2 +
        r_i^2} d\mathcal{H}^n&\to 0. \]
   The argument in Neves~\cite[Theorem A]{Neves1} together with
   Proposition~\ref{prop:N1} implies that along a subsequence the Lagrangians
   $r_i^{-1}(L^i_{-1/2} - x_i)$ converge to a special Lagrangian
   cone $C$. By assumption the $r_i^{-1}(L^i_{-1/2}-x_i)$ are $c_0$-graphs over
   $W_0$ on the annulus $A_{2,1}$, and so if $c_0$ is sufficiently
   small, then we must have $C = W\in \mathcal{W}_L$. In particular for
   large $i$, $r_i^{-1}(L^i_{-1/2} - x_i)$ is an $\epsilon^2$-graph
   over $W$ on $A_{\epsilon^{-1}, \epsilon}$ as required. 
 \end{proof}

\begin{lemma}\label{lem:growthbound2}
  Let $L_t$ be a flow obtained from $L^0_t$ by rescaling as in (\dag). Suppose that
  $L_{-1}$ is a $\delta^2$-graph over $W_0$ on $A_{\delta^{-1},
    \delta}$ for a sufficiently small $\delta > 0$. 
  Suppose that $\kappa_0$ in Lemma~\ref{lem:3ann2} is chosen
  sufficiently small, depending on $L^0_t$. 
  Given an $\epsilon > 0$ sufficiently small depending on $\kappa_0$,
  we choose $\kappa_1$,
  so that  Lemma~\ref{lem:conescale} applies.   For $|x| < 1$ define
\[ \label{eq:r01} r_0(x) = \inf\{ R\in (0,1) \,:\, \Theta( r^{-1} (L_{-1/2}-x)) >
  \Theta(W_0) - \kappa_1, \text{ for all }r\in (R, 1)\}. \]
There is a large
constant $k_1 > 0$, depending on $W_0, \kappa_1$, such that the
following holds. Suppose that for some $r \in (\rho_0^{-k_1} r_0(0),
1)$ the Lagrangian $L = r^{-1} L_{-1/2}$ satisfies the hypotheses of
Lemma~\ref{lem:3ann2}. Then 
  \begin{itemize}
    \item[(a)] The growth rate $p=n$ cannot appear in part (b) of
      Lemma~\ref{lem:3ann2} applied to $L$. 
    \item[(b)] If the link of $W_0$ is connected, then $p=2$ can also
      not appear.
    \item[(c)] Suppose that there is no good blowup sequence along the
      flow $L^0_t$ and that $r_0(0)\leq
      r_0(x)$ for all $|x| < 1$. Then the growth rate $p=0$ can also not
      appear.
    \item[(d)] Suppose that the growth rate $p=0$ occurs. Then $r_0(0) > 0$ and the 
      rescaling $r_0(0)^{-1}L_{-1/2}$ satisfies
      \[ \int_{(B_2\setminus B_1)\cap r_0(0)^{-1}L_{-1/2}} |x^\perp|^2\,
        d\mathcal{H}^n > c_1, \]
      where $c_1$ depends on $W_0$ and the choice of $\kappa_1$. In
      addition for all $r \in (r_0(0), 1)$ the scaling
      $r^{-1}L_{-1/2}$ is a $c_0$-graph over $W_0$ on
      $A_{1,1/2}$. 
   \end{itemize}
\end{lemma}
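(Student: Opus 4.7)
The plan is to iterate Lemma \ref{lem:3ann2}(b) from scale $r$ downward through scales $\rho_0 r, \rho_0^2 r, \ldots, \rho_0^{k_1} r$, choosing $k_1$ large. The constraint $r > \rho_0^{-k_1} r_0(0)$ ensures $\rho_0^{k_1} r > r_0(0)$, and by the definition of $r_0(0)$ combined with Lemma \ref{lem:conescale}, at each intermediate scale the rescaling $\rho_0^{-k}L = (\rho_0^k r)^{-1} L_{-1/2}$ remains a $c_0$-graph over some $W \in \mathcal{W}_L$, preserving the first hypothesis of Lemma \ref{lem:3ann2}. The angle condition $|\theta_L - \theta_W| \leq \kappa_0 D_W$ propagates under iterated growth because $D_W$ only increases, and the global bound $|\theta - \theta_W| < d$ keeps the pointwise angle uniformly small. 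Under growth hypothesis $p$, one obtains
\[  D_W(\rho_0^{-k} L, A_{1, \rho_0}) \geq \rho_0^{-k(p+\lambda)} D_W(L, A_{1, \rho_0}), \]
together with the uniform upper bound $D_W(\rho_0^{-k}L, A_{1, \rho_0}) \leq C$ coming from area-ratio bounds.

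For part (a) with $p = n$: passing to a subsequential blowup limit via Lemma \ref{lem:graphlimit}, I would extract an exact harmonic one-form $\eta_\infty$ on $W_0$ whose spectral decomposition in the sense of Lemma \ref{lem:decomp1} is concentrated in degrees $d \leq 1 - n - \lambda$. Under the regularity bounds (i) or (ii) of Lemma \ref{lem:decomp1} and the static-mode classification of Lemma \ref{lem:static1}, no such nontrivial admissible mode exists on $W_0$, producing the contradiction. For part (b), the same extraction under $p = 2$ yields a mode of degree $d \leq -1 - \lambda$; when $W_0$ has connected link, integrability combined with Definition \ref{defn:stableSL} and the structure of admissible harmonic one-forms on $W_0$ rule out such low-degree modes, again a contradiction.

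For part (c) with $p = 0$: the blowup limit is a harmonic one-form of degree $d \leq 1 - \lambda$. After absorbing its degree-$1$ (cone deformation) component into a new $W' \in \mathcal{W}_L$ via Lemma \ref{lem:3ann2}(a), the remainder is a nontrivial degree-$0$ translation mode, which identifies drifting centers $x_k$. Recentering the rescalings as $(\rho_0^k r)^{-1}(L_{-1/2} - x_k)$ and using the hypothesis $r_0(0) \leq r_0(x)$ to ensure $\rho_0^k r > r_0(x_k)$ (so that the $c_0$-graph property persists around the shifting centers), a diagonal extraction combined with Proposition \ref{prop:N1} yields a limit $L_\infty$ that is an exact special Lagrangian current asymptotic to $W_0$ but not a translate of $W_0$ --- precisely a good blowup sequence in the sense of Definition \ref{defn:goodblowup}, contradicting the standing assumption.

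Part (d) is the complementary situation: iteration proceeds freely until one hits the threshold scale $r_0(0)$, beyond which the Gaussian area of $r_0(0)^{-1}L_{-1/2}$ has dropped by at least $\kappa_1$ by construction. Since $x^\perp = 0$ characterises scale-invariant configurations and a Huisken-type monotonicity for the time slice (applied in the near-minimal limit) quantifies the area drop by $\int |x^\perp|^2$, one obtains the lower bound $\int_{(B_2 \setminus B_1) \cap r_0(0)^{-1} L_{-1/2}} |x^\perp|^2 \geq c_1$ for a constant $c_1$ depending on $W_0$ and $\kappa_1$; the $c_0$-graph property on $A_{1,1/2}$ for $r \in (r_0(0), 1)$ is an immediate output of Lemma \ref{lem:conescale}. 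The principal technical obstacle throughout is verifying the propagation of the angle and $D_W < \kappa_0$ hypotheses across all $k_1$ iterations --- coupling the smallness of $d$ to that of $\kappa_0$ --- and in part (c), ensuring that the successive recenterings stay within the region where the $r_0(x_k)$ estimates apply so the diagonal extraction actually converges.
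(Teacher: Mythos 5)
Your setup and the propagation of the hypotheses of Lemma~\ref{lem:3ann2} through the iteration (graphicality via Lemma~\ref{lem:conescale} and the definition of $r_0(0)$, the angle condition persisting because $D_W$ grows) match the paper. The gap is in the contradiction mechanism for (a) and (b). You claim that the blowup limit is a harmonic one-form on $W_0$ concentrated in degrees $d\leq 1-n-\lambda$ and that ``no such nontrivial admissible mode exists,'' citing Lemma~\ref{lem:decomp1} and Lemma~\ref{lem:static1}. This nonexistence is false: harmonic one-forms of arbitrarily negative homogeneity exist on annuli in any special Lagrangian cone (for the union of two planes, the degree $2-n$ harmonic functions are precisely what govern the Lawlor neck's asymptotics, which is why the paper notes that the rate $2-n$ in Proposition~\ref{prop:slowdecay}(a) is sharp), and Lemmas~\ref{lem:decomp1} and \ref{lem:static1} classify drift-heat solutions in the global Gaussian $L^2$ space, not harmonic forms on annuli, so they cannot be invoked here. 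The paper's argument is necessarily nonlinear: iterate the growth down to the last scale $k_2$ with $r\rho_0^{k_2}>r_0(0)$, rescale to $\tilde L=\rho_0^{-k_2}L$, and note that the accumulated growth forces $\tilde L$ to lie within $\sim\rho_0^{(n+\lambda)j}$ (resp. $\rho_0^{(2+\lambda)j}$) of $W$ at scale $\rho_0^{-j}$; letting $\delta,\kappa_0\to 0$ and $k_1\to\infty$ along putative counterexamples, the $\tilde L$ converge to an \emph{exact special Lagrangian current} asymptotic to $W_0$ at rate $2-n-\lambda$ (resp. at a negative rate), which is not $W_0$ because its Gaussian density has dropped by $\kappa_1$ at unit scale (definition of $r_0(0)$). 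The contradiction then comes from Proposition~\ref{prop:slowdecay}(a) (resp. (b)), the exactness-based rigidity statement, which your proposal never uses; a linear mode count cannot produce the contradiction.

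Part (c) has a related gap: exhibiting a nontrivial degree-$0$ (translation) mode in the linearization does not show that the limit fails to be a translate of $W_0$ --- a surface drifting toward $W_0+x$ produces exactly such a mode. In the paper the non-translate conclusion comes from the hypothesis $r_0(0)\leq r_0(x)$ for all $|x|<1$: at the bottom scale one has $\Theta\big(a\tilde L - a(r\rho_0^{k_2})^{-1}x\big)\leq \Theta(W_0)-\kappa_1$ for some $a\in(1,\rho_0^{-1})$ and every such $x$, so any limit has a Gaussian density drop relative to \emph{every} center and cannot be a translate of $W_0$, yielding a good blowup in the sense of Definition~\ref{defn:goodblowup} and the contradiction; no recentered iteration is needed, since Lemma~\ref{lem:3ann2}(b) is applied with a fixed $W$, and you use $r_0(0)\leq r_0(x)$ only for graphicality, not for this key point. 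For (d), the time slice is not minimal (there is no mean curvature bound), so a monotonicity-deficit identity cannot be applied to $L$ directly, and even for the limiting special Lagrangian the deficit controls $\int |x^\perp|^2|x|^{-n-2}$ over an unbounded region rather than over $B_2\setminus B_1$; the paper argues instead by compactness: the limit is a special Lagrangian that is not a cone (density drop at unit scale versus density $\Theta(W_0)$ at infinity), hence by unique continuation not conical already on $B_2\setminus B_1$, which gives the $c_1$ lower bound, and this transfers to $\tilde L$ by the smooth convergence on the annulus coming from graphicality. Your final claim about the $c_0$-graph property from Lemma~\ref{lem:conescale} is correct.
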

\begin{proof}
  Let us first consider the statement (a), with growth rate $p=n$. We argue
  by contradiction, so let us suppose
  that for some $r > \rho_0^{-k_1} r_0(0)$, 
  the surface $L=r^{-1}L_{-1/2}$ satisfies
\[ \label{eq:dw30} D_W(\rho_0^{-2}L, A_{1,\rho_0}) \geq
\rho_0^{-(n+\lambda)}D_W(\rho_0^{-1}L, A_{1,\rho_0}). \]
We will reach a contradiction if the constants $\kappa_0, \epsilon, \delta$ are sufficiently small, and $k_1$ is large.

We suppose that the hypotheses of Lemma~\ref{lem:3ann2} hold,
so $L$ is a $c_0$-graph over $W_0$ on $A_{1,\rho_0^3}$, $d(W,W_0) <
\kappa_0$, $D_W(L, A_{1,\rho_0}) < \kappa_0$, and $|\theta_L-\theta_W|
\leq \kappa_0 D_W(L, A_{1,\rho_0})$ on $B_1(0)$. We claim that we can
then keep iterating the conclusion of (b) in Lemma~\ref{lem:3ann2} applied to
$\rho_0^{-k}L$, as long as $\rho_0^k r > r_0(0)$. To see this we just
need to ensure that the hypotheses of the Lemma continue to hold (with
the same choice of $W$). We argue by induction, supposing that  the
hypotheses of the Lemma, together with the growth assumption (b), with
$p=n$ hold for $\rho_0^{-k}L$, and $\rho_0^k r > r_0(0)$.

First, the condition $|\theta_L - \theta_W| \leq \kappa_0
D_W(\rho_0^{-(k+1)}L, A_{1,\rho_0})$ continues to hold by the growth
  assumption. Since $\rho_0^{-k}L$ is a $c_0$-graph over $W_0$ on
  $A_{1,\rho_0^3}$, by Lemma~\ref{lem:conescale} we know that
  $\rho_0^{-k}L$ is an $\epsilon^2$-graph over some $W'\in
  \mathcal{W}$ on the annulus $A_{\epsilon^{-1}, \epsilon}$. But then
  $D_W(\rho_0^{-k}L, A_{1,\rho_0}) < \kappa_0$ implies that $d(W, W')
  < C(\kappa_0 + \epsilon^2)$ for some $C > 0$, and so also $d(W_0,
  W') < C(\kappa_0 + \epsilon^2)$ for a larger $C$. If $\kappa_0,
  \epsilon$ are sufficiently small (depending on $W_0, c_0$), then we
  can conclude that $\rho_0^{-k-1}L$ is also a $c_0$-graph over
  $W_0$ on the annulus $A_{1,\rho_0^3}$.

  It remains to argue that we must still have $D_W(\rho_0^{-k-1}L,
  A_{1,\rho_0}) < \kappa_0$. If this were not the case, then we would
  have
  \[ \begin{aligned}
      D_W(\rho_0^{-k-1}L , A_{1,\rho_0}) &\geq \kappa_0, \\
      D_W(\rho_0^{-k-1}L , A_{1,\rho_0}) &\geq \rho_0^{-(n+\lambda)}
        D_W(\rho_0^{-k}L, A_{1,\rho_0}).
      \end{aligned} \]
  If $\epsilon$ is chosen sufficiently small (i.e. $\kappa_1$ is also
  chosen small), depending on $\kappa_0$,
  then this contradicts that $\rho_0^{-k}L$ is an $\epsilon^2$-graph
  over a cone $W'$ on the annulus $A_{\epsilon^{-1}, \epsilon}$.

  It follows then that the growth condition in part (b) of
  Lemma~\ref{lem:3ann2} applies to $\rho_0^{-k} L$ as long as
  $r\rho_0^k > r_0(0)$. Write $k_2$ for the largest such $k$, so
  that in particular $k_2 \geq k_1$. Note that $k_2$ is necessarily finite, since we
  cannot have the growth condition holding for all $k$ while also $D_W(\rho_0^{-k-1}L, A_{1,\rho_0}) < \kappa_0$. Equivalently, we have $r_0(0) > 0$. 
  Consider the rescaled surface $\tilde{L} =
  (r \rho_0^{k_2})^{-1}L_{-1/2} = \rho_0^{-k_2}L$. By the definition
  of $r_0(0)$ we then have
  \[ \label{eq:Thetasmall10} \Theta(\lambda \tilde{L}) \leq \Theta(W_0) - \kappa_1, \]
  for some $\lambda\in (1, \rho_0^{-1})$. The growth condition at each scale implies that we have
  \[ D_W(\rho_0^{j}\tilde{L}, A_{1,\rho_0}) \leq \rho_0^{n+\lambda} D_W(\rho_0^{j-1}\tilde{L}, A_{1\rho_0}),  \]
  for $j=1,\ldots,k_2$. If we have a sequence of counterexamples with $\delta, \kappa_0\to 0$ and $k_2\geq k_1\to\infty$, then the corresponding rescaled surfaces $\tilde{L}$ will converge to a special Lagrangian current $L_\infty$ that is asymptotic to $W_0$ at rate $2-n-\lambda$. Note that $L_\infty \not= W_0$ because of \eqref{eq:Thetasmall10}. This contradicts Proposition~\ref{prop:slowdecay}.
  Therefore, if $\delta$ is chosen sufficiently small, then our
  original assumption \eqref{eq:dw30} leads to a contradiction. This
  shows the required result for $p=n$.

  To prove (b), for $p=2$ in the case when $W_0$ is connected, the
  argument is entirely analogous, using part (b) of Proposition~\ref{prop:slowdecay}.

  To see
  the claim (c) in the case when there is no good blowup along the
  flow $L^0_t$, we also argue in a similar way. We suppose that with a large
  $k_1$, small $\delta, \kappa_0$, and
for some $r > \rho_0^{-k_1} r_0(0)$, 
  the surface $L=r^{-1}L_{-1/2}$ satisfies
\[ \label{eq:dw30.1} D_W(\rho_0^{-2}L, A_{1,\rho_0}) \geq
  \rho_0^{-\lambda}D_W(\rho_0^{-1}L, A_{1,\rho_0}). \]
  Just as before, we can iterate the growth property in
  Lemma~\ref{lem:3ann2} part (b), applied to $\rho_0^{-k}L$, and so we
  have
  \[ \label{eq:dw31}
    D_W(\rho_0^{-k-1}L, A_{1,\rho_0}) \geq \rho_0^{-\lambda}
    D_W(\rho_0^{-k}L, A_{1,\rho_0}), 
  \]
    as long
  as $r\rho_0^k > r_0(0)$, and as a consequence we have $r_0(0) > 0$. 
  We now let $k_2\geq k_1$ be the largest value of
  $k$ so that we still have $r\rho_0^k > r_0(0)$. The rescaled surface $\tilde{L} =
  \rho_0^{-k_2}L$ now satisfies
  \[ \Theta(\tilde L) &> \Theta(W_0) - \kappa_1, \]
  but at the same time, since $r\rho_0^{k_2+1} \leq r_0(0) \leq
  r_0(x)$ for all $|x| < 1$, we have
  \[  \label{eq:dw32} \Theta(a\tilde{L} - a(r\rho_0^{k_2})^{-1}x) \leq
    \Theta(W_0) - \kappa_1, \]
  for some $a\in(1,\rho_0^{-1})$. 
  We claim that this is not possible for sufficiently small $\delta, \kappa_0$
  and large $k_2$ if there is no good blowup along $L^0_t$. Indeed,
  if we had such rescalings $\tilde{L}$ for arbitrarily small $\delta$
  and large $k_2$, then in the limit we would obtain a blowup limit
  that would be a special Lagrangian current. This current would be
  asymptotic to $W_0$ at rate $\lambda+2$ because of \eqref{eq:dw31} (and because $d(W, W_0)< \kappa_0$), but
  it would not be equal to a translate of $W_0$ because of
  \eqref{eq:dw32}.

  Finally to see the claim (d), we argue similarly to (b). As in case (b),
  if the growth case with $p=0$ occurs, then $r_0(0) > 0$ and the
  rescaled surface $\tilde{L} =\rho_0^{-k_2}L$ satisfies
  \[ \label{eq:Th20} \Theta(\tilde{L}) > \Theta(W_0) - \kappa_1 \text{ and }
    \Theta(a\tilde{L}) \leq \Theta(W_0) - \kappa_1, \]
  for some $a\in (1,\rho_0^{-1})$. Recall that from 
  Lemma~\ref{lem:conescale} we have that the rescalings
  $r^{-1}L_{-1/2}$ are $c_0$-graphs over $W_0$ on $A_{10,1/10}$, say. 

  We claim that if $\delta$ is
  sufficiently small, and $k_2$ is large, then this implies that
  \[ \label{eq:x20} \int_{\tilde{L}\cap (B_2\setminus B_1)} |x^\perp|^2\,
    d\mathcal{H}^n > c_0, \]
  for $c_0$ depending on $W_0$ and $\kappa_1$. As in (b), if we had
  such rescalings for arbitrarily small $\delta,\kappa_0$ and large $k_2$, then
  in the limit we would obtain a special Lagrangian current
  $\tilde{L}_\infty$, asymptotic to $W_0$, and  the convergence to this
  $\tilde{L}_\infty$ is smooth on $A_{10,1/10}$ because of the
  graphicality. Because we have $\Theta(\tilde{L}_\infty, 0, r) \to \Theta(W_0)$ as $r\to\infty$ and also \eqref{eq:Th20}, $\tilde{L}_\infty$ could
  not be a cone (centered at the origin), which implies that already
  on the annulus $B_2\setminus B_1$ it cannot be a cone. It follows
  that
  \eqref{eq:x20} is satisfied for $\tilde{L}_\infty$ for some $c_0 >
  0$. So $\eqref{eq:x20}$ is also satisfied for $\tilde{L}$ if
  $\delta,\kappa_0$ and $k_2^{-1}$ are sufficiently small. 
\end{proof}

We can now prove Proposition~\ref{prop:mainelliptic}. 
\begin{proof}
We choose $\kappa_0, k_1, \kappa_1$ as in Lemma~\ref{lem:growthbound2}. We will allow for $\delta$ to be chosen even smaller below. 
Let us first consider
the case when the link of $W_0$ is connected. Let us define
$L=L_{-1/2}$, and let 
\[ \label{eq:r02} r_0 = \inf\{ R\in (0,1) \,:\, \Theta( r^{-1} L) >
  \Theta(W_0) - \kappa_1, \text{ for all }r\in (R, 1)\}, \]
so that $r_0$ tends to zero as $\delta\to 0$. 
Define $\bar\theta = \sup_{L\cap B_2} |\theta - \theta_W|$. Notice
that all rescalings $r^{-1}L$ for $r < 1$ then satisfy
$\sup_{r^{-1}L\cap B_2} |\theta-\theta_W| \leq \bar \theta$. 
The hypotheses of
Proposition~\ref{prop:mainelliptic} imply that if $\delta$ is
sufficiently small, then $d_0 := D_W(L, A_{1,\rho_0}) < \kappa_0^2$
and also $\bar\theta < \kappa_0^2$.
We might not have $\bar{\theta} \leq
\kappa_0d_0$, however for all $k>0$ such that $\rho_0^{k} >
\rho_0^{-k_1}r_0$, (with the $k_1$ in 
Lemma~\ref{lem:growthbound2}), we
claim that we must have 
\begin{itemize}
  \item[either (1)] $\bar\theta > \kappa_0 D_W(\rho_0^{-k}L, A_{1,\rho_0}), $
\item[or (2)] 
$D_W(\rho_0^{-k}L, A_{1,\rho_0}) \leq
  \rho_0^{-(2+\lambda)} D_W(\rho_0^{-(k-1)}L,
  A_{1,\rho_0}).$
\end{itemize}
Indeed, suppose that  both (1) and (2) fail for some such $k$ and let
$k_0$ be the smallest of them. Using the assumption that $\rho_0^k >
\rho_0^{-k_1}r_0$ we can argue as in the proof of Lemma~\ref{lem:growthbound2} to
see that $\rho_0^{-k}L$ satisfies the hypotheses of
Lemma~\ref{lem:3ann2}, and that we are in the growth setting of part
(b) of the Lemma. We then obtain a contradiction to
Lemma~\ref{lem:growthbound2}, with $p=2$.  

It follows from this that for all $k > 0$ such that $\rho_0^{k} > \rho_0^{-k_1}r_0$, we
have
\[ D_W(\rho_0^{-k} L, A_{1,\rho_0}) \leq C(\bar\theta + d_0)
  \rho_0^{-k(2+\lambda)} \leq Cd \rho_0^{-k(2+\lambda)}, \]
where $C$ depends on $\kappa_0$. This implies that for all $r \geq
\rho_0^{-k_1-1} r_0$
we have
\[ \label{eq:DW4} D_W(r^{-1}L, A_{1,\rho_0}) \leq Cd
  r^{-2-\lambda}. \]
To conclude we claim that if $\delta$ is chosen
sufficiently small, then in fact $\rho_0^{-k_1-1}r_0 < \delta^{-1}
d^{(2+\lambda)^{-1}}$. To see this, note that by using
Proposition~\ref{prop:N1} we can assume that for all $\tilde{r} <1$
the rescaling $\tilde{r} ^{-1}L$ is as
close as we like to some special Lagrangian current (depending on $\tilde{r}$)
by choosing $\delta$ small (independently of $\tilde{r}$). Let $r_1 =
\rho_0^{-k_1-1}r_0$, and apply \eqref{eq:DW4} to $r=r_1$. If
$dr_1^{-2-\lambda} \leq \epsilon$ for sufficiently small $\epsilon$, and $\delta$ is
sufficiently small, then this 
implies that $r_1^{-1}L$ must be very close to the
cone $W$. By choosing $\epsilon$ and $\delta$ very small (depending on $W_0, \kappa_1, k_1$), the definition of $r_0$ then implies that $r_0 < \rho_0^{1+k_1} r_1$. This contradicts the definition of $r_1$, so for these choices of $\epsilon, \delta$ we have $dr_1^{-2-\lambda} >
\epsilon$. This implies that $r_1 <
\epsilon^{-(2+\lambda)^{-1}} d^{(2+\lambda)^{-1}}$. Choosing $\delta$
  even smaller if necessary, this implies $\rho_0^{-k_1-1}r_0 < \delta^{-1}
d^{(2+\lambda)^{-1}}$ as required. The case when $n=2$ is basically
the same, using the claim in Lemma~\ref{lem:growthbound2} that the
growth rate $p=n$ cannot occur. 

\bigskip
Let us now consider the case when the link of $W_0$ is not necessarily
connected, and at the same time we have no good blowup along $L^0_t$. We want to
iterate Lemma~\ref{lem:3ann2}, but first we translate the time slice
$L_{-1/2}$ so that the conicality radius is minimized at the
origin. More precisely for any $x$ we let $\tilde{L}= L_{-1/2} -x$,
and define $r_0(x)$ by the formula \eqref{eq:r01}. Let us choose $x_0$
with $|x_0| \leq 1$ which minimizes $r_0(x)$. Note that by choosing
$\delta > 0$ sufficiently small we can ensure that $L_{-1/2}$ is also
very close to the cone $W_0$, and so the minimum $r_0 := r_0(x_0)$ as well as
a minimizer $x_0$ are both small.

We now consider $\tilde{L} = L_{-1/2} - x_0$. There are two cases
depending on how large $x_0$ is relative to $d$ in the assumptions of
Proposition~\ref{prop:mainelliptic}.

Let us suppose first that $|x_0| > 4d$.
Note that $\rho_0^{-1}\tilde{L} = \rho_0^{-1}L_{-1/2} -
\rho_0^{-1}x_0$. By assumption $L_{-1/2}$ is a $d$-graph over $W$ on
$A_{2,\rho_0/2}$, and so on a slightly smaller annulus than
$A_{2,\rho_0/2}$, $\tilde{L}$ is a $d+2|x_0|$-graph over $W$. In
particular this implies that
\[ \label{eq:DW50} D_W(\tilde{L}, A_{1,\rho_0}) \leq C |x_0|, \]
for a constant $C$ depending on $W_0$. At the
same time the distance from $W$ to $\rho_0^{-1}\tilde{L}$ on $A_{9/10,
  6/10}$ is at least
\[ \frac{1}{2}\rho_0^{-1}|x_0| - \rho_0^{-1}d \geq
  \frac{1}{4}\rho_0^{-1} |x_0|, \]
which implies that
\[ \label{eq:DW51}  D_W(\rho_0^{-1}\tilde{L}, A_{1,\rho_0}) \geq
  C^{-1}\rho_0^{-1} |x_0|, \]
for a possibly larger $C$. Combining \eqref{eq:DW50} and
\eqref{eq:DW51}, we have
\[ D_W(\rho_0^{-1} \tilde{L}, A_{1,\rho_0}) \geq  C^{-2} \rho_0^{-1}
  D_W(\tilde{L}, A_{1,\rho_0}), \]
and so if $\rho_0$ is sufficiently small (and using that $\lambda$ is
small), then
\[ D_W(\rho_0^{-1} \tilde{L}, A_{1,\rho_0}) \geq \rho_0^{-\lambda}
  D_W(\tilde{L}, A_{1,\rho_0}). \]
At the same time $\sup_{\tilde{L}\cap B_1}|\theta - \theta_W| < d$. 
Therefore in this case we are in the growth setting of
Lemma~\ref{lem:3ann2}. Lemma~\ref{lem:growthbound2} implies that this
is not possible if $d$ is sufficiently small and there is no good
blowup along the flow.

We can therefore assume that $|x_0| \leq 4d$. We now want to iterate
  Lemma~\ref{lem:3ann2} applied to $\tilde{L}$, and the given $W\in \mathcal{W}$. We
  define a sequence $W_0=W, W_1, W_2, \ldots$ inductively (as long as
  possible), and let $d_i=D_{W_i}(\rho_0^{-i}L, A_{1,\rho_0})$. Let
  us define 
  \[ \overline{\theta} = \sup_{B_2 \cap L} |\theta - \theta_{W}|. \]

  At each stage we try to apply Lemma~\ref{lem:3ann2}, part (i), to
  $\rho_0^{-i}L$ and $W_i$. In order to do this we will want to
  ensure that we still have the condition $\overline{\theta} \leq
  \kappa_0 d_i$, as well as the two conditions
\begin{itemize}
\item[$(a_i)$] $d(W_i, W_0) < \kappa_0$, 
\item[$(b_i)$] $\rho_0^{-i}L$ is a $c_0$-graph over $W_0$ on
  $A_{1,\rho_0^{3}}$ and $d_i < \kappa_0$. 
\end{itemize}
  Assuming that $(a_i), (b_i)$ hold, then 
\begin{itemize}
  \item[Case 1.] if $\bar\theta > \kappa_0 d_i$ then we let
    $W_{i+1}=W_i$, 
  \item[Case 2.] if $\bar\theta \leq \kappa_0 d_i$ then we let  $W_{i+1} = W'$, given by 
    Lemma~\ref{lem:3ann2}, part (i).
  \end{itemize}

  We claim that as long as we have $\rho_0^{i} > \rho_0^{-k_1}r_0$
  (for the $k_1$ in Lemma~\ref{lem:growthbound2})  and
  $i \leq C_2^{-1} \bar\theta^{-1}$ for a
  constant $C_2$ depending on $\kappa_0, \rho_0$, this process can
  be continued, unless at some stage we are in the ``Growth setting''
  of Lemma~\ref{lem:3ann2}. At the same time the growth setting is
  ruled out by Lemma~\ref{lem:growthbound2} if there is no good blowup
  along the flow.

  To see that the process can be continued, we need to check that
  conditions $(a_{i+1}), (b_{i+1})$ remain satisfied.
  For condition $(a_{i+1})$ note that
  \[ \label{eq:Widist} d(W_{i+1}, W_0) \leq d(W_0, W_1) + \ldots + d(W_i, W_{i+1}) \leq
    C(d_0 + \ldots + d_i).  \]
  For any $j\leq i$, in Case 2 we can assume that  $d_{j+1} \leq \rho_0^{\lambda} d_j
  \leq \frac{1}{2}d_j$ since we must be in the decay setting of
  Lemma~\ref{lem:3ann2}.
  In Case 1 we have $d_j \leq
  \kappa_0^{-1}\bar\theta$, and so $d_{j+1} \leq
  \rho_0^{-\lambda}\kappa_0^{-1}\bar\theta$, since we cannot be in the 
  growth setting. Adding up the contributions we have $d(W_{i+1}, W_0)
  \leq C_1i(d_0 + \bar\theta)$, where $C_1$ depends on $\kappa_0,
  \rho_0$. It follows that $d(W_{j+1}, W_0) <
  \kappa_0$ as long as $C_1i(d_0 + \bar\theta) < \kappa_0$, i.e. if
  $C_1id < \kappa_0$. 

  Regarding condition $(b_{i+1})$ note first that we have $d_i,
  d_{i+1} \leq C_1d< \kappa_0$, if $d$ is sufficiently small. To see that $\rho_0^{-{i+1}}L$ is a
  $c_0$-graph over $W_{i+1}$ we use Lemma~\ref{lem:conescale}. Since
  $d_i < \kappa_0$ and $d(W_i, W_0) < \kappa_0$, we know that on the
  annulus $A_{2,1}$, $2\rho_0^{-i}\tilde{L}$ is a $c_0$-graph over
  $W_0$. It follows that $2\rho_0^{-i}\tilde{L}$ is an
  $\epsilon^2$-graph over some $W\in \mathcal{W}$ on
  $A_{\epsilon^{-1}, \epsilon}$. Using $d_i < \kappa_0$ and $d(W_i,
  W_0) < \kappa_0$ we must have $d(W, W_0) < C\kappa_0$ (we can assume that
  $\epsilon$ is much smaller than $\kappa_0$). Since we can also
  arrange that $\kappa_0$ is much smaller than $c_0$, it follows from
  this that $\rho_0^{-(i+1)}\tilde{L}$ is still a $c_0$-graph over
  $W_0$. So condition $(b_{i+1})$ holds. 

  The conclusion is that as long as $\rho_0^{i} > \rho_0^{-k_1}r_0$ and $i \leq
  C_2^{-1}d^{-1}$, we can construct $W_i$ as above.   Let us
  define 
\[ i_{max} = \lfloor \min\{  \log_{\rho_0} r_0 -k_1, C_2^{-1}d^{-1}\} \rfloor. \]
  For $i < i_{max}$ we have $d(W_i, W) \leq C_4id$ from
  \eqref{eq:Widist}, and at the same
  time we have $d_i \leq C_1d$.  If
  $\rho_0^{i_{max}+1} < \rho_0^{-k_1}r_0$, then $r_0^{-1}\tilde{L}$ would be an
  $O(d)$-graph over $W_{i_{max}}$ on $A_{2,1}$, but its Gaussian area would be
  at most $\Theta(W_0) - \kappa_1$. For sufficiently small $\delta$ this is
  a contradiction, since in the limit as $\delta\to 0$, the corresponding
  $r_0^{-1}\tilde{L}$ would converge to a cone in $\mathcal{W}$. 

  It follows that if $\delta$ (and so also $d$) is sufficiently small, then
  $i_{max} = \lfloor C_2^{-1}d^{-1}\rfloor$. Let $R_0 =
  \rho_0^{i_{max}} \geq \rho_0^{-k_1}r_0$. Then for $r \in (R_0,1)$ the rescaling $r^{-1}\tilde{L}$ is a
  $C_5d|\ln r|$-graph over $W$ on the annulus $A_{1,1/2}$. Recall
  that $L = \tilde{L} + x_0$, and we have $|x_0| \leq 4d$, so it
  follows that for $r > R_0$, $r^{-1}L$ is a $C_6d
  r^{-1}$-graph over $W$ on $A_{1,1/2}$. It remains to check that $R_0
  < \delta^{-1}d$ to see that $r^{-1}L$ is a
  $C_6dr^{-1}$-graph over $W$ for all $r\in (\delta^{-1}d, 1)$, as
  required. For this note that $i_{max} > C_2^{-1}d^{-1}-1$, so 
  \[ \delta d^{-1}R_0  = \delta d^{-1} \rho_0^{i_{max}} <  \delta
    d^{-1}\rho_0^{C_2^{-1}d^{-1}-1} \to 0, \text{ as }
    d\to 0, \]
   so the required bound $R_0 < \delta^{-1}d$ holds once $d$ is small
   enough.

   The proof of the final case (c) in the proposition is very similar
   to case (b). The difference is that we do not need to translate the
   time slice, and in the growth case we can directly apply part (d) of
   Lemma~\ref{lem:growthbound2}. 
\end{proof}

\section{Decay estimate along the flow}\label{section:Decayalongflow}
As before we consider a Lagrangian mean curvature flow $L^0_t$ in
$\mathbb{C}^n$ for $t\in [-1,0)$, developing a singularity at
$(0,0)$. In this section we will be working with different rescaled
mean curvature flows $M_\tau$ obtained from $L^0_t$. More precisely we
consider rescalings $L_t$ of $L^0_t$ as in (\dag), and $M_\tau$ will
denote the rescaled mean curvature flow corresponding to $L_t$,
defined by
\[ M_\tau = e^{\tau/2} L_{-e^{-\tau}}, \]
for $\tau\in [0,\infty)$. 

We will assume the following Conditions (A):
\begin{enumerate}
  \item[(A1)] $M_\tau$ has uniformly bounded area ratios and Lagrangian
angle, for all $\tau$. Note that by the monotonicity formula this follows
if we assume that $L^0_{-1}$ has uniformly bounded area ratios and
Lagrangian angles (see Neves~\cite{Neves1}). All constants that follow will depend on
this bound on the Lagrangian angle and area ratios. 
  \item[(A2)] The Lagrangian $M_0\cap B(0,1)$ is exact, 
  i.e. $\int_\gamma\lambda =0$ for any closed
    curve $\gamma$ on it. 
  \item[(A3)] One of the following three situations holds:
    \begin{itemize}
    \item[(a)] The link of $W_0$ is connected and $n > 2$,
    \item[(b)] There is no good blowup sequence along the flow $L^0_t$
      in the sense of Definition~\ref{defn:goodblowup},
    \item[(c)] The dimension $n=2$.
      \end{itemize}
  \end{enumerate}

Our goal in this section is to show that under these assumptions
the tangent flow $W_0$ is
unique. More generally, we will show that under these conditions if the Gaussian area does not change much on a time interval $\tau\in[0, T]$, and $M_0$ is close to the cone $W_0$, then so is $M_T$. This result will lead to both the uniqueness of the
tangent flow and imply that if the tangent flow is $W_0$, then
in fact there is a good blowup
sequence for the flow. This will complete the proof of
Theorem~\ref{thm:goodblowup}. 

Given any $\epsilon > 0$ let us define
\[ T_1(\epsilon) = \sup\{ T\,:\, \text{ for all $\tau\in [0, T], M_\tau$
  is an $\epsilon^2$-graph over $W_0$ on the annulus
  $A_{\epsilon^{-1}, \epsilon}$.} \} \]
The following shows that along the rescaled flow the ``good
graphicality'' region extends exponentially fast. Similar results are
used in Colding-Minicozzi~\cite{CMunique} and Chodosh-Schulze~\cite{CSunique}
for instance. 

\begin{lemma}\label{lem:T1}
  Let $c_0 > 0$. There is an $\epsilon_0 > 0$ depending on $c_0$ such
  that for all $\tau\in [0, T_1(\epsilon_0))$ and $R\in [1, e^{\tau/2}]$ we
  have that $R^{-1}M_\tau$ is a
  $c_0$-graph over $W_0$ on the annulus $A_{2,1/2}$.
\end{lemma}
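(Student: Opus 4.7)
The plan is to reformulate the conclusion in terms of a parabolic rescaling of the non-rescaled flow $L_t$ and then invoke a pseudolocality-type propagation of graphicality on the annulus $A_{2,1/2}$, where $W_0$ is smooth. Set $\rho := Re^{-\tau/2} \in [e^{-\tau/2},1]$ and introduce the parabolic rescaling $\bar L^\rho_s := \rho^{-1} L_{\rho^2 s}$, which is itself an LMCF with the same uniform bounds on Lagrangian angle and area ratios. Using $M_{\tau'} = e^{\tau'/2} L_{-e^{-\tau'}}$, one checks the identities $\bar L^\rho_{-1} = M_{\tau - 2\ln R}$ and $\bar L^\rho_{-1/R^2} = R^{-1}M_\tau$. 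Since $W_0$ is a cone, the hypothesis $\tau - 2\ln R \in [0, T_1(\epsilon_0))$, which holds for $R\in[1,e^{\tau/2}]$, implies that $\bar L^\rho_{-1}$ is an $\epsilon_0^2$-graph over $W_0$ on $A_{\epsilon_0^{-1}, \epsilon_0}$; applying the hypothesis at all intermediate rescaled times similarly gives that $\bar L^\rho_s$ is an $\epsilon_0^2$-graph over $W_0$ on $A_{\epsilon_0^{-1}\sqrt{-s}, \epsilon_0\sqrt{-s}}$ for each relevant $s \in [-1, 0)$, with the good graphical annulus shrinking towards the origin as $s \to 0^-$.

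For $R \in [1, \epsilon_0^{-1}/2]$, the conclusion is immediate, since $A_{2,1/2}\subset R^{-1} \cdot A_{\epsilon_0^{-1}, \epsilon_0}$ and graphicality is preserved by rescaling (once $\epsilon_0^2 < c_0$). The substantive range is $R \in (\epsilon_0^{-1}/2, e^{\tau/2}]$, where the target time $-1/R^2$ can be arbitrarily close to $0$. For this case, I would invoke a pseudolocality-type result for LMCF in the spirit of Ecker-Huisken or Chen-Yin, exploiting that $W_0$ is smooth on $A_{2,1/2}$: starting from the $\epsilon_0^2$-graphicality of $\bar L^\rho_{-1}$ on the generous annulus $A_{\epsilon_0^{-1}, \epsilon_0}$, pseudolocality yields $c_0$-graphicality of $\bar L^\rho_s$ over $W_0$ on $A_{2,1/2}$ over a short time window $s \in [-1, -1+c]$, provided $\epsilon_0$ is chosen small enough in terms of $c_0$.

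To extend the graphicality over the entire interval $s \in [-1,-1/R^2]$, which may approach $[-1,0)$, I would iterate this pseudolocality, using that the hypothesis refreshes the $\epsilon_0^2$-smallness on a shrinking annulus near the origin at every intermediate time. Crucially, this refreshed graphicality rules out singularity formation in the annular region $A_{2,1/2}$---the only possible singular point of the flow remains confined to the spatial origin---so pseudolocality can be reapplied on a geometric sequence of time steps covering $[-1,0)$. The main obstacle is orchestrating this iteration uniformly in $R$ up to the singular time; here the self-similar structure of the hypothesis, namely graphical control of $M_\tau$ for \emph{all} $\tau \in [0, T_1)$ rather than just at the endpoints, is what allows the small constant to be reset at each scale.
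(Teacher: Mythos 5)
Your reduction is the same change of variables the paper uses (the identities $\bar{L}^\rho_{-1}=M_{\tau-2\ln R}$ and $\bar{L}^\rho_{-1/R^2}=R^{-1}M_\tau$), but the propagation mechanism has a genuine gap. Pseudolocality started from the $\epsilon_0^2$-graphicality of $\bar{L}^\rho_{-1}$ on $A_{\epsilon_0^{-1},\epsilon_0}$ only controls the flow on $A_{2,1/2}$ for a short definite time $s\in[-1,-1+c]$, whereas you need control for all $s\in[-1,-1/R^2]$, i.e.\ up to times arbitrarily close to $0$. The ``refreshed'' information you propose to iterate with is, at an intermediate time $s$, the statement that $\bar{L}^\rho_s=\sqrt{-s}\,M_{\tau''}$ is graphical over $W_0$ on the annulus $A_{\epsilon_0^{-1}\sqrt{-s},\,\epsilon_0\sqrt{-s}}$; once $\sqrt{-s}<2\epsilon_0$ this annulus lies entirely inside $B_{1/2}$ and carries no information about the fixed annulus $A_{2,1/2}$. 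In particular the inference that the refreshed graphicality ``rules out singularity formation in the annular region'' is a non sequitur: control at scale $\sqrt{-s}$ near the origin says nothing about curvature at unit scale, and the real issue is not literal singularity formation before time $0$ (the flow is smooth there) but uniformity of the graphical estimate as $s\to 0^-$. So the iteration does not close: at the times where you would reapply pseudolocality on $A_{2,1/2}$ you have neither smallness there (only the non-small output constant $c_0$ of the previous step) nor any refreshed hypothesis at that scale. A smaller point: the ``immediate'' range $R\le \epsilon_0^{-1}/2$ is not quite immediate either, since rescaling by $R^{-1}$ multiplies the $j$-th derivatives of the graph function by $R^{j-1}$, so the $C^{k_0}$ norm in Definition~\ref{defn:cgraph} need not stay small.

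The missing ingredient is exactly what the paper's proof supplies, by compactness rather than pseudolocality: if flows with uniform angle and area-ratio bounds start as $\epsilon_i^2$-graphs over $W_0$ on $A_{\epsilon_i^{-1},\epsilon_i}$ with $\epsilon_i\to 0$, then Proposition~\ref{prop:N1} (which rests on the monotonicity formula applied to $|\theta-\theta_{W_0}|^2$) forces the flows to converge, \emph{uniformly in} $t\in[-1,0)$, as measures to the static flow $W_0$; since $W_0$ is smooth with multiplicity one on the annulus, Gaussian densities there are close to $1$, and White's regularity theorem upgrades the measure convergence to uniform $c_0$-graphicality on $A_{2,1/2}$ for \emph{all} $t\in[-1,0)$, i.e.\ all the way to the singular time and at unit scale (this also handles the derivative-loss issue above). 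If you want to avoid the compactness argument you need a quantitative substitute for this density control at unit scale near time $0$; short-time pseudolocality from the time $-1$ data, together with the hypothesis at the self-similar scale $\sqrt{-s}$, does not provide it.
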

\begin{proof}
  Consider first a (non-rescaled) Lagrangian mean curvature flow $L_t$
  for $t\in [-1,0)$, with uniformly bounded area ratios and Lagrangian
  angles. Let $c_0 > 0$. We claim that if $L_{-1}$ is an
  $\epsilon^2$-graph over $W_0$ on the annulus $A_{\epsilon^{-1},
    \epsilon}$ for sufficiently small $\epsilon$, then $L_t$ is a
  $c_0$-graph over $W_0$ on the annulus $A_{2,1/2}$ for $t\in
  [-1,0)$. To see this, we can argue by contradiction: if we had a
  sequence of such flows $L^i_t$, and $\epsilon_i\to 0$, then using
  Proposition~\ref{prop:N1} the flows would converge, along a
  subsequence, to the static flow given by $W_0$. This
  convergence is uniform for $t\in [-1,0)$, but only in the sense
  of measures. We can then use White's regularity theorem to deduce
  that for sufficiently large $i$ the flow $L^i_t$ is a $c_0$-graph
  over $W_0$ on $A_{2, 1/2}$ for all $t\in [-1,0)$.

  The claim in the Lemma follows by applying this result to the flows
  given by initial condition $L_{-1} = M_\tau$ for all $\tau\in [0,
  T_1(\epsilon_0))$. 
\end{proof}

The quantity $T_1(\epsilon)$ above tells us how long the flow remains
close to $W_0$ on a large annulus. 
For any $\epsilon > 0$ we also define
\[ T_\Theta(\epsilon) = \sup\{ \tau\,:\, \Theta(M_\tau) > \Theta(W_0) -
  \epsilon\}, \]
which tells us how long the (rescaled) flow remains close to a cone in
terms of the Gaussian area. 
The main result of this section is the following, stating that as
long as the Gaussian area of $M_\tau$ remains close to
that of $W_0$, the flow will remain close to $W_0$ as a graph as
well. The proof of this proposition will take up the rest of the section. 

\begin{prop}\label{prop:T1Ttheta}
  Assume Conditions (A), and let $\epsilon > 0$.
  There is a $\delta > 0$ depending on $\epsilon$
  such that if $M_0$ is a $\delta^2$-graph over $W_0$ on
  $A_{\delta^{-1}, \delta}$, then $T_1(\epsilon) \geq T_\Theta(\delta)$. 
\end{prop}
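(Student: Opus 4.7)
The plan is to argue by contradiction: fix $\epsilon>0$ and suppose that no choice of $\delta$ works, so we obtain sequences $\delta_i\to 0$ and (rescaled) flows $M^i_\tau$ whose initial slices are $\delta_i^2$-graphs over $W_0$ on $A_{\delta_i^{-1},\delta_i}$ but for which $T_i:=T_1^i(\epsilon)<T_\Theta^i(\delta_i)$. At time $\tau=T_i$ the quantitative graphicality on $A_{\epsilon^{-1},\epsilon}$ just breaks down, while $\Theta(M^i_\tau)>\Theta(W_0)-\delta_i$ throughout $[0,T_i]$. If the $T_i$ stay bounded along a subsequence, then Proposition~\ref{prop:N1} together with White's regularity theorem forces $M^i_\tau$ to converge to the static flow $W_0$ smoothly away from the origin, so that $M^i_{T_i}$ is an $\epsilon^2$-graph on $A_{\epsilon^{-1},\epsilon}$ for all large $i$, a contradiction. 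From now on we may assume $T_i\to\infty$.

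The next step is to linearize. For each intermediate $\tau\in[1,T_i-1]$, Lemma~\ref{lem:T1} supplies the unit-scale graphicality needed to invoke Proposition~\ref{prop:mainelliptic} on suitable rescalings of the time slice $M^i_\tau$; under Conditions~(A), the good-blowup alternative in part~(b) and the Gaussian-area-drop alternative in part~(c) are both ruled out by the lower bound on $\Theta$, so we are left with the non-concentration estimate: $M^i_\tau$ is a $Cd_i(\tau)r^{-1-\lambda_2}$-graph over some $W^i_\tau\in\mathcal{W}_{M^i_\tau}$ down to a very small scale. Let $\eta^i(\tau)$ denote the exact $1$-form on $W_0$ encoding the graph of $M^i_\tau$ over $W_0$, set $\alpha_i=\sup_{\tau\in[0,T_i]}\Vert\eta^i(\tau)\Vert$ in the Gaussian $L^2$ norm, and pick $\tau^\ast_i$ attaining (nearly) this supremum. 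After rescaling $\eta^i$ by $\alpha_i^{-1}$, translating so that $\tau^\ast_i$ becomes $\tau=0$, and passing to a subsequence, Lemma~\ref{lem:graphlimit} produces a limit $\eta^\infty(\tau)$ solving the drift heat equation~\eqref{eq:f1} on $W_0\setminus\{0\}$; the non-concentration estimate supplies the $|x|^{-1.1}$ (or pointwise) bound near $0$ required in Lemma~\ref{lem:decomp1}, with the extra $d^\ast\eta$ control in the planar case coming from Lemma~\ref{lem:pointwise2}.

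Now apply the $3$-annulus lemma in rescaled time (Lemma~\ref{lem:L23ann2}) with $d=1$ to $\eta^\infty$. In the growth case the norm $\Vert\eta^\infty(\tau)\Vert$ grows exponentially forward in time, which via the monotonicity inequality~\eqref{eq:thetamon} (together with the definite entropy cost of a definite $L^2$ deviation from $W_0$) forces $\Theta(M^i_\tau)$ to drop below $\Theta(W_0)-\delta_i$ within a bounded interval, contradicting $T_i<T_\Theta^i(\delta_i)$. In the decay case $\Vert\eta^\infty\Vert$ is strictly smaller at $\tau=0$ than just before, contradicting the near-maximality at $\tau^\ast_i$. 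The remaining possibility is a static component at degree $d=1$; by Lemma~\ref{lem:static1} this corresponds to $d(f+c|x|^2)$ for a homogeneous quadratic harmonic $f$, and the integrability of $W_0$ lets us absorb it by replacing the reference cone with a nearby element of $\mathcal{W}$, reducing to the strict growth/decay dichotomy already handled.

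The main obstacle is the static component when the link of $W_0$ is disconnected (Conditions~(A)(b) and~(c)), because relative phase rotations between components of $W_0$ then produce static solutions of the drift heat equation which are \emph{not} deformations of $W_0$ as a single special Lagrangian cone and hence cannot be absorbed inside $\mathcal{W}$. To close this gap I plan to invoke the quantitative Neves-type argument outlined in the introduction, combining the Lagrangian angle with the Lagrangian potential and using the non-concentration estimate to push graphicality down to a sufficiently small scale; this forces any genuine drift along a relative-phase direction to incur a definite drop in Gaussian area, again contradicting $T_i<T_\Theta^i(\delta_i)$. In the connected-link case this step is unnecessary and the argument is considerably cleaner.
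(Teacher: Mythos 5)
Your proposal diverges from the paper's actual proof in structure, and the divergence opens genuine gaps. The paper does not run a single global blow-up over $[0,T_1(\epsilon)]$; it proves the statement by a step-by-step iteration over time intervals of length $NB$, applying Proposition~\ref{prop:decay4} (resp.\ Proposition~\ref{prop:decay5} in case (A3(c))) at each step, updating the reference cone $W_i$, and closing the loop with the recursion $d_{i+1}\leq \tfrac{1}{2}d_i+a_{i+1}$, where the error terms $a_i$ are summable because they telescope against the Lojasiewicz-type quantity $\mathcal{A}_\beta$; the accumulated drift $d(W_i,W_0)\leq\sum d_i$ is then small and Lemma~\ref{lem:ext1} extends $T_1$ past the last step, giving the contradiction. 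Your plan has no analogue of this bookkeeping, and that is precisely where it breaks. First, your normalization is not a linearization: with $\epsilon$ fixed, the quantity $\alpha_i=\sup_{\tau\in[0,T_i]}\Vert\eta^i(\tau)\Vert$ is bounded \emph{below} in terms of $\epsilon$ (at times near $T_i$ the flow is on the verge of failing $\epsilon^2$-graphicality on $A_{\epsilon^{-1},\epsilon}$, so its Gaussian $L^2$ deviation from $W_0$ is at least $c(\epsilon)>0$), so $\alpha_i\not\to 0$, the hypotheses of Lemma~\ref{lem:graphlimit} fail, and the limit of $\alpha_i^{-1}\eta^i$ does not solve the linear drift heat equation. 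Second, even granting a linear limit, the dichotomy you invoke at $\tau^\ast_i$ is not exhaustive: near an interior maximum neither the growth nor the decay hypothesis of Lemma~\ref{lem:L23ann2} need be triggered, so one lands in the static case, and after ``absorbing'' the static component by moving to a nearby cone your maximality normalization (taken with respect to the old cone) no longer means anything. Controlling how far the reference cone can drift over a time interval of unbounded length is the central difficulty of the proposition, and your argument contains no mechanism for it.

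Third, your claim that the alternatives in Proposition~\ref{prop:mainelliptic} parts (b) and (c) are ``ruled out by the lower bound on $\Theta$'' is incorrect. Part (b) is only available under (A3(b)) (no good blowup), and the second alternative of part (c) concerns a definite $\int|x^\perp|^2$ at a small scale $r_0$, which is perfectly compatible with $\Theta(M_\tau)>\Theta(W_0)-\delta_i$ at unit scale; if it could be dismissed so cheaply, the paper would not need Proposition~\ref{prop:Abound}, Lemma~\ref{lem:AMtaumonotone} and alternative (b) of Proposition~\ref{prop:decay5}. In the two-dimensional case (A3(c)) this alternative must be fed into the iteration via the telescoping of $\mathcal{A}_\beta$, which your outline omits entirely, and in the disconnected-link case the ``quantitative Neves-type argument'' you defer to is exactly Case 2 of the proof of Proposition~\ref{prop:decay4} — it is legitimate to cite, but in your framework it has nothing to attach to, since the growth/decay/static trichotomy is applied to a limit that was never validly extracted. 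To repair the proof you would need to replace the one-shot maximization by the paper's local-in-time decay propositions and the explicit iteration with cone updates and summable errors.
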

The argument is similar to the approach taken in \cite{LSSz2},
at heart relying on proving a nonlinear version of the three annulus
Lemma~\ref{lem:L23ann2} along the rescaled flow. In \cite{LSSz2} a
non-concentration estimate was shown by exploiting the link between
special Lagrangians and complex curves in two dimensions. In the
present setting we rely on the estimate in
Proposition~\ref{prop:mainelliptic} instead, which exploits the link with
minimal submanifolds. Similarly to Definition~\ref{defn:components1} we will use the following decomposition. 

\begin{definition}\label{def:components}
Let us decompose $M_0\cap B(0,1)$ into a union of smooth connected submanifolds
\[ M_0\cap B(0,1) = M_{0,1}\cup \ldots \cup M_{0,m}. \]
By Lemma~\ref{lem:T1}, if $\epsilon_0$ is sufficiently small, then for all $\tau\in [0,T_1(\epsilon_0)]$ we have corresponding connected submanifolds $M_{\tau, j} \subset B(0, e^{\tau/2})$, and 
\[ M_\tau \cap B(0,e^{\tau/2}) = M_{\tau, 1}\cup \ldots \cup M_{\tau, m}. \]
The $M_{\tau,j}$ satisfy the rescaled mean curvature flow on their domains of definition.

Corresponding to this decomposition we also have a decomposition of the cone $W_0$:
\[\label{eq:W0decomp} W_0 = \bigcup_{j=1}^m W_{0,j},\]
such that $M_{\tau, j}$ is graphical over $W_{0,j}$ on $A_{e^{\tau/2}, 1/2}$. As a reminder, the links of the $W_{0,j}$ are not necessarily connected. 

We define $\tilde{\mathcal{W}}$ to be given by Lagrangian cones
\[ W = \bigcup_{j=1}^m W_j, \]
where each $W_j$ is a special Lagrangian deformation of $W_{0,j}$, but for different $j$ the Lagrangian angles may be different. Recall that we also have $\mathcal{W}\subset \tilde{\mathcal{W}}$ consisting of special Lagrangian cones. In the notation of Definition~\ref{defn:components1} we have $\tilde{\mathcal{W}} = \mathcal{W}_{M_\tau}$ for any $\tau$. 
\end{definition}

Given the decomposition of $M_\tau\cap B(0,e^{\tau/2})$ and $W\in \tilde{\mathcal{W}}$ above, we define the angle function 
\[ \label{eq:thetaWdefn}
\theta_W : M_\tau\cap B(0,e^{\tau/2}) \to \mathbb{R} \]
by letting $\theta_W = \theta_{W_j}$ on the component $M_{\tau, j}$. So on the different connected components $\theta_W$ may take different constant values. Using this we define the following distance type function of $M_\tau$ to a cone $W\in \tilde{\mathcal{W}}$:
\[ \label{eq:EWdef} E_W(M_\tau) = \sup_{B_2\cap M_\tau} r^{\alpha}d_W +
  \left(\int_{M_\tau\cap B(0,e^{\tau/2})} (\chi d_W^2 + |\theta - \theta_W|^2) e^{-|x|^2/ 4}\,
    d\mathcal{H}^n\right)^{1/2}, \]
where $\chi: \mathbb{C}^n\to [0,1]$ vanishes in $B_{1/2}$ and equals 1
outside of $B_1$, and
where according to the three cases in Condition (A3) we have: 
\begin{itemize}
  \item[(a)]  If $W_0$ has connected link and $n > 2$, we set
    $\alpha\geq 1+\lambda_2$ for the $\lambda_2$ in
    Proposition~\ref{prop:mainelliptic}. For simplicity we can let
    $\alpha=1.1$. 
  \item[(b)] When there is no good blowup sequence we let
    $\alpha=0$,
  \item[(c)] When $n=2$ we set $\alpha = 1+\lambda_2$ with $\lambda_2$ as in (a),  but we
    choose $\lambda_2 > 0$ very small, to be chosen below in the proof
    of Proposition~\ref{prop:decay4}. Note that the function $\chi$ in
    \eqref{eq:EWdef} only plays a role in this case, since in the
    other two cases bounding $r^\alpha d_W$ on $B_2$ automatically bounds the
    integral of $d^2_W$ on $B_2$. 
  \end{itemize}

  For technical reasons later it will be convenient to define the
following variant as well:
\[ \label{eq:tildeEdefn} \tilde{E}_W(M_\tau) = \sup_{s\in [\tau-3, \tau]} E_W(M_\tau), \]
so that $\tilde{E}_W(M_\tau)$ gives bounds for $M_s$ on the interval $s\in
[\tau-3, \tau]$. 
We have the following extension result, that follows using an argument
by contradiction and Proposition~\ref{prop:N1}.

\begin{lemma}\label{lem:ext1}
  Assume Conditions (A) and let $B >0$. Given $\epsilon_0 > 0$ there is an
  $\epsilon_1 > 0$ depending on $B, \epsilon_0$
  such that if $\tau< T_1(\epsilon_0)$ and $E_W(M_\tau) < \epsilon_1$ for
  some $W\in
  \tilde{\mathcal{W}}$ with $d(W_0, W) < \epsilon_1$,  then $T_1(\epsilon_0)
  > \tau + 10B$. 
\end{lemma}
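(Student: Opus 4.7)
The plan is to argue by contradiction, as suggested by the hint. Suppose the conclusion fails for some fixed $B, \epsilon_0 > 0$: then there are sequences $\epsilon_{1,i} \to 0$, times $\tau_i < T_1(\epsilon_0)$, and cones $W_i \in \tilde{\mathcal{W}}$ with $d(W_0, W_i) < \epsilon_{1,i}$ and $E_{W_i}(M_{\tau_i}) < \epsilon_{1,i}$, yet $T_1(\epsilon_0) \leq \tau_i + 10B$. Setting $\sigma_i = e^{-\tau_i/2}$, define the restarted parabolic rescalings $\hat{L}^i_t = \sigma_i^{-1} L_{\sigma_i^2 t}$ for $t \in [-1,0)$, so that $\hat{L}^i_{-1} = M_{\tau_i}$ and the associated rescaled flow satisfies $M_{\tau_i+s} = e^{s/2}\hat{L}^i_{-e^{-s}}$ for $s \geq 0$. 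Since the $\hat{L}^i_t$ are further parabolic rescalings of $L^0_t$, they inherit uniform area ratio and Lagrangian angle bounds from Condition (A1).

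Next, I would apply Proposition~\ref{prop:N1} to the sequence $\hat{L}^i_t$. The hypothesis $E_{W_i}(M_{\tau_i}) < \epsilon_{1,i}$ gives $\int_{M_{\tau_i}} |\theta - \theta_{W_i}|^2 e^{-|x|^2/4} \to 0$, and since $d(W_0, W_i) \to 0$ the locally constant function $\theta_{W_i}$ on the graphical components of $M_{\tau_i}$ converges uniformly to the common angle $\theta_{W_0}$; combined with bounded area ratios this yields $\int_{\hat{L}^i_{-1}} |\theta - \theta_{W_0}|^2 e^{-|x|^2/4} \to 0$. Proposition~\ref{prop:N1} then produces, along a subsequence, an integral special Lagrangian current $L_\infty$ such that the Radon measures of $\hat{L}^i_t$ converge to that of $L_\infty$ uniformly in $t \in [-1,0)$. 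To identify $L_\infty = W_0$, I would use that $E_{W_i}(M_{\tau_i}) < \epsilon_{1,i}$ also controls $\sup_{B_2 \cap M_{\tau_i}} r^{\alpha} d_{W_i}$ and the weighted $L^2$ distance $\int \chi d_{W_i}^2 e^{-|x|^2/4}$; together with $W_i \to W_0$, this forces $M_{\tau_i}$ to Hausdorff-converge to $W_0$ on every compact subset of $\mathbb{C}^n\setminus\{0\}$. The limit Radon measure is therefore supported on $W_0$, and Huisken's monotonicity formula together with the multiplicity-one density of $W_0$ pins down $L_\infty = W_0$ with multiplicity one.

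To conclude, for any $s \in [0, 10B]$ the time slice $M_{\tau_i+s}$ equals $e^{s/2}$ times $\hat{L}^i_{-e^{-s}}$, and the annulus $A_{\epsilon_0^{-1}, \epsilon_0}$ in $M_{\tau_i+s}$ corresponds to $A_{\epsilon_0^{-1} e^{-s/2},\, \epsilon_0 e^{-s/2}}$ in $\hat{L}^i_{-e^{-s}}$. All such annuli are contained in a fixed compact set $K \subset \mathbb{C}^n\setminus\{0\}$, on which $W_0$ is a smooth, properly embedded, multiplicity-one minimal submanifold. Applying White's Brakke regularity theorem to the flows $\hat{L}^i_t$ on $K$ for $t \in [-1, -e^{-10B}]$, the uniform Radon measure convergence to the static $W_0$ upgrades to smooth $C^{k_0}$ graphical convergence on a slight retraction of $K$, uniformly in $t$. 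For large $i$ this shows that $M_{\tau_i+s}$ is an $\epsilon_0^2$-graph over $W_0$ on $A_{\epsilon_0^{-1}, \epsilon_0}$ for all $s \in [0, 10B]$, contradicting $T_1(\epsilon_0) \leq \tau_i + 10B$. The main obstacle is precisely this last step: promoting the weak Radon measure convergence supplied by Proposition~\ref{prop:N1} to pointwise $C^{k_0}$ graphical convergence uniform in time. This upgrade is possible only because $W_0$ is smooth and multiplicity one away from the origin, so that a Brakke-type $\epsilon$-regularity theorem applies on the compact annular region.
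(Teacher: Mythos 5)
Your proof is correct and follows essentially the same route as the paper: a contradiction sequence, compactness via Proposition~\ref{prop:N1} giving convergence to the static flow $W_0$, and White's regularity theorem upgrading the measure convergence to smooth graphical convergence on a fixed annular region away from the origin, so that $\epsilon_0^2$-graphicality persists for a further time $10B$. The only (harmless) difference is that the paper lets the contradiction sequence range over different rescaled flows $M^i_\tau$ satisfying Conditions (A), so that $\epsilon_1$ is uniform over all rescalings as in (\dag), whereas you fix one flow and vary the times $\tau_i$; your argument applies verbatim in that greater generality.
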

\begin{proof}
  Suppose that for a given $\epsilon_0, B$ we did not have a suitable
  $\epsilon_1$. Then we would have a sequence of rescaled flows
  $M^i_\tau$, satisfying Conditions (A), and corresponding times $\tau_i <
  T^i_1(\epsilon_0)$ (where $T^i_1$ are the $T_1$ functions
  corresponding to the flows $M^i_\tau$), such that $E_{W_i}(M^i_{\tau_i}) <
  1/i$, for $W_i$ with $d(W_0, W_i) < 1/i$, but $T^i_1(\epsilon_0)
  \leq \tau_i + 10B$.

  Define the translated flows $\tilde{M}^i_\tau = M^i_{\tau+\tau_i}$. Using
  Proposition~\ref{prop:N1} we know that up to choosing a subsequence,
  the rescaled flows $\tilde{M}^i_\tau$ converge to a static flow given
  by a special Lagrangian $L_\infty$. We necessarily must have
  $L_\infty = W_0$, and so by White's regularity theorem the
  convergence is locally smooth for $\tau > 0$. It then follows that
  $\tilde{T}^i_1(\epsilon_0) \to \infty$ (for the $T_1$-functions
  corresponding to $\tilde{M}^i_\tau$), and so we also have
  $T^i_1(\epsilon_0) \to \infty$, contradicting our assumption. 
\end{proof}

In addition we have the following non-concentration estimate for the
distance $E_W$, under our conditions. This is the key result that
allows us to pass information from the linear drift heat equation to the
flow.  
\begin{lemma}\label{lem:nonconc2}
  Assume Conditions (A). There are $\epsilon_0,
  C > 0$ and $p > 1$ depending on $L^0_t$ satisfying the following. Suppose that
  $E_W(M_{s}) < d $ for $s\in [\tau_0-1, \tau_0]$ for some $W\in \tilde{\mathcal{W}}$ 
  with $d(W, W_0) < \epsilon_0$, and $\tau_0 <
  T_1(\epsilon_0)-1$. 
  Suppose in addition that on the annulus
  $A_{2,\rho_0/2}$, $M_s$ is a $d$-graph over $W$ for $s\in [\tau_0-1, \tau_0+1]$ (in
  the sense of Definition~\ref{defn:cgraph}). Define $\bar{d} =
  \max\{d,  e^{-\tau_0}\}$, and assume that $\bar{d} < \epsilon_0$. 
  Then we have the estimate
  \[ d_W^2 \leq C e^{\frac{|x|^2}{4p}} \bar{d}^2 \max\{r^{-2\alpha}, 1\}\]
  on $M_{\tau_0+1}$. In addition we also have
  \[ \left[\int_{M_{\tau_0+1}\cap B(0,e^{\tau_0/2})} ( \chi d_W^{2p} + |\theta - \theta_W|^{2p})\,
      e^{-|x|^2/4}\, d\mathcal{H}^n \right]^{1/p}\leq C \bar{d}^2, \]
  where $\chi$ is as in \eqref{eq:EWdef}. 
\end{lemma}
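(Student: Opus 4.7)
The plan is to treat the rescaled LMCF on a suitable graphical region as a quadratically perturbed drift heat equation, combining the elliptic time-slice dichotomy of Proposition \ref{prop:mainelliptic} with a weighted parabolic estimate that mimics the proof of Lemma \ref{lem:pointwise2}. The characteristic weight $e^{|x|^2/(4p)}$ with $p>1$ appearing in the conclusion is exactly what a Moser-type iteration for the linear drift heat equation produces.

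I would first extend graphicality of the time slices to a quantitatively small inner scale. Parabolic regularity applied to $\theta-\theta_W$, which satisfies the heat equation along the flow and has small weighted $L^2$ data coming from the $E_W$ hypothesis, gives a pointwise angle bound on $B_2\cap M_s$ for $s$ slightly larger than $\tau_0-1$. Combined with the pointwise ingredient $\sup_{B_2\cap M_s}r^\alpha d_W<d$ inside $E_W(M_s)$ and the assumed graphicality on $A_{2,\rho_0/2}$, this puts $M_s$ (after the standard rescaling that presents it as the $L_{-1/2}$ slice of a flow as in $(\dag)$) into the hypotheses of Proposition \ref{prop:mainelliptic}. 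The exponent $\alpha$ in $E_W$ has been chosen in each case of Condition (A3) to match the graphical rate in the conclusion of that proposition, so the output is that $M_s$ is a graph over some $W'\in\tilde{\mathcal{W}}$ with $d(W',W)=O(d)$ on an annulus $A_{2,r^*}$ with $r^*$ shrinking only polynomially in $\bar{d}$. White's $\epsilon$-regularity then propagates this description to $M_{\tau_0+1}$ on a slightly smaller region.

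On the graphical region I would write $M_s$ as the graph of an exact $1$-form $\eta_s$ over $W$, so that
\[ \partial_s\eta_s = \Delta\eta_s + \tfrac12(\eta_s - x\cdot\nabla\eta_s) + Q_s, \]
with $Q_s$ at least quadratic in $(\eta_s,\nabla\eta_s)$ and hence a small perturbation of the linear drift heat equation. I would then run the weighted energy estimate from the proof of Lemma \ref{lem:pointwise2}: integrate against the Gaussian $e^{-|x|^2/4}$, use a spatial cutoff that matches onto the pointwise bound $r^\alpha d_W<d$ in the inner non-graphical region, and a time cutoff localizing to $[\tau_0,\tau_0+1]$. Standard Moser iteration converts the $L^2(e^{-|x|^2/4})$ bound on $\eta_{\tau_0}$ coming from $E_W(M_{\tau_0})<d$ into a pointwise bound at $s=\tau_0+1$ with the weaker weight $e^{|x|^2/(4p)}$ for some $p>1$. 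The analogous weighted estimate applied to the heat equation satisfied by $\theta-\theta_W$ yields the $|\theta-\theta_W|^{2p}$ half of the integral inequality, while the $\chi d_W^{2p}$ half follows by integrating the pointwise bound on $d_W^2$ against $e^{-|x|^2/4}$, noting that $\chi$ vanishes near the origin so that $\max\{r^{-2p\alpha},1\}$ is bounded on its support. The contribution from $|x|\gtrsim e^{\tau_0/2}$, where the flow extends beyond our graphical region, is absorbed by the global area ratio bound and the Gaussian decay, which is exactly the role of $\bar{d}=\max\{d,e^{-\tau_0}\}$ rather than just $d$.

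The main obstacle is the interface between the elliptic and parabolic parts of the argument: one must verify that the scale $r^*$ produced by Proposition \ref{prop:mainelliptic} is small enough, relative to the $L^2$ data $d$, that the weighted parabolic estimate sees no significant contribution from the non-graphical inner region, and that the quadratic nonlinearity $Q_s$ is genuinely absorbed by the linear principal part despite the presence of the inner cutoff. It is for precisely these requirements that the exponent $\alpha$ in $E_W$ was matched to the graphical rate of Proposition \ref{prop:mainelliptic}, and why the whole non-concentration estimate ultimately rests on Condition (A3).
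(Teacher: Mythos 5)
Your overall architecture does match the paper's: a parabolic smoothing estimate on the graphical region outside a fixed ball (Gaussian $L^2$ data at $\tau_0$ upgraded to a pointwise bound with weight $e^{|x|^2/4p}$ at $\tau_0+1$, via Ecker's log-Sobolev inequality \cite{Ecker} in the paper rather than a Moser iteration), the heat equation for $\theta-\theta_W$ plus the monotonicity formula to get a pointwise angle bound on $B_2$, and Proposition~\ref{prop:mainelliptic} to produce the $\max\{r^{-2\alpha},1\}$ factor inside the unit ball. However, you leave open exactly the step on which the lemma turns, and your proposed mechanism for it would not go through as described. You want to push graphicality down to an inner scale $r^*$ by an early application of Proposition~\ref{prop:mainelliptic} and then run a cutoff parabolic estimate that ``matches onto'' the pointwise bound $r^\alpha d_W<d$ there. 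Near that interface the graph function has size up to $d\,(r^*)^{-\alpha}$, the Gaussian weight is of order one, and the cutoff derivative terms carry no smallness, so there is nothing to absorb them with; moreover your appeal to White's regularity to propagate the inner description to time $\tau_0+1$ is not the mechanism that works here (White's theorem needs Gaussian density close to $1$, which fails near the cone's vertex). The paper avoids all of this: the parabolic argument is only ever run outside $B(0,1/2)$, where the hypothesis that $M_s$ is a $d$-graph over $W$ on $A_{2,\rho_0/2}$ gives $|v_\tau|\le\bar d$, and the interior bound with the $r^{-2\alpha}$ weight is obtained by applying Proposition~\ref{prop:mainelliptic} directly to the slice $M_{\tau_0+1}$, after the angle bound $|\theta-\theta_W|\le C\bar d$ on $B_2$ has been established there.

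The second missing device is how the nonlinearity and the two boundaries of the graphical region are handled without any cutoff at all. The paper converts the graph equation into the scalar differential inequality $|\partial_\tau v-\Delta v+\tfrac12 x\cdot\nabla v|\le C(|v|+|\nabla v|)$ and passes to $|v_\tau|^{3/2}$ so that the gradient term is absorbed and $e^{-C_1(\tau-\tau_0)}|v_\tau|^{3/2}$ becomes a genuine subsolution of the drift heat equation on $W$; it is then patched to a globally defined continuous subsolution $\tilde f$ by taking maxima with the constant $2\bar d^{3/2}$ on $B_2$ (this is where the $d$-graphicality hypothesis on $A_{2,\rho_0/2}$ enters) and with the explicit subsolution $e^{-(\tau-\tau_0)}|x|^2-e^{\tau_0/2}$ near $|x|\sim e^{\tau_0/2}$ (this is where $\bar d\ge e^{-\tau_0}$ is really used, not merely to absorb a far-field integral). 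With $\tilde f$ in hand, Ecker's inequality and the monotonicity formula apply on all of $W$, with no boundary or cutoff error terms; the same maximum construction is repeated for $\tilde\theta$ in \eqref{eq:tildetheta}. Without a concrete substitute for this barrier construction (or a proof that your cutoff errors at scale $r^*$ and at $e^{\tau_0/2}$ are controlled by $\bar d^2$), your argument has a genuine gap precisely at the point you yourself flag as the main obstacle.
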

\begin{proof}
  Let us first focus on the region $\mathbb{C}^n \setminus
  B(0,1)$. Here we use that by Lemma~\ref{lem:T1} we have good
  graphicality along the flow for $\tau\in [\tau_0, \tau_0+1]$, at least where
  $|x| < e^{\tau_0/2}$. The distance $d_W$ then satisfies a differential
  inequality on this region.

  More precisely, let us write the flow $M_\tau$ for $\tau\in[\tau_0, \tau_0+1]$ as a family of normal graphs over the minimal cone $W$ on the annulus $A_{e^{\tau_0/2},1/2}$. Thus, we have normal vector fields 
  \[ v_\tau : W \cap A_{e^{\tau_0/2}, 1/2} \to \mathbb{C}^n, \]
  such that a region of $M_\tau$ (in a slight perturbation of the annulus) is parametrized by the points $x + v_\tau(x)$, with $x\in W\cap A_{e^{\tau_0/2}, 1/2}$.
  The evolution equation for $v_\tau$ is of the form
  \[ \frac{\partial}{\partial \tau} v_\tau = \Delta v_\tau + A\ast A\ast v_\tau + 
  \frac{1}{2}(v_\tau - x\cdot \nabla v_\tau) + |x|^{-1} Q(|x|^{-1}x, |x|^{-1}v, \nabla v, |x| \nabla^2 v).\]
  Here $A\ast A\ast v_\tau$ is a contraction of the second fundamental form $A$ with $v_\tau$, and $Q = Q(y,p,q,r)$ is a power series with terms that are all at least quadratic in $p,q,r$, however $r$ appears at most linearly. The Laplacian $\Delta$ is the connection Laplacian $-\nabla^*\nabla$.
  
  The graphicality of $M_\tau$ over $W$ from Lemma~\ref{lem:ext1} implies that we have bounds of the form $|\nabla^i v_\tau| 
  \leq c_0 |x|^{1-i}$ for $i=0,1,2$, where $c_0$ can be as small as we
  like, if $\epsilon_0$ is chosen sufficiently small.
Using this, for sufficiently small $c_0$
  we have the differential inequality
  \[ |\partial_\tau v_\tau - \Delta v_\tau + \frac{1}{2}x\cdot \nabla v_\tau| \leq C(|v| + |\nabla v|). \]
 Here $C$ depends on the cone $W_0$. In order to absorb the
  gradient term we compute the evolution of $|v|^\alpha$ for
  $\alpha > 1$. Working with $\alpha = 3/2$ for simplicity we obtain a
  differential inequality of the form
  \[ \partial_\tau |v_\tau|^{3/2} - \Delta |v_\tau|^{3/2} + \frac{1}{2}x\cdot \nabla |v_\tau|^{3/2}\leq
    C_1|v_\tau|^{3/2}, \]
  for a constant $C_1$ depending on $W_0$. In particular $e^{-C_1(\tau - \tau_0)}
  |v_\tau|^{3/2}$ is a subsolution of the drift heat equation on $W$, at least on the
  annulus $A_{e^{\tau_0/2}, 1/2}$, for $\tau\in [\tau_0, \tau_0+1]$. Note that $|v_\tau|(x) = d_W(x + v_\tau(x))$, and under the parametrization of $M_\tau$ using $v_\tau$ the volume forms of $M_\tau$ and $W$ are uniformly equivalent. Therefore the integrals of $d_W^{2p} e^{-|x|^2/4}$ over a region in $M_\tau$ and of $|v_\tau|^{2p} e^{-|x|^2/4}$ over the corresponding region on $W$ are uniformly equivalent. 

  Let us define
  \[ f(x,\tau) = e^{-C_1(\tau-\tau_0)} |v_\tau|^{3/2}, \]
  so that $f$ is a subsolution of the drift heat equation on $W$, 
  on the annulus
  $A_{e^{\tau_0/2}, 1/2}$, for $\tau\in [\tau_0, \tau_0+1]$. At the same time, along the mean curvature flow $(\partial_s - \Delta) |x|^2 =-2n$, so
  $|x|^2$ is a subsolution of the heat
  equation. It follows that $e^{-(\tau-\tau_0)}|x|^2$ is a subsolution of the drift heat
  equation along the rescaled flow. Let us define the function
  \[ \tilde{f} = \begin{cases} e^{-(\tau-\tau_0)}|x|^2 - e^{\tau_0/2}, &\text{
        for } |x| \geq e^{\tau_0/2}, \\
      \max\{ f, e^{-(\tau-\tau_0)} |x|^2 - e^{\tau_0/2}, 2\bar{d}^{3/2}\}, &\text{
        for } 2 < |x| < e^{\tau_0/2}, \\
      2\bar{d}^{3/2}, &\text{ for }|x| \leq 2. \end{cases} \]
  Note that on any ball $B(0,R)$ we automatically have $|v_\tau| \leq R$,
  and in addition on the annulus $A_{2,1/2}$ by our assumption we have
  $|v_\tau| \leq \bar{d}$. Using this we can see that $\tilde{f}$ is continuous,
  and is a subsolution of the drift heat equation along $W$ for
  $\tau\in [\tau_0, \tau_0+1]$.

  We have
  \[ \tilde{f}^{4/3} \leq \begin{cases} C\bar{d}^2, &\text{ for }|x| \leq 2,
      \\
      C(\bar{d}^2 + |v_\tau|^2), &\text{ for }2 < |x| < e^{\tau_0/4}, \\
      C|x|^{8/3}, &\text{ for }|x| \geq e^{\tau_0/4}. \end{cases} \]
  Using that $\bar{d} \geq e^{-\tau_0}$, this implies that if $\bar{d}$ is sufficiently
  small, then
  \[ \int_{W} \tilde{f}^{4/3}\, e^{-|x|^2/4}\, d\mathcal{H}^n
    \leq C \bar{d}^2, \text{ for } \tau=\tau_0,\]
  for a suitable constant $C$ depending on $W_0$.

  Using Ecker's log-Sobolev inequality~\cite{Ecker} we find that for some $q > 1$,
  and replacing $C$ with a larger constant, we have
  \[ \int_{W} \tilde{f}^{4q/3} e^{-|x|^2/4}\, d\mathcal{H}^n
    \leq C \bar{d}^{2q}, \text{ for } \tau=\tau_0+1/2. \]
 Arguing similarly to the proof of \cite[Lemma 3.5 (2)]{LSSz2}
  we also obtain a pointwise bound of the
  form
  \[ \tilde{f}^{4/3} \leq C e^{\frac{|x|^2}{4q}} \bar{d}^2, \text{ for }\tau=\tau_0+1. \]
  In particular, using the definition of $\tilde{f}$, this implies
  that on $M_{\tau_0+1} \cap A_{e^{\tau_0/4},2}$ we have the estimate 
  \[ d_W^2 \leq Ce^{\frac{|x|^2}{4q}} \bar{d}^2. \]
  Note that this estimate is automatic for $|x| > e^{\tau_0/4}$ using
  that $d_W \leq |x|$ and $\bar{d}\geq e^{-\tau_0}$, and the same pointwise
  estimate also holds with $q$ replaced by any $p < q$. 

  The required estimate for $d_W$ inside the ball $B(0,2)$ follows
  directly  from Proposition~\ref{prop:mainelliptic}, once we obtain a pointwise bound for the Lagrangian angle. By integrating the pointwise estimate,
  we obtain the required integral estimate for $d_W^{2p}$ for any $p <
  q$. Thus it remains to estimate $|\theta - \theta_W|$. 

  Note that on the ball $B(0, e^{\tau_0/2})$ the function $\theta-\theta_W$ is a solution of the drift heat equation, and moreover we are assuming a uniform bound for $\theta$. Similarly to the definition of $\tilde{f}$ above we can define the function
  \[ \label{eq:tildetheta} \tilde{\theta} = \begin{cases} e^{-(\tau-\tau_0)}|x|^2 - e^{\tau_0/2}, &\text{ for } |x| \geq e^{\tau_0/2}, \\
  \mathrm{max}\{ |\theta - \theta_W|, e^{-(\tau-\tau_0)}|x|^2 - e^{\tau_0/2}\}, &\text{ for }|x| < e^{\tau_0/2}, \end{cases}
  \]
  on $M_\tau$ for $\tau\in [\tau_0, \tau_0+1]$. 
  The uniform bound for $|\theta-\theta_W|$ implies that if $\tau_0$ is sufficiently large, then for $|x| = e^{\tau_0/2}$ and $\tau\in [\tau_0, \tau_0+1]$ we have 
  \[ e^{-(\tau-\tau_0)}|x|^2 - e^{\tau_0/2} \geq e^{\tau_0-1} - e^{\tau_0/2} > |\theta-\theta_W|, \]
  so $\tilde\theta$ is a continuous subsolution of the drift heat equation along $M_\tau$. In addition, when $|x| < e^{\tau_0/4}$, then $\tilde\theta = |\theta-\theta_W|$. It follows, using the assumption $E_W(M_\tau), e^{-\tau_0} \leq \bar{d}$ as well, that
  \[ \int_{M_{\tau_0}} \tilde{\theta}^2\, e^{-|x|^2/4}\, d\mathcal{H}^n \leq C\bar{d}^2. \]
  Ecker's log-Sobolev inequality~\cite{Ecker} then leads to the estimate 
  \[ \left(\int_{M_{\tau}} \tilde{\theta}^{2q}\, e^{-|x|^2/4}\, d\mathcal{H}^n\right)^{1/q} \leq C'\bar{d}^2, \]
  for some $q > 1$ for $\tau\in [\tau_0+\frac{1}{2}, \tau_0+1]$. This leads to the required
  integral estimate for $|\theta - \theta_W|^{2p}$ on $M_{\tau_0+1}$, and in addition, the monotonicity formula applied to $\tilde\theta$ leads to the pointwise estimate $|\theta-\theta_W| \leq C\bar{d}$ on $M_{\tau_0+1}\cap B(0,2)$, so we can apply Proposition~\ref{prop:mainelliptic} to obtain the required pointwise distance bounds on $M_{\tau_0+1}\cap B(0,1)$. The integral estimate for $d_W^{2p}$ then follows by integrating the pointwise bound. 
\end{proof}

We will use the non-concentration estimate in the following form
several times. 
\eqref{eq:tildeEdefn}. 
\begin{cor}\label{cor:nonconc}
  Assume Conditions (A). There are
  $\epsilon_0, C > 0$ depending on 
  $L^0_t$ satisfying the following. Suppose that $E_W(M_{s}) < d$ for some
  $W\in\tilde{\mathcal{W}}$ such that $d(W,W_0) <
  \epsilon_0$, for $s\in [\tau_0-3, \tau_0]$, and $\tau_0 < T_1(\epsilon_0)-1$. Suppose that on the
  annulus $A_{2, \rho_0/2}$, $M_s$ is a $d$-graph over $W$ for $s\in
  [\tau_0-3, \tau_0+1]$. Let $\bar{d} = \max\{ d, e^{-\tau_0}\}$. 

  For any $\gamma > 0$ there exists a compact set $K_\gamma\subset
  \mathbb{C}^n\setminus\{0\}$ satisfying the following.
  Whenever on $K_\gamma\cap M_s$, for $s\in [\tau_0-2,
    \tau_0+1]$  we have 
  \[ \label{eq:est1onK} d_W^2 \leq \gamma^2 e^{\frac{|x|^2}{4q}}
    \bar{d}^2\, \text{ for } |x| > \rho_0/4 \]
  and
  \[ \label{eq:est2onK} |\theta - \theta_W|^2 \leq \begin{cases} \gamma^2 e^{\frac{|x|^2}{4q}}
      \bar{d}^2\,\max\{|x|^{-2}, 1\}, &\text{ if } n > 2, \\
\gamma^2 e^{\frac{|x|^2}{4q}}
      \bar{d}^2, &\text{ if } n =2, \end{cases}
    \]
  for some $q> 1$, then for $\bar{d}_\gamma = \max \{\gamma d,
  e^{-\tau_0}\}$ we have the improved estimate $E_W(M_{s})
  \leq C\bar{d}_\gamma$ for $s\in [\tau_0, \tau_0+1]$. 
\end{cor}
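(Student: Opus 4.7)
The proof is a refinement of that of Lemma~\ref{lem:nonconc2}, with the improved pointwise bounds on $K_\gamma$ providing better input to the barrier argument. First I would apply Lemma~\ref{lem:nonconc2} itself: since $E_W(M_{s'}) < d$ on $[\tau_0 - 3, \tau_0]$ and the graphicality condition on $A_{2,\rho_0/2}$ holds on $[\tau_0 - 3, \tau_0 + 1]$, the Lemma applies at any base time in $[\tau_0 - 1, \tau_0]$, yielding baseline pointwise bounds $d_W^2 \leq C e^{|x|^2/4p} \bar{d}^2 \max\{r^{-2\alpha}, 1\}$ and $|\theta - \theta_W| \leq C\bar{d}$ on $M_s \cap B_2$, together with the integral estimate
\[ \bigl\| \chi d_W^{2p} + |\theta - \theta_W|^{2p} \bigr\|_{L^1(\mu_G)}^{1/p} \leq C \bar{d}^2 \]
for $s \in [\tau_0, \tau_0 + 1]$, where $\mu_G = e^{-|x|^2/4} d\mathcal{H}^n$.

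Next I would bound the integral part of $E_W(M_s)$. Take $K_\gamma = \overline{B_{R_\gamma}} \setminus B_{r_\gamma}$ with $R_\gamma$ large and $r_\gamma \in (0, 1/2)$ small, chosen as functions of $\gamma$ so that both $\mu_G(\{|x| > R_\gamma\})^{(p-1)/p}$ and $\mu_G(B_{r_\gamma} \cap M_s)^{(p-1)/p}$ are at most $\gamma^2$ (the latter uses the uniform area-ratio bound). On $K_\gamma$ the hypotheses \eqref{eq:est1onK}, \eqref{eq:est2onK} give directly
\[ \int_{K_\gamma \cap M_s} (\chi d_W^2 + |\theta - \theta_W|^2) \, \mu_G \leq C \gamma^2 \bar{d}^2, \]
using that $q > 1$ makes $e^{|x|^2/4q - |x|^2/4}$ integrable and that the $\max\{|x|^{-2}, 1\}$ factor in \eqref{eq:est2onK} is harmless against $r^{n-1}\, dr$ for $n > 2$. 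On the complement, $\chi d_W^2$ is supported only in $\{|x| > R_\gamma\}$, and H\"older's inequality combined with the baseline $L^{2p}$ bound gives $\int_E u^2 \mu_G \leq C\bar{d}^2 \mu_G(E)^{(p-1)/p} \leq C \gamma^2 \bar{d}^2$ for each of the two integrands.

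For the supremum $\sup_{B_2 \cap M_s} r^\alpha d_W$, I would rerun the subsolution construction of Lemma~\ref{lem:nonconc2} with improved boundary data. Since $A_{2, r_\gamma} \subset K_\gamma$, estimate \eqref{eq:est1onK} gives the graph bound $|v_s| \leq C\gamma \bar{d}$ throughout the annulus where $M_s$ is graphical over $W$ for $s \in [\tau_0 - 2, \tau_0 + 1]$. The barrier $\tilde{f}$ in the proof of Lemma~\ref{lem:nonconc2} is therefore replaced by a modified subsolution $\tilde{f}_\gamma$ taking the smaller value $(\gamma \bar{d})^{3/2}$ on $A_{2, r_\gamma}$ and matched near the origin to the baseline pointwise bound from step one, so that $\int \tilde{f}_\gamma^{4/3} \mu_G \leq C \bar{d}_\gamma^2$ at the initial time $\tau_0 - 2$ (the contribution from $B_{r_\gamma}$ is of order $\bar{d}^2 r_\gamma^{n - 2\alpha}$, bounded by $\gamma^2 \bar{d}^2$ once $r_\gamma$ is small, using $n > 2\alpha$ in our cases). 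Ecker's log-Sobolev inequality upgrades this to an $L^{4q/3}$ bound for some $q > 1$, and the drift-heat monotonicity argument then yields the pointwise estimate $d_W \leq C\bar{d}_\gamma \max\{r^{-\alpha}, 1\} e^{|x|^2/8p}$ on $M_s$, together with the parallel bound $|\theta - \theta_W| \leq C\bar{d}_\gamma$ via the subsolution \eqref{eq:tildetheta}. The $e^{-\tau_0}$ term in $\bar{d}_\gamma$ enters through the far-field subsolution $e^{-(\tau - \tau_0)}|x|^2 - e^{\tau_0/2}$ used as in Lemma~\ref{lem:nonconc2}.

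The main technical obstacle is the construction of $\tilde{f}_\gamma$: it must remain a continuous subsolution across the transition between $K_\gamma$ and $B_{r_\gamma}$, and the choice of $r_\gamma$ as a function of $\gamma$ must be delicate enough to balance the small-scale volume factor $r_\gamma^{n - 2\alpha}$ against $\gamma^2$ in the initial integral without destroying either estimate. The two-unit time interval $[\tau_0 - 2, \tau_0]$ is essential for the drift-heat evolution to smooth out the transition and propagate the improved bound from $K_\gamma$ across $\partial B_{r_\gamma}$ into the interior.
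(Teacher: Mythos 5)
Your treatment of the Gaussian integral part of $E_W$ is exactly the paper's argument: apply the $L^{2p}$ bound from Lemma~\ref{lem:nonconc2}, use the hypotheses \eqref{eq:est1onK}, \eqref{eq:est2onK} on a compact annular set, and H\"older plus the area-ratio bound on the complement. The gap is in your treatment of the weighted sup term $\sup_{B_2\cap M_s} r^\alpha d_W$. You propose to rerun the subsolution/barrier construction of Lemma~\ref{lem:nonconc2} with a modified barrier $\tilde f_\gamma$ "matched near the origin'' and to let the drift-heat evolution "propagate the improved bound from $K_\gamma$ across $\partial B_{r_\gamma}$ into the interior.'' But the differential inequality for $|v_\tau|^{3/2}$ in Lemma~\ref{lem:nonconc2} is only available where $M_\tau$ is written as a normal graph over $W$, i.e.\ on the outer annulus; inside a small ball around the vertex the flow is precisely \emph{not} graphical over $W$ (controlling this is the whole point of the non-concentration estimate), and $d_W$ restricted to $M_s$ is not a subsolution of the drift heat equation there -- the ambient Hessian of $d_W$ blows up like $|x|^{-1}$ near the singular set of the cone and there is no graphicality to absorb it. So the key step, the interior pointwise bound $d_W\le C\bar d_\gamma\max\{r^{-\alpha},1\}e^{|x|^2/8p}$, is not actually produced by your barrier/log-Sobolev/monotonicity chain: a parabolic smoothing argument from a Gaussian $L^{4/3}$ bound does not generate the $r^{-\alpha}$ interior rate at the cone's vertex (in Lemma~\ref{lem:pointwise2} those $|x|^{-k}$ factors come from the eigenfunction expansion of the \emph{linear} equation on $W$, not from the flow near $0$).

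The paper's route for the interior is elliptic, not parabolic: the improved Gaussian $L^2$ bound on $|\theta-\theta_W|^2$ (your step two) is fed into Huisken's monotonicity formula via the subsolution $\tilde\theta$ from \eqref{eq:tildetheta} to get the pointwise angle bound $|\theta-\theta_W|\le C\gamma\bar d$ on $B_2\cap M_s$ for $s\in[\tau_0,\tau_0+1]$, and then Proposition~\ref{prop:mainelliptic}, applied with $d=C\gamma\bar d$ (using \eqref{eq:est1onK} for the graphicality hypothesis on the annulus), gives $d_W\le C\gamma\bar d\, r^{-\alpha}$ in $B_1\cap M_s$; this is what closes the sup estimate. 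You do obtain the pointwise angle bound "in parallel,'' but you never use it as the input to Proposition~\ref{prop:mainelliptic}, which is the missing idea. A secondary error: your matching "using $n>2\alpha$ in our cases'' fails in case (A3(c)), where $n=2$ and $\alpha=1+\lambda_2$, so $2\alpha>n$ and the contribution $\bar d^2 r_\gamma^{\,n-2\alpha}$ diverges as $r_\gamma\to 0$; this is exactly why the cutoff $\chi$ appears in \eqref{eq:EWdef} in that case.
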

\begin{proof}
  First note that by Lemma~\ref{lem:nonconc2} we have the $L^p$
  bound
   \[ \left[\int_{M_{s}\cap B(0, e^{(\tau_0-1)/2})} ( \chi d_W^{2p} + |\theta - \theta_W|^{2p})\,
       e^{-|x|^2/4}\, d\mathcal{H}^n \right]^{1/p}\leq C \bar{d}^2,  \]
   for $s\in [\tau_0-1, \tau_0+1]$. It follows that for these values of $s$,
   for any compact set $K$, if we have the pointwise estimates
   \eqref{eq:est1onK} and \eqref{eq:est2onK}, then
   \[ \int_{M_s\cap B(0, e^{(\tau_0-1)/2})} (\chi d_W^2 + |\theta - \theta_W|^2) e^{-|x|^2/4}\,
     &d\mathcal{H}^n  = \int_{M_s\cap K} (\chi d_W^2 + |\theta - \theta_W|^2) e^{-|x|^2/4}\,
     d\mathcal{H}^n \\
   &\quad + \int_{M_s\cap B(0, e^{(\tau_0-1)/2})\setminus K} (\chi d_W^2 + |\theta - \theta_W|^2) e^{-|x|^2/4}\,
   d\mathcal{H}^n \\
 &\leq C\gamma^2\bar{d}^2 + C\bar{d}^2 \left( \int_{M_s\cap B(0,e^{(\tau_0-1)/2})\setminus K} e^{-|x|^2/4}\,d\mathcal{H}^n\right)^{1-1/p}.\]
   Using the bound on the area ratios we can choose $K$ in a suitable
   way (depending on $\gamma$), such that
   \[ \label{eq:dwthetaL2} \int_{M_s\cap B(0, e^{(\tau_0-1)/2})} (\chi d_W^2 + |\theta - \theta_W|^2) e^{-|x|^2/4}\,
     d\mathcal{H}^n \leq C\gamma^2\bar{d}^2, \]
   for a suitable $C > 0$. 

   Using the monotonicity formula applied to the function $\tilde{\theta}$ considered in the proof of Lemma~\ref{lem:nonconc2}, it follows that for $s\in
   [\tau_0, \tau_0+1]$, on $B_2(0)\cap M_{s}$
   we have the pointwise bound
   \[ |\theta - \theta_W|^2 \leq C\gamma^2\bar{d}^2. \]
   If $\epsilon_0$ is chosen sufficiently small, then we can apply
   Proposition~\ref{prop:mainelliptic} using \eqref{eq:est1onK} to see
   that the conclusions (a) and (b) of the Proposition hold with
   $d=C\gamma\bar{d}$. This implies in particular that for these $s$, in $B_1(0)\cap
   M_{s}$, we have $d_W \leq C\gamma\bar{d}
   r^{-\alpha}$. Combined with \eqref{eq:dwthetaL2}, this shows that
   $E_W(M_{s}) \leq C\gamma\bar{d}$. 
\end{proof}

Recall \eqref{eq:tildeEdefn} for the
definition of $\tilde{E}_W$. 
Using Lemma~\ref{lem:L23ann2} together with the non-concentration
estimate, we have the following. 
\begin{lemma}\label{lem:3ann3}
  Assume Conditions (A). Let $\lambda\in
  (\lambda_1, \lambda_2)$ for the constants in
  Lemma~\ref{lem:L23ann2} with $d=0$. 
  There are $\epsilon_0 > 0$ and a large  $B > 0$ depending on $L^0_t$,
  satisfying the following. Suppose that $\tau < T_1(\epsilon_0) - 2B$,
  $\tilde{E}_W(M_{\tau+B}) > e^{-\tau}$, and $W\in
  \tilde{\mathcal{W}}$ with $d(W, W_0) < 
  \epsilon_0$. Then if 
  \[ \tilde{E}_W(M_{\tau+ B}) \geq e^{B\lambda} \tilde{E}_W(M_\tau), \]
  then we have
  \[ \tilde{E}_W(M_{\tau+2B}) \geq e^{B\lambda} \tilde{E}_W(M_{\tau+B}). \]
\end{lemma}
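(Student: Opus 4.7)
The plan is to argue by contradiction, following the standard blow-up strategy that turns a nonlinear three annulus statement along the rescaled LMCF into the linear statement Lemma~\ref{lem:L23ann2} for the drift heat equation on $W_0$. Suppose we have a sequence of rescaled flows $M^i_\tau$ and times $\tau_i$, cones $W_i \in \tilde{\mathcal{W}}$ with $d(W_i, W_0) \to 0$, such that
\[ d_i := \tilde{E}_{W_i}(M^i_{\tau_i + B}) \geq e^{B\lambda} \tilde{E}_{W_i}(M^i_{\tau_i}), \qquad d_i > e^{-\tau_i}, \]
but the conclusion fails, i.e. $\tilde{E}_{W_i}(M^i_{\tau_i + 2B}) < e^{B\lambda} d_i$. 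In particular the quantities $\tilde{E}_{W_i}(M^i_{s})$ for $s \in [\tau_i - 3,\tau_i + 2B]$ are all bounded by a constant multiple of $d_i$, and $\bar d_i := \max\{d_i, e^{-\tau_i}\} = d_i$ by hypothesis, so all nonlinear error terms in what follows will be $o(d_i)$.

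Next I invoke the non-concentration estimate. By Lemma~\ref{lem:ext1} we may assume (for $\epsilon_0$ small enough and $B$ large) that $\tau_i + 2B < T^i_1(\epsilon_0)$ and that $M^i_s$ is a $c_0$-graph over $W_i$ on $A_{2,\rho_0/2}$ throughout $s\in[\tau_i-3, \tau_i+2B]$, so Lemma~\ref{lem:nonconc2} and Corollary~\ref{cor:nonconc} apply. These furnish pointwise bounds $d_{W_i} \leq C d_i \max\{r^{-\alpha}, 1\} e^{|x|^2/8p}$ and $|\theta - \theta_{W_i}| \leq Cd_i$ on $M^i_s$ for $s\in[\tau_i - 2, \tau_i + 2B]$, together with the corresponding $L^{2p}$ Gaussian integral bounds. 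Writing $M^i_s$ as the graph of a one-form $\eta^i_s$ over $W_i$ on the annulus $A_{2, \rho_0/2}$, these bounds translate to uniform $C^{k_0}$ bounds on $d_i^{-1}\eta^i_s$ on any compact subset of $W_0\setminus\{0\}$ (by Schauder estimates for the linearized parabolic equation, since $|\theta - \theta_{W_i}|/d_i$ controls the nonlinear error in the evolution of $\eta^i_s$).

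Up to a subsequence, the rescaled 1-forms $d_i^{-1}\eta^i_{\tau_i + \sigma}$ converge locally smoothly on $(W_0\setminus\{0\})\times [-3, 2B]$ to an exact 1-form $\eta_\infty(\sigma)$ which, by Lemma~\ref{lem:graphlimit} applied at each time slice together with the vanishing of the error terms (enforced precisely by $\bar d_i = d_i$), satisfies the linear drift heat equation \eqref{eq:f1} on $W_0$. The pointwise bounds above guarantee that $\eta_\infty$ satisfies hypothesis (i), (ii) or (iii) of Lemma~\ref{lem:decomp1} (according to the case $\alpha = 1.1$, $\alpha = 0$, or $n=2$), while the $L^{2p}$ bounds translate into $L^2$ control on $W_0 \setminus B_1$. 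Finally the convergence preserves norms: for the squared norm \eqref{eq:normdefn10} and the definition \eqref{eq:tildeEdefn} we have $d_i^{-2}\tilde{E}_{W_i}(M^i_{\tau_i + \sigma B})^2 \to \Vert \eta_\infty\Vert^2$ evaluated on the appropriate time window, so the hypothesis becomes $\Vert \eta_\infty(B)\Vert \geq e^{B\lambda} \Vert \eta_\infty(0)\Vert$ while the failure of the conclusion gives $\Vert\eta_\infty(2B)\Vert < e^{B\lambda} \Vert \eta_\infty(B)\Vert$. Choosing $B$ large enough that $B\lambda \in (\lambda_1,\lambda_2)$ (with respect to the rescaled time variable of Lemma~\ref{lem:L23ann2}), this directly contradicts conclusion~(1) of Lemma~\ref{lem:L23ann2} with $d=0$.

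The main obstacle I expect is the bookkeeping that ensures the limit $\eta_\infty$ actually satisfies the integrability hypotheses of Lemma~\ref{lem:decomp1} and that the norms used in Lemma~\ref{lem:L23ann2} coincide with the limit of $d_i^{-1}\tilde{E}_{W_i}$. In case (a) this is clean because $\alpha = 1.1 > 1$ suffices for condition (ii); in case (c) with $n=2$ one must carefully track the $a_0 d\ln|x|$ term (which is why \eqref{eq:normdefn10} was set up the way it was), using the pointwise bound on $|\theta-\theta_W|$ to control $d^*\eta_\infty$. In case (b) with $\alpha = 0$ one needs merely condition (i), which is the easiest. The condition $d_i > e^{-\tau_i}$ is used precisely to ensure the rescaled non-concentration error terms $\bar d_i/d_i = 1$ stay under control, i.e. that no drift from $e^{-\tau_i}$ pollutes the limiting linear equation.
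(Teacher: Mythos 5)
Your overall blow-up strategy matches the paper's, but two steps that you treat as automatic are precisely where the real work lies, and as written they are gaps. First, the contradiction hypotheses only control $\tilde{E}_{W_i}$ on the three windows around $\tau_i$, $\tau_i+B$ and $\tau_i+2B$; they do \emph{not} bound $\tilde{E}_{W_i}(M^i_s)$ at intermediate times $s\in(\tau_i,\tau_i+2B)$ by $Cd_i$, so your assertion that all these quantities are $O(d_i)$ (needed even to extract a normalized limit $d_i^{-1}\eta_i$ on the whole time window) is unjustified. The paper devotes the first half of its proof to exactly this: it sets $\tilde{E}_i=\sup_{s\in[0,2B]}\tilde{E}_{W_i}(M^i_{\tau_i+s})$, assumes $\tilde{E}_i/d_i\to\infty$, normalizes by $\tilde{E}_i$, shows the limit vanishes near $s=B$ and hence identically, and then uses Corollary~\ref{cor:nonconc} to contradict the fact that $\tilde{E}_i$ is attained; only after that does one have $\tilde{E}_i\leq C_4 d_i$.

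Second, your claim that ``the convergence preserves norms,'' i.e.\ $d_i^{-2}\tilde{E}_{W_i}(M^i_{\tau_i+\sigma B})^2\to\Vert\eta_\infty\Vert^2$, is false in general: the convergence $d_i^{-1}\eta_i\to\eta_\infty$ is only locally smooth on compact subsets of $W_0\setminus\{0\}$, so Gaussian $L^2$ mass can be lost at the cone point or at infinity, and in particular you cannot conclude $\Vert\eta_\infty(B)\Vert\geq e^{B\lambda}\Vert\eta_\infty(0)\Vert$ (the limit could even have $\eta_\infty\equiv 0$), so there is no ``direct contradiction'' with Lemma~\ref{lem:L23ann2}. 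Relatedly, the device of ``choosing $B$ so that $B\lambda\in(\lambda_1,\lambda_2)$ in a rescaled time variable'' does not make sense: the drift heat equation is not invariant under such a time rescaling, and for large $B$ one has $B\lambda>\lambda_2$. What the paper does instead is keep the unit-time Lemma~\ref{lem:L23ann2} and iterate it over $[0,2B]$ to show $\Vert\eta_\infty(s)\Vert\leq\gamma$ for $s\in[B-10,B+10]$ once $B$ is large (this is where $\lambda_1<\lambda<\lambda_2$ is used), then upgrades this to pointwise bounds via Lemma~\ref{lem:pointwise2} and feeds them into the non-concentration estimate, Corollary~\ref{cor:nonconc}, to get $d_i\leq C\max\{\gamma C_4 d_i, e^{-\tau_i-B}\}$; choosing $B$ first (so $Ce^{-\tau_i-B}<d_i/2$, using $d_i>e^{-\tau_i}$) and then $\gamma$ small gives the contradiction. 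So the hypothesis $d_i>e^{-\tau_i}$ and the non-concentration machinery are not bookkeeping conveniences, as your last paragraph suggests, but the mechanism that substitutes for the norm convergence your argument assumes.
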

\begin{proof}
  The proof is by contradiction, analogous to the proof of
  \cite[Proposition 6.2]{LSSz2}. Suppose that we have a sequence of
  flows $M^i_\tau$ satisfying Conditions (A), and we have times
  $\tau_i\to\infty$, $W_i\in \tilde{\mathcal{W}}$ satisfying $W_i\to W_0$, and
  $\epsilon_i\to 0$ such that $\tau_i < T_1^i(\epsilon_i) - 2B$ for large
  $B$. Here we write $T_1^i$ for the $T_1$ function corresponding to
  the flow $M^i$. Let us write $d_i = \tilde{E}_{W_i}(M^i_{\tau_i+B})$. We
  assume that the conclusion of the lemma fails, so that
  \[ \label{eq:EWbounds2} \tilde{E}_{W_i}(M^i_{\tau_i}) &\leq e^{-B\lambda} d_i, \\
    \tilde{E}_{W_i}(M^i_{\tau_i+2B}) &\leq e^{B\lambda} d_i. \]
  Let us also define
  \[ \tilde{E}_i = \sup_{s\in [0,2B]} \tilde{E}_{W_i}(M^i_{\tau_i+s}). \]
  
  We first show that we have a constant $C_4$, depending on $W_0,
  \lambda, B$ such that $\tilde{E}_i \leq C_4d_i$. From the assumptions it
  follows that for $s\in [0,2B]$ we can write $M^i_{\tau_i + s}$ as the
  graph of a one form $\eta_i(s)$ over $W_i$ on larger and larger
  subsets of $W_i\setminus\{0\}$, and up to choosing a subsequence the
  forms $\tilde{E}_i^{-1}\eta_i$ converge locally smoothly to a solution
  $\eta$ of the drift heat equation on $W_0$. This solution satisfies
  that $|x|^\alpha|\eta|$ and $d^*\eta$ are bounded on
  $B_2(0)\setminus \{0\}$, and in addition
  $\eta$ is in the Gaussian $L^2$-space on $W_0\setminus B_1(0)$. If no constant $C_4$ exists as
  claimed, then $\tilde{E}_i / d_i\to \infty$, and it follows that $\eta(s)=0$
  for $s\in [B-1,B]$. This implies that $\eta$ is identically zero for
  all $s$.
  We can use Lemma~\ref{lem:nonconc2} to show that this contradicts
  the assumption that $\tilde{E}_i = \tilde{E}_{W_i}(M^i_{\tau_i + s_i})$ for some
  $s_i\in [0,2B]$.

  To see this, note that from the definition of $\tilde{E}_W$, the bounds
  \eqref{eq:EWbounds2}, and the fact that $\tilde{E}_i/d_i\to\infty$, we can
  assume that $s_i\in [3,2B-3]$. Then $E_{W_i}(M^i_{\tau_i+s}) \leq
  \tilde{E}_i$ for $s\in [s_i-6, s_i]$. We apply Corollary~\ref{cor:nonconc} with $\tau_0 = \tau_i +
  s$ for $s\in [s_i-3, s_i]$ and $\epsilon_0 = \epsilon_i$. Using that the limit $\eta$ of
  the normalized one-forms $\tilde{E}_i^{-1}\eta_i$ vanishes, it follows that
  for any $\gamma > 0$, the estimates \eqref{eq:est1onK}, \eqref{eq:est2onK} hold once $i$
  is sufficiently large. By Corollary~\ref{cor:nonconc} we have
  \[ \tilde{E}_i = \tilde{E}_{W_i}(M^i_{\tau_i+s_i}) \leq C \max\{\gamma \tilde{E}_i,
    e^{-\tau_i}\}, \]
  but note that
  we also have $e^{-\tau_i} < \tilde{E}_{W_i}(M^i_{\tau_i+B}) = d_i$.
  For sufficiently large $i$, if $\gamma$ is
  chosen small enough, this is a contradiction. 
   Therefore, we have $\tilde{E}_i \leq C_4d_i$ for all
   $i$ for a constant $C_4$.

   We now repeat the argument above, writing $M^i_{\tau_i+s}$ as the
   graphs of $\eta_i(s)$ over $W_i$ on larger and larger regions. This
   time we consider the rescalings $d_i^{-1}\eta_i$. As above, we find
   that along a subsequence $d_i^{-1}\eta_i \to \eta_\infty$, a
   solution of the drift heat equation on $W_0$, where the convergence
   is smooth on compact subsets of $W_0\setminus\{0\}$. By
   the assumptions \eqref{eq:EWbounds2} we have
   \[ \Vert \eta_\infty(0)\Vert &\leq e^{-B\lambda},  \\
     \Vert \eta_\infty(2B)\Vert &\leq e^{B\lambda}, \]
   using the norms as in \eqref{eq:normdefn10}. 
   Using Lemma~\ref{lem:L23ann2} with $d=0$ we find that
   given any $\gamma > 0$, if $B$ is chosen sufficiently large (depending on
   $\lambda, \gamma$), then
   \[ \label{eq:gammaest10} \Vert \eta_\infty(s)\Vert &\leq \gamma, \text{ for } s\in
     [B-10,B+10]. \]
   Indeed, suppose first that $\Vert \eta_\infty(B+10)\Vert >
   \gamma$. If
   \[ \Vert \eta_\infty(i+1)\Vert \leq e^{\lambda_1} \Vert
     \eta_\infty(i)\Vert \text{ for all } i < B+10, \]
   then we have
   \[ e^{-B\lambda} \geq \Vert \eta_\infty(0) \Vert \geq e^{-(B+10)\lambda_1} \Vert
     \eta_\infty(B+10)\Vert > e^{-(B+10)\lambda_1} \gamma. \]
   If $B$ is sufficiently large, depending on $\gamma$, then this is a
   contradiction, since $\lambda > \lambda_1$. It follows that for
   some $i_0 < B+10$ we have
   \[ \Vert \eta_\infty(i_0+1)\Vert > e^{\lambda_1} \Vert
     \eta_\infty(i_0)\Vert, \]
   and this implies, by Lemma~\ref{lem:L23ann2}, that
    \[ \Vert \eta_\infty(i+1)\Vert \geq e^{\lambda_1} \Vert
       \eta_\infty(i)\Vert, \text{ for all }i \geq B+10. \]
     It follows that
    \[ e^{B\lambda} \geq \Vert \eta_\infty(2B)\Vert \geq e^{(B-10)\lambda_2} \Vert
      \eta_\infty(B+10)\Vert > e^{(B-10)\lambda_2} \gamma. \]
    Again, this is a contradiction if $B$ is large, depending on
    $\gamma$, since $\lambda_2 > \lambda$. 
   This shows that we must have $\Vert
   \eta_\infty(B+10)\Vert\leq    \gamma$, and a very similar
   argument shows that $\Vert
   \eta_\infty(s)\Vert\leq    \gamma$ for all $s\in [B-10,B+
   10]$.

   Using Lemma~\ref{lem:pointwise2}, \eqref{eq:gammaest10} implies the estimates
   \[\label{eq:pointwise3} |\eta_\infty(s)|^2(x) \leq C\gamma^2
     e^{\frac{|x|^2}{4p}} \text{ for } |x| > \rho_0/4, \]
   and
   \[ \label{eq:pointwise31}
     |d^*\eta_\infty(s)|^2(x) \leq \begin{cases}
       C\gamma^2 e^{\frac{|x|^2}{4p}} \max\{ |x|^{-2}, 1\}, &\text{ if
       } n >2, \\
       C\gamma^2 e^{\frac{|x|^2}{4p}}, &\text{ if }n=2. \end{cases}
     \]
   for some $p > 1$, and $s\in [B-9,B+10]$. On any compact
   set $K\subset W_0\setminus\{0\}$ we have
   $d_i^{-1}(\theta_{M^i_{\tau_i+s}}- \theta_{W_i}) \to d^*\eta_\infty$
   uniformly. This, together with the pointwise estimate
   \eqref{eq:pointwise31}, implies that given $K,\gamma$, for large
   enough $i$ we have
   \[ \label{eq:p30} |\theta_{M^i_{\tau_i+s}} - \theta_{W_i}|
     \leq \begin{cases}
       Cd_i \gamma e^{\frac{|x|^2}{8p}}\max\{|x|^{-1}, 1\}, &\text{ if
       }n > 2, \\
       Cd_i \gamma e^{\frac{|x|^2}{8p}}, &\text{ if } n=2, \end{cases} \]
   for $s\in [B-9, B+10]$, on $K$. 
   At the same time we have the bound $\tilde{E}_i\leq C_4d_i$ from the first
   part of the argument above. Using also \eqref{eq:pointwise3},
   Corollary~\ref{cor:nonconc} then implies
   that for any $\gamma > 0$, once $i$ is sufficiently large, we have
   \[ d_i = \tilde{E}_{W_i}(M^i_{\tau_i+B}) \leq C\max\{ \gamma C_4d_i,
     e^{-\tau_i-B}\}. \]
   Here $C_4$ depends on $B$, but $C$ does not. Recall also that
   $e^{-\tau_i} \leq d_i$. We can therefore first choose
   $B$ sufficiently large so that $Ce^{-\tau_i-B} < \frac{d_i}{2}$. This
   choice determines $C_4$, and we can choose $\gamma$ small so that
   $C\gamma C_4 d_i < \frac{d_i}{2}$. With these choices we obtain
   $d_i < \frac{d_i}{2}$, which is the required contradiction. 
\end{proof}

From now on we will consider $B$ fixed and large enough
so that this result holds. Iterating this result we obtain the following. 

\begin{cor}\label{cor:growthcase}
  Assume Conditions (A), and let $\epsilon_1 < \epsilon_0$, for the
  $\epsilon_0$ in Lemma~\ref{lem:3ann3}.  There exists $\epsilon_2 > 0$ depending on
  $L^0_t, \epsilon_1$ satisfying the following. Suppose that $M_0$ is an
  $\epsilon_2^2$-graph over $W_0$ on the annulus $A_{\epsilon_2^{-1},
    \epsilon_2}$. In addition suppose that for some $\tau
  < T_1(\epsilon_1) - 2B$ we have
  \[ \tilde E_W(M_{\tau+B}) \geq e^{\lambda B} \tilde E_W(M_\tau), \]
  where $d(W,W_0) < \epsilon_2$ and $\tilde E_W(M_{\tau+B}) > e^{-\tau}$.
  Then we have $T_1(\epsilon_1) > T_\Theta(\epsilon_2)$. 
\end{cor}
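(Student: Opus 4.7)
The plan is to iterate Lemma~\ref{lem:3ann3} to propagate the initial growth of $\tilde E_W(M_\tau)$ through many time windows, and then invoke Huisken's monotonicity to convert this growth into a Gaussian area drop exceeding $\epsilon_2$ within the interval $[0,T_1(\epsilon_1))$, contradicting the hypothesis $T_1(\epsilon_1)\leq T_\Theta(\epsilon_2)$.

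Arguing by contradiction, I assume $T_1(\epsilon_1)\leq T_\Theta(\epsilon_2)$ and choose $\epsilon_2$ much smaller than $\epsilon_0$ from Lemma~\ref{lem:3ann3}; since $\epsilon_1<\epsilon_0$, also $T_1(\epsilon_1)\leq T_1(\epsilon_0)$. Under this contradiction hypothesis, on $[0,T_1(\epsilon_1))$ both the $\epsilon_1^2$-graphicality over $W_0$ on $A_{\epsilon_1^{-1},\epsilon_1}$ and the Gaussian area bound $\Theta(M_s)>\Theta(W_0)-\epsilon_2$ simultaneously hold. I then iterate Lemma~\ref{lem:3ann3}: at step $k\geq 1$, applied with initial time $s=\tau+(k-1)B$, the required hypotheses $s<T_1(\epsilon_0)-2B$ and $d(W,W_0)<\epsilon_0$ are given, while the lower bound $\tilde E_W(M_{s+B})>e^{-s}$ is preserved inductively by the exponential growth since $\lambda>0$: indeed
\[ \tilde E_W(M_{\tau+kB})\geq e^{(k-1)B\lambda}\tilde E_W(M_{\tau+B})>e^{(k-1)B\lambda-\tau}, \]
which comfortably exceeds $e^{-(\tau+(k-1)B)}$. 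The iteration can thus be run as long as $\tau+(k+1)B<T_1(\epsilon_0)$.

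Next I convert this growth into a Gaussian area drop. By the non-concentration estimate of Lemma~\ref{lem:nonconc2}, the Gaussian $L^2$ norm $\bigl(\int_{M_s}|\theta-\theta_W|^2 e^{-|x|^2/4}\,d\mathcal H^n\bigr)^{1/2}$ is comparable (up to universal constants depending on $L^0_t$) to $\tilde E_W(M_s)$ on slightly later time slices. Applying the monotonicity \eqref{eq:thetamon} component-by-component on the decomposition of Definition~\ref{def:components} — where $\theta_W$ is locally constant on each $M_{s,j}$ — and summing, combined with the rescaled Huisken monotonicity for $\Theta(M_\tau)$ and a Gaussian Poincar\'e-type inequality on the Gaussian weighted space over each integrable component of $W_0$ (comparing $\int|H|^2\,e^{-|x|^2/4}$ with $\int|\theta-\theta_W|^2\,e^{-|x|^2/4}$), one obtains
\[ \Theta(M_{\tau+B})-\Theta(M_{\tau+kB})\geq c\,\tilde E_W(M_{\tau+kB})^2\geq c\,e^{2(k-1)B\lambda-2\tau}, \]
for a constant $c>0$ depending on $L^0_t$. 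Choosing $k$ to be the largest integer with $\tau+(k+1)B<T_1(\epsilon_1)$, and taking $\epsilon_2$ so small (depending on $L^0_t$ and $\epsilon_1$) that $c\,e^{2(k-1)B\lambda-2\tau}$ exceeds $2\epsilon_2$, together with the bound $\Theta(M_{\tau+B})\leq \Theta(W_0)+O(\epsilon_2)$ coming from Huisken monotonicity applied to the $\epsilon_2^2$-graphical slice $M_0$, I conclude $\Theta(M_{\tau+kB})<\Theta(W_0)-\epsilon_2$ at a time $\tau+kB<T_1(\epsilon_1)\leq T_\Theta(\epsilon_2)$, the desired contradiction.

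The main obstacle is the middle step, namely quantitatively converting the abstract exponential growth of $\tilde E_W$ into a definite Gaussian area drop. The key analytic ingredient is a component-wise Gaussian Poincar\'e inequality on the integrable cone $W_0$ (leveraging the spectral gap of the drift Laplacian on each connected component to bound $\int|\theta-\theta_W|^2$ by $\int|\nabla\theta|^2=\int|H|^2$ in the Gaussian weighted $L^2$ sense), used in tandem with the non-concentration estimate Lemma~\ref{lem:nonconc2} to pass from $\tilde E_W$ to the pure angle integral, while carefully tracking that different connected components of $M_\tau$ correspond to possibly distinct Lagrangian angles $\theta_{W_j}$ in $\tilde{\mathcal W}$.
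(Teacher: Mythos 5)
Your first step---iterating Lemma~\ref{lem:3ann3} and checking that the lower bound $\tilde E_W(M_{\tau+kB})>e^{-(\tau+(k-1)B)}$ propagates automatically because $\lambda>0$---is exactly what the paper does. The gap is in the middle step, where you try to convert the exponential growth of $\tilde E_W$ into a definite Gaussian area drop. First, there is no lower bound of the form $\int_{M_s}|\theta-\theta_W|^2e^{-|x|^2/4}\,d\mathcal H^n\geq c\,\tilde E_W(M_s)^2$: Lemma~\ref{lem:nonconc2} is a \emph{non-concentration} (upper) estimate, and $\tilde E_W$ is dominated by the distance terms $r^\alpha d_W$ and $\int\chi d_W^2$, which can be large while $\theta$ is essentially constant --- for instance when the flow drifts toward a nearby cone $W'\in\tilde{\mathcal W}$ with $\theta_{W'}\neq\theta_W$, or toward a non-conical special Lagrangian asymptotic to $W$. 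Second, even granting an angle lower bound, Huisken's monotonicity ties the drop of $\Theta(M_\tau)$ to $\int|H+\tfrac{x^\perp}{2}|^2\rho$, and the inequality you want, $\Theta(M_{\tau+B})-\Theta(M_{\tau+kB})\geq c\,\tilde E_W(M_{\tau+kB})^2$, is a genuine \L{}ojasiewicz-type estimate that does not follow from a Gaussian Poincar\'e inequality (note $\theta-\theta_W$ has no reason to have small Gaussian mean on each component, and the inequality would have to hold on $M_s$, not on the cone). A static flow given by a cone $W'\neq W$ shows no such uniform inequality can hold: $\Theta$ is constant while $\tilde E_W\geq c\,d(W,W')>0$. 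This is precisely why the paper does \emph{not} argue this way in general, and why the two-dimensional case later requires the separate excess functional $\mathcal A$ and Proposition~\ref{prop:decay5}. Third, there is a quantifier problem at the end: $\epsilon_2$ must be fixed in advance (depending only on $L^0_t,\epsilon_1$), whereas your lower bound $c\,e^{2(k-1)B\lambda-2\tau}$ depends on the flow through $\tau$ and $k$ and can be arbitrarily small (e.g.\ when $\tilde E_W(M_{\tau+B})$ is barely above $e^{-\tau}$ and $T_1(\epsilon_1)$ is barely above $\tau+2B$), so the alleged drop need not exceed $2\epsilon_2$ even if the inequality held.

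For comparison, the paper's proof avoids any quantitative area-drop estimate. Under the contradiction hypothesis ($T^i_\Theta(\epsilon_{i,2})\geq T^i_1(\epsilon_1)$ with $\epsilon_{i,2}\to 0$), it takes limits of the flows $M^i_{T^i_1(\epsilon_1)+s}$ near the end time: by a Lemma~\ref{lem:conescale}-type argument the limit is a \emph{static} flow given by a cone $W'$ with $d(W',W_0)<C\epsilon_1^2$, and $W'\neq W_0$ because graphicality fails at $T^i_1(\epsilon_1)$. Hence $\tilde E_{W_i}(M^i_{\tau_i+(k_i-1)B})$ stays bounded away from zero, and since the flows become asymptotically static the ratio $\tilde E_{W_i}(M^i_{\tau_i+k_iB})/\tilde E_{W_i}(M^i_{\tau_i+(k_i-1)B})\to 1$, contradicting the iterated growth $\geq e^{B\lambda}$. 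You would need to replace your area-drop step by a soft compactness argument of this kind.
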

\begin{proof}
  We will argue by contradiction, assuming that no such $\epsilon_2$
  exists. Suppose that we have a sequence $\epsilon_{i,2} \to 0$ and flows $M^i_\tau$, such that
  $M^i_0$ is an $\epsilon_{i,2}^2$-graph over $W_0$ on the annulus
  $A_{\epsilon_{i,2}^{-1}, \epsilon_{i,2}}$, and that for some $\tau_i < T^i_1(\epsilon_1) - 2B$ we
  have
  \[ \label{eq:g40}\tilde E_{W_i}(M^i_{\tau_i+ B}) \geq e^{B\lambda}
    \tilde E_{W_i}(M^i_{\tau_i}), \]
  where $d(W_i,W_0) < \epsilon_{i,2}$, and $\tilde E_{W_i}(M^i_{\tau_i+B}) > e^{-\tau_i}$. We will show
  that for sufficiently large $i$ we have $T^i_1(\epsilon_1) >
  T^i_{\Theta}(\epsilon_{i,2})$. Here $T^i_1, T^i_{\Theta}$ are the
  functions corresponding to the flows $M^i$.

  First note that we can iterate Lemma~\ref{lem:3ann3}, using the
  assumption \eqref{eq:g40}, to obtain
  \[ \label{eq:g41} \tilde E_{W_i}(M^i_{\tau_i+ kB}) \geq e^{B\lambda}
    \tilde E_{W_i}(M^i_{\tau_i +
      (k-1)B}), \]
  as long as $\tau_i + kB < T^i_1(\epsilon_1)$. Indeed the condition
  $\tilde E_{W_i}(M^i_{\tau_i+(k-1)B}) > e^{-\tau_i-(k-2)B}$ will hold
  automatically along the iteration.  Let us write $k_i$ for
  the largest such integer $k$ for each $i$, so that
  \[ \tau_i + (k_i+1)B \geq T^i_1(\epsilon_1) > \tau_i + k_iB. \]
  Suppose in addition, by contradiction, that we have
  $T^i_{\Theta}(\epsilon_{i,2}) \geq T^i_1(\epsilon_1)$ for infinitely
  many 
  $i$. Using an argument similar to Lemma~\ref{lem:conescale} we find
  that along a subsequence the flows $\tilde{M}^i_s =
  M^i_{T^i_1(\epsilon_1) + s}$ for $s\in [- 10B,10B]$ converge to a static flow given by a cone
  $W'$, where necessarily $d(W', W_0) < C\epsilon_1^2$ (using the
  $\epsilon_1^2$-graphicality over $W_0$ at time
  $T^i_1(\epsilon_1)$). We cannot have $W'=W_0$, since if $\tilde{M}^i_s$ were
  to converge to $W_0$ for all $s\in [-10B, 10B]$, then for large $i$
  we would have $T^i_1(\epsilon_1) \geq T^i_1(\epsilon_1) + 10B$. 

  It follows that $\tilde E_{W_i}(M^i_{\tau_i+(k_i-1)B}) \not\to 0$, and so the
  convergence of the flows to a static flow implies
  \[ \lim_{i\to \infty} \frac{\tilde E_{W_i}(M^i_{\tau_i +
        k_iB})}{\tilde E_{W_i}(M^i_{\tau_i + (k_i-1)B})} =1.  \]
   This contradicts \eqref{eq:g41} for large $i$. 
\end{proof}

We now come to the main decay estimate along the rescaled flow. We
first focus on the setting where we assume Condition (A3(a)) or (A3(b)). The
case of Condition (A3(c)) will be treated below, using some additional
ideas from \cite{LSSz2}. 
\begin{prop}\label{prop:decay4}
  Assume Conditions (A), with either (A3(a)) or (A3(b)) satisfied. 
  There is an $\epsilon_0 > 0$ and $N, C > 0$ depending on $L^0_t$ with
  the following property. Suppose that $\tilde E_W(M_\tau) > e^{-\tau}$ for some
  $W$ with $d(W, W_0) < \epsilon_1<\epsilon_0$, and $\tau <
  T_1(\epsilon_1) - \epsilon_0^{-1}$. 
  Suppose that $T_1(\epsilon_1) \leq
  T_\Theta(\epsilon_2)$ for the $\epsilon_2$ determined by
  $\epsilon_1$ in
  Corollary~\ref{cor:growthcase}. Then there is a $W'\in \tilde{\mathcal{W}}$
  satisfying
    \begin{itemize} \item[(a)] $d(W, W') \leq C\tilde E_W(M_\tau)$, and
\item[(b)] $\tilde E_{W'}(M_{\tau+NB}) \leq
  \frac{1}{2}\tilde E_W(M_\tau)$, for the constant $B$ in Lemma~\ref{lem:3ann3}.
  \end{itemize}
\end{prop}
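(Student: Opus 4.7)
My plan is to argue by contradiction following an Allard--Almgren style linearization. I would suppose that for every choice of $N, C, \epsilon_0$ the conclusion fails, producing a sequence of flows $M^i_\tau$ satisfying Conditions~(A), times $\tau_i$, and cones $W_i \in \tilde{\mathcal W}$ with $d(W_i, W_0) \to 0$ and $d_i := \tilde E_{W_i}(M^i_{\tau_i}) > e^{-\tau_i}$, such that $T_1^i(\epsilon_1) \leq T_\Theta^i(\epsilon_2)$ and $\tau_i + N_i B < T_1^i(\epsilon_1)$ with $N_i \to \infty$, but for every $W' \in \tilde{\mathcal W}$ with $d(W_i, W') \leq C_i d_i$ one has $\tilde E_{W'}(M^i_{\tau_i + N_i B}) > \tfrac{1}{2} d_i$, where $C_i \to \infty$.

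The first substantive step is to rule out growth along each flow. Since $T_1^i(\epsilon_1) \leq T_\Theta^i(\epsilon_2)$, Corollary~\ref{cor:growthcase} forbids the growth setting of Lemma~\ref{lem:3ann3} at every $\tau < T_1^i(\epsilon_1) - 2B$. Iterating the opposite inequality gives the uniform bound $\tilde E_{W_i}(M^i_{\tau_i + s}) \leq C(s)\, d_i$ on every finite time interval. Using Lemma~\ref{lem:T1} I then write $M^i_{\tau_i + s}$ as the graph of an exact 1-form $\eta_i(s)$ over $W_i$ on exhausting subsets of $W_i \setminus \{0\}$, normalize by $d_i$, and by parabolic regularity and a diagonal subsequence pass to a limit $d_i^{-1} \eta_i \to \eta_\infty$ locally smoothly on $(W_0 \setminus \{0\}) \times [0, \infty)$. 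The limit $\eta_\infty$ is exact, satisfies the drift heat equation \eqref{eq:f1}, and is nontrivial by the normalization; the radial control in the definition of $E_W$ through the $r^\alpha d_W$ term supplies the pointwise bound near the origin needed to apply Lemma~\ref{lem:decomp1}.

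Next I would spectrally analyze $\eta_\infty$. By Lemma~\ref{lem:decomp1}, $\eta_\infty = \sum_j a_j e^{(\lambda_j + 1)s} d\phi_j$ (up to a possible $d\ln|x|$ term when $n=2$). The uniform $B$-step control, together with Lemma~\ref{lem:L23ann2} at $d=0$, forces every surviving mode to satisfy $\lambda_j + 1 \leq 0$. I split $\eta_\infty = \eta_{\mathrm{st}} + \eta_{\mathrm{dec}}$ into static and strictly decaying parts. Lemma~\ref{lem:static1} identifies $\eta_{\mathrm{st}} = d(f_0 + c|x|^2)$, with $f_0$ a degree two harmonic function on $W_0$ and $c$ locally constant on the connected components of $W_0$. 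The integrability hypothesis on each component $V_k$ integrates this infinitesimal direction to a curve of cones in $\tilde{\mathcal W}$ tangent to $\eta_{\mathrm{st}}$ at $W_0$, where the $c|x|^2$ pieces realize independent rotations of individual components (in general changing each component's Lagrangian angle, which is why one must work in $\tilde{\mathcal W}$). At finite level this yields cones $W_i' \in \tilde{\mathcal W}$ with $d(W_i, W_i') = O(d_i)$ such that after re-expressing $M^i_{\tau_i + s}$ as graphs over $W_i'$, the renormalized limit is the strictly decaying $\eta_{\mathrm{dec}}$.

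To conclude I would fix $N$ large enough that $\Vert \eta_{\mathrm{dec}}(NB)\Vert \leq 1/8$, which is possible since each surviving mode of $\eta_{\mathrm{dec}}$ decays strictly. To convert this linear smallness on compact subsets into a bound on $\tilde E_{W_i'}(M^i_{\tau_i + NB})$, I invoke the non-concentration estimate of Corollary~\ref{cor:nonconc} in the same manner as in the proof of Lemma~\ref{lem:3ann3}: on a large compact subset of $W_0 \setminus \{0\}$ the normalized graphs are $\varepsilon$-close to $\eta_{\mathrm{dec}}$, and Corollary~\ref{cor:nonconc} controls the contribution from outside the compact set, yielding $\tilde E_{W_i'}(M^i_{\tau_i + NB}) \leq \tfrac{1}{2} d_i$ for large $i$. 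Since in addition $d(W_i, W_i') \leq C' d_i < C_i d_i$ once $i$ is large, this contradicts the assumed failure of the conclusion. The main obstacle I anticipate lies in the static step: the degree two harmonic functions on the disconnected cone $W_0$ decouple into independent pieces on each $V_k$, and must be integrated using the integrability of each $V_k$ separately, producing perturbations living only in $\tilde{\mathcal W}$ rather than in $\mathcal W$; checking that the non-concentration estimate and the angle function $\theta_{W_i'}$ behave compatibly with these component-wise rotations—so that the bound one ultimately obtains really is on $\tilde E_{W_i'}$ rather than on $\tilde E_{W_i}$—is the key bookkeeping difficulty.
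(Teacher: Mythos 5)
Your outline reproduces the paper's argument in the case (A3(a)) of connected link: rule out growth via Corollary~\ref{cor:growthcase}, extract the normalized limit $\eta_\infty$ of the graph one-forms, kill the growing modes with Lemma~\ref{lem:L23ann2}, integrate the static part $d(f+c|x|^2)$ via Lemma~\ref{lem:static1} and the integrability hypothesis to produce $W_i'$, and then use Corollary~\ref{cor:nonconc} to convert the strict decay of the remaining modes into $\tilde E_{W_i'}(M^i_{\tau_i+NB})\leq \tfrac12 d_i$. For that case your proof is essentially the paper's Case~1.

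However, there is a genuine gap in your treatment of the static component in the case (A3(b)), which the Proposition also claims. You assert that "the $c|x|^2$ pieces realize independent rotations of individual components \dots which is why one must work in $\tilde{\mathcal W}$", i.e.\ that the whole static part can always be absorbed into a choice of $W'\in\tilde{\mathcal W}$. This is false. The elements of $\tilde{\mathcal W}$ are unions $W=\bigcup_j W_j$ where each $W_j$ is a \emph{special Lagrangian} cone (single Lagrangian angle) deforming $W_{0,j}$, and the index $j$ runs over the connected components $M_{\tau,j}$ of the flow, not over the cones with connected link. A single connected component $M_{\tau,j}$ may be asymptotic to $W_{0,j}=\bigcup_a V_{j,a}$ with several connected-link pieces, and Lemma~\ref{lem:static1} gives constants $c_{j,a}$ that can differ for different $a$ within the same $j$. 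The modes $(c_{j,a}-\underline{c}_j)\,d|x|^2$ correspond to rotating the $V_{j,a}$ to cones with \emph{different} angles inside one flow-connected component, and the resulting union is not in $\tilde{\mathcal W}$; moreover $\theta_{W'}$ must be constant on each $M_{\tau,j}$ for $\theta-\theta_{W'}$ to solve the drift heat equation, so this obstruction cannot be defined away. The paper therefore splits $\eta_0=\eta_{00}+\eta_{01}$, absorbs only $\eta_{00}$ into $W_i'$, and when $\Vert\eta_{01}\Vert\geq\kappa_1$ it runs a separate, quantitative Neves-type argument: exactness (A2) gives Lagrangian potentials $f_i$ with $e^{-s}(f_i-2\theta)$ solving the drift heat equation; the graphicality down to scale $\sim d_i$ from Proposition~\ref{prop:mainelliptic}(b) (this is exactly where the no-good-blowup hypothesis enters) bounds $\mathrm{osc}\, f_i$ on a small ball; and a cutoff/monotonicity argument then forces $|\theta-\theta_{W_i'}|\leq C\gamma d_i$ at a slightly later time, contradicting the lower bound $|\theta-\theta_{W_i'}|\geq \tfrac12 C^{-1}\kappa_1 d_i$ coming from $d^*\eta_{01}\geq C^{-1}\kappa_1$. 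This second mechanism is the heart of the proposition in the disconnected case and is absent from your proposal; without it, your contradiction only establishes the statement when every connected component of the flow is asymptotic to a cone with connected link.
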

\begin{proof}
  We argue by contradiction. Suppose that we have a sequence
  $\epsilon_1^i\to 0$, and 
  $W_i\in \tilde{\mathcal{W}}$ such that 
  $d(W_i, W_0) < \epsilon_1^i$. In addition we have flows $M^i_\tau$
  such that $\tilde E_{W_i}(M_{\tau_i}^i) >
  e^{-\tau_i}$, and $\tau_i < T_1(\epsilon^i_1) - (\epsilon^i_0)^{-1}$. We
  are also assuming that $T_1(\epsilon_1^i) \leq T_\Theta(\epsilon^i_2)$ for the
  $\epsilon^i_2$ determined by $\epsilon^i_1$. We claim that if $N, C
  > 0$ are chosen sufficiently large, then the conclusions (a)
  or (b) will hold for all sufficiently large $i$.

  Let us write $d_i = \tilde E_{W_i}(M^i_{\tau_i})$. 
  Note first that using Corollary~\ref{cor:growthcase} we can assume
  that $\tilde E_{W_i}(M^i_{\tau_i + kB})\leq e^{kB\lambda} d_i$,
  as long as $\tau_i + kB < T_1(\epsilon_1^i) - 2B$, i.e. $k$ can be
  arbitrarily large as $i\to\infty$. 
  Because of this growth estimate 
  together with the assumption that $d_i\to 0$, we can write
  $M^i_{\tau_i+s}$ as the graph of a one-form $\eta_i(s)$ over a compact
  subset $K_i$ of $(W_i\setminus \{0\})\times [0,\infty)$, where as
  $i\to\infty$ the sets $K_i$ exhaust $(W_0\setminus \{0\})\times
  [0,\infty)$. Using Lemma~\ref{lem:graphlimit}, the sequence
  $d_i^{-1}\eta_i$ converges, along a subsequence, to a solution of
  the drift heat equation
  \[ \partial_s \eta = \Delta \eta + \frac{1}{2}(\eta - x\cdot \nabla \eta) \]
  on $(W_0\setminus \{0\}) \times [0,\infty)$, where the $\eta(s)$ are
  closed forms. Using the non-concentration estimate,
  Lemma~\ref{lem:nonconc2},
  we have that for
  all $s$, $|x|^\alpha \eta(s)$ is uniformly bounded in $B_1$, and in addition
  $\eta(s)$ is in the Gaussian $L^2$-space.

  We can decompose $\eta$ using the eigenfunction decomposition of
  $\mathcal{L}_0$ as in Lemma~\ref{lem:decomp1}. Let us write
  \[ \eta = \eta_0 + \eta_{<0} + \eta_{>0}, \]
   where $\eta_0$ is the static component, and $\eta_{<0}, \eta_{>0}$
   are the decaying and growing components respectively.

   First we claim that $\eta_{>0}=0$. To see this, note that if
   $\eta_{>0}\not=0$, then we have $\Vert \eta(s)\Vert \geq c
   e^{\lambda_2 s}$ for some $c > 0$ and all $s > 0$ (for the
   $\lambda_2$ in Lemma~\ref{lem:L23ann2}), but this contradicts the growth bound for
   $\tilde E_{W_i}(M^i_{\tau_i + s})$ that we found above.
   We can therefore write $\eta = \eta_0 + \eta_{<0}$. There are two
   cases depending on whether we are assuming (A3(a)), in which case
   the link of $W_0$ is connected, or (A3(b)), when there is no good
   blowup. 

   \bigskip
   \noindent{\bf Case 1.}
   Let us first 
   focus on the setting where the link of $W_0$ is connected. By
   Lemma~\ref{lem:static1}, we can write $\eta_0 = d(f + c|x|^2)$ for a
   degree 2 harmonic function $f$ on $W_0$ and a constant $c$. By the
   integrability assumption we can define the cones $W'_i\in\tilde{\mathcal{W}}$,
   which to leading order are given by perturbing $W_i$ by the one
   forms $d_i \eta_0$. Note that this implies $d(W_i, W_i') \leq
   Cd_i$. 

   Repeating the arguments above, we can write $M^i_{\tau_i + s}$ as the graphs of
   $\eta_i'(s)$ over larger and larger subsets of $(W_i'\setminus
   \{0\}) \times [0,\infty)$. By construction, along a subsequence we
   will have $d_i^{-1}\eta_i' \to \eta' = \eta_{<0}$. We have $\Vert
   \eta_{<0}(s)\Vert \leq C e^{-\lambda_2s}$, and from this we want to use
   Corollary~\ref{lem:nonconc2}, applied to $\tau_0=\tau_i+NB+s$ with $s\in
   [-3,0]$, to reach the contradiction
   \[ \label{eq:E20}
     \tilde E_{W_i'}(M^i_{\tau_i+NB}) \leq \frac{1}{2}\tilde E_{W_i}(M^i_{\tau_i}).
   \]
   The argument is somewhat similar to the proof of
   Lemma~\ref{lem:3ann3}. First, note that
   \[ \tilde E_{W_i'}(M^i_{\tau_i}) \leq Cd_i, \]
   and so by the growth property in Corollary~\ref{cor:growthcase} we
   have $\tilde E_{W_i'}(M^i_{\tau_i + kB}) \leq C e^{kB\lambda} d_i$ for all
   $k$, once $i$ is large. In particular it follows, as in the first
   part of the proof of Lemma~\ref{lem:3ann3}, that
   \[ E_{W_i'}(M^i_{\tau_i + NB + s}) \leq C_{NB} d_i, \text{ for } s\in
     [-10,10], \]
   where $C_{NB}$ depends on $NB$.

   Using that $\Vert \eta_{< 0}(s) \Vert\leq Ce^{-\lambda_2 s}$, we
   can argue as in the second part of the proof of
   Lemma~\ref{lem:3ann3} to show that Corollary~\ref{cor:nonconc}
   implies \eqref{eq:E20} for sufficiently large $i$, as required.

   \bigskip
   \noindent{\bf Case 2.} Suppose now that $W_0=\bigcup W_{0,j}$ according to the decomposition \eqref{eq:W0decomp} depending on the connected components of $M^i_{\tau_i}$. Up to choosing a subsequence we can assume that the decomposition does not depend on $i$. 
   We are also assuming (A3(b)), so there
   is no good blowup sequence along the flow. 
   We further
   decompose $\eta_0 = \eta_{00} + \eta_{01}$ as follows. First let us write $W_{0,j} = \bigcup_a V_{j,a}$, where each $V_{j,a}$ is a cone with connected link. 
   By Lemma~\ref{lem:static1}, on each $V_{j,a}$
   we can write $\eta_0 = d(f_{j,a} + c_{j,a}|x|^2)$ for a degree 2 harmonic
   function $f_{j,a}$ and a constant $c_{j,a}$. Let $\underline{c}_j$ be the
   average of the $c_{j,a}$, and define
   \[ \eta_{00} &= d(f_{j,a} + \underline{c}_j|x|^2)\, \text{ on }V_{j,a} \\
     \eta_{01} &= (c_{j,a}-\underline{c}_j)\, d|x|^2\, \text{ on }V_{j,a}. \]
   We then have $d^*\eta_{00} = 2n\underline{c}_j$ on $W_{0,j}$. Using the
   integrability assumption for $W_0$ we can define the cones $W_i'
   \in \tilde{\mathcal{W}}$ that to leading order correspond to perturbing
   $W_i$ by the one forms $d_i\eta_{00}$. Note that $d_i \eta_{01}$
   also corresponds to perturbing $W_i$, but the result will not be
   in $\tilde{\mathcal{W}}$, unless each $c_{j,a}=\underline{c}_j$, i.e. $\eta_{01}=0$. 

   As in Case 1, we can view the $M^i_{\tau_i+s}$ as graphs of one-forms
   $\eta_i'(s)$ over the $W_i'$ on larger and larger sets, and
   $d_i^{-1}\eta_i' \to \eta' = \eta_{01} + \eta_{<0}$. 
   We would like to argue like in Case 1, except we have the
   additional term $\eta_{01}$ that is not decaying. If $\Vert
   \eta_{01}\Vert < \kappa_1$ for a sufficiently small $\kappa_1$,
   then we can argue as in Case 1. We can therefore assume that $\Vert
   \eta_{01}\Vert \geq \kappa_1$ for a fixed small $\kappa_1 > 0$.
   In particular this means that on at least one component $W_{0, j_0}$ we have. 
   $d^*\eta_{01} > C^{-1}\kappa_1$ for some fixed $C > 0$.

   We can now argue somewhat similarly to \cite[Proposition
   7.3]{LSSz2} to get a contradiction. As in \cite{LSSz2} we  use the
   exactness of the flows (at least close to the origin), so that
   there are
   Lagrangian potentials $f_i(\tau)$ such that $e^{-s}(f_i - 2\theta_i)$
   satisfy the drift heat equation along $M^i_s$. While in
   \cite[Proposition 7.3]{LSSz2} we were only able to control the
   $f_i$ at certain times, in the current setting we will be able to
   show that the oscillation of $f_i$ can be made arbitrarily small
   for all sufficiently large times. This will imply that the
   oscillation of $\theta$ is also as small as we like, however this
   is bounded below by $\kappa_1$, leading to a contradiction. 

   As in Case 1, Corollary~\ref{cor:growthcase} implies that
   $\tilde E_{W_i'}(M^i_{\tau_i + kB}) \leq C e^{kB\lambda} d_i$, and so
   \[ \tilde E_{W_i'}(M^i_{\tau_i + NB + s}) \leq C_{NB} d_i, \text{ for } s\in
     [-10,10], \]
   for a constant $C_{NB}$ depending on $N$ (the constant $B$ is fixed). 
  We first use the nonconcentration estimate,
  Corollary~\ref{cor:nonconc},
  to improve this to a bound for $E_{W_i'}$ which is independent of the choice of $N$.

  For this note that we have the convergence $d_i^{-1}\eta_i' \to \eta'$, and $\eta'$
   has no growing component, so $\Vert \eta'(s)\Vert \leq C$ for all
   $s > 0$ for a uniform $C$. By Lemma~\ref{lem:pointwise2} we get
   pointwise estimates
   \[ \label{eq:eta'40} | \eta'(s)|^2(x) \leq C e^{|x|^2/4p}\, \text{ for } |x| >
     \rho_0/4, \]
   and
   \[ | d^*\eta'(s)|^2 \leq \begin{cases} C e^{|x|^2/4p} \max\{
       |x|^{-2},1\}, &\text{ if } n> 2, \\
       C e^{|x|^2/4p}, &\text{ if }n=2, \end{cases} \]
   for $s > 1$. 
   It follows that on any compact set $K \subset
   \mathbb{C}^n \setminus \{0\}$, for sufficiently large $i$, we have
   \[     |\theta - \theta_{W_i'}|^2 \leq \begin{cases} C e^{|x|^2/4p}
       d_i^2 \max\{|x|^{-2}, 1\}, &\text{ if }n>2, \\
       C e^{|x|^2/4p} d_i^2, &\text{ if } n=2, \end{cases}
     \]
   on $K\cap M^i_{\tau_i + s}$ for $s > 1$ and for some $p >
   1$. Together with \eqref{eq:eta'40}, Corollary~\ref{cor:nonconc} implies that 
   \[ \tilde E_{W_i'}(M^i_{\tau_i+NB+s}) \leq Cd_i, \text{ for }s\in [-9,10]. \]

   Applying Lemma~\ref{lem:nonconc2}, we have
   \[ d^2_{W_i'} \leq C_1 e^{|x|^2 / 4p_0} d_i^2\, \text{ on } M^i_{\tau_i + NB + s} \]
   and
  \[ \left[ \int_{M^i_{\tau_i + NB + s}} (d_{W_i'}^{2p_0} + |\theta -
       \theta_{W_i'}|^{2p_0}) e^{-|x|^2 / 4}\,
       d\mathcal{H}^n\right]^{1/p_0} \leq C_1 d_i^2, \]
  for $s\in [-8,10]$.
  (note that in the current setting, where we have
  no good blowups, the constant $\alpha=0$).
  In terms of the $p_0$ here, let $p_0' =
   (1+p_0)/2$, and for small $d > 0$ define $R_d$ by
   \[ e^{R_d^2 / 4p_0'} d^2 = 1. \]
   This definition implies that given $\epsilon > 0$, if $i$ is
   sufficiently large (depending on $\epsilon, C_1, p_0$), then we
   have
   \[ C_1 e^{|x|^2 / 4p_0} d_i^2 < \epsilon, \text{ for } |x| <
     R_{d_i}. \]

   Given $\gamma > 0$, we can assume that $N$ is sufficiently large so
   that
   \[ \Vert \eta_{<0}(NB + s)\Vert \leq \gamma, \text{ for }
     s\in [-10,10], \]
   which by Lemma~\ref{lem:pointwise2} implies bounds of the form
   \[ |\nabla^k \eta_{<0}(NB + s)|^2(x) \leq C_k \gamma^2 e^{|x|^2 /
       4p_k} \max\{|x|^{-2k},1\}, \text{ for } s\in [-9,10]. \]
   Note here that in the two-dimensional case there can be no $d\ln
   |x|$ terms in the decaying piece $\eta_{<0}$.  Since $d_i^{-1}\eta_i' \to \eta_{01} +
   \eta_{<0}$, and $d^*\eta_{01} > C^{-1}\kappa_1$ on a component $W_{0, j_0}$, it follows that if
   $\gamma$ is chosen sufficiently small (depending on $\kappa_1$), then on a component of the annular region
   $A_{2,1}\cap M^i_{\tau_i + NB+s}$ we have 
   \[ \label{eq:t50} |\theta - \theta_{W_i'}| >
     \frac{1}{2}C^{-1}\kappa_1 d_i, \text{
       for } s\in[-9,10]. \]

    Assumpion (A2) implies that we can find a potential $f_i$ such that
   $df_i=\lambda$ on $M^i_{\tau_i+NB+s}$ for $s\in [-10,10]$, at least on
   the ball $B_{e^{\tau_i/2}}$, by using the good graphicality over $W_0$
   provided by Lemma~\ref{lem:T1}. 
   Using $|\nabla f_i| = |x^\perp|$,
   and the fact that $\eta_{01}$ is homogeneous of degree 1, 
   this implies that on any given compact set $K\subset
   \mathbb{C}^n\setminus \{0\}$, once $i$ is
   sufficiently large, we will have
   \[ \label{eq:fi10} |\nabla f_i|^2 \leq C_1 \gamma^2 d_i^2 e^{|x|^2 /
       4p_1}, \text{ on } M^i_{\tau_i +NB + s} \cap K, \text{ for } s\in
     [-9,10]. \]
   
 In addition using Lemma~\ref{lem:T1}, we know that
   $M^i_{\tau_i+NB+s}$ is graphical over $W_i'$ at least on the annulus
   $A_{e^{\tau_i/2}, 1/10}$, and for large $i$ this contains the annulus
   $A_{R_{d_i}, 1/10}$. Indeed, we have $d_i > e^{-\tau_i}$, and so
   \[ e^{e^{\tau_i}/4p_0'} d_i^2 > e^{d_i^{-1}/4p_0'} d_i \to \infty, \]
   as $i\to \infty$. Viewing $M^i_{\tau_i + s}$ as the graphs of
   $\eta_i'$ over $W_i'$, using parabolic theory we obtain estimates
   analogous to those in Lemma~\ref{lem:pointwise2} for the $\eta_i'$
   on the annulus $A_{R_{d_i}-1, 1/5}$. In particular this implies
   that on this annulus we have
   \[\label{eq:fi11} |\nabla f_i|^2 \leq C d_i^2 e^{|x|^2 / 4p_1}, \]
   for some $p_1 > 1$, and $s\in [-8,10]$. Choosing $p_0$ smaller,
   we can arrange that $p_1 > p_0$.

   \bigskip
   \noindent {\bf Estimating $f_i$ on a small ball}.
   We claim that for sufficiently large $i$, and $s\in [-9,10]$, on each component of $M^i_{t_i+NB+s}\cap B_1$  we
   have the estimate
   \[ \mathrm{osc}_{M^i_{t_i+NB+s}\cap B_{r_\gamma}} f_i < \gamma
     d_i, \]
   where $r_\gamma\in (0,1)$ depends on $\gamma$. We write this argument
   separately since it is the only part of the proof that will need to
   be modified in Proposition~\ref{prop:decay5} below. 
   \begin{proof}
   We use the graphicality estimate from Proposition~\ref{prop:mainelliptic}
   part (b). In our current setting we know that $M^i_{\tau_i + NB + s}$
   is the graph of $\eta_i'(NB+s)$ over $W_i'$, and so it is a
   $d_i$-graph over $W_i'$ on $A_{1, \rho_0}$ for $s\in [-9,10]$ for
   large $i$, and moreover the estimate $|\theta-\theta_{W_i'}| <
   Cd_i$ on $B_2\cap M^i_{\tau_i+NB+s}$ follows from the monotonicity
   formula.  We obtain that for all $r\in (\delta^{-1}Cd_i, 1)$,
   $r^{-1}M^i_{\tau_i+NB+s}$ is a $Cd_ir^{-1}$-graph over $W_i'$ on
   $A_{1, 1/2}$, for $s\in [-9,10]$. It follows that
   \[ |\nabla f_i| \leq Cd_i, \text{ on } M^i_{\tau_i+NB+s} \cap A_{r,
       r/2}, \]
   for $r\in (\delta^{-1}Cd_i, 1)$. In addition, using that $df_i =
   \lambda$ on $M^i_{\tau_i+NB+s}$, we have
   \[ |\nabla f_i| \leq \delta^{-1}Cd_i\, \text{ on }M^i_{\tau_i+NB+s}
     \cap B_{\delta^{-1}Cd_i}. \]

   Consider a connected component $\Sigma=M^i_{\tau_i+NB+s, j}\cap B(0,2)$.
   The intrinsic diameter of $\Sigma\cap B_{\delta^{-1}Cd_i}$ is bounded
   by $\delta^{-1}C'd_i$ for a larger constant $C'$ (this uses the
   volume non-collapsing property implied by the bound on the
   Lagrangian angle, as in the proof of Lemma 7.2 in
   Neves~\cite{Neves1}). It follows from these 
   considerations that for any $\gamma > 0$ we can find an $r_\gamma >
   0$ such that
   \[ \mathrm{osc}_{\Sigma\cap B_{r_\gamma}} f_i <
     C''(\delta^{-2} d_i^2 + d_i r_\gamma)  < \gamma
     d_i, \]
   for sufficiently large $i$ and $s\in [-9,10]$. In other words we
   can find $\underline{f}_i$, depending on $i, s$, that are constant on each component of $M^i_{\tau_i+NB+s}\cap B(0,2)$, such that
   \[ |f_i - \underline{f}_i| \leq \gamma d_i\, \text{ on }
     M^i_{\tau_i+NB+s}\cap B_{r_\gamma}, \]
   for $s \in [-9,10]$ and large $i$.
   \end{proof}

   \bigskip
   We can extend the locally constant functions $\underline{f}_i$ to $M^i_{\tau_i + NB+s}\cap B(0, e^{\tau_i/2})$ using the good graphicality over $W_0$. Then,  first 
   using \eqref{eq:fi10} we can extend the above estimate to the region $K$, so that
   \[ \label{eq:fi40}|f_i - \underline{f}_i|^2 \leq C_1 \gamma^2 d_i^2 e^{|x|^2/4p_2},
     \text{ on } M^i_{\tau_i+NB+s}\cap (K \cup B_1), \]
   for $s\in [-9,10]$ and sufficiently large $i$, with some $p_2 >
   p_0$  (assuming that $K\cup
   B_{r_\gamma}$ contains the unit ball). Using the bound
   \eqref{eq:fi11}, we can further extend this to a weaker estimate
   (without the small factor $\gamma$), to the ball $B_{R_{d_i}-1}$:
   \[\label{eq:fi41} |f_i - \underline{f}_i|^2 \leq C_1  d_i^2 e^{|x|^2/4p_2},
     \text{ on } M^i_{\tau_i+NB+s}\cap B_{R_{d_i}-1}, \]
   for $s\in [-8,10]$ and $i$ sufficiently large.
   In a similar way we
   also have the analogous estimate for the Lagrangian angle: 
   \[ |\theta - \theta_{W_i'}|^2 \leq C_1  d_i^2 e^{|x|^2/4p_2},
     \text{ on } M^i_{\tau_i+NB+s}\cap B_{R_{d_i}-1}, \]
   for $s\in [-8,10]$ and $i$ sufficiently large.

   We now argue in the same way as in the proof of Proposition 7.3 in
   \cite{LSSz2}. We let $s^i_1=\tau_i + NB$ and $s^i_2 = s^i_1 + \sigma$
   with $\sigma > 0$ small, to be chosen below. Define the function
   \[ h = e^{s^i_1 - s}(f_i - \underline{f}_i - 2(\theta -
     \theta_{W_i'})) + 2(\theta - \theta_{W_i'}) \]
   on  $M^i_s\cap B_{R_{d_i}}$ for $s\in [s^i_1, s^i_2]$. Let $\chi_0:[0,\infty) \to \mathbb{R}$
   be a cutoff function such that $\chi_0(a) = 1$ for $a <
   (R_{d_i}-2)^2$, and $\chi_0(a) = 0$ for $a > (R_{d_i}-1)^2$, and so
   that $\chi_0, \chi_0', \chi_0''$ are bounded independently of
   $i$. We let $\chi(x) = \chi_0(|x|^2)$. Then $\chi^2h^2$ is globally
   defined. As in \cite[Proposition 7.3]{LSSz2} we can compute that
   \[ (\partial_s - \mathcal{L}_0) (\chi^2h^2) \leq \begin{cases} C|x|^2
       h^2\, &\text{ for } R_{d_i}-2 < |x| < R_{d_i}-1, \\
       0\, &\text{ otherwise }
       \end{cases}
     \]
     and we also have
     \[ h^2 \leq C d_i^2 e^{R_{d_i}^2/4p_2}, \text{ for } 1 < |x| <
         R_{d_i}. \]
       Define
       \[ \Theta = (e^{s^i_1-s}|x|^2 - e^{-\sigma}(R_{d_i}-2)^2)_+. \]
       We then have
       \[ (\partial_s - \mathcal{L}_0) \Theta \leq \begin{cases} -2\,
           &\text{ if } e^{s^i_1-s}|x|^2 > e^{-\sigma} (R_{d_i}-2)^2, \\
           0, \text{ otherwise.} \end{cases}. \]
       Note that we have $e^{s-s^i_1-\sigma} < 1$ for $s\in
       [s^i_1,s^i_2]$, so $(\partial_s - \mathcal{L}_0)\Theta \leq -2$
       if $|x| > R_{d_i}-2$. In particular for a suitable constant
       $C_2 > 0$ we have
       \[ \label{eq:h50} (\partial_s - \mathcal{L}_0) (\chi^2h^2 + C_2 d_i^2
         R_{d_i}^2 e^{R_{d_i}^2/ 4p_2}\Theta) \leq 0, \]
       for $s\in [s^i_1, s^i_2]$.

       At $s=s^i_1$ we have $\chi^2h^2 =
       \chi^2(f_i-\underline{f}_i)^2$. From the estimates
       \eqref{eq:fi40}, \eqref{eq:fi41},  we have that if the compact
       set $K$ is
       chosen sufficiently large (depending on $\gamma$), then 
       \[ \label{eq:h51} \int_{M^i_{s}} \chi^2(f_i - \underline{f}_i)^2\,
           e^{-|x|^2/4}\, d\mathcal{H}^n \leq C \gamma^2 d_i^2,
           \text{ for }s\in [\tau_i+NB-7, \tau_i+NB+10]. \]
       We can also estimate the Gaussian integral of $\Theta$ at
       $s=s^i_1$ in the
       same way as in \cite{LSSz2}: note that for $s=s^i_1$ we have
       $\Theta(x) =0$ if $|x| < e^{-\sigma/2}(R_{d_i}-2)$, which
       is satisfied if $|x| < e^{-\sigma}R_{d_i}$ for large $i$. At
       the same time $\Theta(x) \leq |x|^2$ for all $|x|$. It follows that
       \[ \int_{M^i_{s^i_1}} \Theta\, e^{-|x|^2/4}\, d\mathcal{H}^n
         \leq Ce^{-2\sigma} R_{d_i}^2 \exp(-e^{-2\sigma} R_{d_i}^2/4), \]
       and so
       \[ \int_{M^i_{s^i_1}}  C_2 d_i^2
         R_{d_i}^2 e^{R_{d_i}^2/ 4p_2}\Theta\, e^{-|x|^2/4}\,
         d\mathcal{H}^n \leq C d_i^2 e^{-2\sigma} R_{d_i}^4
         \exp\left(\frac{R_{d_i}^2}{4p_2} -
           \frac{R_{d_i}^2}{4e^{2\sigma}}\right). \]
       As long as $\sigma$ is chosen sufficiently small so that
       $e^{2\sigma} < p_2$, then for sufficiently large $i$ we will
       have
       \[ \label{eq:h52} \int_{M^i_{s^i_1}}  C_2 d_i^2
         R_{d_i}^2 e^{R_{d_i}^2/ 4p_2}\Theta\, e^{-|x|^2/4}\,
         d\mathcal{H}^n \leq \gamma^2 d_i^2. \]
       Using \eqref{eq:h50} in the monotonicity formula, together with
     the estimates \eqref{eq:h51} and \eqref{eq:h52}, and that $\Theta
     \geq 0$, we find that at
     $s=s^i_2$ we have
     \[ \int_{M^i_{s^i_2}} \chi^2h^2\, e^{-|x|^2/4}\,d\mathcal{H}^n
       \leq C\gamma^2 d_i^2. \]
     Using \eqref{eq:h51} at $s=s^i_2$ we find that
     \[ (2 - 2e^{\sigma})^2 \int_{M^i_{s^i_2}}\chi^2 ( \theta -
         \theta_{W_i'})^2\, e^{-|x|^2/4}\, d\mathcal{H}^n \leq C
         \gamma^2 d_i^2, \]
       so
       \[ \label{eq:thetaint40} \int_{M^i_{s^i_2}\cap B_{R_{d_i}-2}} ( \theta -
         \theta_{W_i'})^2\, e^{-|x|^2/4}\, d\mathcal{H}^n \leq C
         \gamma^2 d_i^2, \]
       for a larger $C$ (depending on $\sigma$). At the same time the
       uniform bound on the Lagrangian angle and area ratios implies
       that for any $c > 1$ we have
       \[ \label{eq:thetaint41} \int_{M^i_{s^i_2}\setminus B_{R_{d_i}-2}} ( \theta -
         \theta_{W_i'})^2\, e^{-|x|^2/4}\, d\mathcal{H}^n \leq 
         e^{-(R_{d_i}-2)^2/4c}, \]
       for sufficiently large $i$. By definition we have
       $e^{-R_{d_i}^2/4p_0'} = d_i^2$ for some $p_0' > 1$, so
       \eqref{eq:thetaint40} and \eqref{eq:thetaint41} imply
       that if $i$ is sufficiently large, we have
       \[ \int_{M^i_{s^i_2}} ( \theta -
         \theta_{W_i'})^2\, e^{-|x|^2/4}\, d\mathcal{H}^n \leq C
         \gamma^2 d_i^2. \]
       Using the monotonicity formula we find that at the later time $s^i_1
       + 1 = \tau_i + NB+1$ we have the pointwise bound
       \[ |\theta - \theta_{W_i'}| \leq C\gamma d_i\, \text{ on }
         M^i_{\tau_i+NB+1} \cap B_2. \]
       If $\gamma$ is chosen sufficiently small, then this contradicts
       the estimate \eqref{eq:t50} for the Lagrangian angle on the
       annulus $A_{2,1}$. The contradiction shows that for large $N$
       we have the decay required in the statement of the
       Proposition. 
     \end{proof}

Next we prove a decay estimate in the setting where
Condition (A3(c)) holds, i.e. in the two dimensional case, without
assuming that there is no good blowup. The argument is more involved,
using additional ideas from \cite{LSSz2}. Similarly to \cite{LSSz2} we want to define
the excess of $M\subset \mathbb{C}^2$, relative to a union of planes
$W$, however now, as above, the planes might not all have the same angle. For a rescaled flow $M_\tau$ in $\mathbb{C}^2$ satisfying Conditions (A), and for $\tau\in [0, T_1(c_0))$ for small $c_0 > 0$, we define 
\[ \mathcal{A}(M_\tau) = \Theta(M_\tau) - \Theta(W_0) + \inf_W \int_{M_\tau\cap B(0, e^{\tau/2})} |\theta - \theta_W|^2\, e^{-|x|^2/4}\, d\mathcal{H}^2. \]
Here the infimum is over all $W\in \tilde{\mathcal{W}}$ as in \eqref{eq:thetaWdefn}.

We need the following slight modification of Proposition 4.6 from
\cite{LSSz2}.
\begin{prop}\label{prop:Abound}
  Assume Conditions (A), with (A3(c)) satisfied. There are
  $\epsilon_0 >0, \beta_0 > 0$ depending on $W_0$, satisfying the
  following. Suppose that $d < \epsilon_0$ and $\tau < T_1(\epsilon_0) -
  10$ with $e^{-\tau} < d$. If $\tilde{E}_W(M_\tau) < d$ for some $W\in \tilde{\mathcal{W}}$ with
  $d(W, W_0) < \epsilon_0$, then we have
  \[ |\mathcal{A}(M_\tau)| \leq d^{1 + \beta_0}. \]
\end{prop}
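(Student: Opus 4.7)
The plan is to bound each of the two pieces of $\mathcal{A}(M_\tau)$ separately, using the non-concentration estimate Lemma~\ref{lem:nonconc2} as the key input and exploiting the two-dimensional hypothesis to reduce the Gaussian area comparison to a local estimate near the origin.

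Applying Lemma~\ref{lem:nonconc2} at $\tau_0=\tau-1$ (shrinking $\epsilon_0$ so that the required $d$-graphicality on $A_{2,\rho_0/2}$ holds over $[\tau-3,\tau+1]$) yields the pointwise estimates $|\theta-\theta_W|\leq Cd$ on $B_2\cap M_\tau$ and $d_W^2(x)\leq Ce^{|x|^2/(4p)}d^2\max\{r^{-2(1+\lambda_2)},1\}$ on $M_\tau$, together with the $L^{2p}$-integral bounds of order $d^2$. The angle piece of $\mathcal{A}$ is then immediate: Hölder's inequality combined with the uniform area-ratio bound gives
\[ \int_{M_\tau\cap B(0,e^{\tau/2})}|\theta-\theta_W|^2 e^{-|x|^2/4}\,d\mathcal{H}^2 \leq Cd^2, \]
and taking the infimum over $W'\in\tilde{\mathcal{W}}$ can only decrease this quantity.

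For the Gaussian area difference, the key two-dimensional observation is that every $W\in\tilde{\mathcal{W}}$ is a union of $m$ Lagrangian $2$-planes through the origin (since multiplicity-one special Lagrangian cones in $\mathbb{C}^2$ are precisely planes, and the SL deformations of the components $W_{0,j}$ remain unions of planes), and each such plane contributes Gaussian density exactly $1$. Hence $\Theta(W)=m=\Theta(W_0)$ with no error term, and it suffices to bound $|\Theta(M_\tau)-\Theta(W)|$. I would choose a cutoff $\delta\sim d^{1/(1+\lambda_2)}$, arranged so that the pointwise bound forces $d_W\leq c_0$ on $M_\tau\setminus B_\delta$; the uniform area-ratio bound then controls the contribution to $|\Theta(M_\tau)-\Theta(W)|$ from $B_\delta$ by $C\delta^2\sim d^{2/(1+\lambda_2)}$. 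On the complement, write $M_\tau$ as a normal graph of a $1$-form $\eta$ over $W$. Because $W$ is a cone, $x^\perp=0$ and so $|x+\eta^\sharp(x)|^2=|x|^2+|\eta|^2$; this makes the Gaussian area expand cleanly into a quadratic form in $\eta$ and $\nabla\eta$ plus a cubic remainder. The $L^2$-control of $\eta$ is immediate from the non-concentration estimate, and the $L^2$-control of $\nabla\eta$ follows by standard parabolic regularity for the approximate drift heat equation on the earlier time window $[\tau-3,\tau]$. The cubic remainder is controlled by the $L^{2p}$-integrability with $p>1$, and altogether this yields $|\Theta(M_\tau\setminus B_\delta)-\Theta(W\setminus B_\delta)|\leq Cd^2$.

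Combining the pieces yields $|\mathcal{A}(M_\tau)|\leq C(d^2+d^{2/(1+\lambda_2)})$, which is bounded by $d^{1+\beta_0}$ for any fixed $\beta_0<(1-\lambda_2)/(1+\lambda_2)$ once $d<\epsilon_0$ is small enough. Since $\lambda_2>0$ can be chosen arbitrarily small in case (c) of Condition~(A3), any $\beta_0\in(0,1)$ is admissible. The principal technical obstacle is the cubic remainder in the Taylor expansion of the Gaussian area: the graph function $\eta$ is allowed to blow up like $r^{-(1+\lambda_2)}$ near the origin while the Gaussian weight provides no decay there, and it is precisely the balance between this blow-up rate and the cutoff radius $\delta$ that determines the exponent $2/(1+\lambda_2)$ in the final estimate — and explains why $\lambda_2$ must be taken small in the definition of $E_W$ in case (c).
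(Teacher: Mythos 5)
Your decomposition into an inner ball, a graphical annulus and a far region is the right general shape, and the easy parts are fine (the $\theta$-term of $\mathcal{A}$ is at most $d^2$ directly from the definition of $E_W$, $\Theta(W)=\Theta(W_0)$ for unions of planes, and the region beyond $R_d$ is negligible). The genuine gap is in how you treat the region near the origin. You cut at $\delta\sim d^{1/(1+\lambda_2)}$ and then ``write $M_\tau$ as a normal graph of a $1$-form $\eta$ over $W$'' on the complement; but the only graphicality available -- from Proposition~\ref{prop:mainelliptic}(a), which is exactly what the paper's proof invokes -- is on $|x|\in(\delta^{-1}d^{(2+\lambda_2)^{-1}},1)$. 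The pointwise bound gives only $d_W\lesssim d\,r^{-1-\lambda_2}$, which at your cutoff is small in absolute terms but \emph{not} small compared to $r$: on the intermediate annulus $d^{1/(1+\lambda_2)}\lesssim |x|\lesssim d^{1/(2+\lambda_2)}$ the surface may deviate from $W$ by much more than $|x|$, so it need not be graphical at all and the Taylor expansion of the Gaussian area there is meaningless. If you retreat to the correct cutoff $\rho_1\sim d^{1/(2+\lambda_2)}$, your crude area-ratio bound for the core gives only $C\rho_1^2\sim d^{2/(2+\lambda_2)}=d^{1-\lambda_2/(2+\lambda_2)}$, which is \emph{larger} than $d^{1+\beta_0}$; so the crude bound cannot close the argument, and one needs genuine cancellation between the contributions of $M_\tau$ and of $W$ near the origin. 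Supplying that cancellation (using, e.g., the almost-calibrated structure coming from the pointwise bound $|\theta-\theta_W|\leq C d$, rather than area ratios alone) is precisely the content of the proof of \cite[Proposition 4.6]{LSSz2} that the paper imports, and it is the step your proposal passes over.

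A second, related problem is the claimed bound $|\Theta(M_\tau\setminus B_\delta)-\Theta(W\setminus B_\delta)|\leq Cd^2$. The second variation of the Gaussian area at the cone does not vanish, so the annular contribution is genuinely of size $\int(|\nabla\eta|^2+|\eta|^2)e^{-|x|^2/4}$, and the available estimates ($|\eta|\lesssim d r^{-1-\lambda_2}$, hence by interior/parabolic estimates only $|\nabla\eta|\lesssim d r^{-2-\lambda_2}$; the earlier-time data have the same blow-up, so parabolic smoothing gains nothing here) give
\begin{equation}
\int_{M_\tau\cap A_{1,\rho_1}} |\nabla\eta|^2\, e^{-|x|^2/4}\, d\mathcal{H}^2 \;\lesssim\; d^2\rho_1^{-2-2\lambda_2},
\end{equation}
which is of order $d^{2/(2+\lambda_2)}$ at the correct cutoff and $O(1)$ at yours -- nowhere near $d^2$. (The cubic remainder is likewise not controlled by the $L^{2p}$ bound on $d_W$, which says nothing about $\nabla\eta$.) So both the core and the near-origin part of the annulus require the finer argument behind \cite[Proposition 4.6]{LSSz2}, working only with graphicality down to $\delta^{-1}d^{(2+\lambda_2)^{-1}}$; as written, your proof does not establish the exponent $1+\beta_0>1$.
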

\begin{proof}
  The proof is essentially the same as that of Proposition 4.6 in
  \cite{LSSz2}. As above (and also as in \cite{LSSz2}) we define $R_d$
  by
  \[ e^{R_d^2/4p_0} d^2 = 1, \]
  for some $p_0 > 1$ that is very close to 1. From
  Lemma~\ref{lem:nonconc2} we have
  \[ d_W^2 \leq C e^{|x|^2/4p} d^2\max\{ |x|^{-2\alpha}, 1\}, \]
  where recall that $\alpha = 1 + \lambda_2$ for $\lambda_2$ very
  close to 0, to be chosen. Note that by Lemma~\ref{lem:T1} the
  surface $M_\tau$ has good graphicality on $A_{e^{\tau/2}, 1/10}$, and as in
  the proof of Proposition~\ref{prop:decay4}, this contains the
  annulus $A_{R_d,1/10}$ for small $d$. Using parabolic estimates the
  distance bound above implies that on $A_{R_d-1, 1/5}$, if $M_\tau$ is the
  graph of the one-form $\eta$ over $W$, then we have
  \[ |\eta|, |\nabla \eta| \leq C e^{|x|^2/4p} d^2, \]
  perhaps for a smaller $p > 1$. We choose $p_0$ above so that $1 <
  p_0 < p$. As for inside the unit ball $B_1$ we use
  Proposition~\ref{prop:mainelliptic}, part (a). This implies that
  $M_\tau$ has good graphicality over $W$ for $|x| \in (\delta^{-1}
  d^{(2+\lambda_2)^{-1}}, 1)$. Note that in Proposition 4.6 of
    \cite{LSSz2} we had a slightly larger graphicality region, where
    $|x|\in (\delta^{-1} d^{1/2}, 1)$. Still, the same proof applies
    to estimate $\mathcal{A}(M_\tau)$. 
\end{proof}

For any $\beta > 0$ we define $\mathcal{A}_\beta(M) =
|\mathcal{A}(M)|^{\beta-1}\mathcal{A}(M)$. 
Using the monotonicity formula, in a similar way to the proof of Lemma~\ref{lem:nonconc2}  we have the following.
\begin{lemma}\label{lem:AMtaumonotone}
    There is a constant $C > 0$ depending on the area ratio and Lagrangian angle bounds, such that if $\tau_0 < \tau_1 < T_1(c_0)$, then
    \[ \mathcal{A}(M_{\tau_0}) - \mathcal{A}(M_{\tau_1}) \geq 
    \int_{\tau_0}^{\tau_1} \int_{M_\tau\cap B(0,2)} \left( \frac{1}{4\pi}\left|H + \frac{x^\perp}{2}\right|^2 + 2|H|^2 \right)\, e^{-|x|^2/4}\, d\mathcal{H}^2\, d\tau - Ce^{-\tau_0}.\]
\end{lemma}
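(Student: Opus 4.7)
The plan is to compare $\mathcal{A}(M_\tau)$ with a one-parameter family that freezes both the reference cone and the integration ball, differentiate it using rescaled Huisken-type monotonicity for both the Gaussian area and the $L^2$-norm of the Lagrangian angle, and control the discrepancies at the endpoints by Gaussian-tail estimates. To that end, pick $W^* \in \tilde{\mathcal{W}}$ that (nearly) achieves the infimum defining $\mathcal{A}(M_{\tau_0})$, set $R_0 := e^{\tau_0/2}$, and for $\tau \in [\tau_0, \tau_1]$ define
\[ \Phi(\tau) := \Theta(M_\tau) - \Theta(W_0) + \int_{M_\tau \cap B(0, R_0)} |\theta - \theta_{W^*}|^2 \, e^{-|x|^2/4}\, d\mathcal{H}^2. \]
Because $\tau \geq \tau_0$ the fixed ball $B(0, R_0)$ lies inside $B(0, e^{\tau/2})$, and combined with $\tau < T_1(c_0)$, Lemma~\ref{lem:T1} ensures that the component decomposition of Definition~\ref{def:components} persists, so the locally constant function $\theta_{W^*}$ is unambiguously defined on $M_\tau \cap B(0, R_0)$. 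By construction $\Phi(\tau_0) = \mathcal{A}(M_{\tau_0})$, and substituting $W^*$ into the defining infimum for $\mathcal{A}(M_{\tau_1})$ gives
\[ \mathcal{A}(M_{\tau_1}) \leq \Phi(\tau_1) + \int_{M_{\tau_1}\cap \left(B(0, e^{\tau_1/2})\setminus B(0, R_0)\right)} |\theta - \theta_{W^*}|^2 e^{-|x|^2/4}\, d\mathcal{H}^2, \]
and the annulus error is controlled by $C e^{-R_0^2/8}$ using the uniform bounds on $|\theta|$ and the area ratios together with the Gaussian factor.

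Next I compute $d\Phi/d\tau$. Huisken's formula for the rescaled flow yields
\[ \frac{d}{d\tau}\Theta(M_\tau) = -\frac{1}{4\pi}\int_{M_\tau}\left| H + \frac{x^\perp}{2}\right|^2 e^{-|x|^2/4}\, d\mathcal{H}^2. \]
For the angle integral, the rescaled drift heat equation $\partial_\tau \theta = \Delta \theta - \frac{1}{2}x \cdot \nabla \theta$ combined with $|\nabla \theta|^2 = |H|^2$ and the fact that $\theta_{W^*}$ is locally constant gives the pointwise identity $(\partial_\tau - \Delta + \frac{1}{2} x \cdot \nabla) |\theta - \theta_{W^*}|^2 = -2|H|^2$. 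Plugging this into the cutoff version of the rescaled Huisken monotonicity formula with a smooth cutoff supported in $B(0, R_0)$ yields
\[ \frac{d}{d\tau}\int_{M_\tau \cap B(0, R_0)} |\theta - \theta_{W^*}|^2 e^{-|x|^2/4}\, d\mathcal{H}^2 \leq -2\int_{M_\tau \cap B(0, R_0)} |H|^2 e^{-|x|^2/4}\, d\mathcal{H}^2 + E_\partial(\tau), \]
where the boundary/cutoff terms $E_\partial(\tau)$ are localized near $|x| = R_0$ and hence bounded by $C e^{-R_0^2/8}$ via the Gaussian weight and the uniform bounds.

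Adding these two derivative inequalities, integrating over $\tau \in [\tau_0, \tau_1]$, and weakening the right-hand side by restricting the spatial integrals from $B(0, R_0)$ down to $B(0, 2)$ (the integrand is non-negative), one arrives at
\[ \Phi(\tau_0) - \Phi(\tau_1) \geq \int_{\tau_0}^{\tau_1}\int_{M_\tau \cap B(0, 2)} \left( \frac{1}{4\pi}\left|H + \frac{x^\perp}{2}\right|^2 + 2|H|^2 \right) e^{-|x|^2/4}\, d\mathcal{H}^2\, d\tau - C(\tau_1 - \tau_0) e^{-R_0^2/8}. \]
Combining this with $\mathcal{A}(M_{\tau_0}) - \mathcal{A}(M_{\tau_1}) \geq \Phi(\tau_0) - \Phi(\tau_1) - C e^{-R_0^2/8}$ from the setup, and absorbing the elementary bound $(\tau_1 - \tau_0) e^{-R_0^2/8} = O(e^{-e^{\tau_0}/8}) \leq C e^{-\tau_0}$ into the final constant (with $\tau_1 - \tau_0 \leq T_1(c_0)$), delivers the desired inequality.

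The only point needing real care is that the moduli space $\tilde{\mathcal{W}}$ and the function $\theta_{W^*}$ are defined through the component structure of $M_\tau$, so the phrase ``the same $W^*$'' must be meaningful simultaneously at times $\tau_0$ and $\tau_1$; this consistency is precisely what Lemma~\ref{lem:T1} provides while $\tau < T_1(c_0)$. All remaining steps are routine once the right comparison family $\Phi(\tau)$ has been introduced.
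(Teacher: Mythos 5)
Your overall strategy (freeze a near-optimal cone $W^*$, use Huisken monotonicity for $\Theta$, use that $|\theta-\theta_{W^*}|^2$ is a drift-heat subsolution with dissipation $-2|H|^2$, and pay Gaussian-tail errors for the mismatch of domains) is the same as the paper's, but your localization device has two genuine weak points. First, the cutoff error $E_\partial(\tau)$ is \emph{not} controlled by ``the Gaussian weight and the uniform bounds'': the cutoff terms contain $\nabla\phi\cdot\nabla|\theta-\theta_{W^*}|^2$ and $H\cdot\nabla\phi$ contributions (recall $\Delta_{M}\phi$ picks up an $H\cdot D\phi$ term), i.e.\ factors of $|\nabla\theta|=|H|$ on the transition annulus, and $|H|$ is not bounded by the angle and area-ratio hypotheses. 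This can be repaired by a Cauchy--Schwarz absorption of these terms into the dissipation $-2\phi|H|^2$ on the annulus (taking $\phi=\eta^2$ so $|\nabla\phi|^2/\phi$ is bounded), which is harmless since the statement only needs the $|H|^2$ term on $B(0,2)$, but as written this step is asserted rather than proved. Second, your final absorption is wrong as stated: the per-time cutoff error is a constant in $\tau$, so after time integration you get $C(\tau_1-\tau_0)e^{-e^{\tau_0}/8}$, and ``$\tau_1-\tau_0\le T_1(c_0)$'' does not rescue it because $T_1(c_0)$ depends on the flow and may be infinite, whereas the constant in the lemma must depend only on the angle and area-ratio bounds. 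The fix is easy --- either telescope the estimate over unit time intervals, using $\sum_k e^{-(\tau_0+k)}\le Ce^{-\tau_0}$, or make the error decay in $\tau$ --- but the proof as written does not give the lemma for arbitrary $\tau_1$.

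For comparison, the paper avoids both issues at once by not cutting off at all: it replaces $|\theta-\theta_{W^*}|$ with the globally defined continuous subsolution $\tilde\theta=\max\{|\theta-\theta_{W^*}|,\,e^{-(\tau-\tau_0)}|x|^2-e^{\tau_0/2}\}$ of \eqref{eq:tildetheta} (the barrier $e^{-(\tau-\tau_0)}|x|^2$ is itself a subsolution along the rescaled flow, and it dominates $|\theta-\theta_{W^*}|$ near $|x|=e^{\tau_0/2}$ by the uniform angle bound). Applying the monotonicity formula to $\tilde\theta^2$ then produces no error during the evolution; the only errors are the two endpoint comparisons between $\int\tilde\theta^2 e^{-|x|^2/4}$ and the integrals appearing in $\mathcal{A}(M_{\tau_0})$, $\mathcal{A}(M_{\tau_1})$, each of size $Ce^{-\tau_0}$ by the Gaussian tail, and the dissipation $\int 2|\nabla\tilde\theta|^2$ is bounded below by $\int_{B(0,2)}2|H|^2$ where $\tilde\theta=|\theta-\theta_{W^*}|$. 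If you incorporate this barrier (or the two fixes above), your argument becomes a complete proof along essentially the paper's lines.
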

\begin{proof}
 The Gaussian area $\Theta(M_\tau)$ satisfies
 \[ \frac{d}{d \tau}  \int_{M_\tau} e^{-|x|^2/4}\, d\mathcal{H}^2 = -\int_{M_\tau} \left| H + \frac{x^\perp}{2}\right|^2\, e^{-|x|^2/4}\, d\mathcal{H}^2, \] 
 by the monotonicity formula. For the term involving $\theta-\theta_W$, we argue as in the proof of Lemma~\ref{lem:nonconc2}, using the function $\tilde\theta$ defined in \eqref{eq:tildetheta}. We have
 \[ \int_{M_{\tau_0}} \tilde{\theta}^2 e^{-|x|^2/4}\, d\mathcal{H}^2 \leq \int_{M_{\tau_0}\cap B(0,e^{\tau/2})} |\theta - \theta_W|^2 e^{-|x|^2/4}\, d\mathcal{H}^2 + Ce^{-\tau_0}.\]
 Since $\tilde{\theta}$ is a subsolution of the drift heat equation, the monotonicity formula implies
 \[ \int_{M_{\tau_1}} \tilde{\theta}^2 e^{-|x|^2/4}\, d\mathcal{H}^2 &\leq \int_{M_{\tau_0}} \tilde{\theta}^2 e^{-|x|^2/4}\, d\mathcal{H}^2 - \int_{\tau_0}^{\tau_1} \int_{M_\tau} 2|\nabla\tilde\theta|^2\, e^{-|x|^2/4}\, d\mathcal{H}^2\, d\tau \\
 &\leq \int_{M_{\tau_0}} \tilde{\theta}^2 e^{-|x|^2/4}\, d\mathcal{H}^2 - \int_{\tau_0}^{\tau_1} \int_{M_\tau\cap B(0,e^{\tau_0/4})} 2|H|^2\, e^{-|x|^2/4}\, d\mathcal{H}^2\, d\tau. \]
 Combining these, we obtain the required result. 
\end{proof}

We have the following
version of Proposition~\ref{prop:decay4} in the setting of Condition
(A3(c)), analogous to Proposition 7.3 in \cite{LSSz2}.
\begin{prop}\label{prop:decay5}
  Assume Conditions (A), with (A3(c)) satisfied, so $W_0$ is a union
  of planes in $\mathbb{C}^2$. Suppose also that $\lambda_2$ in
Lemma~\ref{lem:L23ann1} is chosen sufficiently small. There are
  $\epsilon_0,N, C > 0$ and $\beta\in (0,1)$ depending on $W_0$, satisfying the
  following. Suppose that $\tilde{E}_W(M_\tau) > e^{-\tau}$, for $W\in \tilde{\mathcal{W}}$, where $d(W,W_0)
  < \epsilon_1 < \epsilon_0$, and $\tau < T_1(\epsilon_1) -
  \epsilon_0^{-1}$. Suppose that $T_1(\epsilon_1) \leq
  T_\Theta(\epsilon_2)$ for the $\epsilon_2$ determined by
  $\epsilon_1$ in Corollary~\ref{cor:growthcase}. Then there is a
  $W'\in \tilde{\mathcal{W}}$ satisfying $d(W,W')\leq C\tilde{E}_W(M_\tau)$
  together with one of the following properties:
  \begin{itemize}
  \item[(a)]  $\tilde{E}_{W'}(M_{t \tau+NB}) \leq
    \frac{1}{2}\tilde{E}_W(M_\tau)$, for $B$ determined in
    Lemma~\ref{lem:3ann3}
    \item[(b)] $\tilde{E}_{W'}(M_{\tau+NB}) \leq
      \mathcal{A}_\beta(M_{\tau+NB-3}) - \mathcal{A}_\beta(M_{\tau+NB})$.
    \end{itemize}
  \end{prop}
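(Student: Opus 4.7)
The plan is to follow the structure of Proposition~\ref{prop:decay4}, especially its Case 2, with two modifications for the two-dimensional union-of-planes setting: the static drift heat solutions can include $d\log|x|$ terms (Lemma~\ref{lem:decomp1}(iii)), and there is no no-good-blowup hypothesis, so the decay alternative (a) must be supplemented by the entropy-drop alternative (b). Arguing by contradiction, suppose a sequence of flows $M^i_\tau$, times $\tau_i$, and cones $W_i \in \tilde{\mathcal{W}}$ with $d(W_i, W_0)\to 0$ and $d_i := \tilde{E}_{W_i}(M^i_{\tau_i}) > e^{-\tau_i}$ violates both alternatives. Since $T_1(\epsilon_1)\leq T_\Theta(\epsilon_2)$, Corollary~\ref{cor:growthcase} rules out the growth case of the three-annulus inequality, so $\tilde{E}_{W_i}(M^i_{\tau_i + kB}) \leq Ce^{kB\lambda}d_i$ for all $k$, and the normalized one-forms $d_i^{-1}\eta_i$ describing $M^i_{\tau_i+s}$ as graphs over $W_i$ converge via Lemma~\ref{lem:graphlimit} to an exact drift heat solution $\eta = \eta_0 + \eta_{<0}$ on $W_0\setminus\{0\}$ (no growing component). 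On each plane $V\subset W_0$, Lemmas~\ref{lem:decomp1}(iii) and~\ref{lem:static1} give $\eta_0|_V = d(f_V + c_V|x|^2) + a_V\, d\log|x|$.

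Following Case 2 of Proposition~\ref{prop:decay4}, decompose $W_0 = \bigcup_j W_{0,j}$ as in Definition~\ref{def:components}, let $\underline{c}_j$ be the average of $c_V$ over the planes making up $W_{0,j}$, and split $\eta_0 = \eta_{00} + \eta_{01}$ with $\eta_{00}$ the integrable piece $d(f_V + \underline{c}_j|x|^2)$ and $\eta_{01} = (c_V - \underline{c}_j)\, d|x|^2 + a_V\, d\log|x|$. Use the integrability assumption to produce $W_i'\in\tilde{\mathcal{W}}$ by perturbing $W_i$ along $d_i\eta_{00}$, so $d(W_i,W_i') \leq Cd_i$. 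If $\|\eta_{01}\| < \kappa_1$ then the Case 1 argument of Proposition~\ref{prop:decay4} delivers alternative (a). Otherwise $\|\eta_{01}\|\geq \kappa_1$, and since $d^* d\log|x| = 0$ away from the origin, $d^*\eta_{01}$ is bounded below in absolute value by $C^{-1}\kappa_1$ on some connected component; this yields the pointwise lower bound $|\theta - \theta_{W_i'}|>\frac{1}{2}C^{-1}\kappa_1 d_i$ on $A_{2,1}\cap M^i_{\tau_i+NB+s}$ for $s\in[-9,-3]$.

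To derive a contradiction one must produce a matching upper bound $|\theta - \theta_{W_i'}|\leq C\gamma d_i$ on $B_2\cap M^i_{\tau_i+NB+1}$ for arbitrarily small $\gamma$. The Lagrangian potential argument of Case 2 of Proposition~\ref{prop:decay4} applies essentially verbatim, except for its small-ball oscillation estimate of $f_i$, which used Proposition~\ref{prop:mainelliptic}(b). In the 2D setting only Proposition~\ref{prop:mainelliptic}(a) with $\alpha=1+\lambda_2$ is available, giving graphicality down to scale $d_i^{1/(2+\lambda_2)}$; a direct oscillation bound then yields only $\mathrm{osc}\, f_i \lesssim \lambda_2^{-1}\delta^{\lambda_2}d_i^{2/(2+\lambda_2)}$, which fails to beat $\gamma d_i$ uniformly as $d_i\to 0$. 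The missing smallness is supplied by invoking the failure of alternative (b) together with Proposition~\ref{prop:Abound} and Lemma~\ref{lem:AMtaumonotone}: the assumed inequality $\tilde{E}_{W_i'}(M^i_{\tau_i+NB}) > \mathcal{A}_\beta(M^i_{\tau_i+NB-3}) - \mathcal{A}_\beta(M^i_{\tau_i+NB})$, combined with $|\mathcal{A}|\leq d_i^{1+\beta_0}$, bounds $\mathcal{A}(M^i_{\tau_i+NB-3})-\mathcal{A}(M^i_{\tau_i+NB})$ from above, and Lemma~\ref{lem:AMtaumonotone} converts this into a small spacetime $L^2$-bound on $|\nabla\theta|=|H|$ over $[\tau_i+NB-3,\tau_i+NB]$, which together with the monotonicity formula applied to $\tilde\theta$ as in Lemma~\ref{lem:nonconc2} yields the required pointwise angle bound and hence the contradiction.

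The main obstacle is precisely this last step: choosing the exponent $\beta\in(0,1)$ (roughly $\beta = \beta_0/(2(1+\beta_0))$, determined by the worst-case balance between $|\mathcal{A}_1|\sim d_i^{1+\beta_0}$ and an $\mathcal{A}$-drop of order $d_i^2$) so that the entropy-drop hypothesis translates into a genuinely effective angle estimate at the level $C\gamma d_i$, which must survive the weak 2D graphicality. One must also verify that the residual $\eta_{01}$, which now contains both $d|x|^2$ and $d\log|x|$ contributions, still gives a nontrivial lower bound on $|\theta-\theta_{W_i'}|$ on the annulus $A_{2,1}$ --- an observation that relies on the coclosedness of $d\log|x|$ away from the origin --- and that the constants $\lambda_2, \kappa_1, N, B$ can be chosen in an internally consistent order, as in Proposition~\ref{prop:decay4}.
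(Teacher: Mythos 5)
Your skeleton matches the paper's up to the last step: argue by contradiction, rule out growth via Corollary~\ref{cor:growthcase}, extract the limiting drift heat solution, split the static part into an integrable piece (absorbed into $W'$) and a residual $\eta_{01}$ producing the lower bound $|\theta-\theta_{W_i'}|\gtrsim \kappa_1 d_i$ on an annulus, and then try to contradict this with an upper bound coming from the Lagrangian potential argument, with the failure of alternative (b), Proposition~\ref{prop:Abound} and Lemma~\ref{lem:AMtaumonotone} supplying the extra smallness. You also correctly isolate the genuine difficulty: in the 2D case Proposition~\ref{prop:mainelliptic}(a) only gives graphicality down to scale $d_i^{(2+\lambda_2)^{-1}}$, which is too weak to bound $\mathrm{osc}\, f_i$ at the level $\gamma d_i$. (A side remark: your inclusion of $a_V\,d\log|x|$ terms in the static residual $\eta_{01}$ is off --- in the norm \eqref{eq:normdefn10} the $d\ln|x|$ mode carries a growing exponential factor, so it is excluded along with $\eta_{>0}$, and the paper notes that no $d\ln|x|$ terms occur in the decaying piece either; this does not hurt you, but the coclosedness discussion is moot.)

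The gap is in your proposed resolution of that difficulty. You convert the entropy drop into a small spacetime $L^2$ bound on $|H|=|\nabla\theta|$ and claim that, via the monotonicity formula applied to $\tilde\theta$, this ``yields the required pointwise angle bound.'' It does not: the configuration the proof must exclude is precisely one where $\theta-\theta_{W_i'}$ is nearly \emph{constant}, of size $\sim\kappa_1 d_i$, on some component, in which case $|H|$ is negligible and no smallness of $\int|H|^2$ can contradict it; the monotonicity formula only propagates an $L^2$ bound on $|\theta-\theta_{W_i'}|^2$ forward in time, and without the potential argument that bound is $Cd_i^2$, not $\gamma^2 d_i^2$, so no factor $\gamma$ is gained. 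The paper's mechanism is different: it uses the $|x^\perp|^2$ part of the entropy-drop bound, selects good time slices in $[-2,-1]$ where $\int_{B(0,2)}(|H|^2+|x^\perp|^2)\,d\mathcal{H}^2\leq d_i^{2+\beta_1}$, and plays this against the dichotomy of Proposition~\ref{prop:mainelliptic}(c). If the first alternative holds one is back in the no-good-blowup setting of Proposition~\ref{prop:decay4}; if the second holds, the lower bound \eqref{eq:xperplower} rescales to $c_1r_0^4\leq \int_{L\cap A_{2r_0,r_0}}|x^\perp|^2\leq d_i^{2+\beta_1}$, forcing $r_0\lesssim d_i^{1/2+\beta_1/4}$, so graphicality over $W'$ persists down to scale $d_i^{1/2}$. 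Only then can one bound $\mathrm{osc}\,f_i$ on $B_{d_i^{1/2}}$ and its $L^2$ deviation on the annulus (as in Claims 7.6--7.7 of \cite{LSSz2}) at the level $\gamma d_i$, after which the $h$-function/monotonicity computation of Proposition~\ref{prop:decay4} delivers the angle upper bound and the contradiction. Without invoking Proposition~\ref{prop:mainelliptic}(c) (or some substitute converting the $|x^\perp|$ smallness into an improved conicality scale), your argument does not close.
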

  \begin{proof}
    The proof closely follows the argument in the setting of Condition
    (A3(b)) in the proof of Proposition~\ref{prop:decay4}. The main
    difference is that in the current setting the distance 
    $d = \tilde{E}_W(M_\tau)$ does not imply a sufficiently good
    graphicality estimate for $M_\tau$ over $W$ near the origin. Indeed
    instead of the estimate $d_W \leq d$ for the distance for $W$, we
    only have $r^\alpha d_W \leq d$, where $\alpha = 1 +
    \lambda_2$ for $\lambda_2 > 0$ as close to 0 as we like.
    This is also slightly worse than the estimate $r d_W \leq d$ that
    was used in the proof of the
    analogous Proposition 7.3 in \cite{LSSz2}. That estimate was used
    in a sharp way in \cite{LSSz2}, so we need to use
    a different approach to bound the oscillation of the Lagrangian
    potential $f$ near the origin.

    The setup of the argument is exactly the same as in the proof of
    Proposition~\ref{prop:decay4}, so we will use the same notation. 
    The only part that needs to be changed is the estimate of $f_i$ on
    a small ball. The beginning of the argument is the same as in
    \cite{LSSz2}. We assume 
     that (b) in the statement of the proposition fails, so
    \[ \tilde{E}_{W'}(M^i_{\tau_i+NB}) \geq
      \mathcal{A}_\beta(M^i_{\tau_i+NB-3}) -
      \mathcal{A}_\beta(M^i_{\tau_i+NB}). \]
    Using Lemma~\ref{lem:AMtaumonotone} this implies
    \[ \int_{-3}^0 |\mathcal{A}(M^i_{\tau_i+NB+s})|^{\beta-1}
      \int_{M^i_{\tau_i+NB+s}\cap B(0,2)} (|H|^2 + |x^\perp|^2) e^{-|x|^2/4}\,
      d\mathcal{H}^2\,ds \leq \beta^{-1} Cd_i + Ce^{-\tau_i},\]
    and recall that $e^{-\tau_i} < d_i$ by assumption. 
    Using Proposition~\ref{prop:Abound} we have
    $|\mathcal{A}(M^i_{\tau_i+NB+s})|\leq d_i^{1+\beta_0}$ for $s\in
    [-3,0]$, and so we obtain
    \[ \int_{-3}^0 \int_{M^i_{\tau_i+NB+s}\cap B(0,2)} (|H|^2 + |x^\perp|^2) e^{-|x|^2/4}\,
      d\mathcal{H}^2\,ds \leq C_\beta d_i^{1+(1-\beta)(1+\beta_0)}. \]
    If $\beta$ is chosen sufficiently small, then we obtain
    \[ \int_{-3}^0 \int_{M^i_{\tau_i+NB+s}\cap B(0,2)}( |H|^2 + |x^\perp|^2)\,
      d\mathcal{H}^2\,ds \leq d_i^{2+2\beta_1}, \]
    for some $\beta_1 > 0$.

    For a small $\sigma > 0$ to be chosen later, we can find $s^i_1,
    s^i_2 \in [-2,-1]$ such that $\sigma/2 < s^i_2-s^i_1 < \sigma$,
    and for sufficiently large $i$ we have
    \[ \label{eq:xperpupper} \int_{M^i_{\tau_i + NB + s^i_j}\cap B(0,2)} (|H|^2 + |x^\perp|^2)\,
      d\mathcal{H}^2 \leq d_i^{2+\beta_1}, \text{ for } j=1,2.\]
    The argument for bounding the potentials $f_i$ on $B_1\cap
    M^i_{\tau_i + NB + s^i_j}$ now needs some other ingredients than what
    was used in  \cite{LSSz2}.

    To simplify the notation, let us fix a large $i$ and $j=1$ or $2$,
    and write $L =  M^i_{\tau_i
      +NB + s^i_j}$.  We apply Proposition~\ref{prop:mainelliptic}
    part (c) to $L$. If the first alternative in (c) holds, then we
    are in the same setting as if there were no good blowups, and we
    can estimate $f_i$ on $L \cap B_2$ exactly as in the proof of
    Proposition~\ref{prop:decay4}. Suppose that the second alternative
    holds in part (c) of Proposition~\ref{prop:mainelliptic}. We then
    have some $r_0 > 0$ such that the rescaling $r_0^{-1}L$ satisfies
    the estimate
    \[ \int_{r_0^{-1}L\cap A_{2,1}} |x^\perp|^2\, d\mathcal{H}^2 >
      c_1. \]
    Scaling back down we find that
    \[ \int_{L \cap A_{2r_0, r_0}} |x^\perp|^2\, d\mathcal{H}^2 > c_1
      r_0^4. \]
    At the same time we have the estimate \eqref{eq:xperpupper}, so
    $c_1 r_0^4 < d_i^{2+\beta_1}$ if $i$ is sufficiently large. This
    gives the upper bound
    \[ r_0 < C d_i^{\frac{1}{2} + \frac{\beta_1}{4}} \]
    for the scale at which $L$ stops being close to the cone
    $W'$. Indeed Proposition~\ref{prop:mainelliptic} says that for
    $r\in (\delta^{-1}r_0, 1)$ the rescaling $r^{-1}L$ is a $c_0$-graph
    over $W'$. If $i$ is sufficiently large, then $\delta^{-1}r_0 <
    d_i^{1/2}/2$, so $L$ has good graphicality over $W'$ on the
    annulus $A_{1, d_i^{1/2}/2}$. 

    At this point we can argue similarly to \cite[Proposition
    7.3]{LSSz2}. First we can apply \cite[Claim 7.6]{LSSz2} to bound
    the oscillation of $f_i$ on $L\cap 
    B_{d_i^{1/2}}$.  This shows that for any $\gamma > 0$ there is a constant
    $\underline{f_i}$ such that $|f_i - \underline{f_i}| < \gamma d_i$
    on $L \cap B_{d_i^{1/2}}$ for sufficiently large $i$. On the annulus 
    $B_1 \setminus B_{d_i^{1/2}/2}$ we can use the good graphicality
    of $L$ over $W'$, as in \cite[Claim 7.7]{LSSz2}, to show that for
    any $\gamma > 0$
    \[ \int_{L\cap B_1} |f_i - \underline{f_i}|^2\, d\mathcal{H}^2
      \leq \gamma^2 d_i^2, \]
    once $i$ is sufficiently large. 

    Using \eqref{eq:fi11}, we find that for sufficiently large $i$ we
    can assume that $|f_i - \underline{f_i}| \leq \gamma d_i$ on the
    annulus $B_2\setminus B_{1/2}$. At this point the argument is
    exactly the same as in the proof of
    Proposition~\ref{prop:decay4}.
  \end{proof}

We can now prove Proposition~\ref{prop:T1Ttheta}.
\begin{proof}[Proof of Proposition~\ref{prop:T1Ttheta}]
  We will give the proof in the setting of Assumption (A3(c)), applying
  Proposition~\ref{prop:decay5}. The proof in the cases (A3(a)) and
  (A3(b)) are essentially the same, except simpler since we do not
  have to deal with the situation where the alternative (b) holds in
  Proposition~\ref{prop:decay5}. The argument is quite similar to the
  proof of \cite[Proposition 7.12]{LSSz2}. 
  
  Let $\epsilon > 0$. We can assume that $\epsilon < \epsilon_0$ in
  Proposition~\ref{prop:decay5}, so we can apply that Proposition with
  $\epsilon_1=\epsilon$, which determines $\epsilon_2$.
  We will choose $\delta < \epsilon_2$ below. If $T_1(\epsilon_1) \geq
  T_\Theta(\delta)$, then we are done, so we can assume
  $T_1(\epsilon_1) < T_\Theta(\delta)$. Since $T_\Theta(\delta) \leq
  T_\Theta(\epsilon_2)$, this means that we can apply 
  Proposition~\ref{prop:decay5}.

  We define $W_i\in \tilde{\mathcal{W}}$ inductively as follows. We let
  $\delta_1 < \epsilon_1$, to be chosen below, and set $k_0 > 0$ to be
  the largest integer such that
  $k_0NB < T_1(\delta_1)$.
  For all $i < k_0$ we set $W_i=W_0$. After that we define $W_{i+1}$
  as follows:
  \begin{itemize}
  \item if $\tilde E_{W_i}(M_{iNB}) > e^{-iNB}$, then $W_{i+1}$ is the $W'$
    given by Proposition~\ref{prop:decay5}.
  \item otherwise we set $W_{i+1} = W_i$.
  \end{itemize}
  We can do this as long as we have $iNB \leq T_1(\epsilon_1)-\epsilon_0^{-1}$,
  and also $d(W_i, W_0) < \epsilon_1$. Write $i_{max}$ for the largest
  $i$ satisfying these two conditions. 
  
  Set $d_i = \tilde E_{W_i}(M_{iNB})$, so we have $d(W_i, W_{i+1}) \leq
  Cd_i$. For $k_0 \leq i \leq i_{max}$,  Proposition~\ref{prop:decay5}
  implies that either $d_i \leq e^{-iNB}$, or one of the two
  alternatives (a) or (b) hold. Note that if $d_i \leq e^{-iNB}$ then
  Corollary~\ref{cor:growthcase} also implies $d_{i+1}\leq e^{\lambda
    NB}e^{-iNB}$. In sum, we have 
  \[ \label{eq:drec} d_{i+1} &\leq \frac{1}{2}d_i + e^{(\lambda - i)NB} + \mathcal{A}_\beta(M_{(i+1)NB-3}) -
    \mathcal{A}_\beta(M_{(i+1)NB}) \\
    &= \frac{1}{2}d_i + a_{i+1},\]
  for suitable numbers $a_{i+1}$.
  Since we have
  \[ \sum_{i=k_0+1}^{i_{max}} a_i \leq Ce^{-k_0 NB} +
    \mathcal{A}_\beta(M_0) - \mathcal{A}_\beta(M_{i_{max}NB}), \]
  it follows from the recursion above that for all $i\leq i_{max}$ we have
  \[ d(W_i, W_0) \leq \sum_{i=k_0}^{i_{max}} d_i \leq 2d_{k_0} + C e^{-k_0 NB} +
    2(\mathcal{A}_\beta(M_0) - \mathcal{A}_\beta(M_{i_{max}NB})). \]

  By definition $M_{iB}$ is still a
  $\delta_1^2$-graph over $W_0$ on $A_{\delta_1^{-1},\delta_1}$ for $i
  < k_0$, so by choosing $\delta_1$ sufficiently small, we can ensure
  that $d_{k_0} < \epsilon_1/3$.
  By choosing $\delta$ sufficiently small, depending on the choice of
  $\delta_1$  (so that $M_0$ is a $\delta^2$-graph
  over $W_0$ on $A_{\delta^{-1},\delta}$), we can ensure that $k_0$ is
  large, so that $Ce^{-k_0NB} < \epsilon_1/3$.  
  At the same time since $i_{max}NB \leq T_1(\epsilon_1) <
  T_{\Theta}(\delta)$, we can also bound the change in the excess
  $\mathcal{A}_\beta$ in terms of $\delta$. 
  Therefore, by
  choosing $\delta_1$ small first and then $\delta$ small depending on
  $\delta_1$, we can ensure that
  $d(W_i, W_0) < \epsilon_1$ for all $i \leq i_{max}$. In particular the $W_i$ will be
  defined as long as $iNB \leq T_1(\epsilon_1) - \epsilon_0^{-1}$.

  The inequalities \eqref{eq:drec} also imply that if $i \leq i_{max}$, then 
  \[ d_i \leq d_{k_0} + Ce^{-k_0NB} + \mathcal{A}_\beta(M_0) -
    \mathcal{A}_\beta(M_{i_{max}NB}). \]
  By choosing $\delta_1, \delta$ even smaller, we can ensure that
  $d_{i_{max}}$ is as small as we like. Therefore for suitably small
  $\delta_1, \delta$, 
  Lemma~\ref{lem:ext1} implies that $T_1(\epsilon_1) > (i_{max}+1)NB +
  \epsilon_0^{-1}$.  This contradicts our choice of $i_{max}$, so
  we must have $T_1(\epsilon_1) \geq T_\Theta(\delta)$, as required.
\end{proof}

\section{Characterisation of the Lawlor necks}\label{section:characterisationLawlor}

The goal of this section is to extend the characterisation of Lawlor necks by Joyce-Imagi-Santos \cite{JoyceImagi}, to the setting allowing for \emph{a priori singular} special Lagrangians arising as current topology limits of smooth embedded exact Lagrangians with small Lagrangian angle. The main result is Theorem \ref{singularuniqueness}.

A brief outline is as follows. We begin with a review of the proof strategy of  \cite{JoyceImagi} in section \ref{LawlorneckJoyceImagi}, and expand on Floer theoretic ingredients in section \ref{holocurves}. The main theorem is stated in section \ref{Singularuniquenesssection} with discussions on the new difficulties. After some more analytic preparations in section \ref{Monotonictysection}, \ref{UniformcontinuityLagpotentialsection}, we embark on the key construction of a judicious perturbation of the Lawlor neck model in section \ref{PerturbationofLawlor}, by solving an auxiliary Poisson equation with precise estimates, crucially ensuring some special properties on the Lagrangian potential at the degree $0,n$ intersections. We then follow \cite{JoyceImagi} to compactify the Lagrangians inside a partial completion of $\C^n$ into a Liouville manifold, where we can appeal to the classification results in the derived Fukaya category. A robust application of holomorphic curve ingredients, together with some minimal surface generalities, finishes off the proof in section \ref{Assemblingpieces}.

\subsection{Lawlor neck and Joyce-Imagi-Santos uniqueness}\label{LawlorneckJoyceImagi}

We briefly sketch the seminal work of
 Joyce-Imagi-Santos \cite{JoyceImagi} on the uniqueness of Lawlor necks, where holomorphic curves and the algebraic structures of the Fukaya category make an interesting appearance. The symplectic backgrounds are well summarized in \cite[section 2]{JoyceImagi}, and we will follow their conventions.

\subsubsection*{Lawlor necks}

We use the standard Euclidean structure on $\C^n$, with
\[
\omega= \frac{\sqrt{-1}}{2} \sum dz_i\wedge d\bar{z}_i, \quad \Omega= dz_1\wedge\ldots dz_n, \quad \lambda= \frac{1}{2}\sum (\text{Re} z_j d\text{Im} z_j- \text{Im} z_j d\text{Re} z_j).
\]
We denote the Lagrangian potential of an exact Lagrangian $L$ as $f_L$, namely $df_L=\lambda|_L$.

The Lawlor necks \cite{Lawlor}\cite{JoyceLeeTsui} are non-compact embedded exact special Lagrangians $L_{\phi,A}$ inside $\C^n$, asymptotic at infinity to the union of two planes
\[
\Pi_0= \R^n, \quad \Pi_\phi=(e^{i\phi_1}, \ldots e^{i\phi_n})\R^n, \quad 0<\phi_i<\pi, \quad \sum \phi_i=\pi. 
\]
Symplectic topologically, they can be viewed as a realisation of the Lagrangian handle $S^{n-1}\times \R$ that appears in the Lagrangian connected sum construction. This motivates the ansatz 
\begin{equation}
L_{\phi,A}= \{   (z_1(y)x_1, . . . , z_n(y)x_n) : y \in \R, x_k \in \R, x^2_1
+ \ldots+ x_n^2 =1      \}.
\end{equation}
The special Lagrangian condition translates into an ODE system on the functions $z_1(y),\ldots z_n(y)$, which can be solved exactly as follows.

Let $n > 2$ and $a_1, . . . , a_n > 0$, and define polynomials $p, P$ by
\[
p(x) = (1 + a_1x^2) \ldots (1 + a_nx^2) -1 , \quad  P(x) =
\frac{p(x)}{x^2}.
\]
Define real numbers $\phi_1, . . . , \phi_n$ and $A$ by
\[
\phi_k = a_k
\int_{-\infty}^{\infty}
\frac{dx}{
	(1 + a_kx^2)\sqrt{P(x)}}
, \quad
A =
\int_{-\infty}^{\infty}
\frac{dx}{
	2\sqrt{P(x)}}
\]
Clearly $\phi_k,A > 0$, and elementary integration shows $\sum \phi_i=\pi$. This yields a 1-1 correspondence between $n$-tuples $(a_1,\ldots, a_n)$ with $a_k > 0$,
and $(n+1)$-tuples $(\phi_1, \ldots , \phi_n,A)$ with $\phi_k\in (0,\pi)$, $\sum \phi_k=\pi$ and $A>0$. Setting
\[
z_k(y)= e^{i\psi_k(y)}
\sqrt{a_k^{-1}+y^2 },  \quad \text{where } \psi_k(y) = a_k
\int_{-\infty}^y \frac{dx}{
	(1 + a_kx^2)\sqrt{P(x)}},
\]
yields the solution $(z_1(y),\ldots, z_n(y))$, hence the \textbf{Lawlor necks} $L_{\phi,A}$.

For fixed asymptotic planes $\Pi_0, \Pi_\phi$, the Lawlor necks arise in a 1-parameter family, related to each other by the rescaling $\vec{z}\to \lambda\vec{z}$ in $\C^n$, and $A$ behaves like 2-dimensional area $A\to \lambda^2A$ under this scaling. One also observes that asymptotically near infinity, the Lawlor necks are graphs over $\Pi_0$ (resp. $\Pi_\phi$) of the differential $df$, where
\[
|f|=O( |\vec{x}|^{2-n }), \quad |\partial^k f|=O( |\vec{x}|^{2-n-k}  ).
\]
We say the Lawlor neck has asymptotic decay rate $2-n$. The upshot is that it approaches $\Pi_0\cup \Pi_\phi$ sufficiently fast.

\subsubsection*{Joyce-Imagi-Santos uniqueness theorem}

\begin{thm}
	\cite{JoyceImagi} Let $n\geq 3$.  Assume $L$ is a smooth embedded exact special Lagrangian of phase zero, asymptotic at rate $<0$ to the union of the two planes $\Pi_0\cup \Pi_\phi$ with $\sum \phi_i=\pi$, then $L$ is  $L_{\phi,A}$ for some $A>0$.

\end{thm}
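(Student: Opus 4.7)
The plan is to set up a Floer-theoretic framework on a suitable partial compactification of $\mathbb{C}^n$ and combine it with a Thomas-Yau style perturbation argument. First I would partially compactify $\mathbb{C}^n$ to a Liouville manifold $X$, obtained as a plumbing of two copies of $T^*S^n$, arranged so that the two asymptotic planes $\Pi_0$ and $\Pi_\phi$ close up to Lagrangian spheres $\bar{\Pi}_0, \bar{\Pi}_\phi \subset X$. The rate-$<0$ asymptotics of $L$ (and of any $L_{\phi,A}$) allow one to adjoin finitely many points at infinity, yielding compact exact Lagrangian submanifolds $\bar{L}, \bar{L}_{\phi,A}\subset X$ diffeomorphic to $S^n$.

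Next I would invoke the classification of simple objects in $D^b\mathcal{F}uk(X)$. The Fukaya category of this plumbing is generated by $\bar\Pi_0$ and $\bar\Pi_\phi$, and $\bar L_{\phi,A}$ corresponds to a specific twisted complex built from these two spheres, whose isomorphism class is independent of $A$. Computing the Floer intersection pairings of $\bar L$ with the distinguished Lagrangians $\bar\Pi_0$ and $\bar\Pi_\phi$ identifies the class of $\bar L$ with that of $\bar L_{\phi,A}$ in $D^b\mathcal{F}uk(X)$. This reduces matters to the geometric rigidity statement: two exact, phase-zero special Lagrangians in $X$ that are Floer-theoretically isomorphic must coincide as subsets.

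To prove the geometric coincidence I would follow the Thomas-Yau strategy. After small Hamiltonian perturbations making the intersection transverse, the phase-zero condition pins down the degrees of the Floer generators. Judicious perturbations are used to eliminate all degree $0$ and degree $n$ intersections lying inside $\mathbb{C}^n\subset X$. Exactness implies that the symplectic area of any Floer holomorphic strip equals the difference of Lagrangian potentials at its two corners, and the absence of degree $0$ and $n$ generators in the interior forces a uniform sign for this difference. Combining this sign with the categorical identification, any nontrivial holomorphic curve produced by the Fukaya-theoretic input must have zero area, contradicting exactness and positivity. The only way to avoid the contradiction is $\bar L=\bar L_{\phi,A}$, hence $L=L_{\phi,A}$.

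The main obstacle, and the point the present paper later relaxes, is the behaviour at spatial infinity: after compactification the two lobes of $\bar L$ and $\bar L_{\phi,A}$ converge to the same spheres $\bar\Pi_0, \bar\Pi_\phi$, and the Floer intersection points produced there cannot be removed by Hamiltonian perturbations supported inside $\mathbb{C}^n$. Controlling these boundary intersection points and the holomorphic curves they bound is the delicate step, and Joyce-Imagi-Santos handle it using the \emph{real analyticity} of the special Lagrangians $L$ and $L_{\phi,A}$: analyticity forces the intersection locus to be discrete and well-behaved under perturbation, and enables a sharp match with the asymptotic model, which is precisely what makes the zero-area argument rigorous in the asymptotically conical setting.
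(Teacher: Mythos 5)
Your outline reproduces the broad architecture of the Joyce--Imagi--Santos argument as the paper sketches it in Section \ref{LawlorneckJoyceImagi}: compactify $\mathbb{C}^n$ to the plumbing of two copies of $T^*S^n$, use the Abouzaid--Smith classification to pin down the class of $\bar{L}$ in $D^bFuk(M)$ (with $\sum\phi_i=\pi$ selecting the same surgery as $\bar{L}_{\phi,A}$, this is where $n\geq 3$ enters), and then run a Thomas--Yau style perturbation to kill degree $0,n$ intersections inside $\mathbb{C}^n$, leaving only the two unavoidable intersection points at infinity. The genuine gap is in your endgame. The claim that ``a uniform sign for the potential difference \ldots forces zero area'' is not the actual mechanism, and as stated it cannot work: Floer theory cannot distinguish $L$ from any member of the one-parameter family $L_{\phi,A}$ (all $\bar{L}_{\phi,A}$ are isomorphic in $D^bFuk(M)$), so no purely categorical or sign argument can force coincidence with an arbitrary $A$. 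What makes the holomorphic strip have zero area is an analytic matching you never set up: one defines the invariant $A(L)=c_\phi-c_0$ as the difference of the limits of the Lagrangian potential $f_L$ along the two ends, chooses the comparison Lawlor neck with parameter $A=A(L)$, and then the strip produced by the nontrivial product on $HF^0$, whose degree $0$ and degree $n$ corners are forced to lie at the two points at infinity, has area $\pm\bigl((f_{\bar L}-f_{\bar L''})(q)-(f_{\bar L}-f_{\bar L''})(p)\bigr)=\pm\bigl(A(L)-A\bigr)=0$. Without the choice $A=A(L)$ the area is $\pm(A(L)-A)$ and there is no contradiction.

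A second omission: for the comparison neck $L_{\phi,A(L)}$ to exist one needs $A(L)>0$, which is not automatic and is proved in \cite{JoyceImagi} by a separate argument producing a nontrivial holomorphic triangle from a distinguished triangle relating $\bar{L}$ to the compactified planes, and interpreting $A(L)$ as its area; your proposal never addresses this. Two smaller inaccuracies: the classification step proceeds by analyzing intersections with the two cotangent \emph{fibres} at infinity rather than with the zero-section spheres $\bar\Pi_0,\bar\Pi_\phi$; and real analyticity in \cite{JoyceImagi} is used to control and eliminate the degree $0,n$ intersections \emph{inside} $\mathbb{C}^n$ after perturbation, whereas the two intersection points at infinity are handled by the $A$-matching and zero-area contradiction described above, not by analyticity.
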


Here is a sketch of their arguments:
\begin{enumerate}
	\item  Using the asymptotic  assumption on the exact Lagrangian $L$, one can assign an analytic invariant $A(L)$ to $L$ as follows. Let $f_L:L\to \R$ be a primitive of the Liouville form $\lambda$, namely $df_L=\lambda|_L$, then $f_L$ converges to constants $c_0, c_\phi$ at the two asymptotic ends along $\Pi_0, \Pi_\phi$ respectively. Then one defines $A(L)=c_\phi-c_0$. If $L$ coincides with the Lawlor neck $L_{\phi,A}$, then $A(L)=A$.

	\item  Partially compactify $\C^n$ into a Liouville manifold identified as the plumbing $M$ of two cotangent bundles $T^*S^n$ with $T^*S^n$. Here the two copies of $S^n$ arise topologically as one-point compactifications of $\Pi_0$ and $\Pi_\phi$ by adding the points at infinity $\infty_0$ and $\infty_\phi$, and topologically $M$ is the union of $\C^n$ and the two cotangent fibres over $\infty_0$ and $\infty_\phi$ respectively. Under suitably fast decay condition at infinity, the unknown special Lagrangian $L$ can be compactified into an exact graded embedded Lagrangian $\bar{L}$ inside $M$. (More details will be recalled in section \ref{partialcompactification}). One would like to compare this to the Lagrangian $\bar{L}_{\phi,A}$ obtained by the compactification of the standard Lawlor necks $L_{\phi,A}$ inside $M$.
	
	\item  
	By analyzing the intersection pattern with the two cotangent fibres at infinity, and using the classification results of Abouzaid and Smith \cite{AbouzaidSmith}, one shows that inside $D^bFuk(M)$, the Lagrangian object $\bar{L}$ is isomorphic to one of the two Lagrangian connected sums of the two $S^n$ with suitable gradings, and in fact the assumption on Floer degrees $\sum \phi_i=\pi$ singles out $\bar{L}\simeq \bar{L}_{\phi, A}\in D^bFuk(M)$, the opposite surgery  corresponding to $\sum \phi_i=(n-1)\pi$.
	This step needs $n\geq 3$. 
	For contradiction, we assume $\bar{L}$ does not coincide with $\bar{L}_{\phi,A}$ for any choice of parameter $A>0$.

	\item
	By a modification of the Thomas-Yau argument, one shows that after a small Hamiltation perturbation $\bar{L}''$ of $\bar{L}_{\phi,A}$, we can ensure $\bar{L}''$ is transverse to $\bar{L}$, there is no degree $0, n$ intersection points in $\bar{L}''\cap \bar{L}$ inside $\C^n$, and there is precisely one intersection point $p$ and $q$ in $\bar{L}''\cap \bar{L}$ on each of the two cotangent fibres at infinity respectively. Moreover, the $D^b Fuk(M)$ class and the analytic invariants of $\bar{L}''$ agree with that of $\bar{L}_{\phi,A}$.

	\begin{rmk}
		The subtlety at infinity prevents one from removing degree $0,n$ intersections outside the $\C^n$ region, so one does not reach an immediate contradiction to the nontriviality of Floer cohomology. This technical failure is necessary, because the Lawlor necks with fixed asymptotic planes are not unique, but do arise in a 1-parameter family. It is in overcoming this technical problem that holomorphic curves appear in \cite{JoyceImagi}.	
	\end{rmk}

	\item Now suppose the Lawlor neck is chosen with the parameter $A=A(L)$, which presumes $A(L)>0$.	Floer theory produces a $J$-holomorphic strip $\Sigma$ with boundary on $\bar{L}$ and $\bar{L}''$ and two corners at some degree $0,n$ intersections $p',q'$. (\cf Prop. \ref{holomorphicstrip} below for more expositions). The only degree $0,n$ intersections must be the two points at infinity $\{ p, q\}$. 
	Now the area of the $J$-holomorphic curve can be computed cohomologically. Using the choice of parameter $A$,
	\[
	\int_{\Sigma}\omega=\int_{\partial \Sigma}\lambda= \int_{p'\to q'} df_{\bar{L}}+ \int_{q'\to p'} df_{\bar{L}''}= \pm( A(L)-A(L'')  ) =\pm(A-A)=0.
	\]
	This contradicts the positivity of area of the holomorphic curve, which proves $L$ must coincide with $L_{\phi,A}$.

	\item Finally one needs to a priori justify $A(L)>0$. This relies on a slightly more complicated holomorphic polygon counting argument, and the main upshot is that one can produce a nontrivial holomorphic triangle from a distinguished triangle in $D^b Fuk(M)$, with the three edges on $\bar{L}$, $\Pi_0\cup\{\infty_0\}$ and $\Pi_\phi\cup \{ \infty_\phi \}$. Then one shows $A(L)$ has the interpretation as its area, so must be positive.

\end{enumerate}

\subsection{Holomorphic curves}\label{holocurves}

We begin with more exposition on Floer theory. The context and notations of this subsection are not restricted to the Lawlor neck setting. Our Floer degree convention agrees with Joyce-Imagi-Santos \cite{JoyceImagi}, but differs with Seidel \cite{Seidel}. Given graded Lagrangians $L,L'$ with transverse intersection point $p$, we put 
\begin{equation}
\mu_{L,L'}(p)= \frac{1}{\pi}( \sum_1^n \alpha_i + \theta_L(p)- \theta_{L'}(p) ) ,  
\end{equation}
where inside the tangent space at $p$ for the ambient symplectic Calabi-Yau manifold,
\[
T_pL\simeq \R^n \subset \C^n ,\quad T_p L'\simeq (e^{i\alpha_1},\ldots e^{i\alpha_n})\R^n,\quad 0<\alpha_i<\pi.
\]
As observed in \cite[(2.7)]{JoyceImagi},

\begin{lem}\label{Floerdegreeinequality}
$\frac{1}{\pi}(\theta_L(p) - \theta_{L'}(p))<  \mu_{L,L'}(p) < \frac{1}{\pi}(\theta_L(p) - \theta_{L'}(p))+n.$
\end{lem}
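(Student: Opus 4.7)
The plan is extremely short: the inequality is a direct algebraic consequence of the definition of $\mu_{L,L'}$ combined with the stated range $0<\alpha_i<\pi$ for the angles.

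First I would unwind the definition. By the formula
\[
\mu_{L,L'}(p)=\frac{1}{\pi}\Bigl(\sum_{i=1}^n \alpha_i + \theta_L(p)-\theta_{L'}(p)\Bigr),
\]
the claimed two-sided bound is equivalent to
\[
0 < \sum_{i=1}^n \alpha_i < n\pi,
\]
so the entire content of the lemma is contained in the range of the angles $\alpha_i$.

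Next I would justify that representation of the intersection. Since $L,L'$ are Lagrangian and meet transversally at $p$, the pair $(T_pL,T_pL')$ can be simultaneously diagonalised by a unitary transformation so that $T_pL\simeq\R^n$ and $T_pL'\simeq(e^{i\alpha_1},\ldots,e^{i\alpha_n})\R^n$ with angles $\alpha_i\notin \pi\mathbb{Z}$. One may then pick representatives of these angles (working modulo $2\pi$) in the open interval $(0,\pi)$; this is the convention implicitly adopted in the definition of $\mu_{L,L'}$ above.

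Finally, summing the strict inequalities $0<\alpha_i<\pi$ over $i=1,\ldots,n$ yields $0<\sum_i \alpha_i<n\pi$, which upon substitution into the definition gives the claim. There is no real obstacle here — the lemma is a direct observation recording that the Maslov-type grading $\mu_{L,L'}$ and the normalised angle difference $\pi^{-1}(\theta_L-\theta_{L'})$ differ by a quantity in $(0,n)$, and it is quoted purely for later bookkeeping in comparing gradings to Lagrangian angles.
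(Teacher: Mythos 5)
Your argument is correct and coincides with the paper's (implicit) reasoning: the paper takes the displayed formula with $0<\alpha_i<\pi$ as the definition of $\mu_{L,L'}(p)$, citing \cite[(2.7)]{JoyceImagi}, so the two-sided bound is exactly the observation $0<\sum_{i=1}^n\alpha_i<n\pi$ that you make. Only a cosmetic remark: the angles of the Lagrangian lines are defined modulo $\pi$ (not $2\pi$), and transversality excludes $\alpha_i\equiv 0$, which is precisely why representatives can be chosen in $(0,\pi)$.
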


\begin{prop}\label{holomorphicstrip}\cite[Theorem 2.15]{JoyceImagi}
	Let $(M,\omega), \lambda$ be a symplectic Calabi-Yau Liouville manifold of dimension $2n$, and consider the derived Fukaya category $D^bFuk(M)$ of smooth, embedded, compact, exact, graded Lagrangians in $(M,\omega)$ with $\Z_2$ coefficients. Let $L, L'$ be transversely intersecting Lagrangians in $M$ which are isomorphic as objects of $D^bFuk(M)$. Let $J$ be an almost complex structure on $M$ compatible with $\omega$ and convex at infinity.

	Then there exists $p,q\in L\cap L'$ with $\mu_{L,L'}(p)=0$ and $\mu_{L',L}(q)=n$ and a (possibly broken) $J$-holomorphic strip $\Sigma$ with boundary in $L\cup L'$ and corners at $p,q$. Moreover, we can require the strip to pass through any prescribed point $r\in L$ (resp. $L'$). The area of the strip is 
	\[
	\int_{\Sigma}\omega= (f_L- f_{L'})(q)- (f_{L}- f_{L'})(p),
	\]
	where $f_L, f_{L'}$ denote the Lagrangian potentials.
\end{prop}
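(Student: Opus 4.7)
The plan is to adapt the Floer-theoretic argument of Joyce-Imagi-Santos \cite[Theorem 2.15]{JoyceImagi}. Since $L\cong L'$ in $D^bFuk(M)$, we have $HF^*(L,L')\cong HF^*(L,L)\cong H^*(L;\mathbb{Z}_2)$ (the second isomorphism from the standard Floer computation for an exact Lagrangian paired with a small Hamiltonian perturbation of itself), so in particular there is a unit class in degree $0$ and a top class in degree $n$ paired non-degenerately. First I would fix Floer cocycles $\alpha\in CF^0(L,L')$ and $\beta\in CF^n(L',L)$ representing these classes. By definition each summand of $\alpha$ is a point $p\in L\cap L'$ with $\mu_{L,L'}(p)=0$, and each summand of $\beta$ is a point $q$ with $\mu_{L',L}(q)=n$, which is exactly the degree profile asserted in the conclusion.

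Next I would produce the strip through a prescribed point $r\in L$ by a point-constraint moduli argument. Consider the moduli space of $J$-holomorphic strips with boundary arcs on $L$ and $L'$, corners at some admissible $(p,q)$ as above, and one boundary marked point on the $L$-side constrained to map to $r$. Index theory gives virtual dimension zero. Summing over admissible $(p,q)$ the mod-$2$ count of this moduli computes the value at $[r]\in H_0(L;\mathbb{Z}_2)$ of an open-closed type map applied to the Floer product $\mu^2(\alpha,\beta)$. This value is nonzero, because under the isomorphism $HF^*(L,L')\otimes HF^*(L',L)\to HF^*(L,L)$ the product $[\alpha]\cdot[\beta]$ represents the top class of $L$, and the open-closed map on the top class evaluates non-trivially on point classes. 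Hence the moduli is nonempty for some $(p,q)$, and Gromov compactness yields a (possibly broken) $J$-holomorphic limit passing through $r$, which is the $\Sigma$ in the conclusion; no transversality hypothesis on $J$ is needed because we only claim existence of a possibly broken limit. The case $r\in L'$ is symmetric.

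The area formula is Stokes. Orient $\partial\Sigma$ so that the $L$-arc runs from $p$ to $q$ and the $L'$-arc from $q$ to $p$; since $\omega=d\lambda$ and $\lambda|_L=df_L$, $\lambda|_{L'}=df_{L'}$, we obtain
\[
\int_\Sigma\omega=\int_{\partial\Sigma}\lambda=\bigl(f_L(q)-f_L(p)\bigr)-\bigl(f_{L'}(q)-f_{L'}(p)\bigr)=(f_L-f_{L'})(q)-(f_L-f_{L'})(p),
\]
and a broken configuration gives the same total by additivity along pieces with cancellation at the break points. The main obstacle is the existence step: one must argue nontriviality of the open-closed map applied to $\mu^2([\alpha],[\beta])$, which genuinely uses the hypothesis $L\cong L'$ in $D^bFuk(M)$, and then invoke Gromov compactness to justify the parenthetical ``possibly broken'' clause. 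Everything else---the Stokes computation, the index arithmetic, and the degree bookkeeping via Lemma~\ref{Floerdegreeinequality}---is routine Floer theory.
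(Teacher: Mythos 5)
Your skeleton is the same as the paper's (nontriviality of a Floer product coming from $L\cong L'$, a chain-level strip count with a boundary point constraint, exactness to exclude sphere/disc bubbling so that only strip breaking occurs, Stokes for the area), but the key existence step is mis-calibrated and, as written, fails. You take $\alpha\in CF^0(L,L')$ and $\beta\in CF^n(L',L)$. Since $\mu_{L,L'}+\mu_{L',L}=n$ at every transverse intersection, the generators of $CF^n(L',L)$ are exactly the points of $\mu_{L,L'}$-degree $0$; so both corners of your strip have $\mu_{L,L'}$-degree $0$, and $[\alpha]\cdot[\beta]$ lands in $HF^n(L,L)\cong H^n(L;\Z_2)$, the top class. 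Your claim ``index theory gives virtual dimension zero'' is then wrong: the parametrized strips with these corners form an expected $0$-dimensional family (index $\mu_{L,L'}(q)-\mu_{L,L'}(p)=0$), and constraining a boundary point to a prescribed $r\in L$ cuts $n$ more, giving virtual dimension $-n$. Correspondingly, ``the open-closed map on the top class evaluates non-trivially on point classes'' is a degree-mismatched pairing: the open-closed image of the top class is the point class in $H_0(L;\Z_2)$, which is nonzero but is represented by finitely many points determined by the geometry — it does not allow you to force a strip through an \emph{arbitrary} prescribed $r$, and that prescribed-point clause is exactly what the paper needs in the later application.

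The paper's proof uses the opposite calibration: $p\in CF^0(L,L')$ and $q\in CF^0(L',L)\simeq CF^n(L,L')$, i.e. $\mu_{L,L'}(q)=n$, so the relevant product is $HF^0(L',L)\otimes HF^0(L,L')\to HF^0(L,L)$ with $1_L\cup 1_L=1_L$. The output being the \emph{unit}, i.e. the fundamental chain of $L$, is precisely what is detected by counting strips through a generic point $r$ (now the index is $n$ and the point constraint cuts $n$, giving virtual dimension $0$), and then a limiting argument handles non-generic $J$ and $r$, with exactness ensuring only strip breaking. The remark immediately following the proposition warns that swapping which corner carries degree $0$ and which carries degree $n$ makes the virtual dimension go wrong; your choice runs into exactly this. (In fairness, the statement's label $\mu_{L',L}(q)=n$ conflicts with the proof's $q\in CF^n(L,L')$ and with the way the result is used in Corollary~\ref{degreepotential1}, and may have misled you; but with your degree profile the prescribed-point existence step does not go through and must be redone with $q$ of $\mu_{L,L'}$-degree $n$.) Your Stokes computation and the additivity over broken pieces are fine.
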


\begin{proof}
	Consider the Floer cup product with mod 2 coefficients
	\[
	HF^0( L', L )\otimes HF^0(L, L')\to HF^0(L,L  ),
	\]
	which as $L\simeq L'\in D^bFuk(M)$ can be identified as the cup product
	\[
	H^0( L,L )\otimes H^0(L,L)\to H^0(L,L  ),\quad 1_{L}\cup 1_{L}= 1_{L},
	\]
	and thus must be nontrivial. At chain level this Floer product comes from the $A_\infty$ operation
	\[
	m_2:  CF^0(L', L) \otimes  CF^0( L, L'  )\to CF^0(L,L  )
	\]
	which must be nontrivial.

	If $J$ is a \emph{generic} domain dependent almost complex structure, and $r$ is chosen generically, then this Floer product is defined in terms of counting $J$-holomorphic strips $u: \R_s\times [0,1]_t\to M$, with boundary condition
	\begin{equation}\label{holostripconvention}
	u(s,0)\in L, \quad  u(s,1)\in L', \quad \lim_{s\to -\infty} u= p, \quad \lim_{s\to +\infty} u=q,
	\end{equation}
	for $p\in CF^0(L,L')$, $q\in CF^n(L,L')\simeq CF^0(L',L)$, and $u(0,0)=r$. The nontriviality of the Floer product implies the moduli space is nonempty. Once the genericity assumption is dropped, then transversality requirements in Floer theory may break down. However, in the exact setting, no sphere bubbling or disc bubbling is possible, so the compactness theory of holomorphic discs implies that only strip breaking can happen.

	The area is computed by
	\[
	\int_{\Sigma}\omega= \int_{\partial\Sigma} \lambda= \int_{p \to q} df_L + \int_{q \to p}df_{L'}= (f_L- f_{L'})(q)- (f_{L}- f_{L'})(p).
	\]
\end{proof}

\begin{rmk}
The prescribed point constraint $r$ is not emphasized in \cite{JoyceImagi}, but will play an important role here. Forgetting the point constraint, the moduli space of strips between the degree $0, n$ intersection points, modulo the $\R$-translation action, has virtual dimension $n-1$, and yields interesting information beyond the counting of zero dimensional moduli spaces (\cf \cite{Li}). We will not use higher dimensional moduli space structure in this paper.
\end{rmk}

\begin{rmk}
The role of the degree $0,n$ intersection points are \emph{asymmetric}. If we replace the $s\to-\infty$ limit by the intersection in $CF^n(L,L')$, and the $s\to +\infty$ limit by the intersection in $CF^0(L,L')$, then the virtual dimension will go wrong. 
\end{rmk}

We will need a version of the area monotonicity formula for the portion of the image of $J$-holomorphic curves inside a geodesic ball of the target space, in the presence of Lagrangian boundary.

\begin{lem}\label{areamonotonicitywithboundary}
Let $L$ be a smooth embedded Lagrangian, passing through a point $p$. Let $R\leq 1$. Suppose $\Sigma$ is a nonconstant J-holomorphic curve passing through $p$, whose only boundary in the ambient ball $B(p,R)$ lies on $L$, then its area has a lower bound $\text{Area}(\Sigma\cap B(p,R)) \geq C^{-1}R^2$. The constant depends only on $\omega, J$, and the $C^{1,\alpha}$ regularity of $L$.
\end{lem}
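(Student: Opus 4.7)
The plan is to reduce this to the classical monotonicity formula for $J$-holomorphic curves passing through an interior point, using the doubling/reflection trick across the Lagrangian boundary. The standard interior monotonicity states that for a nonconstant $J$-holomorphic curve $\tilde{\Sigma}$ (without boundary in $B(p,R)$) passing through $p$, one has $\mathrm{Area}(\tilde\Sigma \cap B(p,R)) \geq C^{-1} R^2$, where $C$ depends on $\omega, J$. So the goal is to produce such a $\tilde\Sigma$ out of $\Sigma$ by doubling.

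First, I would work in geodesic normal coordinates centered at $p$, where the metric is close to Euclidean and $L$ is $C^{1,\alpha}$-close to its tangent plane $T_pL$. Since $L$ is Lagrangian, by an $\omega$-compatible linear change of coordinates we may arrange that $T_pL = \mathbb{R}^n \subset \mathbb{C}^n$ and the almost complex structure $J_p$ agrees with the standard complex structure at $p$. The key point is that $T_pL$ is totally real, so the antiholomorphic reflection
\[ \sigma : (x + iy) \mapsto (x - iy) \]
fixes $T_pL$ and satisfies $\sigma^* J_p = -J_p$. Using the $C^{1,\alpha}$ regularity of $L$, we can modify $\sigma$ near $p$ to an involution $\sigma_L$ whose fixed point set is exactly $L \cap B(p, R/2)$, and define a new almost complex structure $\tilde J$ on a neighborhood by $\tilde J = J$ on one side of $L$ and $\tilde J = -\sigma_L^* J$ on the other side. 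Then $\tilde J$ is $C^{0,\alpha}$ across $L$ and tamed by an appropriate symplectic form on $B(p, R/2)$.

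The second step is to define the doubled curve $\tilde\Sigma = \Sigma \cup \sigma_L(\Sigma)$ inside $B(p, R/2)$. Since $\partial \Sigma \cap B(p,R) \subset L$, the doubled object $\tilde\Sigma$ has no boundary inside $B(p, R/2)$. By the Schwarz-type reflection principle for pseudoholomorphic curves with totally real boundary (as in Ivashkovich--Shevchishin, or McDuff--Salamon), $\tilde\Sigma$ is a $\tilde J$-holomorphic curve (a priori only $C^{1,\alpha}$, but this is sufficient). It is nonconstant since $\Sigma$ is, and it passes through $p$.

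The third step is to apply the classical monotonicity formula for tamed almost complex structures to $\tilde\Sigma$ inside $B(p, R/2)$: since $\tilde{J}$ is uniformly tame and $C^{0,\alpha}$-close to a standard structure in coordinates, one obtains
\[ \mathrm{Area}(\tilde\Sigma \cap B(p, R/2)) \geq C_1^{-1} R^2, \]
where $C_1$ depends only on the bounds on $\omega, J$, and the $C^{1,\alpha}$ norm of $L$. Since $\mathrm{Area}(\tilde\Sigma) = 2\,\mathrm{Area}(\Sigma)$ (with $\sigma_L$ being essentially an isometry up to a controlled factor), we conclude $\mathrm{Area}(\Sigma \cap B(p,R)) \geq C^{-1} R^2$ after absorbing the constants. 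The main obstacle is purely technical: verifying that the reflection $\sigma_L$ built from a merely $C^{1,\alpha}$ Lagrangian $L$ yields an almost complex structure with enough regularity that standard monotonicity applies; this is handled by a straightforward extension of the Schwarz reflection argument and is where the $C^{1,\alpha}$ hypothesis on $L$ enters the estimate.
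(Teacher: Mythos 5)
Your overall strategy --- reflect across $L$ to kill the boundary and then run an interior monotonicity argument through $p$ --- is the same idea as the paper's, but the specific route you take has a genuine gap at the reflection-principle step. First, the almost complex structure $\tilde J$ you build by setting $\tilde J = J$ on one side of $L$ and $\tilde J = -\sigma_L^* J$ on the other is in general not even continuous along $L$: continuity requires $d\sigma_L$ to anti-commute with $J$ at every point of $L$, i.e.\ the $(-1)$-eigenspace of $d\sigma_L$ along $L$ must be exactly $J(TL)$, and with a reflection built from a graph/tubular structure of a Lagrangian that is only controlled in $C^{1,\alpha}$ this can be arranged at $p$ but not along all of $L\cap B(p,R)$ without losing regularity. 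Second, and more seriously, the ``Schwarz-type reflection principle for pseudoholomorphic curves with totally real boundary'' you invoke does not hold in this generality: for non-real-analytic data one has boundary regularity for $J$-holomorphic curves with Lagrangian boundary, not an extension of the curve as a pseudoholomorphic curve for a reflected structure (the genuine doubling trick requires an anti-symplectic involution under which $J$ is anti-invariant, which is exactly what is not available here and what your construction of $\tilde J$ is trying to force by hand). As written, the claim that $\tilde\Sigma=\Sigma\cup\sigma_L(\Sigma)$ is a $\tilde J$-holomorphic curve to which the classical monotonicity formula applies is not justified.

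The fix --- and this is what the paper does --- is to notice that you never need the doubled object to be pseudoholomorphic. The paper chooses Darboux coordinates in which $L$ is the flat plane $\R^n_{x_i}$ and $J$ agrees with the standard $J_0$ at $p$ up to $O(R^\alpha)$, takes the Euclidean reflection $\iota(x,y)=(x,-y)$, and works with the integral current $\tilde\Sigma=\Sigma-\iota_*\Sigma$. All that is used about $\tilde\Sigma$ is that (i) its boundary cancels inside a smaller ball, and (ii) its mass is comparable to its symplectic area, because $\iota$ is an almost-isometry and almost anti-symplectic, so $-\iota_*\Sigma$ is still ``almost positively calibrated'' by $\omega$. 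The $R^2$ lower bound is then proved directly: the isoperimetric inequality produces a filling $Q$ of $\partial(\tilde\Sigma\cap B_{g_0}(r))$ with $\mathrm{Mass}(Q)\leq C\,\mathrm{Mass}(\partial)^2$, Stokes' theorem (using $d\omega=0$) transfers the symplectic area of $\tilde\Sigma\cap B_{g_0}(r)$ to $Q$, and the coarea formula yields the differential inequality $f\leq C(f')^2$ for $f(r)=\mathrm{Mass}(\tilde\Sigma\cap B_{g_0}(r))$, which integrates to $f\geq C^{-1}r^2$ since $f>0$ for $r>0$. In other words, your third step (citing interior monotonicity for tamed structures) should be replaced by its proof, run at the level of currents on $\Sigma-\iota_*\Sigma$; once you do that, the pseudoholomorphicity of the double, and hence the reflection principle, is not needed at all, and the dependence of the constant on only $\omega$, $J$ and the $C^{1,\alpha}$ norm of $L$ becomes transparent.
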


\begin{proof}
Without loss $R$ is small, so that $L$ is locally represented by a $C^{1,\alpha}$-graph.
Consider local Darboux coordinates $\{x_i, y_i\}$ such that $L$ locally coincides with the plane $\R^n_{x_i}$, and the almost complex structure $J_0$ at the origin acts by $J\partial_{x_i}=\partial_{y_i}$. Inside the small coordinate ball, the deviation of $J$ from $J_0$ is of order $O(R^\alpha)\ll 1$. Denote $g_0=\omega(\cdot, J_0\cdot)$ as the constant Euclidean metric in the coordinates, which is uniformly equivalent to the original metric $g=\omega(\cdot, J\cdot)$.

We define the reflection map $\iota: (x_i,y_i)\mapsto (x_i,-y_i)$, which is locally almost an isometry up to $O(R)$ error. The current $\tilde{\Sigma}=\Sigma - \iota_*\Sigma$ then has no boundary inside $B_{g_0}(p, C^{-1}R)\subset B_g(p,R)$, and satisfies for $r\leq C^{-1} R$ that
\[
\text{Mass}(\tilde{\Sigma}\cap B_{g_0}(r)) \leq C\int_{\tilde{\Sigma}\cap B_{g_0}(r)  } \omega .
\]
By the isoperimetric inequality, we can find an integral current $Q$ contained in the ball $B_{g_0}(r)$ with $\partial Q= \partial (\tilde{\Sigma}\cap B_{g_0}(r)  )$ and
\[
\int_{Q\cap B_{g_0}(r)  } \omega \leq \text{Mass}(Q) \leq C \text{Mass}( \partial Q  )^2.
\]
Using Stokes formula
\[
\int_{\tilde{\Sigma}\cap B_{g_0}(r)  } \omega=  \int_{Q\cap B_{g_0}(r)  } \omega.
\]
Combining the above,
\[
\text{Mass}(\tilde{\Sigma}\cap B_{g_0}(r)) \leq C \text{Mass}( \tilde{\Sigma}\cap \partial B_{g_0}(r)  )^2.
\]
Using the coarea formula, the increasing function $f(r)= \text{Mass}(\tilde{\Sigma}\cap B_{g_0}(r))$ satisfies $f\leq C(f')^2$ for a.e $r\lesssim R$, whence
\[
(f^{1/2})' \geq C^{-1}.
\]
Since the holomorphic curve passes through the origin, we have $f(r)>0$ for any $r>0$. 
Integrating the differential inequality gives the claim.
\end{proof}

\subsection{Singular version of Lawlor neck uniqueness}\label{Singularuniquenesssection}

We now set the scene of our improved version. 

\begin{itemize}

\item (\textbf{Asymptotic near infinity}) Let $L$ be a multiplicity one special Lagrangian closed integral current inside $\C^n$, with \textbf{tangent cone at infinity}  being the union of two tranverse planes $\Pi_0\cup \Pi_\phi$, where 
\[
\Pi_0=\R^n, \quad \Pi_\phi= (e^{i\phi_1},\ldots e^{i\phi_n})\R^n, \quad 0<\phi_i<\pi.
\]
In particular by the monotonicity formula for minimal surfaces, the volume ratio $\frac{\text{Vol}(L\cap B(r))}{\omega_n r^n}$ is bounded by two.

These assumptions are known to imply the following improved asymptotic description up to translation, which we will use henceforth:

\begin{lem}
Up to translating $L$ by a vector in $\C^n$, 
there is some $R_0>0$, such that $L\setminus B(R_0)$ is a smooth graph of a normal vector field over the planes $\Pi_0$ and $\Pi_\phi$, with decay bounds $|\partial^k v|=O( r^{\rho-k-1}) $ for any $2-n< \rho<0$. 
\end{lem}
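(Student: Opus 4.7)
The plan is to follow the same three-step strategy employed in the proof of Proposition~\ref{prop:noexactAC}: first smooth $L$ at infinity via Allard-Almgren regularity, then reduce the decay rate to $0$ by a translation via Imagi's theorem, and finally bootstrap to any rate strictly larger than $2-n$ via Joyce's asymptotic expansion.

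First, since the tangent cone at infinity $W = \Pi_0 \cup \Pi_\phi$ is a union of two transverse special Lagrangian planes (which is smooth away from the origin and integrable, as planes admit only planar special Lagrangian deformations coming from the $SU(n)$-action and linear harmonic functions), Allard-Almgren regularity \cite{AA} applied at large scales implies that for some $R_1 > 0$, the set $L \setminus B_{R_1}$ is a smooth multiplicity-one Lagrangian which can be written as a normal graph of some vector field $v$ over $W$, and this graph is asymptotic to $W$ at some rate $\delta_0 < 2$.

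Next, I would apply Imagi's theorem \cite[Theorem 4.7]{Imagi} exactly as in the proof of Proposition~\ref{prop:noexactAC}: this produces a translation vector $x \in \C^n$ such that $L - x$ is asymptotic to $W$ at rate $0$, i.e. $|v| = O(r^{-1})$. On each simply connected plane component, closedness of the graphing 1-form gives exactness, so the normal displacement can be written as $du$ for a Lagrangian potential $u = O(1)$. This potential satisfies the special Lagrangian graph equation $\sum_i \arctan \lambda_i(D^2 u) = \theta_0$, which linearizes to $\Delta u = Q(D^2 u)$ with $Q$ cubic in its entries; in particular, the nonlinear error decays much faster than $u$ itself.

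Finally, I would invoke Joyce's asymptotic expansion \cite[Theorem 7.11(a)]{JoyceI}. The crucial input is the classical fact that the set of degrees of homogeneous harmonic functions on the plane $\mathbb{R}^n$ forms the discrete set $\{\ldots, -n, 1-n, 2-n\} \cup \{0, 1, 2, \ldots\}$, so for $n \geq 3$ there are no homogeneous harmonic functions with degree in the open interval $(2-n, 0)$. Modifying $u$ by an additive constant (which does not alter the graph), this gives $u = O(r^\rho)$, and hence $|v| = |du| = O(r^{\rho - 1})$, for any $\rho \in (2-n, 0)$. The derivative bounds $|\partial^k v| = O(r^{\rho - k - 1})$ then follow from standard Schauder estimates applied on dyadic annuli via rescaling, using that $v$ approximately satisfies a linear elliptic system at each scale. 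The main point to verify is that Imagi's translation result applies in our setting, but since the hypotheses on tangent cones and asymptotic planes match those of Proposition~\ref{prop:noexactAC}, this is immediate; note also that the case $n = 2$ is vacuous since the range $(2-n, 0)$ is empty.
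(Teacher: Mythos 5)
Your argument follows essentially the same route as the paper's: an initial graphicality statement at some rate less than $2$ (the paper quotes the Lojasiewicz-based result of Lambert--Lotay--Schulze \cite{LambertLotaySchulze}, you quote Allard--Almgren \cite{AA}, which the paper also records in its preliminaries for integrable cones, and the two-plane cone is integrable), followed by an improvement of the decay rate governed by the homogeneous harmonic functions on the cone: degree $1$ absorbed by a translation, degree $0$ harmless via exactness on each end, and no homogeneous degrees in $(2-n,0)$. The derivative bounds via Schauder on dyadic annuli are also as in the paper's definition of asymptotic rate.

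The one point you should repair is the middle step. Proposition~\ref{prop:noexactAC}, and the appeal to \cite[Theorem 4.7]{Imagi} made there, concern a cone with \emph{connected} link, whereas here $W=\Pi_0\cup\Pi_\phi$ has two components, so your claim that ``the hypotheses match'' is not correct as stated. The substantive issue behind this is the degree-$1$ crossing: on a two-component cone the degree-one obstruction space consists of \emph{independent} constant normal fields on the two planes (separate translations of $\Pi_0$ and of $\Pi_\phi$), and one must check that a single ambient translation of $L$ can kill both. This is where transversality enters: since $\Pi_0\cap\Pi_\phi=\{0\}$, the map $x\mapsto (\pi^\perp_{\Pi_0}x,\ \pi^\perp_{\Pi_\phi}x)$ from $\mathbb{C}^n$ to $N\Pi_0\oplus N\Pi_\phi$ is injective, hence an isomorphism (both sides have real dimension $2n$), so arbitrary degree-one data on the two ends is realized by one translation. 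The clean citation for this pair-of-planes case is \cite[Proposition 4.6]{JoyceImagi}, which is exactly what the paper's proof invokes; with that substitution (or with the transversality observation spelled out and \cite[Theorem 7.11(a)]{JoyceI} applied end by end, as you do for degree $0$), your proof is complete. The remaining ingredients --- exactness from $H^1(\mathbb{R}^n\setminus B_R)=0$ for $n\geq 3$, so that constants do not change the graph, and the fact that the homogeneous harmonic degrees on a plane avoid $(2-n,0)$ --- are correct and coincide with the paper's reasoning.
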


\begin{proof}
Due to the integrability of the tangent cone at infinity, 
Lambert-Lotay-Schulze \cite[section 5]{LambertLotaySchulze} shows by an optimal Lojasiewicz inequality that $L\setminus B(R_0)$ is graphical with $|\partial^k v|= O(r^{\alpha-k})$ for some $\alpha<1$. The improvement of decay rate is discussed in \cite[Prop 4.6]{JoyceImagi}. The idea is that the obstruction of crossing the critical rates comes from homogeneous harmonic functions on the tangent cone. Degree one harmonics can be accounted for by a translation, while degree zero harmonics cause no obstruction since the link of the tangent cone has first Betti number zero.
\end{proof}

\item (\textbf{Smooth approximations})
We assume there exist smooth, embedded, exact, graded Lagrangians $L_i$ defined on large Euclidean balls $B(R_i)$ with $R_i\to +\infty$, such that the Lagrangian angles $\theta_{L_i}$ converges to zero in $C^0_{loc}$ topology, and $L_i\to L$ as integral currents on any fixed ball. Moreover, we assume that on any fixed annulus $B(R)\setminus B(R_0)$, the $L_i$ converge to $L$ in the $C^1$-topology.

Since the link of the tangent cone has first Betti number zero, we can glue $L_i$ with the two ends of $L$ to obtain an exact Lagrangian, which has all the properties of $L_i$. Thus we can without loss assume $L_i$ coincides with $L$ outside some fixed ball  independent of $i\gg 1$ (eg. $B(2R_0)$). According to the classification of exact, graded, embedded, asymptotically conical Lagrangians in $\C^n$ converging to the tangent cone $\Pi_0\cup \Pi_\phi$ at negative decay rate $\rho<0$, we have

\begin{lem}\label{JoyceImagiclassification}
\cite[Cor. 2.18]{JoyceImagi} The Lagrangian $L_i$ has $H^*(L_i,\Z_2)\simeq H^*(S^{n-1}\times \R, \Z_2)$, and in particular is connected. Moreover, either $\sum \phi_i=\pi$ or $\sum \phi_i=(n-1)\pi$. 
\end{lem}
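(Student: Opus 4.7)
The plan is to derive the lemma as a direct consequence of the classification theorem \cite[Cor.~2.18]{JoyceImagi}, which classifies, up to Hamiltonian isotopy, the exact graded smoothly embedded asymptotically conical Lagrangians in $\C^n$ whose tangent cone at infinity is the transverse union $\Pi_0\cup\Pi_\phi$ at some negative decay rate. Accepting that classification as a black box, the only work is to verify its hypotheses for each smoothed approximation $L_i$ and then read off both conclusions.

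First I would check the hypotheses. Smoothness, embeddedness, exactness and the graded structure of $L_i$ are built into the standing assumption on the approximating sequence. After the gluing arranged in the paragraph just before the lemma, each $L_i$ coincides with the limit special Lagrangian $L$ outside the ball $B_{2R_0}$, so by the improved asymptotic description recalled just above, $L_i$ is a normal graph over $\Pi_0\cup\Pi_\phi$ on the outer region with $|\partial^k v|=O(r^{\rho-k-1})$ for any $\rho\in(2-n,0)$. In particular $L_i$ is asymptotically conical at some negative rate, so the hypotheses of \cite[Cor.~2.18]{JoyceImagi} are met.

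Applying the classification then identifies $L_i$, up to Hamiltonian isotopy rel infinity, with a Lagrangian connected sum of $\Pi_0$ and $\Pi_\phi$, which is diffeomorphic to $S^{n-1}\times\R$. This simultaneously gives the cohomology identification $H^*(L_i,\Z_2)\simeq H^*(S^{n-1}\times\R,\Z_2)$ and forces connectedness of $L_i$. The grading compatibility across the two ends is precisely the constraint that forces the real number $\sum_j \phi_j$ to be one of the two integer multiples of $\pi$ realisable by the two possible sign conventions of the Lagrangian handle attachment, namely $\pi$ (the Lawlor-neck grading) or $(n-1)\pi$ (the opposite grading); intuitively, interpolating a graded lift of the Lagrangian angle between the two asymptotic planes through a connected $S^{n-1}\times\R$ allows only these two discrete possibilities.

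The only genuine difficulty is absorbed entirely into \cite{JoyceImagi}, whose proof of the classification proceeds by compactifying $\C^n$ to the plumbing of two copies of $T^*S^n$, extending $L_i$ to a compact exact graded Lagrangian there, and invoking Abouzaid-Smith's \cite{AbouzaidSmith} classification of exact Lagrangian objects in the derived Fukaya category of such plumbings. Beyond the straightforward hypothesis verification, no additional argument is needed on our side.
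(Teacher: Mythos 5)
Your proposal matches the paper's treatment: the paper gives no independent argument and simply invokes \cite[Cor.~2.18]{JoyceImagi}, the hypotheses (smooth, embedded, exact, graded, and asymptotically conical to $\Pi_0\cup\Pi_\phi$ at negative rate, the last point coming from the gluing that makes $L_i$ coincide with $L$ outside $B_{2R_0}$) being exactly the ones you check. Your extra gloss on how the cited corollary is proved via the plumbing compactification and Abouzaid--Smith is consistent with the paper's later discussion and adds nothing that needs verification here.
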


By the symmetry $\phi\leftrightarrow \pi-\phi$, we will henceforth consider $\sum \phi_i=\pi$.

\end{itemize}

\begin{rmk}\label{notC1}
We emphasize that $L$ is \emph{not} a priori assumed to be smooth inside the fixed Euclidean ball $B(R_0)$.  Moreover, inside $B(R_0)$ we do not have quantitative control on $L_i$ beyond the Lagrangian angle, and in particular no assumption is made on the mean curvature. The convergence $L_i\to L$ needs \emph{not} be in $C^1$ topology, even near the smooth locus of $L\cap B(R_0)$.	
\end{rmk}

The main goal of this section is

\begin{thm}(Singular version of Lawlor neck uniqueness)\label{singularuniqueness}
Under the setting, either $L$ coincides with the Lawlor neck $L_{\phi,A}$ for some $A>0$, or $L=\Pi_0\cup \Pi_\phi$. 
\end{thm}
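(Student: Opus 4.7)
The plan is to follow the overall scheme of \cite{JoyceImagi}, but replace every argument that depends on real analyticity, the $C^1$-structure of $L$ inside $B(R_0)$, or the elimination of interior degree $0,n$ intersections, by a quantitative holomorphic curve argument that gives a $C^0$-approximation to a Lawlor neck for each smooth approximation $L_i$, and then pass to the limit $i \to \infty$.

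\textbf{Step 1: The analytic invariant and the candidate Lawlor neck.} Using the decay rate $\rho < 0$ of $L$ and of $L_i$ towards $\Pi_0\cup \Pi_\phi$, the Lagrangian potential $f_{L_i}$ converges to constants $c_0^i, c_\phi^i$ at the two ends (along $\Pi_0$ and $\Pi_\phi$ respectively), and likewise for the limit $L$. Put $A(L) = c_\phi - c_0$. Since $L_i = L$ outside a fixed ball for $i \gg 1$, we have $A(L_i)=A(L)$ for large $i$. Using a triangle-counting argument in the partial compactification $M$ of $\mathbb C^n$ (the plumbing of two copies of $T^*S^n$), exactly as in the Joyce--Imagi--Santos derivation of $A(L) > 0$, applied to $L_i$ (which are smooth, embedded, exact and asymptotically planar), we obtain $A(L_i) \geq 0$. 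If $A(L)=0$, one shows via the same holomorphic triangle whose area equals $A(L_i)$ that the $L_i$ must converge to $\Pi_0\cup\Pi_\phi$ and hence $L = \Pi_0\cup \Pi_\phi$. So henceforth assume $A(L) > 0$, and choose the Lawlor neck $L_{\phi,A}$ with $A := A(L)$.

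\textbf{Step 2: Perturbation of the Lawlor neck with signed potential differences.} Let $\bar L_{\phi,A}$ and $\bar L_i$ denote the compactifications inside $M$ (adding the cotangent fibres over $\infty_0$ and $\infty_\phi$). By Lemma~\ref{JoyceImagiclassification} and the Abouzaid--Smith classification, $\bar L_i \simeq \bar L_{\phi,A} \in D^b\mathrm{Fuk}(M)$. I then construct a small Hamiltonian perturbation $L''_i$ of $L_{\phi,A}$, supported near $L_{\phi,A}$, as the graph of an exact one-form $dh_i$ where $h_i$ is obtained by solving a Poisson equation $\Delta h_i = \rho_i$ on $L_{\phi,A}$, with forcing term $\rho_i$ depending on the Lagrangian potential $f_{L_i}$ transported to $L_{\phi,A}$ via the asymptotic graphicality. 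The construction is chosen so that
\begin{itemize}
\item $L''_i$ is transverse to $L_i$;
\item at every degree $0$ intersection $p \in L_i \cap L''_i$ one has $f_{L_i}(p) - f_{L''_i}(p) \leq \varepsilon_i$, and at every degree $n$ intersection $q$ one has $f_{L_i}(q) - f_{L''_i}(q) \geq -\varepsilon_i$, with $\varepsilon_i \to 0$;
\item the perturbation is $C^0$-small, of size $\eta_i \to 0$, both globally on $L_{\phi,A}$ and at $\infty_0,\infty_\phi$; in particular $[\bar L''_i]=[\bar L_{\phi,A}]\in D^b\mathrm{Fuk}(M)$ and $A(L''_i)=A$.
\end{itemize}
The existence of such $h_i$ rests on the $L^p$ theory for the Laplacian on the asymptotically conical manifold $L_{\phi,A}$, together with the quantitative estimates for $f_{L_i}$ from the preliminary sections (Monotonicity / Uniform continuity of Lagrangian potentials, i.e.\ Sections \ref{Monotonictysection} and \ref{UniformcontinuityLagpotentialsection}), applied on the $C^1$ annulus where $L_i$ is graphical over $\Pi_0\cup \Pi_\phi$.

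\textbf{Step 3: Produce holomorphic strips through every point of $L''_i$.} Fix any $r \in L''_i$. Apply Proposition~\ref{holomorphicstrip} to the pair $(\bar L_i, \bar L''_i)$ inside $M$, using an almost complex structure agreeing with the standard $J_0$ on large balls in $\mathbb C^n$. This yields a (possibly broken) $J$-holomorphic strip $\Sigma_i$ with boundary on $\bar L_i \cup \bar L''_i$, corners at some $p'_i, q'_i$ of degrees $0$ and $n$, passing through $r$, and area
\[
\int_{\Sigma_i} \omega \;=\; \bigl(f_{L_i}-f_{L''_i}\bigr)(q'_i) - \bigl(f_{L_i}-f_{L''_i}\bigr)(p'_i).
\]
By Step 2, both terms on the right are bounded by $\varepsilon_i$, so $\mathrm{Area}(\Sigma_i) \leq 2\varepsilon_i \to 0$.

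\textbf{Step 4: From small-area strips to $C^0$-closeness.} If $r \in L''_i\cap \mathbb C^n$ lay at definite distance from $L_i$, then a portion of $\Sigma_i$ would be contained in a ball around $r$ whose boundary meets only the Lagrangian $L''_i$ (the component of $L_i$ being at controlled distance). Applying the area monotonicity Lemma~\ref{areamonotonicitywithboundary} to this ball yields a lower bound $\mathrm{Area}(\Sigma_i \cap B) \geq C^{-1}R^2$, contradicting Step 3 for $i$ large. Hence every $r \in L''_i \cap K$ lies within $o(1)$ of $L_i$, for $K \subset \mathbb C^n$ any fixed compact set. Combined with the smallness of the perturbation ($\eta_i \to 0$), this gives that $L_{\phi,A}$ lies in a $C^0$-neighborhood of size $o(1)$ of $L_i$ on $K$.

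\textbf{Step 5: Passing to the limit.} By the convergence $L_i \to L$ as integral currents, the $C^0$-closeness in Step 4 implies $L_{\phi,A} \subset \mathrm{supp}(L)$ in any compact set. Since both $L_{\phi,A}$ and $L$ are multiplicity-one special Lagrangian integral currents of the same mass density and with identical tangent cone at infinity, and since $L_{\phi,A}$ is smooth and embedded, $L$ must coincide with $L_{\phi,A}$ by, e.g., the constancy theorem for integral currents with smooth support together with the matching asymptotic data $A(L)=A$.

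\textbf{Expected main obstacle.} The hard step is Step 2: solving the Poisson equation on $L_{\phi,A}$ with a forcing term depending on $f_{L_i}$ in such a way that the \emph{signs} of the potential differences at degree $0$ and degree $n$ intersections come out correctly, while simultaneously keeping $\varepsilon_i, \eta_i \to 0$ and preserving the Fukaya-category class of $L_{\phi,A}$. Because we have no $C^1$-control of $L_i$ inside $B(R_0)$ (Remark~\ref{notC1}), one cannot control the locations or even the cardinality of the intersection points $L_i \cap L''_i$ uniformly; the sign condition must be built into the forcing term globally. This is where the preliminary Lagrangian-potential estimates of Sections \ref{Monotonictysection}--\ref{UniformcontinuityLagpotentialsection} will do the real work, playing the role that real analyticity played in \cite{JoyceImagi}.
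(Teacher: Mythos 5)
Your proposal follows the same architecture as the paper's proof: potential estimates for the $L_i$, a Poisson-equation perturbation of the Lawlor neck $L_{\phi,A}$ with $A=A(L)$, Floer-theoretic strips through arbitrary points of the perturbed neck with area controlled by potential differences at degree $0,n$ corners, the area monotonicity with Lagrangian boundary to force $C^0$-closeness, and a passage to the limit. However, the step you yourself flag as the main obstacle — your Step 2 — is exactly where the paper's key idea lives, and your write-up leaves it as an unproved assertion, so there is a genuine gap. The mechanism is: a Hamiltonian graph $\delta\, d\tilde u$ over the \emph{special} Lagrangian $L_{\phi,A}$ has Lagrangian angle $\delta\Delta u$ to leading order, so choosing $\Delta u=(f_L-f_{L_{\phi,A}})(1+|\vec x|)^{-2}$ makes $\theta_{L_\delta}\approx \delta(f_L-f_{L_\delta})(1+|\vec x|)^{-2}$ (Proposition~\ref{perturbationLagangle}); the elementary index inequality of Lemma~\ref{Floerdegreeinequality} then converts ``$p$ has degree $0$'' (resp.\ $n$) into a sign for $\theta_{L_\delta}-\theta_{L_i}$ at $p$, hence into $(f_{L_i}-f_{L_\delta})(p)\le C\delta$ after dividing by the weight, splitting into $|\vec x|\lessgtr\delta^{1/\rho}$, and absorbing the errors $\delta^{-1}\Vert\theta_{L_i}\Vert_{C^0}(1+|\vec x|^2)$, $\delta^2(1+|\vec x|)^{\rho-2}$ and $\Vert f_{L_i}-f_L\Vert_{C^0}$ by taking $i$ large for fixed $\delta$ (Corollary~\ref{degreepotential1}). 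Without this angle-to-potential conversion your bulleted sign conditions in Step 2 are not established, and the strips of Step 3 have no area bound. Note also that this resolves your stated worry about uncontrolled intersection points: the sign comes out \emph{pointwise on all of} $L_\delta$, not from locating intersections.

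Two secondary points. First, your forcing term ``$f_{L_i}$ transported to $L_{\phi,A}$ via the asymptotic graphicality'' does not make sense inside $B(R_0)$, precisely because of Remark~\ref{notC1}; the paper instead builds the forcing from the \emph{limit} potential $f_L$, extended as a Lipschitz function to $\mathbb{C}^n$ and restricted to $L_{\phi,A}$, which makes the perturbation independent of $i$ and lets the sign conditions follow for all large $i$ via $\Vert f_{L_i}-f_L\Vert_{C^0}\to0$ (Proposition~\ref{Equicontinuity}, Corollary~\ref{potentialC0convergence}) and $\Vert\theta_{L_i}\Vert_{C^0}\to0$; your $i$-dependent version can be rescued by a diagonal choice $\delta_i\to0$ slowly, but as written it is ill-defined. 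Second, two claims in your Step 1 are not justified: $A(L_i)=A(L)$ exactly for large $i$ is false in general (the potential difference between the ends is computed through the interior of $L_i$, which differs from $L$); one only gets $A(L_i)\to A(L)$, which suffices. And ``small triangle area forces $L_i\to\Pi_0\cup\Pi_\phi$'' is not an argument; the correct treatment of $A(L)=0$ is intrinsic to $L$: $f_L^2$ is subharmonic, $f_L\to0$ at both ends, so the mean value inequality gives $f_L\equiv0$, hence $L$ is a cone and equals $\Pi_0\cup\Pi_\phi$. Also be aware that the equicontinuity machinery you invoke uses a definite volume-ratio drop that itself presupposes $L\ne\Pi_0\cup\Pi_\phi$, so that case must be split off first, as in the paper's contradiction setup.
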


The new difficulties are as follows. The strategy in \cite{JoyceImagi} crucially relies on the \emph{real analyticity} of the special Lagrangian, to remove all degree $0,n$ intersection points inside $\C^n$. As the current technology of Floer theory only applies to smooth Lagrangians, there is the delicate question of uniform control on the smooth approximations of the special Lagrangian. To overcome such problems, we recently introduced the technique of perturbation with prescribed Lagrangian angle \cite{LiquantitativeTY}.  
This leads to quantitative Thomas-Yau uniqueness results inside compact almost Calabi-Yau manifolds. However, in the current setting, the presence of degree $0,n$ intersections at infinity prevents the na\"ive implementation of this technique, and forces us to use holomorphic curve ingredients more prominently than in \cite{LiquantitativeTY}. A technically interesting feature, is that we are able to circumvent the most complicated part of \cite{JoyceImagi} concerning real analyticity and Hamiltonian perturbations, and instead appeal to a more robust holomorphic curve argument.

From henceforth in this section, we will assume that $L$ is not equal to $L_{\phi,A}$ or $\Pi_0\cup \Pi_\phi$, and aim to derive a contradiction. We shall also freely pass to subsequences of $L_i$.

\subsection{Monotonicity inequality}\label{Monotonictysection}

The Lagrangians $L_i$ are only assumed to have $|\theta|$ small, but there is no quantitative bound on the mean curvature. In this setting, some weaker consequences of the monotonicity formula for minimal surfaces continue to hold. The following is a simpler version of \cite[Prop. 2.6]{LiquantitativeTY}, which is based on Almgren's isoperimetric inequality \cite{Almgrenisoperimetric}.

\begin{prop}\label{monotonicity1}
	Let $L'\subset \C^n$ be a smooth Lagrangian with $|\theta|\leq \epsilon \ll 1$, and $p\in L'$. Then there is a lower bound on the volume of $L'\cap B_{Eucl}(p,r)$,
	\[
	\text{Vol}(L'\cap B_{Eucl}(p, r)) \geq \omega_n r^n (1-C\epsilon^2).
	\]
	The same bound holds for the intrinsic ball.

\end{prop}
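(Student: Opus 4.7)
The plan is to combine the standard Euclidean monotonicity formula for an $n$-submanifold in $\R^{2n}$ with a Lagrangian-specific integration-by-parts identity. The key observation is that while $|\theta|\leq\epsilon$ does not control $|H|=|\nabla\theta|$ pointwise, the integral $\int_{L'\cap B_r}H\cdot x$ can be bounded by $O(\epsilon^2)$ times volume plus boundary terms, thanks to the Lagrangian identities $H=J\nabla\theta$, $\mathrm{div}_{L'}(Jx)=0$, $(Jx)^\top=-J(x^\perp)$, and $(Jx)^\perp=J(x^\top)$ that all follow from $\omega|_{L'}=0$.

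After translating so $p=0$ and rotating $\Omega$ by $e^{-i\theta(p)}$, we may assume $|\theta|\leq 2\epsilon$ on $L'$. Writing $V(r)=\mathcal{H}^n(L'\cap B_r)$, the first variation formula applied to the radial field $x$ yields
\[
\frac{d}{dr}\!\left[\frac{V(r)}{r^n}\right]=\frac{1}{r^{n+2}}\int_{L'\cap\partial B_r}\frac{|x^\perp|^2}{|\nabla^{L'}|x||}\,d\mathcal{H}^{n-1}-\frac{1}{r^{n+1}}\int_{L'\cap B_r}H\cdot x\,d\mathcal{H}^n,
\]
with the first term on the right non-negative. A double integration by parts, using $\mathrm{div}_{L'}((Jx)^\top)=\nabla\theta\cdot x$ (from $(Jx)^\perp=J(x^\top)$) and $\mathrm{div}_{L'}(x^\top)=n+H\cdot x$, produces
\[
\int_{L'\cap B_r}H\cdot x=\mathrm{bdry}_1-\mathrm{bdry}_2+\tfrac{n}{2}\int_{L'\cap B_r}\theta^2+\tfrac{1}{2}\int_{L'\cap B_r}\theta^2\,H\cdot x,
\]
where $|\mathrm{bdry}_1|\leq\epsilon\int_{L'\cap\partial B_r}|x^\perp|\,d\mathcal{H}^{n-1}$ and $|\mathrm{bdry}_2|\leq\tfrac{\epsilon^2 r}{2}V'(r)$. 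Absorbing the $\theta^2\,H\cdot x$ piece on the left yields $|\int H\cdot x|\leq C\epsilon^2 V(r)+C|\mathrm{bdry}|$, and the AM--GM split $\epsilon|x^\perp|\leq\tfrac{\epsilon^2 r}{2}+\tfrac{|x^\perp|^2}{2r}$ absorbs the $\tfrac{1}{r}\int_{L'\cap\partial B_r}|x^\perp|^2$ piece into the positive monotonicity integrand.

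Integrating the resulting inequality from $\rho$ to $R$ and using $\lim_{\rho\to 0}V(\rho)/\rho^n=\omega_n$ at the smooth interior point $p$, together with Almgren's isoperimetric inequality (which provides the a priori bound $V(r)/r^n\leq C_0$), a Gronwall iteration produces $V(R)/R^n\geq\omega_n(1-C\epsilon^2)$. The crucial structural feature is that the prefactor of the bulk correction is $\epsilon^2$ rather than $\epsilon$, guaranteed by the second integration by parts; an $\epsilon$-prefactor would produce only a degenerate bound $V(R)/R^n\geq\omega_n\rho^{C\epsilon}$ that becomes vacuous as $\rho\to 0$. The statement for intrinsic balls follows because $|\theta|\leq\epsilon$ on a Lagrangian implies intrinsic and extrinsic distances from $p$ agree up to a factor $1+O(\epsilon)$, so the intrinsic ball of radius $r$ contains an extrinsic ball of radius $(1-O(\epsilon))r$ and the extrinsic bound transfers. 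The main technical obstacle is closing the Gronwall at the logarithmic endpoint $\rho\to 0$: one must exploit the smoothness of $L'$ at $p$ quantitatively via $V(\rho)=\omega_n\rho^n(1+O(\rho^2))$ to pick a small but finite $\rho_0$ at which the Taylor expansion is controlled, then run Gronwall only on the interval $[\rho_0,R]$.
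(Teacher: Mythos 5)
Your overall strategy -- first variation plus a Gronwall iteration -- cannot deliver the stated bound, and the difficulty you flag at the end is not a technicality but the fatal point. After all your absorptions, the differential inequality you obtain is of logarithmic-derivative type: essentially $\frac{d}{dr}\log\big(V(r)/r^n\big)\ge -C\epsilon^2/r$, i.e.\ $V(r)/r^{\,n-C\epsilon^2}$ is almost monotone. Integrating from $\rho$ to $r$ costs a multiplicative factor $(\rho/r)^{C\epsilon^2}$, which is $1-O(\epsilon^2)$ only when $\log(r/\rho)=O(1)$. Your proposed repair, choosing a base radius $\rho_0$ from the Taylor expansion $V(\rho)=\omega_n\rho^n(1+O(\rho^2))$, makes $\rho_0$ depend on the second fundamental form of $L'$ at $p$, which is \emph{not} controlled by the hypothesis $|\theta|\le\epsilon$; and even for a fixed smooth $L'$ the resulting bound $\omega_n r^n(\rho_0/r)^{C\epsilon^2}$ drops below $\omega_n r^n(1-C\epsilon^2)$ as soon as $r\gg\rho_0$. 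Since the proposition is applied uniformly to the approximating sequence $L_i$, whose curvature is unbounded, this loss of uniformity destroys the statement. Your remark that the $\epsilon^2$ prefactor (rather than $\epsilon$) rescues the argument is mistaken: it only changes the exponent from $C\epsilon$ to $C\epsilon^2$, and the degeneration as $\rho\to 0$ persists. Indeed, the clean exact form of your computation is the divergence identity $\mathrm{div}_{L'}\big(\cos\theta\, x^\top+\sin\theta\,(Jx)^\top\big)=n\cos\theta$, and what it yields is precisely the $r^{n\cos\epsilon}$-monotonicity of Proposition~\ref{monotonicity2} -- a strictly weaker statement that does not imply Proposition~\ref{monotonicity1}. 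The paper avoids the first variation entirely: it uses the almost-calibration $\cos\epsilon\,\mathrm{Vol}(L'\cap B_r)\le\int_{L'\cap B_r}\mathrm{Re}\,\Omega=\int_Q\mathrm{Re}\,\Omega\le\mathrm{Mass}(Q)$ together with Almgren's sharp isoperimetric inequality for a filling $Q$ of $L'\cap\partial B_r$, giving $f(r)\le n^{-n/(n-1)}\omega_n^{-1/(n-1)}(1+C\epsilon^2)f'(r)^{n/(n-1)}$, i.e.\ $(f^{1/n})'\ge\omega_n^{1/n}(1-C\epsilon^2)$, which integrates from $r=0$ with no accumulated loss and no curvature input.

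Two further points. First, the absorption of $\tfrac12\int\theta^2\,H\cdot x$ ``on the left'' is not legitimate as written: that term is bounded by $\epsilon^2\int|H\cdot x|$, not by $\epsilon^2\big|\int H\cdot x\big|$, so it cannot be absorbed into the quantity you are estimating; one would have to iterate the integration by parts (a convergent series in $\epsilon^2$) or use the exact identity above. Second, the intrinsic-ball claim rests on a false assertion: $|\theta|\le\epsilon$ does not make intrinsic and extrinsic distances comparable up to $1+O(\epsilon)$ (two nearly parallel sheets of an almost-special Lagrangian, joined far away, are extrinsically close but intrinsically far apart), and in any case the intrinsic ball is \emph{contained} in the extrinsic ball, so an extrinsic volume lower bound does not transfer by inclusion. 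The paper instead reruns the same isoperimetric argument verbatim with a regularized intrinsic distance to $p$ in place of $|x-p|$, which only uses the coarea formula and $|\nabla d|\le 1$.
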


\begin{proof}
For a.e. $0<r$, the level set $ L'\cap \partial B_{Eucl}(r)$ is smooth, and the sharp isoperimetric inequality allows us to find an integral current $Q$ with $\partial Q=L'\cap \partial B_{Eucl}(r)$ and mass bound
\[
Mass(Q) \leq 
n^{ -\frac{n}{n-1}  } \omega_n^{\frac{-1}{n-1}}
\mathcal{H}^{n-1}(L'\cap B_{Eucl}(r))^{\frac{n}{n-1}}.
\]
Since $\partial (L'\cap B_{Eucl}(r))= \partial Q$, the form $\text{Re}\Omega$ is closed, and $\text{Re}\Omega |_Q \leq dvol_Q$,
\[
\int_{ L'\cap B_{Eucl}(r)} \text{Re} \Omega = \int_{ Q} \text{Re}\Omega \leq  Mass(Q).
\]
Since $|\theta|\leq \epsilon$, we have 
$ \text{Vol}(L'\cap B_{eucl}(r))\leq \frac{1}{\cos \epsilon} \int_{ L'\cap B_{Eucl}(r)}  \text{Re} \Omega .$
Combining the above, and applying the coarea formula, 
the function $f(r)= \text{Vol}(L'\cap  B_{Eucl}(r)  )$ satisfies
\[
f(r)\leq n^{-\frac{n}{n-1} }\omega_n^{-\frac{1}{n-1}} (1+C\epsilon^2) f'(r)^{  \frac{n}{n-1} }.
\]
For $r>0$, the function $f$ is positive, increasing, and
\[
(f^{1/n})' \geq \omega_n^{1/n}(1-C\epsilon^2),
\]
whence the claim. For the intrinsic ball analogue, we simply replace the function $r$ with a regularized version of the intrinsic distance to $p$.
\end{proof}

\begin{prop}\label{monotonicity2}
Let $L'\subset \C^n$ be a smooth Lagrangian with $|\theta|\leq \epsilon \ll 1$, and $p\in L'$. Then the function
\[
\frac{\text{Vol}(L'\cap  B_{Eucl}(p, r)  ) }{\omega_n r^{n\cos \epsilon} }
\]
is nondecreasing in $r$.
\end{prop}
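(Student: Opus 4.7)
The plan is to combine the calibration inequality with a cone comparison, in the spirit of the classical monotonicity argument for minimal submanifolds. Set $f(r) = \mathrm{Vol}(L'\cap B_{Eucl}(p,r))$ and fix a value $r$ for which the slice $\Sigma_r := L'\cap \partial B_{Eucl}(p,r)$ is a smooth $(n-1)$-submanifold; this holds for a.e.\ $r$ by Sard.

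First, since $|\theta|\leq \epsilon$, pointwise on $L'$ one has $\cos\epsilon\cdot d\mathrm{vol}_{L'} \leq \mathrm{Re}\,\Omega|_{L'}$. Integrating over $L'\cap B(p,r)$ yields
\[ \cos\epsilon \cdot f(r) \leq \int_{L'\cap B(p,r)}\mathrm{Re}\,\Omega. \]
Next, let $C_r$ denote the cone over $\Sigma_r$ with vertex $p$, i.e.\ the image of $\Sigma_r\times[0,1]$ under $(x,t)\mapsto p+t(x-p)$. Then $\partial C_r = \Sigma_r = \partial(L'\cap B(p,r))$, so $(L'\cap B(p,r)) - C_r$ is an integral $n$-cycle in $\mathbb{C}^n$. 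Since $\mathbb{C}^n$ is contractible and $\mathrm{Re}\,\Omega$ is a closed form with constant coefficients, this cycle bounds and so
\[ \int_{L'\cap B(p,r)}\mathrm{Re}\,\Omega = \int_{C_r}\mathrm{Re}\,\Omega \leq \mathrm{Vol}(C_r), \]
the last inequality because $\mathrm{Re}\,\Omega$ is a calibration.

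The elementary Euclidean cone-volume formula gives $\mathrm{Vol}(C_r) \leq \tfrac{r}{n}\mathcal{H}^{n-1}(\Sigma_r)$. Moreover, the coarea formula applied to the distance function $\rho(\cdot)=|\cdot-p|$ restricted to $L'$ shows that $f'(r) = \int_{\Sigma_r}|\nabla^{L'}\rho|^{-1}\,d\mathcal{H}^{n-1} \geq \mathcal{H}^{n-1}(\Sigma_r)$, since $|\nabla^{L'}\rho|\leq 1$. Combining everything,
\[ \cos\epsilon \cdot f(r) \leq \frac{r}{n}\,f'(r), \]
which rearranges to $(\log f(r))' \geq \frac{n\cos\epsilon}{r}$ for a.e.\ $r>0$. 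Equivalently, $\frac{d}{dr}\bigl(f(r)/r^{n\cos\epsilon}\bigr) \geq 0$, which upon dividing by the constant $\omega_n$ is precisely the claimed monotonicity. The one step that requires a brief check is that $L'\cap B(p,r)$ has no interior boundary (so that its boundary as a current is exactly $\Sigma_r$), which follows from $L'$ being smooth and without boundary in $\mathbb{C}^n$ in the statement.
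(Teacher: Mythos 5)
Your proposal is correct and follows essentially the same route as the paper: the calibration inequality $\cos\epsilon\, f(r)\leq \int_{L'\cap B(p,r)}\mathrm{Re}\,\Omega$, a Stokes comparison with the cone over the slice $L'\cap\partial B(p,r)$ whose mass is at most $\tfrac{r}{n}\mathcal{H}^{n-1}$, and the coarea bound $f'(r)\geq \mathcal{H}^{n-1}$, yielding $f'\geq n\cos\epsilon\, f/r$ and hence the claimed monotonicity. No issues.
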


\begin{proof}
In the above proof, we can replace the integral current $Q$ by the cone over $L'\cap \partial B_{eucl}(r)$, whose mass is 
\[
\text{Mass}(Q)= \frac{r}{n} \mathcal{H}^{n-1} ( L'\cap \partial B_{eucl}(r)   ).
\]
The same comparison argument shows
\[
\cos \epsilon\, \text{Vol}(L'\cap B_{eucl}(r)) \leq 
\int_{ L'\cap B_{Eucl}(r)} \text{Re} \Omega = \int_{ Q} \text{Re}\Omega \leq  Mass(Q).
\]
Combining the above, the function $f(r)= \text{Vol}(L'\cap  B_{Eucl}(r)  )$ satisfies
\[
f'\geq n\cos \epsilon \frac{f}{r},
\]
whence $\log f- n\cos \epsilon \log r$ is a nondecreasing function.
\end{proof}

The monotonicity inequalities have some standard consequences on our sequence $L_i$ converging in the current topology to $L$. Recall that $\norm{\theta_{L_i}}_{C^0}\to 0$ by assumption, so we may assume $|\theta_{L_i}|\leq \epsilon$.

\begin{cor}\label{Hausdorffconvergence1}
Any point $p\in \C^n$ lies on the support of $L$ if and only if there is a sequence $p_i\in L_i$ with $p_i\to p$. 
\end{cor}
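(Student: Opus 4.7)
The plan is to prove the two directions separately using the monotonicity formula (Proposition \ref{monotonicity1}) together with the defining property of weak convergence in the current topology, exploiting the fact that $\|\theta_{L_i}\|_{C^0}\to 0$ provides a uniform bound $|\theta_{L_i}|\leq \epsilon_i\to 0$.

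For the easier direction, suppose $p\in \mathrm{supp}(L)$ and yet no sequence $p_i\in L_i$ converges to $p$. Then there exists $r>0$ such that, after passing to a subsequence, $L_i\cap B(p,r)=\emptyset$ for all $i$. Consequently the currents $L_i$ vanish as currents on $B(p,r)$. Testing the weak convergence $L_i\to L$ against smooth $n$-forms compactly supported in $B(p,r)$ yields $L\lfloor B(p,r)=0$, contradicting $p\in\mathrm{supp}(L)$.

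For the converse, assume $p_i\in L_i$ and $p_i\to p$. Since $|\theta_{L_i}|\leq \epsilon_i\to 0$, Proposition \ref{monotonicity1} applied to $L_i$ at the point $p_i$ gives
\[
\mathrm{Vol}(L_i\cap B_{Eucl}(p_i,r)) \geq \omega_n r^n (1-C\epsilon_i^2)
\]
for every sufficiently small $r>0$ (independent of $i$, using that the $L_i$ are smooth Lagrangians defined on arbitrarily large balls). For fixed small $r>0$ and $i$ large we have $B(p_i,r)\subset B(p,2r)$, hence $\mathrm{Vol}(L_i\cap B(p,2r))\geq \tfrac{1}{2}\omega_n r^n$. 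By the lower semicontinuity of mass under weak convergence of integral currents on open sets (using also the uniform bound on area ratios inherited from the ambient flow to avoid boundary concentration issues), the current $L$ has positive mass in $B(p,2r)$ for every such $r$, so $p\in \mathrm{supp}(L)$.

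There is no substantive obstacle here: the result is a standard consequence of monotonicity plus weak current convergence, and the only mild technical point is ensuring that mass cannot escape to the boundary of $B(p,2r)$ when passing to the limit, which is controlled by the uniform area ratio bound.
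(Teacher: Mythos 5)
Your ``only if'' direction is fine, and in fact a bit more elementary than the paper's: arguing by contradiction from $L_i\cap B(p,r)=\emptyset$ along a subsequence and testing the current convergence against forms supported in $B(p,r)$ is perfectly valid.

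The converse direction, however, has a genuine gap at the final step. Mass is \emph{lower} semicontinuous under weak convergence of currents, i.e. $\mathbf{M}(L\lfloor U)\leq \liminf_i \mathbf{M}(L_i\lfloor U)$; this inequality goes the wrong way for your purpose. A uniform positive lower bound on $\mathrm{Vol}(L_i\cap B(p,2r))$ gives no lower bound on the mass of the limit current: weak limits of integral currents can lose mass through cancellation (two nearby sheets with opposite orientations, say), and the uniform area ratio bound you invoke only prevents mass from \emph{concentrating}, not from \emph{disappearing}. This is precisely the point the paper's proof is designed to handle: instead of appealing to semicontinuity of mass, one tests the convergence against the calibration form. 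Since $|\theta_{L_i}|\leq\epsilon$, one has $\mathrm{Re}\,\Omega|_{L_i}\geq \cos\epsilon\, d\mathcal{H}^n$, so Proposition~\ref{monotonicity1} at $p_i$ gives
\[ \int_{L_i\cap B(p,r)} \mathrm{Re}\,\Omega \;\geq\; \omega_n(1-C\epsilon^2)\bigl(r-|p_i-p|\bigr)^n, \]
and for a fixed cutoff $\chi$ concentrated near $p$ the quantity $\int_{L_i}\chi\,\mathrm{Re}\,\Omega$ is bounded below by a positive constant. Because $\chi\,\mathrm{Re}\,\Omega$ is a fixed smooth form, the definition of current convergence forces $\int_{L}\chi\,\mathrm{Re}\,\Omega>0$, hence $p\in\mathrm{supp}(L)$. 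So your argument is repaired by replacing the semicontinuity step with this calibration test; as written, the step does not follow.
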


\begin{proof}
If $p\in \text{supp}(L)$, then since $L$ is a special Lagrangian,
 \[
\int_{B(p,r)\cap L} \text{Re}\Omega= \text{Vol}(L\cap B(p,r)) \geq \omega_n r^n,\quad \forall r>0.
\]
But the definition of current convergence implies that for any compactly supported cutoff function $\chi$,
\[
\int_{L_i } \chi \text{Re}\Omega \to \int_{L } \chi \text{Re}\Omega. 
\]
so by taking very concentrated $\chi$, the nontriviality of $\int_{L_i } \chi \text{Re}\Omega$ allows us to find the nearby $p_i$ converging to $p$.

Suppose $p_i\in L_i$ converges to $p$, then 
Prop \ref{monotonicity1} implies 
\[
\int_{B(p_i,r)\cap L_i} \text{Re}\Omega\geq \cos \epsilon \text{Vol}(L_i\cap B(p_i,r)) \geq \omega_n (1- C\epsilon^2) r^n,\quad \forall r>0.
\]
Whence
\[
\int_{B(p,r)\cap L_i} \text{Re}\Omega\geq  \omega_n (1- C\epsilon^2) (r-|p_i-p|)^n,\quad \forall r>0.
\]
For fixed cutoff functions $\chi$, this provides a uniform lower bound on $\int_{L_i}\chi \text{Re}\Omega$, so $\int_{L}\chi \text{Re}\Omega>0$ in the limit, and $p\in \text{supp}(L)$.
\end{proof}

By a standard compactness argument,

\begin{cor}\label{Hausdorffconvergence2}
The sequence $L_i$ converges to $L$ in the Hausdorff distance.
\end{cor}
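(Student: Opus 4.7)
The plan is to upgrade the pointwise characterization of $\mathrm{supp}(L)$ in Corollary \ref{Hausdorffconvergence1} to uniform Hausdorff convergence, via a standard compactness argument. The key simplification is that by the arrangement made earlier in this section, $L_i$ coincides with $L$ on $\mathbb{C}^n \setminus B(2R_0)$, so the Hausdorff distance is supported inside the fixed ball $B(2R_0)$, reducing the problem to a compact setting.

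Concretely, it suffices to prove that for every $\epsilon > 0$, for all sufficiently large $i$ we have $L_i \cap \bar{B}(2R_0) \subset N_\epsilon(L)$ and $L \cap \bar{B}(2R_0) \subset N_\epsilon(L_i)$, where $N_\epsilon(\cdot)$ denotes the open $\epsilon$-neighborhood. I would argue by contradiction: if either inclusion failed, after passing to a subsequence we could find points $p_i$ (in $L_i \cap \bar{B}(2R_0)$ in the first case, or in $L \cap \bar{B}(2R_0)$ in the second case) with distance at least $\epsilon$ from the competing set. By compactness of $\bar{B}(2R_0)$, extract a further subsequence with $p_i \to p_\infty$.

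In the first case, Corollary \ref{Hausdorffconvergence1} applied to the converging sequence $p_i \in L_i$ yields $p_\infty \in \mathrm{supp}(L)$, and then $\mathrm{dist}(p_i, L) \leq |p_i - p_\infty| \to 0$, contradicting $\mathrm{dist}(p_i,L) \geq \epsilon$. In the second case, $p_\infty \in L$, so Corollary \ref{Hausdorffconvergence1} provides a sequence $q_i \in L_i$ with $q_i \to p_\infty$, giving $\mathrm{dist}(p_i, L_i) \leq |p_i - q_i| \leq |p_i - p_\infty| + |p_\infty - q_i| \to 0$, again a contradiction. Combined with $L_i \equiv L$ outside $B(2R_0)$, this gives Hausdorff convergence on all of $\mathbb{C}^n$.

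There is no substantial obstacle here: the argument is simply the standard upgrade of a pointwise two-sided inclusion to uniform Hausdorff distance via compactness, and the non-compactness of the ambient space is neutralized by the assumption that the $L_i$ agree with $L$ outside a fixed ball.
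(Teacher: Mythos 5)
Your argument is correct and is exactly the ``standard compactness argument'' the paper invokes without spelling out: the two-sided pointwise characterization of $\mathrm{supp}(L)$ from Corollary \ref{Hausdorffconvergence1}, upgraded to uniform Hausdorff convergence by a contradiction-and-subsequence argument on the compact ball $\bar{B}(2R_0)$, with the exterior region handled by the arrangement that $L_i$ coincides with $L$ there. No gaps; this matches the paper's intended proof.
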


\subsection{Uniform continuity of the Lagrangian potential}\label{UniformcontinuityLagpotentialsection}

We now investigate the Lagrangian potentials $f_{L_i}$ on $L_i$, with the goal of establishing subsequential uniform convergence to some Lagrangian potential on the singular Lagrangian $L$, with uniform estimates on the modulus of continuity with respect to the \emph{extrinsic distance}. This information will eventually be used to control the area of holomorphic curves. The technical aspects of this subsection are designed to overcome the difficulties in Remark \ref{notC1}.

Since $L_i$ is asymptotically conical with rate $\rho<0$, it makes sense to impose the normalization $\lim_{r\to \infty \text{ along } \Pi_0} f_{L_i}=0 $. Likewise the limit 
\[
A(L_i)= \lim_{  r\to \infty \text{along } \Pi_\phi} f_{L_i}
\]
is well defined.

\begin{lem}
There are uniform bounds
\[
\text{Vol}(L_i\cap B(R_0)) \leq C, \quad \norm{f_{L_i}}_{L^\infty}\leq C.
\]
\end{lem}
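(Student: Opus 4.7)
The plan is to split both bounds into the exterior and interior of $B(2R_0)$, outside of which we have arranged $L_i = L$.

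For the volume bound, I apply Proposition~\ref{monotonicity2} centered at the origin: the ratio $\mathrm{Vol}(L_i \cap B(r))/r^{n \cos\epsilon}$ is non-decreasing in $r$, and since $L_i$ is asymptotic to $\Pi_0 \cup \Pi_\phi$ with density $2$, the ratio tends to $2\omega_n$ as $r \to \infty$. This yields $\mathrm{Vol}(L_i \cap B(R_0)) \leq C$ uniformly in $i$, for $\epsilon$ sufficiently small.

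For the potential bound, outside $B(2R_0)$ we have $L_i = L$, and hence $f_{L_i} = f_L$. A direct calculation gives $\lambda|_{\Pi_0} = \lambda|_{\Pi_\phi} = 0$, and since $L \setminus B(R_0)$ is a graph over these planes with decay $|\partial^k v| = O(r^{\rho-k-1})$, a computation yields $|\lambda|_L| = O(r^{\rho-1})$, which is integrable along paths running to infinity. Combined with the normalization $f_L \to 0$ on the $\Pi_0$-end, this makes $|f_{L_i}|$ uniformly bounded on $L_i \setminus B(2R_0)$. Inside $B(2R_0)$ I would use the pointwise gradient estimate $|\nabla^{L_i} f_{L_i}| = |x^\perp|/2 \leq R_0$, which must be converted into an oscillation bound via an intrinsic diameter estimate for $L_i \cap \overline{B(3R_0)}$. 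By Proposition~\ref{monotonicity1}, $|\theta_{L_i}| \leq \epsilon$ yields the non-collapsing intrinsic volume lower bound $\mathrm{Vol}(B^{L_i}(p, r)) \geq (1 - C\epsilon^2)\omega_n r^n$. Packing disjoint such balls of radius $r_0 \sim R_0$ along any intrinsic minimizing geodesic $\gamma \subset L_i \cap \overline{B(3R_0)}$, and comparing with the extrinsic volume bound on $L_i \cap B(4R_0)$, forces $\mathrm{length}(\gamma) \leq C$. Since $L_i$ is connected (Lemma~\ref{JoyceImagiclassification}) and non-compact, every connected component of $L_i \cap \overline{B(3R_0)}$ must meet $\partial B(3R_0)$, so any interior point can be joined to a point of $\partial B(3R_0) \cap L_i$ by a path of bounded intrinsic length, and we integrate the gradient bound.

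The main technical obstacle is exactly this intrinsic diameter estimate inside $B(2R_0)$, because no $C^1$ or mean curvature control on $L_i$ is assumed there --- only $C^0$-smallness of the Lagrangian angle, as emphasized in Remark~\ref{notC1}. The key conceptual point is that the monotonicity formula for Lagrangians with small angle trades smallness of $\theta$ for volume non-collapsing, which rules out the thin intrinsic filaments that would otherwise allow the Lagrangian potential to oscillate without bound.
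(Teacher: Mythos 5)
Your overall skeleton (volume bound, non\-collapsing from Proposition~\ref{monotonicity1}, Lipschitz integration of $|\nabla f_{L_i}|=\tfrac12|x^\perp|$, connectivity) matches the paper's, but two steps are genuinely flawed. First, the volume bound: the monotone quantity in Proposition~\ref{monotonicity2} is $\mathrm{Vol}(L_i\cap B(r))/(\omega_n r^{n\cos\epsilon})$, and since $\mathrm{Vol}(L_i\cap B(r))\sim 2\omega_n r^n$ at infinity, this ratio grows like $2r^{n(1-\cos\epsilon)}\to\infty$ for any fixed $\epsilon>0$; it does \emph{not} tend to $2\omega_n$, so monotonicity up to the limit at infinity gives no bound at $r=R_0$, and smallness of $\epsilon$ does not help at fixed $i$. (Also, as stated the proposition requires the center to lie on $L_i$, while you center at the origin.) The idea is repairable without the paper's calibration/Stokes argument: compare two \emph{finite} radii, e.g.\ $2R_0$ and $4R_0$, where the annulus is contained in the region with $L_i=L$; monotonicity gives $2^{n\cos\epsilon}\,\mathrm{Vol}(L_i\cap B(2R_0))\le \mathrm{Vol}(L_i\cap B(2R_0))+\mathrm{Vol}\bigl(L\cap(B(4R_0)\setminus B(2R_0))\bigr)$, and the last term is a constant depending only on $L$, so $\mathrm{Vol}(L_i\cap B(2R_0))\le C$. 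As written, though, your deduction rests on a false limit.

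Second, the potential bound never controls $A(L_i)$. Integrating $|\lambda|_L|=O(r^{\rho-1})$ along each end only bounds the \emph{oscillation} on each end; the normalization pins $f_{L_i}$ near $0$ on the $\Pi_0$ end, but on the $\Pi_\phi$ end it only gives $|f_{L_i}-A(L_i)|\le C$, and $A(L_i)$ genuinely depends on $i$ (it is computed by integrating $\lambda$ along a path crossing $B(2R_0)$, where the $L_i$ differ), so your claim that $|f_{L_i}|$ is uniformly bounded on all of $L_i\setminus B(2R_0)$ is a non sequitur. Your interior step connects points to $\partial B(3R_0)\cap L_i$, but on the $\Pi_\phi$ side of that sphere the value is again only known up to $A(L_i)$, so the argument does not close. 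The paper's missing ingredient is connectedness of $L_i$ (Lemma~\ref{JoyceImagiclassification}) plus the intermediate value theorem: every value of $f_{L_i}$ lies within $C$ of $\{0\}\cup\{A(L_i)\}$, while the image of the connected $L_i$ is an interval containing values near both $0$ and $A(L_i)$, forcing $|A(L_i)|\le 2C$. Finally, your packing argument along a minimizing geodesic is shakier than needed: a geodesic of $L_i$ between points of $\overline{B(3R_0)}$ need not stay in a region of controlled volume, and a geodesic of the induced metric on $L_i\cap\overline{B(3R_0)}$ need not have disjoint intrinsic balls. The paper's route is simpler: once $\omega_n r^n(1-C\epsilon^2)$ exceeds the volume bound, the intrinsic ball $B^{L_i}(p,r)$ cannot stay inside the Euclidean ball, which directly produces a bounded-length path from any interior point to the region where $L_i=L$.
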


\begin{proof}
On the two ends outside $B(R_0)$, the oscillation of $f_{L_i}$ is uniformly bounded, due to the fixed asymptote with rate $\rho<0$. Since $|\theta|$ is small, the volume of $L_i$ inside $B(R_0)$ is almost the same as the integral of $\text{Re}\Omega$, which can be computed as a boundary integral by Stokes formula. Since the boundary term is $C^1$ controlled, we have $\text{Vol}(L_i\cap B(R_0)) \leq C$.

By the lower bound on the volume of intrinsic balls in Prop. \ref{monotonicity1}, this implies that for any $p\in L_i\cap B(R_0)$, an intrinsic ball of uniformly bounded radius must intersect $L_i\setminus B(R_0)$. Since $df_{L_i}=\lambda_{L_i}|_{L_i}$ is $C^0$ bounded, the potential $f_{L_i}$ is Lipschitz with respect to the intrinsic distance. Consequently there exists $q\in L_i\cap \partial B(R_0)$ with $|f_{L_i}(p)-f_{L_i}(q)|\leq C$, whence either $|f_{L_i}(p)|\leq C$ or $|f_{L_i}(p)- A(L_i)|\leq C$.

To conclude, we claim that $A(L_i)$ is uniformly bounded. This follows from the intermediate value theorem as $L_i$ is connected (\cf Lemma \ref{JoyceImagiclassification}).  
\end{proof}

\begin{lem}\label{definitevolumeratiodrop}
(Definite volume ratio drop) There is a small constant $c_0>0$ depending on $L$, such that the volume ratio
\[
\frac{Vol(L\cap B(p,r)) }{\omega_n r^n} \leq 2- 3c_0, \quad \forall p\in B(R_0),\forall r\leq 3R_0.
\]
Moreover, for large enough $i$, we have 
\[
\frac{Vol(L_i\cap B(p,3R_0)) }{\omega_n (3R_0)^n} \leq 2- 2c_0, \quad \forall p\in B(R_0).
\]
\end{lem}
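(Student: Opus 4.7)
The plan is to prove part (1) by contradiction using the monotonicity formula for minimal surfaces and its rigidity case. Since the tangent cone at infinity of $L$ is $\Pi_0\cup\Pi_\phi$, which has density $2$, classical monotonicity gives $\Theta(L,p,r):=\mathrm{Vol}(L\cap B(p,r))/(\omega_n r^n)\leq 2$ for every $(p,r)$. If no $c_0>0$ worked, one could extract a sequence $(p_k,r_k)\in B(R_0)\times(0,3R_0]$ with $\Theta(L,p_k,r_k)\to 2$, and pass to a subsequence with $p_k\to p_\infty$, $r_k\to r_\infty\in[0,3R_0]$. Combining continuity of $\Theta(L,\cdot,r_\infty)$ in $p$ (when $r_\infty>0$) with monotonicity in $r$ (when $r_\infty=0$) forces $\Theta(L,p_\infty,r)=2$ for all $r\geq r_\infty$.

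The rigidity case of monotonicity then makes $L$ conical with vertex $p_\infty$ outside $B(p_\infty,r_\infty)$, and matching this cone against the tangent cone at infinity identifies it as the translate $(\Pi_0\cup\Pi_\phi)+p_\infty$. The negative asymptotic decay rate rules out any nonzero translation (a nonzero $p_\infty$ would produce a constant nonzero normal displacement over each plane, incompatible with the graph being $O(r^{\delta-1})$ for $\delta<0$), so $p_\infty\in\Pi_0\cap\Pi_\phi=\{0\}$. Thus $L=\Pi_0\cup\Pi_\phi$ outside $B(0,r_\infty)$. To extend this equality inside, I would invoke Allard regularity at points of $\Pi_0\cap\partial B(0,r_\infty)$ where $L$ has density $1$ (a generic set, since the singular set of $L$ has Hausdorff dimension $\leq n-2$), and then unique continuation for the real-analytic special Lagrangian equation to propagate $L=\Pi_0$ (and similarly $L=\Pi_\phi$) throughout $B(0,r_\infty)$. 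Any residual component of $L-[\Pi_0\cup\Pi_\phi]$ would then be a closed compact integral special Lagrangian current in $\mathbb{C}^n$; since $\mathrm{Re}\,\Omega=d\eta$ is exact on the contractible space $\mathbb{C}^n$, Stokes' theorem forces its mass to vanish. Hence $L=\Pi_0\cup\Pi_\phi$, contradicting our standing assumption.

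For part (2), I would use the current convergence $L_i\to L$ together with upper semicontinuity of density. If there existed $i_k\to\infty$ and $p_k\in B(R_0)$ with $\Theta(L_{i_k},p_k,3R_0)>2-2c_0$, pass to a subsequence with $p_k\to p_\infty\in\bar{B}(R_0)$; then the current convergence (combined with the approximate monotonicity of Proposition~\ref{monotonicity2} at a slightly enlarged radius, used to absorb any concentrated mass of $L$ on $\partial B(p_\infty,3R_0)$) yields $\limsup_k\Theta(L_{i_k},p_k,3R_0)\leq\Theta(L,p_\infty,3R_0)\leq 2-3c_0$, the required contradiction. The hardest step will be the rigidity argument closing up $L=\Pi_0\cup\Pi_\phi$ inside $B(0,r_\infty)$ in part (1): one needs Allard-type regularity at density-$1$ points of the planar boundary to trigger unique continuation of the special Lagrangian PDE, followed by the calibration Stokes argument to eliminate any compact closed leftover. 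By comparison, the semicontinuity step in part (2) is routine.
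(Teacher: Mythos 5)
Your part (1) follows essentially the same route as the paper: reduce via monotonicity to the single scale $3R_0$, argue by contradiction, use the bound $\Theta(L,p,r)\le 2$ coming from the density $2$ at infinity, invoke the rigidity of the monotonicity formula to force conicality about a limit point $p_\infty$, and use the negative decay rate to force the cone to be the untranslated $\Pi_0\cup\Pi_\phi$. The paper's own proof stops there ("this would force $L=\Pi_0\cup\Pi_\phi$"), leaving the propagation of the equality from the exterior region into the ball implicit; your extra steps (Allard regularity at density-one boundary points, unique continuation on the regular part, and exactness of $\mathrm{Re}\,\Omega$ to kill a compactly supported leftover) are a reasonable way to fill that in. Two cautions: the unique continuation has to be carried across the (codimension-two) singular set of $L$, which is where the real work sits, and the Stokes/calibration step only kills the residual once you know $L-[\Pi_0\cup\Pi_\phi]$ is still a \emph{positive} (calibrated) cycle, i.e.\ only after the unique continuation step has produced the planes inside the ball as sub-currents of $L$.

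Part (2) is where you genuinely diverge from the paper, and as stated there is a gap: the inequality $\limsup_k\Theta(L_{i_k},p_k,3R_0)\le\Theta(L,p_\infty,3R_0)$ does not follow from current convergence plus ``upper semicontinuity of density''. Weak convergence of integral currents gives \emph{lower} semicontinuity of mass: it prevents the limit $L$ from gaining mass, but does nothing to prevent the approximators $L_{i_k}$ from carrying excess volume inside $B(p_k,3R_0)$, which is exactly what you must exclude; your parenthetical worry about mass of $L$ concentrating on the sphere addresses the harmless direction. The correct mechanism is the almost-calibrated property: since $|\theta_{L_i}|\to 0$, one has $\mathrm{Vol}(L_i\cap B)\le(1+C\epsilon_i^2)\int_{L_i\cap B}\mathrm{Re}\,\Omega$, and the calibration integral is controlled because $\mathrm{Re}\,\Omega$ restricted to $L_i$ is nonnegative and $L_i=L$ near the relevant sphere. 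This is exactly the paper's (shorter) argument: replace the volume by $\int\mathrm{Re}\,\Omega$, compute it by Stokes as a boundary integral over $L_i\cap\partial B(p,3R_0)$, which lies outside $B(2R_0)$ where $L_i$ coincides with $L$, so part (2) reduces directly to part (1) — uniformly in $p\in B(R_0)$ and for a single large $i$, with no compactness or diagonal argument needed. If you repair your semicontinuity step with the calibration bound, your compactness version also works, but the Stokes reduction is both simpler and what the paper does.
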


\begin{proof}
For the first statement, notice by volume monotonicity for the special Lagrangian $L$, we only need to consider $r=3R_0$. If $c_0$ does not exist, then the rigidity of the monotonicity formula would imply that $L$ is conical for some central point $p$. Since $L$ is asymptotic to $\Pi_0\cup \Pi_\phi$ with negative decay rate, this would force $L=\Pi_0\cup \Pi_\phi$, which violates our contradiction hypothesis.

For large $i$, the Lagrangian angle $\theta_{L_i}$ can be assumed to be arbitrarily small, so we may replace $\text{Vol}(L_i\cap B(p,3R_0))$ by $\int_{ L_i\cap B(p,3R_0)} \text{Re}\Omega$, which can be computed as a boundary integral by the Stokes formula. The boundary $L_i\cap \partial B(p,3R_0)$ lies in the exterior of $B(2R_0)$, where $L_i$ coincides with $L$, so the second statement reduces to the first. 
\end{proof}

\begin{rmk}
As a caveat, it is \emph{not} obvious that for finite but large enough $i$, we can \emph{simultaneously} achieve the infinite number of conditions
\[
\frac{Vol(L_i\cap B(p,r)) }{\omega_n r^n} \leq 2- 2c_0, \quad \forall p\in B(R_0), \forall r\leq 3R_0.
\]	

\end{rmk}

\begin{prop}\label{Equicontinuity}
(Equicontinuity) Let $\epsilon_i= \norm{\theta_{L_i}}_{C^0}$. Then for any $p,q\in L_i$, we have
\[
|f_{L_i}(p)- f_{L_i} (q)|\leq C |p-q|+ Ce^{-C^{-1}\epsilon_i^{-2} },
\]
for sufficiently large $i$, where the constant depends on $c_0$, but is independent of $p,q$ and $\epsilon_i$.  Here $|p-q|$ is the extrinsic distance. 
\end{prop}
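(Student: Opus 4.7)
The strategy is to compare the intrinsic distance on $L_i$ to the extrinsic Euclidean distance, using the uniform pointwise bound $|df_{L_i}|=|\lambda_{L_i}|\leq C$ (which is bounded at infinity thanks to the asymptotic decay $\rho<0$ of the two ends, and bounded in the interior since both points lie in a compact region). Since $f_{L_i}$ is thus Lipschitz with respect to the intrinsic distance on $L_i$, the content of the Proposition is to control the intrinsic distance by the extrinsic one, up to an exponentially small correction.

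Combining Lemma \ref{definitevolumeratiodrop} with the scale-weighted monotonicity of Proposition \ref{monotonicity2}, for $i$ large and for all $p\in L_i\cap B(R_0)$, $r\leq 3R_0$, I obtain an upper bound of the form $\mathrm{Vol}(L_i\cap B(p, r))/(\omega_n r^n)\leq (2-2c_0)\exp(C\epsilon_i^2|\log r|)$. Choosing $r_\ast:=e^{-c_\ast/\epsilon_i^2}$ for a suitably small constant $c_\ast=c_\ast(c_0, R_0, n)$, this ratio stays below $2-c_0$ for all scales $r\in[r_\ast, 3R_0]$.

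At such scales I would establish a \emph{single-sheet property}: the connected component of $L_i\cap B(p,r)$ containing $p$ is the only component meeting the concentric ball $B(p,\alpha r)$, for some $\alpha=\alpha(c_0, n)>0$. The proof uses the pointwise monotonicity lower bound from Proposition \ref{monotonicity1} applied both at $p$ and at a hypothetical second-component point inside $B(p,\alpha r)$; summed, the two contributions would exceed $(2-c_0)\omega_n r^n$ when $\alpha$ is sufficiently small, contradicting the upper bound. Coupled with an $\epsilon$-regularity theorem of Allard type (exploiting the $C^0$-smallness of $\theta_{L_i}$ together with the near-planar multiplicity-one density forced by the single-sheet condition), this component is a Lipschitz graph over a plane, so intrinsic and extrinsic distances are comparable within it.

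Combining these ingredients: for $p,q\in L_i$ with $|p-q|\geq r_\ast$, applying the single-sheet/graphicality property at scale $r=\alpha^{-1}|p-q|$ yields an intrinsic path on $L_i$ from $p$ to $q$ of length $\leq C|p-q|$, whence $|f_{L_i}(p)-f_{L_i}(q)|\leq C|p-q|$; for $|p-q|<r_\ast$ both points lie in a common single sheet of $L_i\cap B(p, r_\ast)$ of intrinsic diameter $O(r_\ast)$, giving $|f_{L_i}(p)-f_{L_i}(q)|\leq Cr_\ast=Ce^{-c_\ast/\epsilon_i^2}$. The remaining regime $|p-q|$ of unit order is handled trivially by the uniform bound $\|f_{L_i}\|_{L^\infty}\leq C$ established earlier. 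The main technical obstacle is the intrinsic-to-extrinsic distance comparison \emph{inside} a single sheet: volume bounds alone do not control the intrinsic diameter of a connected component (Remark \ref{notC1} emphasizes that no $C^1$ control is assumed on $L_i$ near the singular locus of $L$), so this step relies essentially on an $\epsilon$-regularity result for almost-minimal Lagrangians in the near-planar regime, driven by the $C^0$-smallness of $\theta_{L_i}$.
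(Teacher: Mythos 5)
Your preparatory step is sound and matches the paper: propagating the definite density drop of Lemma \ref{definitevolumeratiodrop} down to small scales via the almost-monotonicity of Proposition \ref{monotonicity2}, with the exponential threshold $r_*\sim e^{-c_*/\epsilon_i^2}$ arising exactly because the weight $r^{n\cos\epsilon_i-n}$ only stays below $1+C^{-1}c_0$ for $r\geq r_*$; your single-sheet observation (summing two lower bounds from Proposition \ref{monotonicity1} against this ratio upper bound) is also fine. The gap is the step you yourself flag as the main obstacle, and as formulated it cannot be filled: there is no Allard-type $\epsilon$-regularity available in this setting. Smallness of $\norm{\theta_{L_i}}_{C^0}$ gives no control on the mean curvature $H=J\nabla\theta_{L_i}$ (this is precisely the point of Remark \ref{notC1}), and the density hypothesis of Allard also fails, since connectedness of a sheet does not force density near $1$: a Lawlor neck at a tiny scale is connected, has $\theta\equiv 0$, density ratio close to $2$, and is not graphical over a plane at scales large compared to its neck. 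Moreover the conclusion you want from graphicality — intrinsic distance $\leq C|p-q|$ within a sheet — is stronger than what is true under the hypotheses: two nearly parallel sheets joined only far away cannot be excluded at scales below the exponential threshold, and the exponential error term in the Proposition exists precisely to absorb such configurations. So the route through regularity theory does not close.

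The paper's proof avoids any intrinsic-to-extrinsic comparison or graphicality by turning your "almost self-intersection" heuristic directly into the dichotomy. Set $d=|p-q|$ and $r_0=C_1^{-1}|f_{L_i}(p)-f_{L_i}(q)|$; since $|df_{L_i}|\leq C_1/2$ along $L_i$, the intrinsic distance between $p$ and $q$ is at least $2r_0$, so the intrinsic balls of radius $r_0$ about $p$ and $q$ are disjoint. By Proposition \ref{monotonicity1} each has volume at least $\omega_n r_0^n(1-C\epsilon_i^2)$, and both lie in $B(p,r_0+d)$, so the density ratio at scale $r_0+d$ is at least $2(1-C\epsilon_i^2)(1-Cd/r_0)$. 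Feeding this into Proposition \ref{monotonicity2} and Lemma \ref{definitevolumeratiodrop} gives $2(1-C\epsilon_i^2)(1-Cd/r_0)\leq (\tfrac{r_0+d}{R_0})^{n\cos\epsilon_i-n}\,2(1-c_0)$, which forces either $d\geq C^{-1}c_0 r_0$ (the Lipschitz alternative) or $r_0\leq e^{-C^{-1}\epsilon_i^{-2}}$ (the exponentially small alternative) — exactly the stated estimate. In other words, the quantity your outline tries to control by $\epsilon$-regularity (the intrinsic separation, equivalently the potential difference) is the quantity the paper bounds by this disjoint-intrinsic-ball/density argument; if you replace your regularity step by it, the rest of your outline becomes unnecessary.
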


\begin{proof}
The argument hinges on volume monotonicity. The basic heuristic is that $f_{L_i}$ is $C^1$ with respect to the intrinsic distance, so if its modulus of continuity is large with respect to the extrinsic distance, then $L_i$ should almost have a self intersection point, which would work against the volume ratio upper bound.

Since we already have uniform $L^\infty$ bounds on $f_{L_i}$, and since the asymptotic geometry is well controlled, without loss 
$p,q$ lies inside $B(R_0)$, and denote the extrinsic distance as $d=|p-q|$, and we only need to worry about the case $d\ll 1$. Since $|df_{L_i}|$ is bounded, we see that the intrinsic distance between $p, q$ is at least $2C_1^{-1}|f_{L_i}(p)- f_{L_i}(q)|$ for some constant $C_1$. Denote $r_0= C_1^{-1}|f_{L_i}(p)- f_{L_i}(q)|$, then the intrinsic balls of radius $r_0$ centred at $p,q$ are disjoint. By Prop \ref{monotonicity1}, the volumes of both balls are at least $\omega_n r_0^n(1-C\epsilon_i^2)$. In particular, the volume of the extrinsic ball
\[
\text{Vol}(L_i\cap B(p, r_0+d) ) \geq 2\omega_n r_0^n(1-C\epsilon_i^2),
\]
whence the volume ratio
\[
\frac{\text{Vol}(L_i\cap B(p, r_0+d) )}{\omega_n (r_0+d)^n} \geq 2 (1-C\epsilon_i^2)(1- C \frac{d}{r_0}).
\]
We now apply Prop \ref{monotonicity2}, to deduce
\[
\frac{\text{Vol}(L_i\cap B(p, r_0+d) )}{\omega_n (r_0+d)^n}
\leq  \frac{\text{Vol}(L_i\cap B(p, 3R_0) )}{\omega_n (3R_0)^n} (\frac{r_0+d}{R_0})^{n\cos \epsilon_i-n}.
\]
Combined with Lemma \ref{definitevolumeratiodrop}, we arrive at 
\[
2(1-C\epsilon_i^2)(1- C \frac{d}{r_0}) \leq (\frac{r_0+d}{R_0})^{n\cos \epsilon_i-n} 2(1-c_0).
\]
For $i\gg 1$, we may assume $\epsilon_i^2\ll c_0$, so the $1-C\epsilon_i^2$ factor is negligible. Thus either $Cr_0^{-1}d \geq c_0/2$, or $r_0\gg d$ and $r_0^{ n\cos \epsilon_i-n  } \geq 1+C^{-1} c_0$. The first case unravels to the Lipschitz bound
\[
|f_{L_i}(p)- f_{L_i}(q)|\leq C |p-q|
\]
for some new constant $C$ depending on $c_0$. The second case implies $r_0\leq e^{-C^{-1}\epsilon_i^{-2}}$ for some large constant depending on $c_0$, whence
$|f_{L_i}(p)- f_{L_i}(q)|\leq C_1e^{-C^{-1}\epsilon_i^{-2}}$. 
\end{proof}

We now extract a limiting function $f_L$ on the support of $L$ by an Arzela-Ascoli style procedure as follows. Pick a countable dense subset of points $p^j$ on the support of $L$. By Cor. \ref{Hausdorffconvergence1}, for each such $p^j$, we can find a sequence $p_i^j\in L_i$ converging to $p$. By a diagonal argument, by passing to subsequence of $L_i$, we can assume $f_{L_i}(p_i^j)$ converges to some value, which we denote as $f_L(p^j)$. By passing Prop. \ref{Equicontinuity} to the limit $i\to +\infty$, we see that $f_L$ is Lipschitz with respect to the extrinsic distance, hence extends to a Lipschitz continuous function $f_L$ on $\text{supp}(L)$. We can pick an auxiliary extension of $f_L$ as a Lipschitz function on $\C^n$, so that we can compare $f_L$ with $f_{L_i}$ directly.

\begin{cor}\label{potentialC0convergence}
Up to subsequence $\norm{f_{L_i}-f_L}_{C^0(L_i)}\to 0$ as $i\to +\infty$.
\end{cor}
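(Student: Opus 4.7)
The plan is to run a standard Arzela-Ascoli style argument, exploiting three inputs already assembled: the Hausdorff convergence $L_i \to L$ from Corollary~\ref{Hausdorffconvergence2}, the uniform extrinsic equicontinuity of the potentials $f_{L_i}$ from Proposition~\ref{Equicontinuity}, and the pointwise convergence $f_{L_i}(p_i^j) \to f_L(p^j)$ on the countable dense subset $\{p^j\}$ used to construct $f_L$. The main observation is that since each $L_i$ can be arranged to coincide with $L$ outside a fixed ball $B(2R_0)$, and both $f_{L_i}, f_L$ share the normalization $f\to 0$ along the $\Pi_0$ end, we have $f_{L_i} = f_L$ outside $B(2R_0)$ on the $\Pi_0$ end automatically; on the $\Pi_\phi$ end the difference $f_{L_i} - f_L$ is locally constant and equals $A(L_i) - A(L)$, which we may arrange to tend to zero after passing to a further subsequence. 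This reduces the problem to proving uniform convergence on the compact region $L_i \cap B(2R_0)$.

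For the compact region, I would argue by contradiction: suppose there exist $\epsilon_0 > 0$, a subsequence (still indexed by $i$), and points $p_i \in L_i \cap B(2R_0)$ with $|f_{L_i}(p_i) - f_L(p_i)| \geq \epsilon_0$. Passing to a further subsequence, $p_i \to p_\infty$, and by Corollary~\ref{Hausdorffconvergence1} we have $p_\infty \in \mathrm{supp}(L)$. Given any $\eta>0$, pick some $p^j$ from the dense subset with $|p^j - p_\infty| < \eta$, and recall that the construction of $f_L$ provides sequences $p_i^j\in L_i$ with $p_i^j\to p^j$ and $f_{L_i}(p_i^j) \to f_L(p^j)$. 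Proposition~\ref{Equicontinuity} applied to $p_i, p_i^j \in L_i$ yields
\[
|f_{L_i}(p_i) - f_{L_i}(p_i^j)| \leq C|p_i - p_i^j| + Ce^{-C^{-1}\epsilon_i^{-2}},
\]
and passing to the limit $i\to\infty$ the right side tends to $C|p_\infty - p^j| \leq C\eta$.

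Combining these with the triangle inequality gives
\[
\limsup_{i\to\infty} |f_{L_i}(p_i) - f_L(p_i)| \leq C\eta + |f_L(p^j) - f_L(p_\infty)| + \limsup_{i\to\infty}|f_L(p_\infty) - f_L(p_i)|,
\]
and using the Lipschitz continuity of the extension of $f_L$ we bound the right side by $C'\eta$. Since $\eta>0$ is arbitrary, this contradicts $|f_{L_i}(p_i) - f_L(p_i)| \geq \epsilon_0$, completing the proof. The only delicate point I anticipate is the bookkeeping of the normalizing constants so that the asymptotic matching $f_{L_i} = f_L$ outside $B(2R_0)$ holds after passing to a subsequence of the $A(L_i)$; but this is routine given the uniform bound on $A(L_i)$ already established.
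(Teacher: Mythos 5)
Your proof is correct and follows essentially the same route as the paper: reduce to the compact region via the fixed asymptotics (where $L_i$ coincides with $L$ and the potentials differ only by the constants $A(L_i)$), then combine the equicontinuity of Proposition~\ref{Equicontinuity}, the Hausdorff convergence of Corollary~\ref{Hausdorffconvergence2}, and pointwise convergence on the dense set — the paper phrases this last step as a finite $\epsilon$-net approximation, which is the same argument you run by contradiction.
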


\begin{proof}
It suffices to ensure $\norm{f_{L_i}-f_L}_{C^0(L_i\cap B(R_0))}\to 0$. This follows from Prop. \ref{Equicontinuity} and the Hausdorff convergence of $L_i$ to $L$, using finite $\epsilon$-net approximations.
\end{proof}

\begin{cor}
(Potential on the special Lagrangian)
The function $f_L$ satisfies $df_L=\lambda|_L$ on the smooth locus of the special Lagrangian $L$.	In particular $f_L$ is a harmonic function on $L$. Moreover,
\[
\lim_{r\to \infty \text{along } \Pi_0 } f_L=0, \quad  A(L):=\lim_{r\to +\infty \text{along } \Pi_\phi} f_L=\lim_{i\to +\infty} A(L_i).
\]
\end{cor}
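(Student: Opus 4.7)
The plan is to prove all four assertions---$df_L = \lambda|_L$ on the smooth locus, harmonicity of $f_L$, and the two asymptotic values---via a distributional integration-by-parts argument. This approach avoids any reliance on $C^1$ convergence $L_i \to L$ inside $B(R_0)$, which by Remark~\ref{notC1} is not available; the inputs are the current convergence $L_i\to L$, the uniform convergence $\Vert f_{L_i}-f_L\Vert_{C^0(L_i)}\to 0$ from Corollary~\ref{potentialC0convergence}, the Lipschitz regularity of $f_L$ on $\mathbb{C}^n$, and the agreement $L_i = L$ outside $B(2R_0)$ for large $i$.

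For the pointwise identity, I would start from Stokes on each closed boundaryless exact Lagrangian $L_i$: for any smooth compactly supported $(n-1)$-form $\psi$ on $\mathbb{C}^n$, since $df_{L_i} = \lambda|_{L_i}$,
\[ \int_{L_i} f_{L_i}\, d\psi = -\int_{L_i} \lambda \wedge \psi. \]
The right side converges to $-\int_L \lambda\wedge \psi$ by current convergence. For the left side, I would split $f_{L_i}$ as $(f_{L_i}-f_L) + f_L$: the difference piece vanishes using Corollary~\ref{potentialC0convergence} together with the uniform mass bound on $\mathrm{supp}\,\psi$, and the $f_L$-piece converges to $\int_L f_L\, d\psi$ by mollifying the Lipschitz extension of $f_L$ on $\mathbb{C}^n$ and using current convergence against the smooth approximants. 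This yields the weak identity $\int_L f_L\, d\psi = -\int_L \lambda\wedge \psi$ for all test $\psi$. Restricting next to $\psi$ supported in an open set $U\subset \mathbb{C}^n$ with $L\cap U\subset L_{reg}$, and applying Stokes on the smooth $L\cap U$ to the Lipschitz compactly supported form $f_L\,\psi|_L$ (with $df_L|_L$ a well-defined $L^\infty$ 1-form on $L_{reg}$ by Rademacher), I obtain $\int_{L\cap U}(df_L - \lambda)|_L \wedge \psi|_L = 0$ for all such $\psi$. Since local coordinates on $L_{reg}$ together with a partition of unity show that restrictions $\psi|_L$ realize arbitrary smooth compactly supported $(n-1)$-forms on any smooth patch of $L$, this forces $df_L = \lambda|_L$ a.e.\ on $L_{reg}$, and then pointwise since $\lambda$ is smooth. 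The harmonicity of $f_L$ on the regular part is the standard fact for a Lagrangian potential on a smooth special Lagrangian, already used in the proof of Proposition~\ref{prop:slowdecay} via \cite[Lemma 2.13]{CollinsLi}.

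For the asymptotic values, the key observation is that for large $i$, $L_i = L$ outside $B(2R_0)$, so on each of the two ends of $L$ the difference $f_{L_i} - f_L$ is a single constant, say $c_{i,0}$ along $\Pi_0$ and $c_{i,\phi}$ along $\Pi_\phi$. Evaluating Corollary~\ref{potentialC0convergence} at any fixed points on the two ends gives $c_{i,0}, c_{i,\phi}\to 0$. Plugging $f_L = f_{L_i} - c_{i,0}$ and $f_L = f_{L_i} - c_{i,\phi}$ into the respective ends and letting $r\to \infty$ with $i$ fixed yields $\lim_{r\to\infty,\Pi_0} f_L = -c_{i,0}$ and $\lim_{r\to\infty,\Pi_\phi} f_L = A(L_i) - c_{i,\phi}$ for each large $i$. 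Since the left sides are independent of $i$, letting $i\to\infty$ produces the claimed $\lim_{r\to\infty,\Pi_0} f_L = 0$ and $\lim_{r\to\infty,\Pi_\phi} f_L = \lim_i A(L_i) = A(L)$. The main technical hurdle throughout is the weak-to-pointwise step on $L_{reg}$: the absence of $C^1$ convergence of $L_i$ to $L$ inside $B(R_0)$ means one cannot simply integrate $\lambda$ along curves on $L$ approximated by curves on $L_i$, and the distributional route above is essential to bypass this.
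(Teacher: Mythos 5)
Your proposal is correct and follows essentially the same route as the paper: pass the weak identity $\int_{L_i} f_{L_i}\,d\chi = -\int_{L_i}\lambda\wedge\chi$ to the limit using current convergence plus the $C^0$ convergence of the potentials (Corollary~\ref{potentialC0convergence}), then upgrade to $df_L=\lambda|_L$ on the regular set, with harmonicity standard and the asymptotic values read off from the $C^0$ convergence on the ends where $L_i=L$. Your extra details (mollifying the Lipschitz extension of $f_L$ before testing against the currents, and the constant-difference argument on each end) are just careful elaborations of steps the paper treats briefly.
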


\begin{proof}
The weak formulation of $df_{L_i}=\lambda|_{L_i}$ is that for any compactly supported test $(n-1)$-form $\chi$,
\[
\int_{L_i} f_{L_i} d\chi =- \int_{L_i} \lambda\wedge \chi.
\]
Now 
\[
\int_{L_i} \lambda\wedge \chi\to \int_L \lambda\wedge \chi, \quad \int_{L_i} f_L d\chi\to  \int_{L} f_L d\chi
\]
 by the definition of current topology convergence, and
\[
|\int_{L_i} f_{L_i} d\chi - \int_{L_i} f_L d\chi | \leq C(\chi) \norm{f_{L_i}-f_L}_{C^0(L_i)}\to 0.
\]
Thus the weak formulation of $df_L=\lambda|_L$ holds:
\[
\int_{L} f_{L} d\chi =- \int_{L} \lambda\wedge \chi.
\]
Almgren's regularity theorem says that the special Lagrangian $L$ is smooth away from a Hausdorff codimension two subset. On the smooth locus, we infer $df_L=\lambda|_L$. Since $f_L$ is the Hamiltonian function generating the rescaling deformation of $L$, it is well known to be harmonic. The asymptotic values of $f_L$ easily follow from the $C^0$-convergence of $f_{L_i}$.
\end{proof}

\begin{cor}
The invariant $A(L)>0$. 
\end{cor}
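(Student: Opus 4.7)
The plan is to apply the Joyce-Imagi-Santos holomorphic triangle argument to each smooth approximation $L_i$ in isolation to obtain $A(L_i) > 0$, pass to the limit to conclude $A(L) \geq 0$, and then rule out $A(L) = 0$ via the subharmonicity of the squared Lagrangian potential together with the standing contradiction hypothesis that $L$ is neither a Lawlor neck nor $\Pi_0 \cup \Pi_\phi$.

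For the Floer-theoretic step, each $L_i$ is smooth, embedded, exact, graded, and asymptotic to $\Pi_0 \cup \Pi_\phi$ at negative rate, since outside a fixed ball $L_i$ coincides with $L$. Lemma~\ref{JoyceImagiclassification} applied to $L_i$, combined with the hypothesis $\sum_j \phi_j = \pi$, identifies the compactification $\bar{L}_i$ in $D^bFuk(M)$ with the surgery $\bar{\Pi}_0 \#_\phi \bar{\Pi}_\phi$, fitting it into a distinguished triangle whose connecting morphism is represented by the transverse intersection $0 \in \bar{\Pi}_0 \cap \bar{\Pi}_\phi$. This forces nontriviality of the Floer product $HF(\bar{\Pi}_0, \bar{L}_i) \otimes HF(\bar{L}_i, \bar{\Pi}_\phi) \to HF(\bar{\Pi}_0, \bar{\Pi}_\phi)$, which at chain level produces a nontrivial $J$-holomorphic triangle $\Sigma_i$ with corners at $e_0 \in \bar{L}_i \cap \bar{\Pi}_0$ (at $\infty_0$), $e_\phi \in \bar{L}_i \cap \bar{\Pi}_\phi$ (at $\infty_\phi$), and $0 \in \bar{\Pi}_0 \cap \bar{\Pi}_\phi$, with edges on $\bar{L}_i, \bar{\Pi}_\phi, \bar{\Pi}_0$ respectively. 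Since $\Pi_0$ and $\Pi_\phi$ are linear subspaces through the origin, direct inspection of $\lambda = \frac{1}{2}\sum(x_j dy_j - y_j dx_j)$ gives $\lambda|_{\Pi_0} = \lambda|_{\Pi_\phi} = 0$, so the Stokes computation of $\int_{\Sigma_i} \omega$ collapses onto the potential jump $f_{L_i}(e_\phi) - f_{L_i}(e_0) = A(L_i)$ on the $\bar{L}_i$-edge, with the sign fixed by the orientation of the distinguished triangle. Positivity of area for a nontrivial holomorphic curve then yields $A(L_i) > 0$.

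Passing to the limit through the already established identity $A(L) = \lim_i A(L_i)$ gives $A(L) \geq 0$. To rule out equality, assume $A(L) = 0$ for contradiction. Then, normalizing $f_L \to 0$ on the $\Pi_0$ end, we also have $f_L \to 0$ on the $\Pi_\phi$ end, so $f_L^2 \to 0$ at infinity on all of $L$. The function $f_L^2$ is weakly subharmonic on the closed integral current $L$ by \cite[Lemma 2.13]{CollinsLi}, and the maximum principle for bounded weakly subharmonic functions on a complete integral current then forces $f_L \equiv 0$. Consequently $\lambda$ vanishes on the smooth locus $L_{reg}$, so the Liouville dilation vector field is tangent to $L_{reg}$: $L$ is invariant under scaling and hence a cone with apex at the origin. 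Combined with asymptoticity to $\Pi_0 \cup \Pi_\phi$, this forces $L = \Pi_0 \cup \Pi_\phi$, contradicting the standing hypothesis.

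The main technical point will be the area bookkeeping in the compactification $M$: one must verify that the Liouville primitive on $M$ makes $\bar{\Pi}_0$ and $\bar{\Pi}_\phi$ into exact Lagrangians whose potentials can be taken constant on each compactified sphere, so that no extra boundary terms arise at the compactification corners beyond $A(L_i)$, and that Floer-theoretic transversality for $\bar{L}_i$ can be achieved by a compactly supported Hamiltonian perturbation leaving the asymptotic data intact. Both points are exactly the framework assembled in \cite[Sections 2--3]{JoyceImagi}, applied here to each smooth $L_i$ individually --- crucially no uniform estimates across the sequence $\{L_i\}$ are required.
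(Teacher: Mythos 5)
Your proposal is correct and takes essentially the same approach as the paper: positivity of each $A(L_i)$ via the Joyce--Imagi--dos Santos holomorphic triangle argument (the paper simply cites \cite[Prop.~4.19]{JoyceImagi} for this), passage to the limit $A(L)=\lim_i A(L_i)\geq 0$, and exclusion of $A(L)=0$ using the subharmonicity of $f_L^2$ together with its decay at both ends, forcing $f_L\equiv 0$, hence $L$ conical and equal to $\Pi_0\cup\Pi_\phi$, contradicting the standing hypothesis. The only cosmetic difference is that the paper runs the last step through the mean value inequality furnished by the monotonicity formula rather than a boundary maximum principle, which is the cleanest way to deal with the singular set of $L$, but the two are interchangeable here.
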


\begin{proof}
Recall we are in the case $\sum \phi_i=\pi$ (\cf Lemma \ref{JoyceImagiclassification}), where Joyce-Imagi-Santos \cite[Prop 4.19]{JoyceImagi} proves that $A(L_i)>0$ by interpreting it as the area of a $J$-holomorphic curve. 
Passing to the limit, we have $A(L)\geq 0$.

Suppose $A(L)=0$. Since $f_L^2$ is subharmonic on $L$ (\cf \cite[Lemma 2.13]{CollinsLi}), the monotonicity formula shows
\[
f_L^2 (p)\leq \frac{1}{\omega_n r^n} \int_{L\cap B(p,r)} f_L^2,\quad  \forall r>0.
\]
Notice $A(L)=0$ implies $f_L\to 0$ at both infinity ends, so taking the $r\to +\infty$ limit, the RHS is zero, and $f_L=0$ on $L$ identically. Thus $L$ is conical, so must be equal to $\Pi_0\cup \Pi_\phi$, contradicting our hypothesis.
\end{proof}

\subsection{Perturbation of the Lawlor neck}\label{PerturbationofLawlor}

The basic versions of the Thomas-Yau uniqueness argument depends on the ability to remove degree $0, n$ intersections by perturbing the Lagrangians. When the Lagrangians are close to being singular, removing intersections in a uniform fashion is quite challenging, and instead we will  choose judicious perturbations to ensure certain `Novikov exponents' at Lagrangian intersections almost have the appropriate signs.

We start with the Lawlor neck $L_{\phi,A}$ with $A=A(L)>0$, which is asymptotic to $\Pi_0\cup \Pi_\phi$ at decay rate $2-n$, and in particular we can construct a Weinstein $\epsilon_0$-neighbourhood for some $\epsilon_0>0$ depending on $L_{\phi,A}$. Inside this neighbourhood, the function $f_L$ (extended to $\C^n$) is Lipschitz, and we can arrange
\[
|\nabla^k(f_L- f_{L_{\phi,A}})|= O( |\vec{x}|^{\rho-k} ),\quad \forall k\geq 0 
\]
 for any given $2-n<\rho<0$ along the $\Pi_0, \Pi_\phi$ ends outside $B(R_0)$.

We introduce the standard weighted H\"older spaces $C^{k,\alpha}_{\gamma}$ with growth rate $\gamma$ on the asymptotically conical $L_{\phi,A}$, with norm
\[
\sum_{j=0}^k |\nabla^j u| (1+|\vec{x}|)^{j-\gamma} + \sup_{|p|\sim |q|} \frac{|\nabla^ku(p)- \nabla^ku(q)|}{ |p-q|^\alpha } (1+|\vec{x}|)^{ k+\alpha- \gamma  }.
\]
The analytic Laplacian maps $\Lap: C^{k+2,\alpha}_\gamma\to C^{k,\alpha}_{\gamma-2}$. 
We solve the Poisson equation on $L_{\phi,A}$ with fast decaying RHS in $C^{0,\alpha}_{\rho-2}$:
\begin{equation}\label{Poissonauxiliary}
\Lap u= (f_L- f_{L_{\phi,A}})(1+|\vec{x}|)^{-2}. 
\end{equation}
The solution $u$ exists in $C^{2,\alpha}_{\rho}$, and is $C^{k,\alpha}_\rho$ for any $k$ outside of $B(2R_0)$. We can find smooth approximation $\tilde{u}$ which is arbitrarily close to $u$ in $C^{2,\alpha}_{\rho}$ norm, and coincides with $u$ near infinity.

Given a very small parameter $0<\delta\ll \epsilon_0$,
we now construct the Hamiltonian perturbed Lagrangian $L_\delta$ as the graph of $\delta d\tilde{u}$ in the Weinstein neighbourhood of $L_{\phi,A}$.

\begin{prop}\label{perturbationLagangle}
The Lagrangian $L_\delta$ is smooth embedded, exact, asymptotically conical with ends $\Pi_0\cup \Pi_\phi$ at decay rate $\rho<0$. The limiting values of its Lagrangian potential along $\Pi_0, \Pi_\phi$ are $0,A(L)$ respectively.
Moreover, the Lagrangian angle satisfies
\[
\theta_{L_\delta}= \delta(f_L- f_{L_\delta})(1+|\vec{x}|)^{-2}+ \delta^2 O( (1+|\vec{x}|)^{\rho-2} ).
\]

\end{prop}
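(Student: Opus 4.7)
The plan is to verify the three claims in sequence, working in a Weinstein neighborhood $\mathcal{U}$ of $L_{\phi,A}$ identified with a neighborhood of the zero section in $T^*L_{\phi,A}$; write $\pi: L_\delta \to L_{\phi,A}$ for the resulting Weinstein projection.

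First I would establish the structural properties. Since $\tilde{u}\in C^{k,\alpha}_\rho$ with $\tilde{u}-u$ compactly supported and $u\in C^{2,\alpha}_\rho$, we have uniform estimates $|d\tilde{u}|=O((1+|\vec{x}|)^{\rho-1})$ and $|\nabla d\tilde{u}|=O((1+|\vec{x}|)^{\rho-2})$. For $\delta$ small enough depending on $\epsilon_0$, the graph of $\delta d\tilde{u}$ lies in $\mathcal{U}$, hence $L_\delta$ is smooth embedded and, as the graph of an exact $1$-form, is exact Lagrangian. Outside some compact set $\tilde{u}=u$, so near infinity $L_\delta$ is the graph of $\delta du$ over the ends of $L_{\phi,A}$; combined with the rate $2-n<\rho$ decay of $L_{\phi,A}$ to $\Pi_0\cup\Pi_\phi$, this yields that $L_\delta$ is asymptotically conical with decay rate $\rho$.

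Next I would analyze the Lagrangian potential. In the Weinstein coordinates, the standard computation for graphs of exact $1$-forms gives
\[ \pi^*(\lambda|_{L_\delta}) = \lambda|_{L_{\phi,A}} + \delta\, d\tilde{u} + R, \]
where $R$ is exact with $|R|=O(\delta^2 |d\tilde{u}|^2)=O(\delta^2(1+|\vec{x}|)^{2\rho-2})$. Integrating gives $\pi^* f_{L_\delta} = f_{L_{\phi,A}} + \delta\tilde{u} + r$ with $r\to 0$ at both ends (using $2\rho<1$). Since both $f_{L_{\phi,A}}$ tends to $0$ on $\Pi_0$ and to $A=A(L)$ on $\Pi_\phi$, while $\delta\tilde{u}$ and $r$ vanish at infinity, $f_{L_\delta}$ has the claimed asymptotic values.

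The Lagrangian angle expansion is the main technical step; the challenge is to track several error terms of different origins simultaneously. Starting from the nonlinear formula $\theta_{L_\delta} = d^*(\delta d\tilde{u}) + Q(x,\delta d\tilde{u},\delta \nabla d\tilde{u})$ (with $\theta_{L_{\phi,A}}=0$ and $Q$ having only quadratic and higher order terms), I would combine four estimates. (i) Since $\rho<0$, $Q$ is bounded by $O(\delta^2(1+|\vec{x}|)^{2\rho-2})$ near infinity and $O(\delta^2)$ on compacts, both absorbed into $\delta^2 O((1+|\vec{x}|)^{\rho-2})$. (ii) Choosing $\tilde u$ with $\|\tilde u-u\|_{C^2}\leq \delta$ (which is possible because $\tilde u-u$ is compactly supported) makes $\delta\Delta(\tilde u-u)$ a compactly supported $O(\delta^2)$ term, hence again absorbed. (iii) The Poisson equation \eqref{Poissonauxiliary} then gives $\delta\Delta\tilde u = \delta(f_L-f_{L_{\phi,A}})(1+|\vec{x}|)^{-2}+\delta^2 O((1+|\vec{x}|)^{\rho-2})$. (iv) Writing $f_L-f_{L_{\phi,A}} = (f_L-f_{L_\delta})+(f_{L_\delta}-f_{L_{\phi,A}})$ and using Step~2 to bound $f_{L_\delta}-f_{L_{\phi,A}} = \delta\tilde u + O(\delta^2\ldots)$ produces an error $\delta^2\tilde u (1+|\vec{x}|)^{-2} = \delta^2 O((1+|\vec{x}|)^{\rho-2})$. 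Finally, to compare $f_L$ pulled back along $\pi$, the Lipschitz bound for $f_L$ (with $|\nabla f_L|$ a bounded constant on compacts and $O(|\vec{x}|^{\rho-1})$ near infinity) gives $|f_L\circ\pi^{-1}-f_L|\leq C\delta(1+|\vec{x}|)^{\rho-1}$ on $L_{\phi,A}$, contributing $\delta^2 O((1+|\vec{x}|)^{\rho-3})\leq \delta^2 O((1+|\vec{x}|)^{\rho-2})$. Summing the pieces yields the claimed identity. The hard part is the simultaneous bookkeeping of the Weinstein projection error, the smoothing error, and the potential shift, and verifying that each is dominated by the single expression $\delta^2(1+|\vec{x}|)^{\rho-2}$ uniformly in $|\vec{x}|$; the choice $\rho<0$ is what makes all the subleading exponents compatible.
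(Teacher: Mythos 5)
Your argument is correct and follows essentially the same route as the paper's proof: the Hamiltonian-deformation expansion $\theta_{L_\delta}=\delta\Delta\tilde u+\delta^2 O(|\nabla\tilde u|^2+|\nabla^2\tilde u|^2)$, the weighted H\"older bounds on $u$, the Poisson equation to produce the main term, and then the same two corrections (replacing $f_{L_{\phi,A}}$ by $f_{L_\delta}$ using $f_{L_\delta}-f_{L_{\phi,A}}=O(\delta(1+|\vec x|)^{\rho})$, and accounting for where $f_L$ is evaluated using its Lipschitz/decay bound), all absorbed into $\delta^2 O((1+|\vec x|)^{\rho-2})$ since $\rho<0$. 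The extra detail you give on exactness, the asymptotic cone, and the limiting potential values is exactly what the paper dismisses as immediate, so there is nothing substantively different.
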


\begin{proof}
All claims are immediate except the Lagrangian angle. Recall $L_{\phi,A}$ is a special Lagrangian, and for small deformations associated to $C^2$ Hamiltonian functions
\[
\theta_{L_\delta}= \theta_{L_{\phi,A}  } + \delta \Lap \tilde{u} + \delta^2 O( |\nabla \tilde{u} |^2 +  |\nabla^2 \tilde{u}|^2 ). 
\]
The difference between $u$ and $\tilde{u}$ can be ignored. From the weighted H\"older norm estimate,
\[
|\nabla u|=O( (1+|\vec{x}|)^{\rho-1}  ), \quad |\nabla^2 u|= O( (1+|\vec{x}|)^{\rho-2}  ),
\]
hence
\[
\theta_{L_\delta}= \delta \Lap u + \delta^2 O( (1+|\vec{x}|)^{2\rho-2} )= \delta(f_L- f_{L_{\phi,A}})(1+|\vec{x}|)^{-2}+ \delta^2 O( (1+|\vec{x}|)^{2\rho-2} ).
\]

There is a small caveat: in the line above $f_L$ refers to its restriction on $L_{\phi,A}$. Its deviation from the value of $f_L$ on $L_\delta$ is bounded by
\[
O ( \delta |d\tilde{u}| |df_L|  )= O(\delta (1+|\vec{x}|)^{\rho-1} (1+|\vec{x}|)^{\rho-1}   )= \delta O( (1+|\vec{x}|)^{2\rho-2}   ).
\] 
Multiplied by the $\delta$ in front of $f_L$, the upshot is that this ambiguity does not matter.

Finally, the deviation between $f_{L_{\phi,A}}$ and $f_{L_\delta}$ is bounded by $\delta O((1+ |\vec{x}|)^\rho)$. Replacing $f_{L_{\phi,A}}$ by $f_{L_\delta}$ introduces a term in $\theta_{L_\delta}$ bounded by $\delta^2 O((1+ |\vec{x}|)^{\rho-2})$.
\end{proof}

By the flexibility of arbitrarily small perturbations on $\tilde{u}$, 
without loss the intersection of $L_\delta$ with $L_i$ is transverse. The following is a key step in our strategy, which achieves control on the Lagrangian potential difference (`Novikov exponents') at the degree $0,n$ intersection points.

\begin{cor}\label{degreepotential1}
For $i\gg 1$ and fixed small $\delta$, 
at any degree zero intersection $p\in CF^0(L_\delta, L_i)$, we have $(f_{L_i} - f_{L_\delta})(p) \leq C\delta$, while at any degree $n$ intersection $q\in CF^n(L_\delta, L_i)$, we have $(f_{L_\delta} - f_{L_i})(q) \leq C\delta$.
\end{cor}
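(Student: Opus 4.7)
The plan is to convert the Floer degree information at an intersection into an upper bound on $\theta_{L_\delta}$, use the angle identity of Proposition~\ref{perturbationLagangle} to translate this into a bound on $(f_L - f_{L_\delta})(p)$, and then compensate via the $C^0$--convergence $f_{L_i}\to f_L$ from Corollary~\ref{potentialC0convergence}. The subtle point will be controlling an unwanted $(1+|p|)^2$ factor that enters through the weight in the angle formula; this will be dealt with by noting that for $|p|$ large the quantity $f_L - f_{L_\delta}$ already decays at rate $\rho$, which gives us what we want for free.

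Concretely, fix $\delta$ small and consider a degree zero intersection $p\in L_\delta\cap L_i$. Lemma~\ref{Floerdegreeinequality} gives $\theta_{L_\delta}(p) < \theta_{L_i}(p)$; since $\epsilon_i := \norm{\theta_{L_i}}_{C^0}\to 0$, this yields $\theta_{L_\delta}(p) \leq \epsilon_i$. Substituting into the identity
\[ \theta_{L_\delta}(p) = \delta (f_L - f_{L_\delta})(p)(1+|p|)^{-2} + \delta^2 O((1+|p|)^{\rho-2}) \]
from Proposition~\ref{perturbationLagangle}, I rearrange to obtain
\[ (f_L - f_{L_\delta})(p) \leq \delta^{-1}\epsilon_i (1+|p|)^2 + C\delta (1+|p|)^\rho. \]

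The a priori input is that $L$ and $L_\delta$ share the same asymptotic cone $\Pi_0\cup\Pi_\phi$ with the same asymptotic potential values ($0$ along $\Pi_0$ and $A(L)$ along $\Pi_\phi$) and decay rate $\rho < 0$; combining the decay of $L-L_{\phi,A}$ with the $C^{2,\alpha}_\rho$--estimate on $\tilde u$, one has the unconditional bound $|f_L - f_{L_\delta}|(x) \leq C(1+|x|)^\rho$ with $C$ independent of $\delta$. Now set $R_\delta := \delta^{1/\rho}$; this diverges as $\delta \to 0$ since $\rho < 0$. For $|p|\geq R_\delta$, the a priori decay already yields $(f_L - f_{L_\delta})(p) \leq C\delta$ directly. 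For $|p|\leq R_\delta$, the displayed inequality gives $(f_L - f_{L_\delta})(p) \leq \delta^{-1}\epsilon_i(1+R_\delta)^2 + C\delta$; since $R_\delta$ depends only on $\delta$, choosing $i$ large enough (depending on $\delta$) makes the first summand at most $\delta$. Either way one has $(f_L - f_{L_\delta})(p) \leq C\delta$. Combining with $\norm{f_{L_i}-f_L}_{C^0(L_i)}\to 0$ (Corollary~\ref{potentialC0convergence}) yields $(f_{L_i} - f_{L_\delta})(p) \leq C\delta$ for $i\gg 1$ as required. The degree $n$ case is perfectly symmetric: Lemma~\ref{Floerdegreeinequality} gives $\theta_{L_\delta}(q) > \theta_{L_i}(q) \geq -\epsilon_i$, so the same angle identity and dichotomy produce $(f_{L_\delta} - f_L)(q)\leq C\delta$, and $f_{L_i}\to f_L$ then gives the desired $(f_{L_\delta}-f_{L_i})(q)\leq C\delta$.

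The only genuine obstacle is precisely the $(1+|p|)^2$ factor: a priori there is no reason that intersections $L_\delta\cap L_i$ cluster in a bounded region independent of $i$, since both Lagrangians approach $\Pi_0\cup\Pi_\phi$ at the same decay rate and their perturbations can cross at arbitrary scales. The resolution is that at scales where the angle-based estimate becomes weak (large $|p|$), the a priori decay of $f_L - f_{L_\delta}$ is automatically strong, so the two regimes cover each other and the dichotomy closes.
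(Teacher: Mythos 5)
Your proposal is correct and follows essentially the same route as the paper: the degree inequality of Lemma~\ref{Floerdegreeinequality} gives the sign of $\theta_{L_\delta}-\theta_{L_i}$ at the intersection, the angle formula of Proposition~\ref{perturbationLagangle} converts this into the bound $(f_L - f_{L_\delta})(p) \leq \delta^{-1}\norm{\theta_{L_i}}_{C^0}(1+|p|)^2 + C\delta(1+|p|)^\rho$, and the case split at $|p| = \delta^{1/\rho}$ against the a priori decay $|f_L - f_{L_\delta}| \leq C(1+|x|)^\rho$, together with $\norm{\theta_{L_i}}_{C^0} \ll \delta^{2-2/\rho}$ and $\norm{f_{L_i}-f_L}_{C^0} \ll \delta$ for $i \gg 1$, is exactly the paper's argument.
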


\begin{proof}
In our Floer degree convention, Lemma \ref{Floerdegreeinequality} implies
\[
\frac{1}{\pi}( \theta_{L_\delta}- \theta_{L_i}  ) <
\mu_{L_\delta, L_i}(p) < \frac{1}{\pi}( \theta_{L_\delta}- \theta_{L_i}  )+n. 
\]
By Prop \ref{perturbationLagangle},
\begin{equation}
\theta_{L_\delta}- \theta_{L_i} \geq \delta(f_{L}- f_{L_\delta})(1+|\vec{x}|)^{-2}- C\delta^2 (1+|\vec{x}|)^{\rho-2} - \norm{\theta_{L_i}}_{C^0}.
\end{equation}
Thus at a degree zero intersection $p\in CF^0(L_\delta, L_i)$, 
\[
(f_{L} - f_{L_\delta})(p) <   C\delta (1+|\vec{x}|)^\rho + \delta^{-1}\norm{\theta}_{L_i} (1+|\vec{x}|^2).
\]
On the other hand, from the asymptotic control outside $B(R_0)$, we know
\[
|f_L- f_{L_\delta}| \leq C (1+|\vec{x}|)^\rho, 
\]
where $\rho<0$. 
By separating the cases with $|\vec{x}|\leq \delta^{1/\rho}$ and $|\vec{x}|\geq \delta^{1/\rho}$, we deduce
\[
(f_{L_i} - f_{L_\delta})(p) <   C\delta + C\delta^{-1+2/\rho}\norm{\theta}_{L_i} +\norm{f_L- f_{L_i}}_{C^0} .
\]

We are given $\norm{\theta_{L_i}}_{C^0}\to 0$, and $\norm{f_L- f_{L_i}}_{C^0}\to 0$ (\cf Cor. \ref{potentialC0convergence}). Consequently for $i\gg 1$, we can assume \[
\norm{\theta_{L_i}}_{C^0}\ll \delta^{2-2/\rho}, \quad 
\norm{f_L- f_{L_i}}_{C^0} \ll \delta. 
\]
Thus at $p\in CF^0(L_\delta, L_i)$, we have
\[
(f_{L_i} - f_{L_\delta})(p) \leq C\delta. 
\]
The proof for the degree $n$ intersection is completely analogous. 
\end{proof}

\begin{rmk}
In the constructions above, the precise choice of the Poisson equation (\ref{Poissonauxiliary}) is not essential. What really matters is that the forcing term \emph{detects the sign} of the potential difference, and has sufficiently fast decay at infinity.
\end{rmk}

\subsection{Partial compactification}\label{partialcompactification}

We now recall from Imagi-Joyce-Santos \cite[section 2.4]{JoyceImagi} the partial compactification construction of $\C^n$ and the asymptotically conical Lagrangians with planar ends therein.

We introduce the real coordinates $x_1,\ldots x_n, y_1,\ldots y_n$ on $\C^n$ by
\[
x_j= \text{Re} z_j-\cot \phi_j \text{Im}z_j,\quad y_j= \text{Im} z_j, \quad j=1,2,\ldots n.
\]
In these coordinates $\omega= \sum dx_j\wedge dy_j$, and
\[
\Pi_0= \{ (x_1,\ldots x_n, 0,\ldots 0): x_j\in \R    \}, \quad \Pi_\phi= \{ (0,\ldots, 0,y_1,\ldots y_n): y_j\in \R  \}.
\]
Identify $(\C^n, \omega)\simeq T^*\Pi_0\simeq T^*\Pi_\phi$ as symplectic manifolds by identifying
\[
(x_1,\ldots x_n, y_1,\ldots y_n)\leftrightarrow \sum y_jdx_j \leftrightarrow -\sum x_jdy_j \in T^*\Pi_\phi. 
\]
Write $\mathcal{S}_0= \Pi_0\cup \{\infty_0 \}$ and $\mathcal{S}_\phi=\Pi_\phi\cup \{\infty_\phi  \}$ for the one-point compactifications of $\Pi_0$ and $\Pi_\phi$. Define $F(r)= \frac{1}{ \log (1+r^2) }$, and define coordinates $(\tilde{x}_1,\ldots \tilde{x}_n)\in \mathcal{S}_0\setminus \{ 0\}$ and $ (\tilde{y}_1,\ldots \tilde{y}_n)\in \mathcal{S}_\phi\setminus \{ 0\}$ by
\[
\tilde{x}_j= \begin{cases}
0, \quad & p=\infty_0,
\\
\frac{F( \sqrt{\sum x_j^2}  )x_j}{\sqrt{\sum x_j^2}},\quad & p= (\tilde{x}_1,\ldots \tilde{x}_n)\in \mathcal{S}_0\setminus \{ 0\},
\end{cases}
\]
and $\tilde{y}_j$ similarly. Then $\mathcal{S}_0, \mathcal{S}_\phi$ have  smooth structures diffeomorphic to the standard $n$-sphere.

On the level of sets, the partial compactification is
\[
M= \C^n \sqcup T^*_{\infty_0} \mathcal{S}_0\sqcup T^*_{\infty_\phi} \mathcal{S}_\phi,
\]
which is the plumbing of the cotangent bundles $T^*\mathcal{S}_0$ and $T^* \mathcal{S}_\phi$ along the common open set $\C^n$. The manifold $M$ inherits the natural symplectic structure $\omega$. The usual Liouville form on $\C^n$ does not extend to $M$. Instead, let $T\gg 1$ be a parameter, and let $\eta:\R\to [-1,1]$ be smooth with $|\eta'|=O(T^{-1})$, and
\[
\eta(t)= \begin{cases}
-1, \quad t\leq -2T,
\\
0, \quad -T\leq t\leq T,
\\
1,\quad t\geq 2T.
\end{cases}
\]
Define a smooth 1-form $\tilde{\lambda}$ on $\C^n$ by
\[
\tilde{\lambda}=\lambda+dh,\quad \lambda= \frac{1}{2} (\sum x_jdy_j-\sum y_jdx_j),\quad h= -\frac{1}{2} \eta( \sum x_j^2-\sum y_j^2   )(\sum x_jy_j). 
\]
Then $d\tilde{\lambda}=\omega$ and agrees with the natural Liouville forms on the cotangent bundles asymptotically, so extends to a smooth Liouville form on $M$.

Similarly $J,\Omega$ do not extend smoothly to $M$, but we can define modified versions $\tilde{J}, \tilde{\Omega}$ which do, so that $(M,\omega)$ is symplectically Calabi-Yau, albeit no longer complex integrable. We can arrange $\tilde{J}=J$ and $\tilde{\Omega}=\Omega$ when $-T\leq \sum x_j^2-y_j^2\leq T$ for $T\gg 1$. 
We choose $\tilde{J}$ so that $\tilde{J}(T_{\infty_0})=T_{\infty_0}(T^*_{\infty_0}\mathcal{S}_0)$ and $\tilde{J}(T_{\infty_\phi})=T_{\infty_\phi}(T^*_{\infty_\phi}\mathcal{S}_\phi)$, and $\tilde{\Omega}$ so that $\mathcal{S}_0, \mathcal{S}_\phi, T^*_{\infty_0}\mathcal{S}_0, T^*_{\infty_\phi}\mathcal{S}_\phi$ have constant phase, with gradings 
\[
\theta_{\mathcal{S}_0}=0,\quad \theta_{\mathcal{S}_\phi}=\sum \phi_j,\quad  \theta_{ T^*_{\infty_0}\mathcal{S}_0   }= n\pi/2, \quad \theta_{ T^*_{\infty_\phi}\mathcal{S}_\phi  }=\sum \phi_j+ n\pi/2. 
\]
We remark there is really a \emph{family} of such structures parametrized by $T$, and taking $T$ large is akin to neck stretching, which can be leveraged to our advantage.

Now suppose $L'$ is an asymptotically conical Lagrangian in $\C^n$ with cone $\Pi_0\cup \Pi_\phi$ and rate $\rho<0$. Define $\bar{L}'= L'\sqcup \{ \infty_0, \infty_\phi \}$. Then \cite[section 2.4]{JoyceImagi} shows that $\bar{L}'$ is a smooth compact Lagrangian in $(M,\omega)$, intersecting the cotangent fibres at infinity transversely at $\infty_0, \infty_\phi$. Assume that $L'$ is exact and connected, with potential $f_{L'}$, such that $f_{L'}$ tends to zero along $\Pi_0$ and to $A(L')$ along $\Pi_\phi$. Then $\bar{L}'$ is exact in $(M,\omega)$ with potential
\begin{equation}\label{Lagpotentialcompletion}
f_{\bar{L}'} (p)= \begin{cases}
f_{L'} (p)+ h(p), \quad & p\in L',
\\
0, \quad & p=\infty_0,
\\
A(L'),\quad & p=\infty_\phi,
\end{cases}
\end{equation}
where $\tilde{\lambda}=\lambda+ dh$. Moreover, if $L'$ is graded then so is $\bar{L}'$. Based on Abouzaid-Smith \cite{AbouzaidSmith} and by analyzing the intersection pattern of $\bar{L}'$ with the cotangent fibres at infinity, Joyce-Imagi-Santos classified all possibilities of  $\bar{L}'$ as an object of $D^bFuk(M,\omega)$, characterized by distinguished triangles \cite[Thm. 2.17]{JoyceImagi}. The additional information $\theta_{L'}\to 0$ at infinity, and $\sum_1^n \phi_i=\pi$, pins down $\bar{L}'$ uniquely up to isomorphism in $D^bFuk(M)$ \cite[Cor. 2.18]{JoyceImagi}.

We apply the construction to $L_i$ and $L_\delta$ (\cf section \ref{PerturbationofLawlor}), to obtain Lagrangians $\bar{L}_i$ and $\bar{L}_\delta$. We also choose the auxiliary parameter $T=T_i\to +\infty$ very fast. 
The upshot is

\begin{prop}
Both $\bar{L}_i$ and $\bar{L}_\delta$ are isomorphic to the compactification of the Lawlor neck $L_{\phi,A}$, as objects of $D^bFuk(M,\omega)$.
\end{prop}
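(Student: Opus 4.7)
The plan is to verify, separately for $\bar L_i$ and $\bar L_\delta$, the hypotheses of the Joyce-Imagi-Santos classification result \cite[Cor.~2.18]{JoyceImagi} (already invoked above as Lemma~\ref{JoyceImagiclassification}). That result pins down the $D^bFuk(M,\omega)$-isomorphism class of the compactification of any smooth, exact, graded, asymptotically conical Lagrangian in $\C^n$ with ends $\Pi_0\cup\Pi_\phi$ at negative rate, Lagrangian angle tending to $0$ at infinity, and $\sum_j\phi_j=\pi$. Once both compactifications are shown to fall under this classification, they must both be isomorphic to $\bar L_{\phi,A}$, proving the proposition.

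For $\bar L_i$ I would use the reduction made at the start of section~\ref{Singularuniquenesssection}: after modification we may assume $L_i$ coincides with $L$ outside $B(2R_0)$, where $L$ is a smooth special Lagrangian graph over $\Pi_0\cup\Pi_\phi$ of phase $0$ decaying at rate $\rho\in(2-n,0)$. Thus $\theta_{L_i}\equiv 0$ outside $B(2R_0)$, $L_i$ is asymptotically conical at rate $\rho<0$, Lemma~\ref{JoyceImagiclassification} supplies $\sum\phi_j=\pi$, and exactness together with the grading are assumed. Therefore \cite[Cor.~2.18]{JoyceImagi} applies and gives $\bar L_i\simeq \bar L_{\phi,A}$ in $D^bFuk(M,\omega)$.

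For $\bar L_\delta$ I would verify the same list of hypotheses using the explicit construction in section~\ref{PerturbationofLawlor}. Since $\tilde u\in C^{2,\alpha}_\rho(L_{\phi,A})$ with $\rho<0$, the perturbation $\delta d\tilde u$ decays at infinity, so $L_\delta$ is a smooth embedded exact Lagrangian asymptotic to $\Pi_0\cup\Pi_\phi$ at rate $\rho<0$, with the same asymptotic planes (and hence the same angle sum $\sum\phi_j=\pi$) as $L_{\phi,A}$. A grading on $L_\delta$ is induced from that of $L_{\phi,A}$ by lifting along the Hamiltonian isotopy $s\mapsto \{\,\text{graph of }s\delta d\tilde u\,\}$, $s\in[0,1]$. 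The formula in Proposition~\ref{perturbationLagangle}, combined with the asymptotic $|f_L-f_{L_\delta}|=O((1+|\vec x|)^\rho)$, yields $|\theta_{L_\delta}|=O((1+|\vec x|)^{\rho-2})$, so $\theta_{L_\delta}\to 0$ at infinity. Thus \cite[Cor.~2.18]{JoyceImagi} applies again, yielding $\bar L_\delta\simeq \bar L_{\phi,A}$. The proof is essentially bookkeeping once the Joyce-Imagi-Santos classification is in hand; the one step worth checking carefully is that the induced gradings extend continuously across the compactification points $\infty_0,\infty_\phi$ to match the set-up of section~\ref{partialcompactification}, but this is immediate from the coincidence with the model cone $\Pi_0\cup\Pi_\phi$ near infinity (up to a decaying perturbation).
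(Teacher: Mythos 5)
Your proposal is correct and follows essentially the same route as the paper: the paper obtains this proposition directly by applying the Joyce--Imagi--dos Santos classification (\cite[Thm.~2.17, Cor.~2.18]{JoyceImagi}) to $L_i$ and $L_\delta$, both being smooth, embedded, exact, graded, asymptotically conical with cone $\Pi_0\cup\Pi_\phi$ at rate $\rho<0$, Lagrangian angle tending to $0$ at infinity, and $\sum_j\phi_j=\pi$. Your explicit verification of these hypotheses (using the coincidence of $L_i$ with $L$ outside $B(2R_0)$ and Proposition~\ref{perturbationLagangle} for $L_\delta$, with the grading transported along the Hamiltonian isotopy) is exactly the bookkeeping the paper leaves implicit.
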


\subsection{Assembling the pieces}\label{Assemblingpieces}

After arbitrarily small Hamiltonian perturbation of $\bar{L}_\delta$, supported in arbitrarily small neighbourhoods of the cotangent fibres at infinity, we can achieve transversal intersection between $\bar{L}_\delta$ and $\bar{L}_i$.

\begin{cor}\label{degreepotential2}
For $i\gg 1$ and fixed small $\delta$, 
at any degree zero intersection $p\in CF^0(\bar{L}_\delta, \bar{L}_i)$, we have $(f_{\bar{L}_i} - f_{\bar{L}_\delta})(p) \leq C\delta$, while at any degree $n$ intersection $q\in CF^n(\bar{L}_\delta, \bar{L}_i)$, we have $(f_{\bar{L}_\delta} - f_{\bar{L}_i})(q) \leq C\delta$.
\end{cor}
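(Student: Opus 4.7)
The plan is to separate the intersection set $\bar{L}_\delta \cap \bar{L}_i$ into intersections lying in $\C^n$ and intersections lying near the cotangent fibres at infinity, and handle the two regions by genuinely different means. The small Hamiltonian perturbation of $\bar{L}_\delta$ used to achieve transversality is supported in arbitrarily small neighborhoods of $\infty_0$ and $\infty_\phi$, so interior intersections are unaffected by it, while the new intersections live in a prescribed small neighborhood of $\{\infty_0,\infty_\phi\}$.

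For intersection points $p$ lying inside $\C^n$, one simply notes that $\bar{L}_\delta \cap \bar{L}_i \cap \C^n = L_\delta \cap L_i$, with Floer degrees inherited from the uncompactified setting. From formula \eqref{Lagpotentialcompletion}, the corrective term $h$ is the same for both compactified potentials, so
\[
f_{\bar{L}_i}(p) - f_{\bar{L}_\delta}(p) = f_{L_i}(p) - f_{L_\delta}(p),
\]
and Corollary \ref{degreepotential1} delivers the required signed bounds.

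For intersections $p$ near the points at infinity, the argument is much softer: an absolute bound $|f_{\bar{L}_i}-f_{\bar{L}_\delta}|(p)\leq C\delta$ will suffice, and it follows by continuity. Indeed, \eqref{Lagpotentialcompletion} gives $f_{\bar{L}_i}(\infty_0) = f_{\bar{L}_\delta}(\infty_0) = 0$, while $f_{\bar{L}_i}(\infty_\phi) - f_{\bar{L}_\delta}(\infty_\phi) = A(L_i) - A(L)$, which tends to zero as $i\to\infty$ (since $A(L_i)\to A(L)$), and can therefore be made smaller than $\delta$ for $i\gg 1$. Both potentials are continuous on the compact Lagrangians $\bar{L}_i,\bar{L}_\delta$, and the additional perturbation changes $f_{\bar{L}_\delta}$ by a quantity controlled by the $C^0$-norm of the Hamiltonian. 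By first choosing $i$ large and then choosing the Hamiltonian perturbation small enough (the order of quantifiers is legitimate because transversality is an open and dense property), we force every intersection $p$ in $M\setminus \C^n$ to lie in a neighborhood on which $|f_{\bar{L}_i}-f_{\bar{L}_\delta}|\leq C\delta$.

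There is no real obstacle here: the corollary is essentially a transfer of Corollary \ref{degreepotential1} across the compactification, and the only genuinely new case --- the intersections created by the perturbation near infinity --- is handled by continuity together with the convergence $A(L_i)\to A(L)$ proved earlier in the section. The mild care required is that the perturbation and the choice of $i$ are arranged in the correct order so that both the $A(L_i)-A(L)$ term and the perturbation-induced shift in $f_{\bar{L}_\delta}$ are each $\leq \delta$.
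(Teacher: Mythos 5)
Your proposal is correct and takes essentially the same route as the paper: interior intersections are handled through the identity $f_{\bar{L}_i}(p)-f_{\bar{L}_\delta}(p)=f_{L_i}(p)-f_{L_\delta}(p)$ (the correction $h$ from $\tilde{\lambda}=\lambda+dh$ cancels) together with Corollary~\ref{degreepotential1}, while intersections created near $\infty_0,\infty_\phi$ are controlled by the absolute bound coming from $A(L_i)\to A(L)$ and the smallness of the final Hamiltonian perturbation supported near infinity, with the same order of quantifiers ($i$ large first, then the perturbation). The only point the paper makes explicit that you assert without comment is that the Floer degrees of the interior intersections are unaffected by replacing $\Omega$ with $\tilde{\Omega}$ in the compactification (the degree is intrinsic, characterized via the graded Lagrangian Grassmannian, cf.\ \cite{Seideldegree}), which is what legitimizes "degrees inherited from the uncompactified setting."
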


\begin{proof}
Inside $\C^n$, 
observe that the Floer degree of intersections is independent of the choice of complex volume form $\Omega$ or $\tilde{\Omega}$ (since it is intrinsic characterized in terms of the Lagrangian Grassmannian bundle \cite{Seideldegree}), and by (\ref{Lagpotentialcompletion}) the Novikov exponents at intersections is insensitive to the modification of Liouville form:
\[
f_{\bar{L}_i} (p)- f_{\bar{L}_\delta} (p)= f_{{L}_i} (p)- f_{{L}_\delta} (p).
\]
Thus before the Hamiltonian perturbation of $\bar{L}_\delta$, the conclusion in Cor. \ref{degreepotential1} transfers to the  compactified Lagrangians, for intersections inside $\C^n$.

At the infinity \[
f_{\bar{L}_i} (\infty_0)= f_{\bar{L}_\delta} (\infty_0)= 0,\quad f_{\bar{L}_i} (\infty_\phi)= A(L_i), \quad  f_{\bar{L}_\delta} (\infty_\phi)=A(L).
\]
Since $A(L_i)\to A(L)$, we know for $i\gg 1$ that $|f_{\bar{L}_i}- f_{\bar{L}_\delta} |< \delta$ at the infinity intersections, and a fortiori on the very small neighbourhood near infinity where the last Hamiltonian perturbation is supported. 
The claim follows.
\end{proof}

Applying Prop. \ref{holomorphicstrip} to the Lagrangians $\bar{L}_\delta, \bar{L}_i$, we observe

\begin{cor}
Through any given point $P\in \bar{L}_\delta$, there exists a (possibly broken) holomorphic strip $\Sigma$ with boundary on $\bar{L}_\delta\cup \bar{L}_i$, and $\int_{\Sigma}\omega \leq C\delta$.	
\end{cor}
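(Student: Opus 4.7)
The plan is to apply Proposition~\ref{holomorphicstrip} directly to the pair $(\bar{L}_\delta,\bar{L}_i)$, with the prescribed point constraint, and then bound the resulting area via the cohomological formula together with Corollary~\ref{degreepotential2}.

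First I would verify that the hypotheses of Proposition~\ref{holomorphicstrip} are met. The ambient space $(M,\omega,\tilde\lambda,\tilde\Omega)$ constructed in Section~\ref{partialcompactification} is a symplectic Calabi--Yau Liouville manifold. Both $\bar{L}_\delta$ and $\bar{L}_i$ are smooth, embedded, compact, exact, graded Lagrangians in $M$ (obtained from the asymptotically conical $L_\delta,L_i$ by adding the points $\infty_0,\infty_\phi$), and by the small Hamiltonian perturbation supported near the cotangent fibres at infinity they intersect transversely. Finally, both are isomorphic to the compactification of the Lawlor neck as objects of $D^bFuk(M,\omega)$, hence are isomorphic to each other in $D^bFuk(M)$.

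Next I would invoke Proposition~\ref{holomorphicstrip} with $L=\bar{L}_\delta$, $L'=\bar{L}_i$, and prescribed point $r=P\in\bar{L}_\delta$. This produces a (possibly broken) $J$-holomorphic strip $\Sigma$ through $P$ with boundary in $\bar{L}_\delta\cup\bar{L}_i$ and corners at points $p,q\in\bar{L}_\delta\cap\bar{L}_i$ satisfying $\mu_{\bar{L}_\delta,\bar{L}_i}(p)=0$ and $\mu_{\bar{L}_i,\bar{L}_\delta}(q)=n$, i.e.\ $p\in CF^0(\bar{L}_\delta,\bar{L}_i)$ and $q\in CF^n(\bar{L}_\delta,\bar{L}_i)$. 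The cohomological area formula in Proposition~\ref{holomorphicstrip} reads
\[
\int_{\Sigma}\omega=(f_{\bar{L}_\delta}-f_{\bar{L}_i})(q)-(f_{\bar{L}_\delta}-f_{\bar{L}_i})(p)=(f_{\bar{L}_\delta}-f_{\bar{L}_i})(q)+(f_{\bar{L}_i}-f_{\bar{L}_\delta})(p).
\]
Now Corollary~\ref{degreepotential2} bounds each of the two summands by $C\delta$: it asserts precisely that $(f_{\bar{L}_i}-f_{\bar{L}_\delta})(p)\leq C\delta$ at any degree-zero intersection and $(f_{\bar{L}_\delta}-f_{\bar{L}_i})(q)\leq C\delta$ at any degree-$n$ intersection. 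Combining, $\int_{\Sigma}\omega\leq 2C\delta$.

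The only mild wrinkle is that $\Sigma$ may be broken, i.e.\ a concatenation $\Sigma_1\cup\cdots\cup\Sigma_k$ of strips joined at intermediate intersection points; however the telescoping form of the area identity shows that the total area still equals $(f_{\bar{L}_\delta}-f_{\bar{L}_i})(q)-(f_{\bar{L}_\delta}-f_{\bar{L}_i})(p)$, since the contributions at each intermediate corner cancel. Each component has strictly positive $\omega$-area by the usual positivity for non-constant $J$-holomorphic curves in an exact setting, so in particular the single component passing through $P$ has area at most the total, which is $\leq C\delta$. No step looks delicate: the substantive work was front-loaded into Proposition~\ref{holomorphicstrip} (Floer-theoretic production of strips with prescribed corner degrees and point constraint) and Corollary~\ref{degreepotential2} (sign control of the Novikov exponents via the judicious Poisson perturbation); the present corollary is essentially their concatenation.
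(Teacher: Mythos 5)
Your proof is correct and follows exactly the route the paper takes (the paper simply states "Applying Prop.~\ref{holomorphicstrip} to the Lagrangians $\bar{L}_\delta, \bar{L}_i$, we observe..."): verify the $D^bFuk(M)$ isomorphism and transversality hypotheses, produce the strip through $P$ with degree $0$ and degree $n$ corners, and bound the telescoping area formula by Corollary~\ref{degreepotential2}, giving $\int_\Sigma\omega\leq 2C\delta$, which the constant absorbs.
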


\begin{cor}
For any $p\in L_{\phi,A}\cap B_{eucl}(0, \frac{1}{2}T_i)$, there exists a point $p_i\in L_i$ with $|p-p_i|\leq C\delta^{1/2}$.
\end{cor}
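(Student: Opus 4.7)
The plan is to combine the holomorphic strip existence and area bound from the previous corollary with the area monotonicity with Lagrangian boundary (Lemma~\ref{areamonotonicitywithboundary}), in order to show that $L_i$ must fill out a $\delta^{1/2}$-neighborhood of $L_\delta$ (and hence of $L_{\phi,A}$) on the relevant ball.

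First I would relate $p\in L_{\phi,A}$ to a nearby point $P\in L_\delta$. Recall $L_\delta$ is the graph of $\delta\, d\tilde u$ over $L_{\phi,A}$ in a Weinstein neighborhood, with $\tilde u\in C^{2,\alpha}_\rho$ and $|d\tilde u|=O((1+|x|)^{\rho-1})\leq C$ pointwise. Thus for $p$ in the relevant range, the corresponding graph point $P\in L_\delta$ satisfies $|P-p|\leq C\delta$. Because $T_i$ is taken to grow very rapidly while $\delta$ is fixed, $P$ lies well inside the compactification region where $(\tilde J,\tilde\Omega)=(J,\Omega)$ and the ambient geometry is the standard Euclidean one, so we can apply Euclidean monotonicity on balls $B(P,R)$ of radius comparable to $\delta^{1/2}$.

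Next, apply the preceding corollary to the point $P\in\bar L_\delta$: it yields a (possibly broken) $J$-holomorphic strip $\Sigma$ with boundary on $\bar L_\delta\cup\bar L_i$ and $\int_\Sigma\omega\leq C\delta$. If $\Sigma$ is broken, at least one component passes through $P$, still has boundary on $\bar L_\delta\cup\bar L_i$, and has area bounded by $C\delta$. Let $\Sigma_P$ denote this component. Now set
\[ R^{*}=\sup\{R>0:L_i\cap B_{eucl}(p,R)=\emptyset\}. \]
If $R^{*}\leq 2|P-p|\leq 2C\delta$, then (for $\delta$ small) $R^{*}\leq C\delta^{1/2}$ and we are done. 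Otherwise $R^{*}>2|P-p|$, so the ball $B_{eucl}(P,R^{*}/2)$ misses $L_i$ entirely. Since $\Sigma_P$ passes through $P$ and its only Lagrangian boundary in $B_{eucl}(P,R^{*}/2)$ lies on the smooth Lagrangian $L_\delta$, Lemma~\ref{areamonotonicitywithboundary} gives
\[ C^{-1}(R^{*}/2)^{2}\leq \mathrm{Area}\bigl(\Sigma_P\cap B_{eucl}(P,R^{*}/2)\bigr)\leq \mathrm{Area}(\Sigma_P)\leq C\delta, \]
and therefore $R^{*}\leq C'\delta^{1/2}$. In either case there is a point $p_i\in L_i$ with $|p-p_i|\leq C\delta^{1/2}$.

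The only subtle point is verifying that the monotonicity lemma can indeed be invoked in the compactified ambient: one must check that $P$ together with the ball $B_{eucl}(P,C\delta^{1/2})$ sits inside the region where the Liouville form and almost complex structure are the standard Euclidean ones, so that the Euclidean monotonicity constant $C$ in Lemma~\ref{areamonotonicitywithboundary} is uniform and independent of $i$. This is where the rapid growth of $T_i$ relative to (fixed) $\delta$ is used: the modifications in the partial compactification of section~\ref{partialcompactification} are supported in the region $|\sum x_j^2-\sum y_j^2|\geq T_i$, and for $p\in B_{eucl}(0,\tfrac{1}{2}T_i)$ together with the perturbation $|P-p|\leq C\delta$ and the radius $R^{*}/2\leq C\delta^{1/2}$, the small ball $B_{eucl}(P,R^{*}/2)$ remains in the standard region once $T_i$ is large enough. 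Once this uniform geometric setting is secured, the argument above goes through cleanly and yields the desired $C\delta^{1/2}$-proximity.
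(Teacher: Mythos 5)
Your argument is essentially the paper's own proof: pass from $p$ to the nearby point $P\in L_\delta$, take the small-area strip through $P$ from the previous corollary, and run Lemma~\ref{areamonotonicitywithboundary} on a ball around $P$ disjoint from $L_i$ to force $\mathrm{dist}(P,L_i)\leq C\delta^{1/2}$. The only small adjustment needed is to cap the radius at $1$ (the paper takes $R=\min(\tfrac{1}{2}\mathrm{dist}(P,L_i),1)$), both because Lemma~\ref{areamonotonicitywithboundary} assumes $R\leq 1$ and because this keeps the ball inside the Euclidean region $B(0,T_i)$ \emph{before} you know $R^*$ is small, avoiding the slight circularity in your final paragraph; the conclusion $\min(R^*/2,1)\leq C\delta^{1/2}$ then gives the same bound.
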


\begin{proof}
Recall that inside the large Euclidean ball $B(0, T_i)$, the $J,\omega$ are by construction the standard Euclidean mode, and 
we can arrange $\bar{L}_\delta$ to coincide with $L_\delta$ inside $B(0,T_i)$. The displacement from $L_{\phi,A}$ to $L_\delta$ has distance bounded by $C\delta\ll 1$, so we can find $P\in \bar{L}_\delta$ with $|P-p|\leq C\delta$. Our goal is to find a point $p_i\in L_i$ close to $P$.

 We denote $R= \min(\frac{1}{2}\text{dist}(P, L_i), 1 )$, so that $B(P,R)$ is disjoint from $L_i$, and $R\leq 1$ implies in particular that $B(P,R)$ is contained inside $B(0,T_i)$. The possibility $R=0$ means simply that $P\in L_i$, and we are done, so without loss $R>0$.

Since $L_{\phi,A}$ is a fixed smooth Lagrangian, and $L_\delta$ is a small $C^{1,\alpha}$-graph over $L_{\phi,A}$ due to the $C^{2,\alpha}$-regularity for the solution $u$ of the Poisson equation (\ref{Poissonauxiliary}), the $C^{1,\alpha}$ regularity of $L_\delta$ is controlled. We then apply the area monotonicity formula with boundary (\cf Lemma \ref{areamonotonicitywithboundary}), to deduce
\[
C^{-1}R^2\leq \int_{ \Sigma \cap B(P,R)} \omega \leq \int_{ \Sigma} \omega \leq C\delta,
\]
whence $R\leq C\delta^{1/2}$. As $\delta\ll 1$, this shows $\text{dist}(P, L_i) \leq C\delta^{1/2}$. Thus there is some $p_i\in L_i$ with
\[
|p-p_i|\leq |p-P|+ |P- p_i|\leq C\delta+ C\delta^{1/2} \leq C\delta^{1/2}.
\]
\end{proof}

\begin{cor}
For any $p\in L_{\phi,A}$, there is a point $p'\in \text{supp}(L)$ with $|p-p'|\leq C\delta^{1/2}$. 
\end{cor}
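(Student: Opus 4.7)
The plan is to combine the previous corollary (which gives approximation of points on $L_{\phi,A}$ by points on $L_i$ within error $C\delta^{1/2}$, valid inside the large ball $B_{eucl}(0, \tfrac{1}{2}T_i)$) with the Hausdorff convergence $L_i \to L$ established in Corollary \ref{Hausdorffconvergence2}.

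Fix $p \in L_{\phi,A}$. Since $T_i \to \infty$, for all sufficiently large $i$ we have $p \in B_{eucl}(0, \tfrac{1}{2}T_i)$, so the previous corollary yields $p_i \in L_i$ with $|p - p_i| \leq C\delta^{1/2}$. By Corollary \ref{Hausdorffconvergence2}, $L_i$ converges to $L$ in the Hausdorff distance on any fixed compact set (or, since the $L_i$ coincide with $L$ outside a fixed compact set, globally in a suitable sense). Thus we can find $p'_i \in \text{supp}(L)$ with $|p_i - p'_i| \to 0$ as $i \to \infty$. For $i$ large we then have $|p - p'_i| \leq C\delta^{1/2} + o(1)$, and passing to a limit point $p'$ of the sequence $p'_i$ (which exists since the $p'_i$ are bounded and $\text{supp}(L)$ is closed) yields $p' \in \text{supp}(L)$ with $|p - p'| \leq C\delta^{1/2}$.

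The main subtle point is that the constant $C$ in the previous corollary is independent of $i$, so letting $i\to\infty$ does not inflate it; the Hausdorff convergence only contributes an $o(1)$ error that is absorbed in the limit. Since $\delta > 0$ can be taken arbitrarily small (the perturbed Lawlor neck $L_\delta$ is defined for all small $\delta$), this corollary in turn will imply, by letting $\delta \to 0$, that every point of $L_{\phi,A}$ lies in $\text{supp}(L)$ — which, together with the reverse inclusion obtained by a symmetric argument interchanging the roles of $L$ and $L_{\phi,A}$, will contradict the standing assumption that $L$ is not equal to $L_{\phi,A}$ and thereby complete the proof of Theorem~\ref{singularuniqueness}.
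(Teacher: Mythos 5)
Your proof is correct and follows essentially the paper's own argument: both obtain $p_i\in L_i$ with $|p-p_i|\leq C\delta^{1/2}$ from the previous corollary (using $T_i\to\infty$ so that $p\in B_{eucl}(0,\tfrac12 T_i)$ eventually) and then pass to a limit point lying in $\text{supp}(L)$, the only cosmetic difference being that you route through the Hausdorff convergence of Corollary~\ref{Hausdorffconvergence2}, whereas the paper takes a subsequential limit of the $p_i$ directly and cites Corollary~\ref{Hausdorffconvergence1}. Your closing remark about the later use of this corollary is tangential and slightly off (the paper concludes not by a symmetric interchange of $L$ and $L_{\phi,A}$ but by noting both have asymptotic area ratio two, so $L-L_{\phi,A}$ is empty), but this does not affect the proof of the statement itself.
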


\begin{proof}
As $T_i\to +\infty$, 
any $p\in L_{\phi,A}$ is contained in $ B_{eucl}(0, \frac{1}{2}T_i)$ for large enough $i$. We can then take a subsequential limit $p'\in B(p, C\delta^{1/2})$ for $p_i\in L_i$. By Cor. \ref{Hausdorffconvergence1}, we have $p'\in \text{supp}(L)$.
\end{proof}

\begin{cor}
$L=L_{\phi,A}$ as integral currents. 
\end{cor}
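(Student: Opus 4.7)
The plan is to first upgrade the preceding corollary to the set-theoretic inclusion $L_{\phi,A}\subset \mathrm{supp}(L)$ by letting $\delta\to 0$, and then to promote this inclusion to equality of integral currents using the multiplicity-one hypothesis together with the monotonicity formula for area-minimizing currents.

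For the first step, I would fix any $p\in L_{\phi,A}$ and observe that the constant $C$ in the previous corollary does not depend on $\delta$, so there are points of $\mathrm{supp}(L)$ within distance $C\delta^{1/2}$ of $p$ for every sufficiently small $\delta>0$. Since $\mathrm{supp}(L)$ is closed, $p\in\mathrm{supp}(L)$, hence $L_{\phi,A}\subset\mathrm{supp}(L)$.

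For the second step, I would argue that $L$ coincides with $L_{\phi,A}$ as an integral current near every point of $L_{\phi,A}$. Since $L_{\phi,A}$ is a smooth $n$-dimensional submanifold of $\mathbb{C}^n$ contained in $\mathrm{supp}(L)$, upper semicontinuity of Hausdorff density implies $\Theta^n(\|L\|,p)\geq 1$ at every $p\in L_{\phi,A}$. The multiplicity-one assumption on $L$ forces the density to equal $1$ for $\mathcal{H}^n$-a.e.\ $p\in L_{\phi,A}$, so at such $p$ the approximate tangent plane of $L$ is forced to coincide with $T_p L_{\phi,A}$: any discrepancy would produce at least two distinct tangent sheets of $L$ in $\mathrm{supp}(L)$ through $p$, contradicting density one. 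Since both currents are calibrated by $\mathrm{Re}\,\Omega$, their orientations agree on this common tangent plane, and it follows that the integral current $L-L_{\phi,A}$ is non-negative on a neighborhood of $L_{\phi,A}$. Globally, $L-L_{\phi,A}$ is a non-negative closed integral current calibrated by $\mathrm{Re}\,\Omega$, hence area-minimizing in $\mathbb{C}^n$.

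Finally, I would apply the monotonicity formula for area-minimizing currents to $L-L_{\phi,A}$. Both $L$ and $L_{\phi,A}$ are asymptotic to $\Pi_0\cup\Pi_\phi$ with multiplicity one, so $r^{-n}\|L\|(B_r)\to 2\omega_n$ and $r^{-n}\|L_{\phi,A}\|(B_r)\to 2\omega_n$ as $r\to\infty$, giving
\[
\lim_{r\to\infty} \frac{\|L-L_{\phi,A}\|(B_r)}{\omega_n r^n}=0.
\]
Monotonicity forces this ratio to vanish identically in $r$, so $L-L_{\phi,A}=0$. The main technical obstacle is step two, where one must carefully convert a set-theoretic inclusion into a current inequality without any a priori smoothness for $L$; this is precisely where the multiplicity-one hypothesis on the special Lagrangian current and the common calibration $\mathrm{Re}\,\Omega$ are indispensable.
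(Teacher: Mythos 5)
Your proposal is correct and follows essentially the same route as the paper: let $\delta\to 0$ to get $L_{\phi,A}\subset\mathrm{supp}(L)$, then use that both currents are calibrated with asymptotic area ratio two to force $L-L_{\phi,A}=0$. The paper states this second step in one sentence, and your density/calibration argument (multiplicity one plus Allard-type regularity at density-one points to get $L\llcorner L_{\phi,A}=L_{\phi,A}$, then monotonicity for the calibrated difference) is exactly the kind of filling-in that sentence presupposes.
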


\begin{proof}
Since $\delta $ can be chosen arbitrarily small, the previous statement implies that $L_{\phi,A}$ is contained in the support of the special Lagrangian $L$. Since $L$ and $L_{\phi,A}$ both have asymptotic area ratio two, $L-L_{\phi,A}$ must be empty.
\end{proof}

We have arrived at the promised contradiction, so Theorem \ref{singularuniqueness} is proved.

\section{Proof of the main results}\label{section:main}
Given the main estimate, Proposition~\ref{prop:T1Ttheta}, from Section~\ref{section:Decayalongflow} the proof of the main results follows established methods, 
similar to the work in \cite{LSSz2} for instance. For the readers' convenience we give the main steps in the proofs.

The main result on the existence of good blowups is the following. Recall the standing assumptions stated before Theorem~\ref{thm:goodblowup} for the flow $L^0_t$.
\begin{thm}\label{mainthmproof}
  There exists some type II blowup along the flow $L^0_t$ given by an exact special Lagrangian
  current $L_\infty$ which has tangent cone $W_0$ at infinity, but
  which is not equal to a translate of $W_0$.
\end{thm}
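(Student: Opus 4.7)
The plan is to argue by contradiction: if no good blowup sequence in the sense of Definition~\ref{defn:goodblowup} exists, then Condition~(A3(b)) is in force, so Proposition~\ref{prop:T1Ttheta} applies in full strength. The strategy is to use that proposition to propagate graphicality of $L^0_t$ over $W_0$ from the tangent flow scale down to scales just above where the singularity forms, and then use singularity formation at $(0,0)$ to directly construct a good blowup.

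First I would exploit the tangent flow hypothesis. For any small $\epsilon > 0$ and the $\delta = \delta(\epsilon) > 0$ from Proposition~\ref{prop:T1Ttheta}, the convergence of parabolic rescalings to the static flow $W_0$, combined with Huisken's monotonicity, allows us to pick $\sigma_0 > 0$ such that the rescaling $L^{\sigma_0}_{-1} := \sigma_0^{-1} L^0_{-\sigma_0^2}$ is a $\delta^2$-graph over $W_0$ on $A_{\delta^{-1},\delta}$, and the associated rescaled LMCF $M_\tau := e^{\tau/2} L^{\sigma_0}_{-e^{-\tau}}$ has $T_\Theta(\delta)$ as large as we like. Proposition~\ref{prop:T1Ttheta} then gives $T_1(\epsilon) \geq T_\Theta(\delta) \to \infty$ as $\sigma_0 \to 0$, and Lemma~\ref{lem:T1} extends graphicality spatially. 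Translating back, this means that on parabolic neighborhoods of $(0,0)$ of the form $\{c_1\sqrt{-t} \leq |x| \leq c_2\sigma_0\}$ the Lagrangian $L^0_t$ is a small $C^0$-graph over a perturbation $W \in \tilde{\mathcal{W}}$ of $W_0$ (modulo a small translation).

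Next I would extract a type~II blowup limit via second-fundamental-form blowup. The graphicality above combined with White's regularity gives uniform $|A|$ bounds on $L^0_t$ away from $(0,0)$ at rescaled scales, so singularity formation at $(0,0)$ forces a sequence $(x_i, t_i) \to (0,0)$ with $|A|(x_i,t_i) \to \infty$. Setting $r_i := |A|(x_i,t_i)^{-1}$, we have $r_i \to 0$ and $r_i/\sqrt{-t_i} \to 0$, since LMCF does not develop type~I singularities by Neves~\cite{Neves1}. The rescaled flows $\tilde{L}^i_s := r_i^{-1}(L^0_{t_i + r_i^2 s} - x_i)$ satisfy uniform $|A|$, area-ratio and Lagrangian-angle bounds on every compact subset (using smoothness on mesoscopic annuli from Step~1), and by the lemma following Proposition~\ref{prop:N1} we have $\int_{\tilde{L}^i_{-1}} |\theta - \theta_{W_0}|^2 \rho_{0,0}\, d\mathcal{H}^n \to 0$. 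Standard parabolic regularity yields smooth subsequential convergence $\tilde{L}^i_{-1} \to L_\infty$ to a smooth exact special Lagrangian in $\mathbb{C}^n$ with $|A|(0) = 1$; in particular $L_\infty$ is not a translate of $W_0$.

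The main technical obstacle is verifying that $L_\infty$ is asymptotic to $W_0$ at infinity, rather than being an arbitrary unrelated special Lagrangian. This is where the graphicality established in the first step is essential: on annular regions $\{Rr_i \leq |x - x_i| \leq \sigma_0\}$, $L^0_{t_i}$ is a $C^0$-small graph over some $W_i \in \tilde{\mathcal{W}}$ with $d(W_i, W_0) \to 0$. Rescaling by $r_i^{-1}$ shows that $\tilde{L}^i_{-1}$ is a graph of vanishingly small $C^0$-norm over $W_i$ on annuli $\{R \leq |y| \leq \sigma_0/r_i\}$, with $\sigma_0/r_i \to \infty$. Passing to the limit, $L_\infty$ has $W_0$ as tangent cone at infinity, so $(x_i, t_i, r_i)$ is the desired good blowup sequence, contradicting the hypothesis.
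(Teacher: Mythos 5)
Your setup (contradiction via Condition (A3(b)) and Proposition~\ref{prop:T1Ttheta}, graphicality on mesoscopic annuli, then extraction of a blowup) starts like the paper's argument, but the choice of blowup scale is where the proof breaks. You take $r_i = |A|(x_i,t_i)^{-1}$ at points of curvature blowup and claim that the rescalings $\tilde L^i_{-1}$ converge smoothly to a special Lagrangian with $|A|(0)=1$ that is asymptotic to $W_0$. The graphicality you invoke on the annuli $\{Rr_i \leq |x-x_i| \leq \sigma_0\}$ is not available at that scale: Proposition~\ref{prop:T1Ttheta}, Lemma~\ref{lem:T1} and Proposition~\ref{prop:mainelliptic} only propagate graphicality over (perturbations of) $W_0$ down to the scale at which the Gaussian density drops below $\Theta(W_0)-\kappa$ (the conicality radius $r_0(x)$), and this scale can a priori be far larger than the curvature scale $|A|^{-1}$. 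This is exactly the caveat stated in the introduction: the type II blowup of the theorem detects the Gaussian-area-drop scale, which may be much larger than the length scale determined by the second fundamental form, and for that reason the limit is only an (a priori singular) special Lagrangian current -- your claim of a smooth limit with $|A|(0)=1$ is not only unjustified (without a point-picking argument you have no local curvature bounds for the rescaled flows at all) but, more importantly, even with point selection the resulting curvature-scale limit need not have tangent cone $W_0$ at infinity, so it need not be a good blowup in the sense of Definition~\ref{defn:goodblowup}.

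The paper's proof instead defines the blowup scale directly from the Gaussian density: $r(\tau,x)$ is the smallest radius above which $\Theta(M_\tau,x,r)\geq \Theta(W_0)-\kappa$, the center $x_\tau$ is chosen to minimize this radius, and the rescaled Lagrangians $L_i = r_i^{-1}(L^0_{t_i}-y_i)$ are shown, via Proposition~\ref{prop:T1Ttheta} applied to the rescaled flows based at the shifted times, to be $\epsilon^2$-graphs over $W_0$ at every intermediate scale $\lambda\leq 1$; the limit is a special Lagrangian current with $\Theta(L_\infty)=\Theta(W_0)-\kappa$ and $\Theta(L_\infty-x)\leq\Theta(W_0)-\kappa$, hence not a translated cone, and asymptotic to a cone near $W_0$. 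Two further steps in the paper are missing from your sketch and would still be needed even if you repaired the scale issue: a diagonal argument using Simon's uniqueness of tangent cones (with its uniform convergence rate) to upgrade ``asymptotic to some cone $\epsilon$-close to $W_0$'' to ``tangent cone exactly $W_0$'', and the treatment of the degenerate case $r(\tau_i,x_i)=0$ where several immersed sheets intersect, in which one shows via White's regularity that no singularity actually forms at $(0,0)$.
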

\begin{proof}
  In the terminology of Definition~\ref{defn:goodblowup} our goal is to show that there exists a good blowup along the flow $L^0_t$. Suppose that this is not the case. Then we are in the setting of Section~\ref{section:Decayalongflow}, with the Conditions (A) holding for any suitable rescaled flow obtained from $L^0_t$. In particular Proposition~\ref{prop:T1Ttheta} applies. We can now argue as follows, to show that in fact a good blowup does exist. 

  First we show that $W_0$ is the unique tangent flow at $(0,0)$. Let us define the rescaled flow $M_\tau= e^{\tau/2} L^0_{-e^{-\tau}}$, so that along a sequence $\tau_i\to\infty$ we have $M_{\tau_i} \to W_0$. Consider the time translated rescaled flows
  \[ M^i_\tau = M_{\tau+\tau_i}. \]
 Let $\epsilon > 0$, and let $\delta > 0$ be determined by Proposition~\ref{prop:T1Ttheta}. Note that for sufficiently large $i$ we have that $M^i_0$ is a $\delta^2$-graph over $W_0$ on the annulus $A_{\delta^{-1}, \delta}$. Proposition~\ref{prop:T1Ttheta} implies that for the flow $M^i$ we have $T_1(\epsilon) \geq T_{\Theta}(\delta)$, but at the same time since $M_{\tau_i}\to W_0$, the monotonicity formula implies that $T_{\Theta}(\delta) = \infty$. It follows that for large enough $i$ we have that $M_{\tau + \tau_i}$ is an $\epsilon^2$-graph over $W_0$ on the annulus $A_{\epsilon^{-1}, \epsilon}$, for all $\tau \geq 0$. Since $\epsilon$ was arbitrary, this shows that $W_0$ is the unique limit of $M_\tau$ as $\tau\to\infty$. 

 Next, let $\kappa > 0$ be small. For any $\tau\geq 0$ and $x\in B(0,1)$ define the radius
 \[ r(\tau, x) = \inf\{ R > 0\, :\, \Theta(M_\tau, x, r) \geq \Theta(W_0) - \kappa, \text{ for all } r\in (R, 1)\},\]
 where we define $r(\tau, x) = 1$ if the set above is empty. 

 Note that by the convergence $M_\tau \to W_0$ as $\tau\to \infty$ we have that for all sufficiently small $x$ and large $\tau$ (depending on $\kappa$), we have $r(\tau, x) < 1$. Moreover we have $r(\tau,0) \to 0$ as $\tau\to \infty$, and also if $\kappa$ is chosen sufficiently small, then we have $r(\tau, x)=1$ for large $\tau$ and $x\in A_{1,1/2}$. 

 It follows that for all sufficiently large $\tau$ we can pick a point $x_\tau \in B(0,1/2)$ such that 
 \[ r(\tau, x_\tau) \leq r(\tau, x), \text{ for all }x\in B(0,1). \]
 Let us define $r_\tau = r(\tau, x_\tau)$, and note that $r_\tau\to 0$ as $\tau\to\infty$. Let us assume that for all sufficiently large $\tau$ we have $r_\tau > 0$, which is the case for instance if the flow $M_\tau$ is embedded, since then $\Theta(M_\tau,x,r)\to 1$ as $r\to 0$ for all $x\in M_\tau$.  For a sequence $\tau_i\to\infty$, we then consider the points $(y_i, t_i)$ along the flow $L^0_t$, which correspond to $(x_{\tau_i}, \tau_i)$ along the rescaled flow. In other words
 \[ t_i = -e^{-\tau_i}, \qquad y_i = e^{-\tau_i/2} x_{\tau_i} = \sqrt{-t_i} x_{\tau_i}. \]
  We let $r_i = e^{-\tau_i/2} r(\tau_i, x_{\tau_i})$ denote the corresponding rescaled radii. We claim that the points $(y_i, t_i)$ together with the scales $r_i$ almost define a good blowup sequence in the sense of Definition~\ref{defn:goodblowup}, except the resulting special Lagrangian current will be asymptotic to a cone $W$ close to $W_0$.

  Since $r(\tau_i, x_{\tau_i})\to 0$, we have $r_i < \sqrt{-t_i}$ for large $i$. It is also clear that $(y_i, t_i)\to (0,0)$. Consider the Lagrangians $L_i = r_i^{-1}(L^0_{t_i} - y_i)$. We claim that for any given $\lambda \leq 1$, for large $i$ the scaling $\lambda L_i$ is close to $W_0$ in a sense made precise below. To see this, note that $\lambda L_i$ can be viewed as the time 0 slices $M^i_0$ of the rescaled flows $M^i_\tau$ based at the points $(y_i, t_i + \lambda^{-2}r_i^2)$. We have
  \[ M^i_\tau = e^{\tau/2} \lambda r_i^{-1}\Big(L^0_{t_i+\lambda^{-2}r_i^2-e^{-\tau}\lambda^{-2}r_i^2} - y_i\Big), \]
  and so at $\tau_i=\ln (\lambda^{-2} r_i^2)$ we have
  \[ M^i_{\tau_i} = L^0_{t_i+\lambda^{-2}r_i^2-1} - y_i.\]
  Since $t_i + \lambda^{-2}r_i^2\to 0$ and $y_i\to 0$ as $i\to\infty$, it follows that $M^i_{\tau_i} \to W_0$. At the same time, by our construction, we have 
  \[ \Theta(M^i_0) \geq \Theta(W_0) - \kappa, \]
  for large $i$. 
  Therefore for any $\epsilon > 0$ if we choose $\kappa$ sufficiently small, then Proposition~\ref{prop:T1Ttheta} (applied to the time translated flow $\tau\mapsto M^i_{\tau + \tau_i}$) implies that $M^i_0$ is an $\epsilon^2$-graph over $W_0$ on the annulus $A_{\epsilon^{-1}, \epsilon}$.

  Consider the sequence $L_i$. Proposition~\ref{prop:N1} implies that up to choosing a subsequence, we have $L_i\to L_\infty$, where $L_\infty$ is a special Lagrangian current. Moreover our construction implies that $\Theta(L_\infty) = \Theta(W_0) - \kappa$, while $\Theta(L_\infty - x) \leq \Theta(W_0) - \kappa$ for all $x$. The discussion above also shows that for all $\lambda \leq 1$ the rescaling $\lambda L_\infty$ is an $\epsilon^2$-graph over $W_0$ on $A_{\epsilon^{-1}, \epsilon}$ (where recall that $\kappa$ depends on the choice of $\epsilon$). In sum this implies that $L_\infty$ is a special Lagrangian current, asymptotic to a cone $W$ at infinity that is $\epsilon$-close to $W_0$, while at the same time $L_\infty$ is not the translation of a cone. By choosing $\kappa$ smaller, we can arrange that the tangent cone at infinity of $L_\infty$ is as close to $W_0$ as we like. 

  It remains to argue that by choosing a diagonal sequence we can find a blowup limit given by a special Lagrangian current $L_\infty$ with tangent cone $W_0$ at infinity, such that $L_\infty$ is not a translate of $W_0$. To see this, suppose that $L^i_\infty$ are special Lagrangian currents, with tangent cone $W_i$ at infinity, and $W_i\to W_0$, such that the $L^i_\infty$ are not translates of cones. By translating and scaling, we can arrange that for some fixed small $\kappa > 0$ we have
  \[ \label{eq:s60} \Theta(L^i_\infty) &= \Theta(W_0) - \kappa, \\
    \Theta(L^i_\infty - x) &\leq \Theta(W_0) - \kappa, \text{ for all } x. \]
  To see this, note that from the proof of the uniqueness of tangent cones by Simon~\cite{SimonAsymptotics}, we have the following. Given $\epsilon >0$ (depending on $W_0$) there is a $\kappa >0$ such that if a special Lagrangian $L$ satisfies that $\Theta(r_1^{-1}L) > \Theta(r_2^{-1}L) - \kappa$ for some $r_1 < \kappa r_2$, and $r_2^{-1}L$ is  a $\kappa$-graph over $W_0$ on the annulus $B_2\setminus B_1$, then $r_1^{-1}L$ is an $\epsilon$-graph over $W_0$ on $B_2\setminus B_1$. Using this one can see that if $\kappa$ is chosen sufficiently small, and $i$ is large, then at any point $p$ of $L^i_\infty$ with density at least $\Theta(W_0) - \kappa$, the tangent cone of $L^i_\infty$ would have to be a small perturbation of $W_i$. In particular the density at $p$ would equal $\Theta(W_0)$, and so $L^i_\infty$ would be a cone centered at $p$. It follows, using that the $L^i_\infty$ are not translates of cones, that if $\kappa$ is chosen sufficiently small, then for large enough $i$ the density of $L^i_\infty$ at every point is strictly less than $\Theta(W_0) - \kappa$. The existence of suitable translates and rescalings of the $L^i_\infty$ satisfying \eqref{eq:s60} follows. 
  
  Up to choosing a subsequence we can assume that $L^i_\infty \to L_\infty$ for a special Lagrangian current, which is not a translation of a cone, and it remains to show that the tangent cone of $L_\infty$ is $W_0$ (i.e. the limit of the tangent cones of the $L^i_\infty$). To see this we use that
  the proof of the uniqueness of tangent cones of minimal surfaces in Simon~\cite{SimonAsymptotics} implies that under the condition that $\Theta(L^i_\infty) \geq \Theta(W_0) - \kappa$, for small enough $\kappa$, we have a uniform rate of convergence of the $L^i_\infty$ to their tangent cones. More precisely, for any small $\epsilon > 0$ there is some large $R > 0$ such that $r^{-1}L^i_\infty$ is an $\epsilon$-graph over $W_i$ on the annulus $A_{2,1}$ for all $r > R$ and all $i$. It follows from this that $r^{-1}L_\infty$ is an $\epsilon$-graph over $W_0$ on $A_{2,1}$, and since $\epsilon$ was arbitrary, this shows that the tangent cone at infinity of $L_\infty$ is $W_0$. It follows that we have a good blowup sequence along the flow as required.

  It remains to deal with the case when in the construction above we have $r_{\tau_i} = 0$ for a sequence $\tau_i \to \infty$. We will see that in this case no singularity forms at $(0,0)$. Note that if $r(\tau_i, x_i)=0$ (and $\kappa$ is sufficiently small), then the flow $M_\tau$ has density greater than 1 at $(\tau_i, x_i)$. Since we are dealing with a smoothly immersed flow, this is only possible if at $(\tau_i, x_i)$ several smooth local components intersect. The argument above, invoking Proposition~\ref{prop:T1Ttheta}, shows that for any $\Lambda > 0$, if $i$ is sufficiently large, then for all $\lambda\in (0, \Lambda)$ the rescaling $\lambda^{-1}(M_{\tau_i}-x_i)$ is graphical over $W_0$ on the annulus $A_{2,1}$. In particular, for any $\Lambda > 0$, once $i$ is large, we have that $M_{\tau_i}\cap B(0,\Lambda)$ is a Lipschitz graph over the translate $W_0 + x_i$ of the cone $W_0$. Since the flow is smoothly immersed, this means that $W_0$ must be a union of transverse subspaces, and $M_{\tau_i}\cap B(0, \Lambda)$ is a union of graphs over these subspaces. Using White's regularity theorem~\cite{White} as in Neves~\cite[Corollary 4.3]{Neves}, it follows that actually no singularity is forming along $L^0_t$ at $(0,0)$. 
\end{proof}

Theorem~\ref{thm:Lawlorblowup} on the existence of blowup limits given by a Lawlor neck follows immediately from Theorem~\ref{thm:goodblowup} and Theorem~\ref{singularuniqueness}, characterizing the possible special Lagrangians asymptotic to the union of two transverse subspaces. Similarly, Corollary~\ref{cor:nostable} is an immediate consequence of Theorem~\ref{thm:goodblowup} and Proposition~\ref{prop:noexactAC}.
Finally, Theorems~\ref{thm:connected2} and \ref{thm:2dim} follow in the same way as the beginning of the proof of Theorem~\ref{mainthmproof}, since in these two cases we are assuming that Conditions (A) hold, with either (A3(a)) or (A3(c)).

\end{document}